\documentclass[a4paper,11pt]{amsart} 
\usepackage{amsmath,amsxtra,amssymb,latexsym, amscd,amsthm}
\usepackage[mathscr]{eucal}
\usepackage{mathrsfs}
\usepackage{bbm}
\usepackage{enumerate}
\usepackage{pict2e}
\usepackage{tikz}
\usepackage{graphicx}
\usepackage[a4paper]{geometry}
\geometry{left=3cm,right=3cm,top=2.5cm}
\usepackage{hyperref}
\numberwithin{equation}{section}

%----------------------------------------------------------------------%
\theoremstyle{plain}
\newtheorem{thm}{Theorem}[section]
\newtheorem{prp}[thm]{Proposition}
\newtheorem{cor}[thm]{Corollary}
\newtheorem{lem}[thm]{Lemma}
\newtheorem*{euc*}{Euclidean division}
\newtheorem*{fek*}{Fekete's Lemma}
\newtheorem*{kin*}{Kingman's Subadditive Ergodic Theorem}
\newtheorem*{fur*}{Furstenberg-Kesten Theorem}
%----------------------------------------------------------------------%
\theoremstyle{definition}
\newtheorem{defa}[thm]{Definition}

\newtheorem{rem}[thm]{Remark}

\newtheorem*{rem*}{Remark}

%----------------------------------------------------------------------%
%
%----------------------------------------------------------------------%
\newcommand{\dd}{\mathrm{d}}

%----------------------------------------------------------------------%
\renewcommand{\Im}{\operatorname{Im}}
\renewcommand{\Re}{\operatorname{Re}}
%----------------------------------------------------------------------%

%----------------------------------------------------------------------%
\makeatletter
\newcommand*{\ov}[1]{%
  $\m@th\overline{\mbox{#1}}$%
}
\newcommand*{\ovA}[1]{%
  $\m@th\overline{\mbox{#1}\raisebox{3mm}{}}$%
}
\newcommand*{\ovB}[1]{%
  $\m@th\overline{\mbox{#1\rule{0pt}{3mm}}}$%
}
\newcommand*{\ovC}[1]{%
  $\m@th\overline{\mbox{#1\strut}}$%
}
\newcommand*{\ovD}[1]{%
  $\m@th\overline{\mbox{#1\vphantom{\"A}}}$%
}
\newcommand*{\ovE}[1]{%
  $\m@th\overline{\raisebox{0pt}[1.2\height]{#1}}$%
}
\newcommand*{\ovF}[1]{%
  $\m@th\overline{\raisebox{0pt}[\dimexpr\height+0.3mm\relax]{#1}}$%
  % Package `calc' can be used as alternative for `\dimexpr'.
}
\newcommand*{\ovG}[1]{%
  $\m@th\overline{\raisebox{0pt}[\dimexpr\height+1mm\relax]{#1\vphantom{A}}}$%
}
\makeatother
%----------------------------------------------------------------------%
\newcommand{\N}{\mathbb{N}}

\newcommand{\R}{\mathbb{R}}
\newcommand{\C}{\mathbb{C}}

\newcommand{\tilGN}{{\widetilde{G_N}}}
\newcommand{\tilG}{{\widetilde{G}}}
\newcommand{\tilg}{{\tilde{g}}}
\newcommand{\tilWN}{{\widetilde{W_N}}}
\newcommand{\tilW}{{\widetilde{W}}}
\newcommand{\tilB}{{\widetilde{B}}}
\newcommand{\tilV}{{\widetilde{V}}}
\newcommand{\tilE}{{\widetilde{E}}}
\newcommand{\tilH}{{\widetilde{H}}}

\newcommand{\cs}{M}
%----------------------------------------------------------------------%
\newcommand{\vertiii}[1]{{\left\vert\kern-0.25ex\left\vert\kern-0.25ex\left\vert #1 
    \right\vert\kern-0.25ex\right\vert\kern-0.25ex\right\vert}}
%----------------------------------------------------------------------%

\DeclareMathOperator{\expect}{\mathbb{E}}

\DeclareMathOperator{\prob}{\mathbb{P}}

\DeclareMathOperator{\varnbi}{Var_{nb, \eta_0}^I}
\DeclareMathOperator{\varnbia}{Var_{nb, \eta_0}^{J_1}}
\DeclareMathOperator{\varnbib}{Var_{nb, \eta_0}^{J_2}}

\DeclareMathOperator{\varisigma}{Var_{\eta_0}^{J_j^{\varsigma}}}
\DeclareMathOperator{\varis}{Var_{\eta_0}^{ I_1\setminus \cup_{j=1}^M J^\varsigma_j}}
\DeclareMathOperator{\vari}{Var_{\eta_0}^I}
\DeclareMathOperator{\varib}{Var_{\eta_0}^{I_1}}

%----------------------------------------------------------------------%
\DeclareSymbolFont{extraup}{U}{zavm}{m}{n}
\DeclareMathSymbol{\varheart}{\mathalpha}{extraup}{86}
\DeclareMathSymbol{\vardiamond}{\mathalpha}{extraup}{87}
%----------------------------------------------------------------------%

%----------------------------------------------------------------------%
\newcommand{\nwc}{\newcommand}
\nwc{\nwt}{\newtheorem}
\nwt{coro}{Corollary}
\nwt{ex}{Example}
\nwt{prop}{Proposition}
\nwt{defin}{Definition}

%font change

\nwc{\mf}{\mathbf} %Latex (as in \bf not tilted math letters)
\nwc{\blds}{\boldsymbol} %Latex 
\nwc{\ml}{\mathcal} %Latex

%greek letters

\nwc{\lam}{\lambda}
\nwc{\del}{\delta}
\nwc{\Del}{\Delta}
\nwc{\Lam}{\Lambda}
\nwc{\elll}{\ell}
%blackboard bold math

\nwc{\IA}{\mathbb{A}} %algebraic
\nwc{\IB}{\mathbb{B}} %ball
\nwc{\IC}{\mathbb{C}} %complex
\nwc{\ID}{\mathbb{D}} %Dedekind
\nwc{\IE}{\mathbb{E}} %Euklides
\nwc{\IF}{\mathbb{F}} %finite field
\nwc{\IG}{\mathbb{G}} %Gauss
\nwc{\IH}{\mathbb{H}} %Hilbert\N-subgroup
\nwc{\IN}{\mathbb{N}} %natural
\nwc{\IP}{\mathbb{P}} %prime
\nwc{\IQ}{\mathbb{Q}} %rational
\nwc{\IR}{\mathbb{R}} %real
\nwc{\IS}{\mathbb{S}} %sphere
\nwc{\IT}{\mathbb{T}} %torus
\nwc{\IZ}{\mathbb{Z}} %integers
\def\bbbone{{\mathchoice {1\mskip-4mu {\rm{l}}} {1\mskip-4mu {\rm{l}}}
{ 1\mskip-4.5mu {\rm{l}}} { 1\mskip-5mu {\rm{l}}}}}
\def\bbleft{{\mathchoice {[\mskip-3mu {[}} {[\mskip-3mu {[}}{[\mskip-4mu {[}}{[\mskip-5mu {[}}}}
\def\bbright{{\mathchoice {]\mskip-3mu {]}} {]\mskip-3mu {]}}{]\mskip-4mu {]}}{]\mskip-5mu {]}}}}
\nwc{\setK}{\bbleft 1,K \bbright}
\nwc{\setN}{\bbleft 1,\cN \bbright}
 \newcommand{\Lim}{\mathop{\longrightarrow}\limits}
%Straight (vector) bold letters

%lowercase

\nwc{\va}{{\bf a}}
\nwc{\vb}{{\bf b}}
\nwc{\vc}{{\bf c}}
\nwc{\vd}{{\bf d}}
\nwc{\ve}{{\bf e}}
\nwc{\vf}{{\bf f}}
\nwc{\vg}{{\bf g}}
\nwc{\vh}{{\bf h}}
\nwc{\vi}{{\bf i}}
\nwc{\vI}{{\bf I}}
\nwc{\vj}{{\bf j}}
\nwc{\vk}{{\bf k}}
\nwc{\vl}{{\bf l}}
\nwc{\vm}{{\bf m}}
\nwc{\vM}{{\bf M}}
\nwc{\vn}{{\bf n}}
\nwc{\vo}{{\it o}}
\nwc{\vp}{{\bf p}}
\nwc{\vq}{{\bf q}}
\nwc{\vr}{{\bf r}}
\nwc{\vs}{{\bf s}}
\nwc{\vt}{{\bf t}}
\nwc{\vu}{{\bf u}}
\nwc{\vv}{{\bf v}}
\nwc{\vw}{{\bf w}}
\nwc{\vx}{{\bf x}}
\nwc{\vy}{{\bf y}}
\nwc{\vz}{{\bf z}}
\nwc{\bal}{\blds{\alpha}}
\nwc{\bep}{\blds{\epsilon}}
\nwc{\barbep}{\overline{\blds{\epsilon}}}
\nwc{\bnu}{\blds{\nu}}
\nwc{\bmu}{\blds{\mu}}
\nwc{\bet}{\blds{\eta}}

%bold letters
%\b* letters are tilted in math mode and scale in equations. 
%but cannot be used in plain text format.

%I. lowercase

\nwc{\bk}{\blds{k}}
\nwc{\bm}{\blds{m}}
\nwc{\bM}{\blds{M}}
\nwc{\bp}{\blds{p}}
\nwc{\bq}{\blds{q}}
\nwc{\bn}{\blds{n}}
\nwc{\bv}{\blds{v}}
\nwc{\bw}{\blds{w}}
\nwc{\bx}{\blds{x}}
\nwc{\bxi}{\blds{\xi}}
\nwc{\by}{\blds{y}}
\nwc{\bz}{\blds{z}}

%caligraphic

\nwc{\cA}{\ml{A}}
\nwc{\cB}{\ml{B}}
\nwc{\cC}{\ml{C}}
\nwc{\cD}{\ml{D}}
\nwc{\cE}{\ml{E}}
\nwc{\cF}{\ml{F}}
\nwc{\cG}{\ml{G}}
\nwc{\cH}{\ml{H}}
\nwc{\cI}{\ml{I}}
\nwc{\cJ}{\ml{J}}
\nwc{\cK}{\ml{K}}
\nwc{\cL}{\ml{L}}
\nwc{\cM}{\ml{M}}
\nwc{\cN}{\ml{N}}
\nwc{\cO}{\ml{O}}
\nwc{\cP}{\ml{P}}
\nwc{\cQ}{\ml{Q}}
\nwc{\cR}{\ml{R}}
\nwc{\cS}{\ml{S}}
\nwc{\cT}{\ml{T}}
\nwc{\cU}{\ml{U}}
\nwc{\cV}{\ml{V}}
\nwc{\cW}{\ml{W}}
\nwc{\cX}{\ml{X}}
\nwc{\cY}{\ml{Y}}
\nwc{\cZ}{\ml{Z}}

\nwc{\fA}{\mathfrak{a}}
\nwc{\fB}{\mathfrak{b}}
\nwc{\fC}{\mathfrak{c}}
\nwc{\fD}{\mathfrak{d}}
\nwc{\fE}{\mathfrak{e}}
\nwc{\fF}{\mathfrak{f}}
\nwc{\fG}{\mathfrak{g}}
\nwc{\fH}{\mathfrak{h}}
\nwc{\fI}{\mathfrak{i}}
\nwc{\fJ}{\mathfrak{j}}
\nwc{\fK}{\mathfrak{k}}
\nwc{\fL}{\mathfrak{l}}
\nwc{\fM}{\mathfrak{m}}
\nwc{\fN}{\mathfrak{n}}
\nwc{\fO}{\mathfrak{o}}
\nwc{\fP}{\mathfrak{p}}
\nwc{\fQ}{\mathfrak{q}}
\nwc{\fR}{\mathfrak{r}}
\nwc{\fS}{\mathfrak{s}}
\nwc{\fT}{\mathfrak{t}}
\nwc{\fU}{\mathfrak{u}}
\nwc{\fV}{\mathfrak{v}}
\nwc{\fW}{\mathfrak{w}}
\nwc{\fX}{\mathfrak{x}}
\nwc{\fY}{\mathfrak{y}}
\nwc{\fZ}{\mathfrak{z}}

%% (wide)tilde letters

\nwc{\tA}{\widetilde{A}}
\nwc{\tB}{\widetilde{B}}
\nwc{\tE}{E^{\vareps}}
%\nwc{\tcO}{\widetilde{\mathcal{O}}}
\nwc{\tk}{\tilde k}
\nwc{\tN}{\tilde N}
\nwc{\tP}{\widetilde{P}}
\nwc{\tQ}{\widetilde{Q}}
\nwc{\tR}{\widetilde{R}}
\nwc{\tV}{\widetilde{V}}
\nwc{\tW}{\widetilde{W}}
\nwc{\ty}{\tilde y}
\nwc{\teta}{\tilde \eta}
\nwc{\tdelta}{\tilde \delta}
\nwc{\tlambda}{\tilde \lambda}
%\nwc{\tchi}{\tilde \chi}
\nwc{\ttheta}{\tilde \theta}
\nwc{\tvartheta}{\tilde \vartheta}
\nwc{\tPhi}{\widetilde \Phi}
\nwc{\tpsi}{\tilde \psi}
\nwc{\tmu}{\tilde \mu}

%miscellany
\nwc{\To}{\longrightarrow} %limits

\nwc{\ad}{\rm ad}
\nwc{\eps}{\epsilon}
\nwc{\ep}{\epsilon}
\nwc{\vareps}{\varepsilon}

\def\ep{\epsilon}

\def\sq2{\sqrt{2}}

\def\t2{{\mathbb T}^2}
%\def\tt2{{\mathbb T}^2}
%\nwc{\t1}{{\mathbb T}^1}
\def\s2{{\mathbb S}^2}

\def\N{\mathbb{N}}

\def\R{\mathbb{R}}

\def\C{\mathbb{C}}

\nwc{\lap}{\bigtriangleup}
\nwc{\rest}{\restriction}
\nwc{\Diff}{\operatorname{Diff}}
\nwc{\diam}{\operatorname{diam}}
\nwc{\Res}{\operatorname{Res}}
\nwc{\Spec}{\operatorname{Spec}}
\nwc{\Vol}{\operatorname{Vol}}
\nwc{\Op}{\operatorname{Op}}
\nwc{\supp}{\operatorname{supp}}
\nwc{\Span}{\operatorname{span}}

\nwc{\dia}{\varepsilon}
\nwc{\cut}{f}
\nwc{\qm}{u_\hbar}

\def\hto0{\xrightarrow{\hbar\to 0}}

\def\rto0{\xrightarrow{r\to 0}}

\providecommand{\norm}[1]{\lVert#1\rVert}

\nwc{\la}{\langle}
\nwc{\ra}{\rangle}
\nwc{\lp}{\left(}
\nwc{\rp}{\right)}

%\nwc{\bal}{\begin{align}}
\nwc{\bequ}{\begin{equation}}
\nwc{\be}{\begin{equation}}
\nwc{\ben}{\begin{equation*}}
\nwc{\bea}{\begin{eqnarray}}
\nwc{\bean}{\begin{eqnarray*}}
\nwc{\bit}{\begin{itemize}}
\nwc{\bver}{\begin{verbatim}}

%\nwc{\eal}{\end{align}}
\nwc{\eequ}{\end{equation}}
\nwc{\ee}{\end{equation}}
\nwc{\een}{\end{equation*}}
\nwc{\eea}{\end{eqnarray}}
\nwc{\eean}{\end{eqnarray*}}
\nwc{\eit}{\end{itemize}}
\nwc{\ever}{\end{verbatim}}

%----------------------------------------------------------------------%
\title[From Spectral to Spatial Delocalization]{Quantum Ergodicity on Graphs~: from Spectral to Spatial Delocalization}
\author{Nalini Anantharaman and Mostafa \textsc{Sabri}}
%----------------------------------------------------------------------%
\address{Universit\'e de Strasbourg, CNRS, IRMA UMR 7501, F-67000 Strasbourg, France.}
\email{anantharaman@math.unistra.fr}
\address{Department of Mathematics, Faculty of Science, Cairo University, Cairo 12613, Egypt.}
\address{Universit\'e de Strasbourg, CNRS, IRMA UMR 7501, F-67000 Strasbourg, France.}
\email{mmsabri@sci.cu.edu.eg}
%----------------------------------------------------------------------%
\subjclass[2010]{Primary 58J51. Secondary 60B20, 81Q10.}
%----------------------------------------------------------------------%
\keywords{Quantum ergodicity, large graphs, delocalization}
%----------------------------------------------------------------------%
%\date{November 28, 2014}
%----------------------------------------------------------------------%
\usepackage{calc}
\usepackage{yhmath}
\usepackage{graphicx}

\makeatletter
\newlength{\temp@wc@width}
\newlength{\temp@wc@height}
\newcommand{\widecheck}[1]{%
  \setlength{\temp@wc@width}{\widthof{$#1$}}%
  \setlength{\temp@wc@height}{\heightof{$#1$}}%
  #1\hspace{-\temp@wc@width}%
  \raisebox{\temp@wc@height+2pt}[\heightof{$\widehat{#1}$}]%
     {\rotatebox[origin=c]{180}{\vbox to 0pt{\hbox{$\widehat{\hphantom{#1}}$}}}}%
}
\makeatother

%----------------------------------------------------------------------%

\begin{document}

\begin{abstract}
We prove a quantum-ergodicity theorem on large graphs, for eigenfunctions of Schr\"odinger operators in a very general setting. We consider a sequence of finite graphs endowed with discrete Schr\"odinger operators, assumed to have a local weak limit. We assume that our graphs have few short loops, in other words that the limit model is a random rooted {\em{tree}} endowed with a random discrete Schr\"odinger operator. We show that absolutely continuous spectrum for the infinite model, reinforced by a good control of the moments of the Green function, imply
``quantum ergodicity'', a form of spatial delocalization for eigenfunctions of the finite graphs approximating the tree. This roughly says that the eigenfunctions become equidistributed in phase space. Our result applies in particular to graphs converging to the Anderson model on a regular tree, in the r\'egime of extended states studied by Klein and Aizenman--Warzel.

%Note that we test the equidistribution of eigenfunctions by measuring some {\em a priori given} macroscopic observable. Thus our results are not incompatible with the numerical results of \cite{alts1, alts2} for the Anderson model on random regular graphs, that seem to indicate that each eigenfunction is supported on a relatively small portion of phase space.
\end{abstract}

\maketitle

\section{Introduction}          \label{sec:introd}

\subsection{The problem}
Consider a very large, but finite, graph $G=(V, E)$. Are the eigenfunctions of its adjacency matrix {\em{localized}}, or {\em{delocalized}}~? These words are used in a variety of contexts, with several different meanings.

For discrete Schr\"odinger operators on infinite graphs (e.g. for the celebrated Anderson model describing the metal-insulator transition), localization can be understood in a spectral, spatial or dynamical sense. Given an interval $I\subset \R$, one can consider
\begin{itemize}
\item \emph{spectral localization :} pure point spectrum in $I$,
\item \emph{exponential localization :} the corresponding eigenfunctions decay exponentially,
\item \emph{dynamical localization :} an initial state with energy in $I$ which is localized in a bounded domain essentially stays in this domain as time goes on.
\end{itemize}

On the opposite, delocalization may be understood at different levels :
\begin{itemize}
\item \emph{spectral delocalization :} purely absolutely continuous spectrum in $I$,
%\item \emph{spatial delocalization :} the corresponding (generalized) eigenfunctions do not concentrate on small regions. Ideally, they are uniformly distributed,
\item \emph{ballistic transport :} wave packets with energies in $I$ spread on the lattice at a specific (ideally, linear) rate as time goes on.
\end{itemize}
In this paper we want to discuss a notion of \emph{spatial delocalization}. Since the wavefunctions corresponding to absolutely continuous spectrum are not square summable, a natural interpretation of spatial delocalization is to consider a sequence of growing ``boxes'' or finite graphs $(G_N)$ approximating the infinite system in some sense, and ask if the eigenfunctions on $(G_N)$ become delocalized as $N\to \infty$. Can they concentrate on small regions, or, on the opposite, are they uniformly distributed over $(G_N)$~? Large, finite graphs are also a subject of interest on their own. Actually, an infinite system is often an idealized version of a large finite one.

Localization/delocalization of eigenfunctions is believed to bear some relation with {\em{spectral statistics}}~: localization is supposedly associated with Poissonian spectral statistics, whereas delocalization should be associated with Random Matrix statistics (GOE/GUE). In the field of quantum chaos, the former notion is often associated with {\em{integrable dynamics}} and the latter with {\em{chaotic dynamics}} \cite{BT, BGS1, BGS2}. However, specific examples show that the relation is not so straightforward \cite{SarnakSchur, SarnakPoisson, Mark} Understanding how far one can push these ideas is one amongst many reasons for studying models of large graphs \cite{Keating, Smi07, Smi10}.

Recently, the question of delocalization of eigenfunctions of large matrices or large graphs has been a subject of intense activity. Let us mention several ways of testing delocalization that have been used. Let $M_N$ be a large symmetric matrix of size $N\times N$, and let $(\psi_j)_{j=1}^N$ be an orthonormal basis of eigenfunctions. The eigenfunction $\psi_j$ defines a probability measure
$\sum_{x=1}^N|\psi_j(x)|^2\delta_x$. The goal is to compare this probability measure with the uniform measure, which puts mass $1/N$ on each point.
\begin{itemize}
\item \emph{$\ell^\infty$ norms~:} Can we have a pointwise upper bound on $|\psi_j(x)|$, in other words, is $\Vert\psi_j\Vert_\infty$ small, and how small compared with $1/{\sqrt{N}}$~?
\item \emph{$\ell^p$ norms:} Can we compare $\Vert\psi_j\Vert_p$ with $N^{1/p-1/2}$~? In \cite{Alt1}, a state $\psi_j$ is called {\emph{non-ergodic}} (and {\emph{multi-fractal}}) if $\Vert\psi_j\Vert_p$ behaves like
$N^{f(p)}$ with $f(p)\not=1/p-1/2$. Related criteria appear in \cite{ASW}.
\item \emph{Scarring~: } Can we have full concentration ($\sum_{x\in \Lambda}|\psi_j(x)|^2\geq 1-\eps$) or partial concentration ($\sum_{x\in \Lambda}|\psi_j(x)|^2\geq \eps$) with $\Lambda$ a set of ``small'' cardinality~? We borrow the term ``scarring'' from the term used in the theory of quantum chaos \cite{SarnakSchur}.
\item \emph{Quantum ergodicity~:} Given a function $a: \{1, \ldots, N\}\To \IC$, can we compare $\sum_{x}a(x)|\psi_j(x)|^2$ with $\frac1N\sum_x a(x)$~? This criterion, borrowed again from quantum chaos, was applied to discrete regular graphs in \cite{ALM, A}. Quantum ergodicity means that the two averages are close for {\emph{most}} $j$. If they are close for {\emph{all}} $j$, one speaks of \emph{quantum unique ergodicity}.
\end{itemize}
As was demonstrated in a recent series of papers, adding some randomness may allow to settle the problem completely. For instance {\em{almost sure}} optimal $\ell^\infty$-bounds and quantum unique ergodicity for various models of {\emph{random}} matrices and {\emph{random}} graphs, such as Wigner matrices, sparse Erd\"os-R\'enyi graphs, random regular graphs of slowly increasing or bounded degrees were obtained in \cite{ESY09, ESY09-2, BourgadeYau13, EKYY, BKY, BHKY, BHY}. The invariance of the probability distribution under certain elementary transformations plays an important role. The completely different point of view that we adopt is to consider deterministic graphs and to prove delocalization as resulting directly from the geometry of the graphs. Up to now, in this deterministic setting, only eigenfunctions of the adjacency matrix of regular graphs have been treated, taking advantage of the completely explicit Fourier analysis on regular trees. The papers \cite{ALM, BLML, A} give various proofs of quantum ergodicity; the paper \cite{BL} proves the absence of scarring on sets of cardinality $N^{1-\eps}$ and also contains (although not stated) a logarithmic upper bound on the $\ell^\infty$ norms.

The aim of this paper is to prove a \emph{quantum ergodicity theorem} for eigenfunctions of discrete Schr\"odinger operators on quite general large graphs. As we will see, a particularly interesting point of our result is that it gives a direct relation between {\emph{spectral delocalization}} of infinite systems and  {\emph{spatial delocalization}} of large finite system. Our result may be summarized as follows (with proper additional assumptions to be described later)~:

\medskip

{ \emph{``If a large finite system is close (in the Benjamini-Schramm topology) to an infinite system having purely absolutely continuous spectrum in an interval $I$, then the eigenfunctions (with eigenvalues lying in $I$) of the finite system satisfy quantum ergodicity.''}}

%Roughly speaking, the quantum ergodicity property means that most eigenfunctions get evenly distributed in phase space when the graph is sufficiently large; a form of \emph{delocalization}. We will assume that the finite graphs approximate a tree in the Benjamini--Schramm sense, and we will prove that {\em purely absolutely continuous} spectrum for the limiting Schr\"odinger operator on the tree, reinforced by a good control of the moments of the Green function, imply quantum ergodicity for the finite size approximations.

\subsection{The results}       \label{sec:results}
Consider a sequence of connected graphs without self-loops and multiple edges $(G_N)_{N \in \N}$. We assume each vertex has at least $3$ neighbours. It will be convenient to write $G_N$ as a quotient of a tree ${\widetilde{G_N}}$ by a group of automorphisms $\Gamma_N$, that is, $G_N =\Gamma_N\backslash \tilGN$, where $\Gamma_N$ acts freely on the vertices of $\tilGN$, i.e. given $v \in\tilGN$, $\gamma_1 v = \gamma_2 v$ implies $\gamma_1=\gamma_2$. In other words, 
${\widetilde{G_N}}$ is the ``universal cover'' of $G_N$.
We will work under the assumption that the degree of $\tilGN$ is everywhere smaller than some fixed $D$. 

%que se passe-t-il s'il y a des feuilles~?

We denote by $\widetilde{V_N}$ and $\widetilde{E_N}$ the set of vertices and edges of $\tilGN$, respectively.
We denote by $V_N$ and $E_N$ the vertices and edges of $G_N$, respectively. We assume $|V_N| = N$ and work in the limit $N \To \infty$.

Define the adjacency operator $\widetilde{\mathcal{A}}_N:\IC^{\tilGN}\to\IC^{\tilGN}$ by
\[
(\widetilde{\mathcal{A}}_Nf)(v) = \sum_{w \sim v} f(w) \, ,
\]
where $v \sim w$ means $v$ and $w$ are nearest neighbours.
The operator $ \widetilde{\mathcal{A}}_N$
is bounded on $\ell^2(\tilGN)$. It also preserves the space of $\Gamma_N$-invariant functions on $\widetilde{V_N}$, in other words it defines an operator on $\ell^2(V_N)$, that we denote by $\cA_N$
(we will drop the index $N$ and write $ \widetilde{\mathcal{A}}, \cA$ when no confusion may arise).
Consider a bounded function $\widetilde{W_N} :\widetilde{V_N}\To \IR$ such that $\tilWN (\gamma\cdot v)=\tilWN(v)$ for all $\gamma\in \Gamma_N$. The operator of multiplication by $  \tilWN$
is bounded on $\ell^2(\tilGN)$; it also preserves the space of $\Gamma_N$-invariant functions on $\widetilde{V_N}$, thus it defines an operator on $\ell^2(V_N)$, that we denote by $W_N$. We define the discrete Schr\"odinger operators
$\widetilde{H}_N=\widetilde{\mathcal{A}}_N+ \widetilde{W_N}$ and $H_N=\cA_N + W_N$. The central object of our study are the eigenfunctions of $H_N$, and their behaviour (localized/delocalized) as $N\To +\infty$. The fact that  $\Gamma_N$ acts freely implies that $H_N$ is symmetric (self-adjoint) on $\ell^2(V_N)$.

For comfort, we will always work under the assumption that $W_N$ takes values in some fixed interval $[-A, A]$. This implies that the spectrum of all operators we will encounter is contained in some fixed interval $I_0=[-A-D, A+D]$.

We define the Laplacian $P_N:\C^{V_N}\to\C^{V_N}$ by
\begin{equation}\label{e:lapl}
(P_Nf)(x) = \frac{1}{d_N(x)}\sum_{y\sim x} f(y) \, ,
\end{equation}
where $d_N(x)$ stands for the number of neighbours of $x$. If we introduce the positive measure on $V_N$ assigning to $x$ the weight $d_N(x)$, then $P_N$ is self-adjoint on $\ell^2(V_N, d_N)$.

We shall assume the following conditions on our sequence of graphs:

\medskip

\textbf{(EXP)} The sequence $(G_N)$ forms an expander family. By this we mean that the Laplacian $P_N$ has a uniform spectral gap in $\ell^2(V_N, d_N)$. More precisely, the eigenvalue $1$ of $P_N$ is simple, and the spectrum of $P_N$ is contained in $[-1+\beta, 1-\beta]\cup \{1\}$, where $\beta>0$ is independent of $N$.

\medskip

Note that $1$ is always an eigenvalue, corresponding to constant functions. Our assumption implies in particular that each $G_N$ is connected and non-bipartite. 
It is well-known that a uniform spectral gap for $P_N$ is equivalent to a Cheeger constant bounded away from $0$ (see for instance \cite{Diac91}, \S 3). 

Our second assumption is that $(G_N)$ has few short loops:

\medskip
{\bf{(BST)}}
For all $r>0$,
\[
\lim_{N \to \infty} \frac{|\{x \in V_N : \rho_{G_N}(x)<r\}|}{N} = 0 \, ,
\]
where $\rho_{G_N}(x)$ is the \emph{injectivity radius} at $x$, i.e. the largest $\rho$ such that the ball $B_{G_N}(x,\rho)$ is a tree. 

\medskip

The general theory of Benjamini-Schramm convergence (or local weak convergence), briefly recalled in Appendix~\ref{s:BSCT}, allows us to assign a limit object to the sequence $(G_N,W_N)$, which is a probability distribution carried on \emph{trees}. More precisely, up to passing to a subsequence, assumption \textbf{(BST)} above is equivalent to the following assumption.

\medskip
 
\textbf{(BSCT)} The sequence $(G_N, W_N)$ has a local weak limit $\IP$ which is concentrated on the set of (isomorphism classes of) coloured rooted \emph{trees}, denoted $\mathscr{T}_{\ast}^{D,A}$.

\medskip

Assumption \textbf{(BSCT)} says that $(G_N,W_N)$ converges in a distributional sense to a random system of rooted trees $\{[\mathcal{T},o]\}$, endowed with a map $\cW :\mathcal{T}\To \IR$. More precisely, the empirical measure of $(G_N,W_N)$, defined by choosing a root $x\in V_N$ uniformly at random, converges weakly to a probability measure $\prob$ concentrated on trees.

If $[\mathcal{T},o,\mathcal{W}]\in \mathscr{T}_{\ast}^{D,A}$ and $\cA$ is the adjacency matrix of $\cT$, we denote by $\cH=\cA+\cW$ the limiting random Schr\"odinger operator, which is self-adjoint on $\ell^2(\cT)$.

Call $(\lambda^{(N)}_j)_{j=1}^N$ the eigenvalues of $H_N$ on $\ell^2(V_N)$.
Assumption \textbf{(BSCT)} implies the convergence of the empirical law of eigenvalues~: for any continuous $\chi:\IR\To \IR$, we have
\begin{equation}\label{e:IDS1}
\frac1N\sum_{j=1}^N\chi(\lambda^{(N)}_j)\Lim_{N\To +\infty}  \IE\left(\la \delta_o, \chi(\cH)\delta_o\ra\right)=:\rho(\chi) \, ,
\end{equation}
see Remark~\ref{rem:IDS1}. Here $\expect$ is the expectation with respect to $\prob$, that is,
\[
\expect(f) = \int_{\mathscr{T}_{\ast}^{D,A}} f([\mathcal{T},o,\mathcal{W}])\,\dd\prob([\mathcal{T},o,\mathcal{W}]) \, .
\]
The measure $\rho$ is called the \emph{integrated density of states} in the theory of random Schr\"odinger operators.

We need some notation for our last assumption. Let $[\mathcal{T},o,\mathcal{W}]\in \mathscr{T}_{\ast}^{D,A}$. Given $x, y\in \cT$, and $\gamma\in \IC\setminus \IR$, we introduce the Green function
\[
\cG^{\gamma}(x, y)=\la \delta_x, (\cH-\gamma)^{-1}\delta_y\ra_{\ell^2(\cT)} \, .
\]
Given $v,w \in \cT$ with $v \sim w$, we denote by ${\cT}^{(v|w)}$ the tree obtained by removing from the tree ${\cT}$ the branch emanating from $v$ that passes through $w$. We define the restriction $\mathcal{H}^{(v|w)}(u,u') = \mathcal{H}(u,u')$ if $u,u'\in \mathcal{T}^{(v|w)}$ and zero otherwise. The corresponding Green function is denoted by $\cG^{(v|w)}(\cdot, \cdot;\gamma)$. We then put $\hat{\zeta}_w^{\gamma}(v) := -\cG^{(v|w)}(v,v;\gamma)$.

\medskip

\textbf{(Green)} There is a non-empty open set $I_1$, such that for all $s>0$ we have
\[
\sup_{\lambda\in I_1, \eta_0\in(0,1)}\IE\left(\sum_{y : y\sim o} |\Im \hat{\zeta}^{\lambda+i\eta_0}_o(y)|^{-s}\right) < \infty \, .
\]

To understand \textbf{(Green)}, define the (rooted) spectral measure of $[\mathcal{T},o,\mathcal{W}]\in \mathscr{T}_{\ast}^{D,A}$ by
\begin{equation}         \label{e:muo}
\mu_o(J) = \langle \delta_o, \chi_J(\mathcal{H})\delta_o\rangle \qquad \text{for Borel } J \subseteq \R \, .
\end{equation}
Assumption \textbf{(Green)} implies that $\sup_{\lambda\in I_1,\eta_0>0}\expect(|\mathcal{G}^{\gamma}(o,o)|^2)<\infty$; see Remark~\ref{rem:IDS2}. As shown in \cite{Klein}, this implies that for $\prob$-a.e. $[\mathcal{T},o,\mathcal{W}]\in \mathscr{T}_{\ast}^{D,A}$, the spectral measure $\mu_o$ is absolutely continuous in $I_1$, with density $\frac{1}\pi \Im \cG^{\lambda+i0}(o, o)$. Hence, \textbf{(Green)} implies that $\prob$-a.e. operator $\mathcal{H}$ has purely absolutely continuous spectrum in $I_1$. This is a natural assumption since our aim is to prove delocalization properties of eigenfunctions.

 % \begin{rem} An example of a measure $\IP$ for which (Green) is satisfied (and our main motivation for writing this paper) is the case of the Anderson model on a $(q+1)$-regular tree. More precisely, $\cT$ is the (deterministic) rooted $(q+1)$-regular tree $\cT=\IT_q$, with a random colouring $\cW : V(\IT_q)\To \IR$ such that the random variables $\cW(v), v\in V(\IT_q)$ are i.i.d. The results of Klein, Froese--Hassler--Spitzer, Aizenman--Warzel show that...

% \end{rem}

Now let $(\psi^{(N)}_j)_{j=1}^N$ be an orthonormal basis of $\ell^2(V_N)$ consisting of eigenfunctions of $H_N$. Pick $j\in \{1,\dots, N\}$. The problem of quantum ergodicity is to understand if the probability measure $\sum_{x\in V_N}| \psi^{(N)}_j(x)|^2\delta_x$ on $V_N$
is ``localized'' (essentially carried by $o(N)$ vertices) or ``delocalized'' (ideally, close to the uniform measure on $V_N$, or maybe, to some other natural measure on $V_N$, comparable to the uniform measure). More generally, we want to know if the correlations $\overline{\psi^{(N)}_j(x)}\psi^{(N)}_j(y)$, for $x$ and $y\in V_N$ at some fixed distance, approach some limiting object.
From a mathematical point of view, the question was addressed in \cite{ALM, BLML} for eigenfunctions of the adjacency matrix of large deterministic {\em{regular}} graphs, and for the adjacency matrix of \emph{random} regular graphs or Erd\"os-R\'enyi graphs in the recent works \cite{EKYY, BKY, BHKY, BHY}. The main motivation of our paper is to extend the results of \cite{ALM} to disordered systems, that is, to non-regular graphs, possibly with a potential on the vertices or weights on the edges. This necessarily requires a different method from that of \cite{ALM}, that was specific to regular graphs. New methods to prove quantum ergodicity were already explored in \cite{A}. We insist on the fact that, contrary to \cite{EKYY, BKY, BHKY, BHY, Geisinger}, our sequence of graphs and potentials are deterministic. The results may in particular be applied to random graphs and/or random potentials, provided one knows that Assumptions \textbf{(EXP)}, \textbf{(BSCT)} and \textbf{(Green)} hold true for some realizations. We discuss the relation with existing work more extensively in Section \ref{s:other}.

Let us state the main abstract result; its concrete meaning will be explored afterwards. For $x, y\in \tilV_N$, and $\gamma\in \IC\setminus \IR$, we introduce the lifted Green function
\begin{equation}\label{e:tilg}\tilde g_N^\gamma(x, y)=\la \delta_x, (\tilH_N-\gamma)^{-1}\delta_y\ra_{\ell^2(\tilV_N)}.\end{equation}
Recall that we write $G_N$ as a quotient $\Gamma_N\backslash \tilG_N$ where $\tilG_N$ is a tree. We denote by $\cD_N$ a fundamental domain of the action of $\Gamma_N$ on the vertices of $\tilG_N$. Thus $\cD_N$ contains $N$ vertices of $\tilG_N$, each of them projecting to a distinct vertex of $G_N$.

Let $I_1$ be the open set of Assumption \textbf{(Green)}, and let us fix an interval $I$ (or finite union of intervals) such that $\bar I\subset I_1$.

\begin{thm} \label{thm:2}
Assume that the graphs $G_N$ and the potentials $W_N$ satisfy \textbf{\emph{(BSCT)}}, \textbf{\emph{(EXP)}} and \textbf{\emph{(Green)}}.

Call $(\lambda^{(N)}_j)_{j=1}^N$ the eigenvalues of the Schr\"odinger operator $H_N$ on $\ell^2(V_N)$, and let $(\psi^{(N)}_j)_{j=1}^N$ be a corresponding orthonormal eigenbasis.

For each $N$, let $a=a_N$ be a function on $V_N$ with $\sup_N\sup_{x\in V_N}|a_N(x)| \leq 1$. For $\gamma\in \C\setminus \IR$, define $ \la a\ra_{\gamma}=  \sum_{x\in V_N }a(x)\Phi_\gamma^N(\tilde{x},\tilde{x})$, where $\Phi_{\gamma}^N(\tilde{x},\tilde{x}) = \frac{\Im {\tilde g^{\gamma}_N}(\tilde x, \tilde x)}{\sum_{\tilde x\in \cD_N} \Im {\tilde g^{\gamma}_N}(\tilde x,\tilde x)}$. Then
\[
\lim_{\eta_0\downarrow 0}\lim_{N\to +\infty }\frac1{N}  \sum_{\lambda^{(N)}_j\in I} \left| \sum_{x\in V_N} a(x)|\psi^{(N)}_j(x)|^2 - \la a \ra_{\lambda^{(N)}_j+i\eta_0}
\right| =0 \, .
\]
\end{thm}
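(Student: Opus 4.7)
The plan is to follow the variance strategy for quantum ergodicity, adapted to the Benjamini--Schramm framework for graphs. By the Cauchy--Schwarz inequality and the uniform bound on $\#\{j:\lambda_j^{(N)}\in I\}/N$ implied by \eqref{e:IDS1}, it suffices to prove
\[
\lim_{\eta_0\downarrow 0}\limsup_{N\to\infty} V_N(\eta_0) = 0,\qquad V_N(\eta_0):=\frac{1}{N}\sum_{\lambda_j^{(N)}\in I}\bigl|\langle\psi_j^{(N)},K_N\psi_j^{(N)}\rangle-\langle a\rangle_{\lambda_j^{(N)}+i\eta_0}\bigr|^2,
\]
where $K_N$ is multiplication by $a_N$. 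Introduce the self-adjoint operator $\bar K_N:=f_{N,\eta_0}(H_N)$ with the real-valued function $f_{N,\eta_0}(\lambda):=\langle a\rangle_{\lambda+i\eta_0}$; then $\bar K_N\psi_j=\langle a\rangle_{\lambda_j+i\eta_0}\psi_j$, so $M_N:=K_N-\bar K_N$ is self-adjoint and the elementary bound $|\langle\psi_j,M_N\psi_j\rangle|^2\le\langle\psi_j,M_N^2\psi_j\rangle$ yields, for any smooth $\chi\ge\mathbf{1}_I$ supported in $I_1$,
\[
V_N(\eta_0)\le \frac{1}{N}\operatorname{Tr}\bigl[\chi(H_N)M_N^2\bigr].
\]

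Since $\bar K_N$ commutes with $\chi(H_N)$, expanding $M_N^2=K_N^2-K_N\bar K_N-\bar K_N K_N+\bar K_N^2$ produces three traces of the form $N^{-1}\operatorname{Tr}[\chi(H_N)K_N F_1(H_N)K_N F_2(H_N)]$ with $F_i\in\{1,f_{N,\eta_0}\}$. I would represent each smooth function of $H_N$ by a Helffer--Sj\"ostrand contour integral of its resolvent; each trace then becomes a double contour integral in $(\gamma,\gamma')$ whose integrand is a sum, over pairs $(x,y)$ with $x\in\cD_N$ and $y\in\widetilde V_N$, of products $a(x)a(y)\,\tilde g_N^{\gamma}(x,y)\,\tilde g_N^{\gamma'}(y,x)$, possibly weighted by the diagonal Green-function factors entering $f_{N,\eta_0}$. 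Assumption \textbf{(BSCT)}, together with the uniform moment control from \textbf{(Green)} needed to interchange the $N\to\infty$ limit with the contour integration, allows one to pass to the limit: the three traces converge to analogous expectations under the tree measure $\IP$, written in terms of products $\cG^\gamma(o,y)\,\cG^{\gamma'}(y,o)$ on the random rooted tree $[\cT,o,\cW]$. The specific definition of $\langle a\rangle_\gamma$ as a weighted average of the diagonal of $\Im\tilde g^\gamma$ is designed precisely so that the diagonal part $y=o$ cancels between the three limiting expressions, leaving only an off-diagonal bilinear form.

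It remains to show this off-diagonal limit vanishes as $\eta_0\downarrow 0$. On the tree, $\cG^\gamma(o,y)$ factors as a product of transfer coefficients built from the $\hat\zeta^\gamma$'s along the unique path from $o$ to $y$; using \textbf{(Green)} one then obtains geometric decay of $\IE\bigl[|\cG^\gamma(o,y)|^2\bigr]$ in the distance $d(o,y)$, uniformly in $\lambda\in I_1$ and $\eta_0\in(0,1)$. Combined with the absolute continuity of the spectrum in $I_1$ (which \textbf{(Green)} implies via \cite{Klein}) and a dominated-convergence argument in the contour integral, this forces the surviving off-diagonal quantity to tend to $0$ as $\eta_0\downarrow 0$. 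The main obstacle is the middle step: isolating the precise cancellation of diagonal contributions among the three traces while simultaneously passing to the $N\to\infty$ limit under the contour integral. This requires robust uniform-in-$N$ control of products of resolvents (supplied by \textbf{(Green)}) together with enough spatial spreading of the eigenfunctions of $H_N$ to rule out concentration phenomena not detectable from the local limit alone, which is where the expander assumption \textbf{(EXP)} must enter.
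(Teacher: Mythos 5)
Your proposal breaks down at the very first bound, and the failure is structural rather than technical.

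You write $V_N(\eta_0) \le \frac{1}{N}\operatorname{Tr}[\chi(H_N)M_N^2]$ via the Cauchy--Schwarz step $|\langle\psi_j, M_N\psi_j\rangle|^2 \le \langle\psi_j, M_N^2\psi_j\rangle$, and then plan to show the right-hand side vanishes. But this trace does \emph{not} vanish. Take the simplest case: $G_N$ regular, $W_N=0$, so $\langle a\rangle_\gamma = \frac1N\sum_x a(x) =: \bar a$ and $\bar K_N = \bar a\cdot I$, $M_N = K_N - \bar a\cdot I$. Then
\[
\frac{1}{N}\operatorname{Tr}[\chi(H_N)M_N^2] = \frac{1}{N}\sum_{x}\bigl(a(x)-\bar a\bigr)^2\,\chi(H_N)(x,x),
\]
which under \textbf{(BSCT)} converges (for $a$ independent of $x$'s neighbourhood) to $\rho(\chi)$ times the empirical variance of $a$, a strictly positive quantity for any nonconstant observable. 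There is nothing left to cancel among your three contour-integral terms, because the loss already occurred in the Cauchy--Schwarz step: $\langle\psi_j,M_N^2\psi_j\rangle\approx\|M_N\psi_j\|^2$ is generically of order $1$, while quantum ergodicity asserts that the diagonal elements $\langle\psi_j,M_N\psi_j\rangle$ are small — a much stronger statement, not recoverable from $\|M_N\psi_j\|$ alone. Your invocation of \textbf{(EXP)} at the end cannot repair this, since the bound is already fixed before \textbf{(EXP)} is used.

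What is missing is the \emph{invariance/averaging} step that makes the variance method work for quantum ergodicity: one must exploit the eigenvalue equation to replace $M_N$ by a version averaged under some dynamics, and only the averaged operator has a small Hilbert--Schmidt-type norm. This is Step 2 in the paper's outline. The paper replaces $\psi_j$ by (quasi-)eigenfunctions $f_j, f_j^*$ of the non-backtracking operator $\cB\zeta^\gamma$ (Section \ref{s:nb}), proves that the resulting non-backtracking quantum variance is invariant under the transfer operators $\mathcal{R}^\gamma_{n,r}$ (Section \ref{s:inv}), averages over $r$, and then — after the ``classicalization'' in Section \ref{s:Mark} which produces a family of sub-Markov transition operators $\cS_\gamma$, $\cS_{u^\gamma}$ — uses the spectral gap for these operators furnished by \textbf{(EXP)} (Sections \ref{s:mixing}--\ref{sec:su}) to kill the averaged quantity. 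None of this is present in your plan, and without it the argument collapses at the first inequality.
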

Here, $\tilde{x}$ is a lift of $x\in V_N$ in the universal cover $\widetilde{V}_N$.

\begin{cor}\label{c:firstcor}
Under the same assumptions, for any $\eps>0$, we have
\[
\frac1N \#\left\{\lambda^{(N)}_j\in I : \left| \sum_{x\in V_N} a(x)|\psi^{(N)}_j(x)|^2 - \la a \ra_{\lambda^{(N)}_j+i\eta_0}
\right| >\eps \right\} \Lim_{N\to +\infty,\, \eta_0\downarrow 0} 0 \, .
\]
\end{cor}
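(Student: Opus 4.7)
The corollary is a routine consequence of Theorem~\ref{thm:2} via Markov's inequality, and the plan is essentially one line with a careful handling of the iterated limit.

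First, I would fix $\varepsilon > 0$ and for each $N$, $\eta_0 > 0$, introduce the set
\[
S_N(\eta_0,\varepsilon) \defeq \left\{ j : \lambda^{(N)}_j \in I,\ \Bigl| \sum_{x\in V_N} a(x)|\psi^{(N)}_j(x)|^2 - \la a \ra_{\lambda^{(N)}_j+i\eta_0} \Bigr| > \varepsilon \right\} .
\]
Since each term of $S_N(\eta_0,\varepsilon)$ contributes at least $\varepsilon$ to the sum in Theorem~\ref{thm:2}, the Markov / Chebyshev inequality gives the pointwise bound
\[
\frac{1}{N}\, \# S_N(\eta_0,\varepsilon) \;\leq\; \frac{1}{\varepsilon}\cdot \frac{1}{N} \sum_{\lambda^{(N)}_j \in I} \Bigl| \sum_{x\in V_N} a(x)|\psi^{(N)}_j(x)|^2 - \la a \ra_{\lambda^{(N)}_j+i\eta_0} \Bigr| .
\]

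Next I would take the iterated limit $\lim_{\eta_0 \downarrow 0} \lim_{N\to\infty}$ on both sides. By Theorem~\ref{thm:2} the right-hand side tends to $0$, so the same holds for the left-hand side, which is exactly the claim of Corollary~\ref{c:firstcor}.

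The only point worth checking is that the order of limits in the statement of the corollary matches that of Theorem~\ref{thm:2}: the notation $\Lim_{N\to+\infty,\,\eta_0\downarrow 0}$ has to be read as the iterated limit ``first $N\to\infty$, then $\eta_0\downarrow 0$'', which is indeed the order appearing in Theorem~\ref{thm:2}. There is no genuine obstacle here; the argument is the standard passage from an averaged $\ell^1$-estimate to a ``quantum variance'' estimate on the proportion of bad eigenvalues, and the bound is uniform in the choice of observable $a$ subject to $\|a\|_\infty \leq 1$, so the corollary follows directly without any additional input.
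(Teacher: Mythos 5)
Your proof is correct and is exactly the argument the paper leaves implicit (it gives no separate proof for Corollary~\ref{c:firstcor}, treating it as an immediate Chebyshev-type consequence of Theorem~\ref{thm:2}). The Markov-inequality bound and the remark on the order of the iterated limit are the only points that need to be said, and you have said both.
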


More generally, we have the following result on eigenfunction correlators, which says that $\overline{\psi_j(x)}\psi_j(y)$ ``approaches'' the function $\Phi_{\lambda_j+i0}^N(\tilde{x},\tilde{y})$ defined in \eqref{e:Klambda}. For technical reasons we have to assume the $(\psi_j)$ are real-valued. More precisely, we need $\overline{\psi_j(x)}\psi_j(y)$ to be real for any $j=1,\dots,N$ and $x,y\in V_N$ with $x\sim y$.

\begin{thm} \label{thm:1} 
Assume that $(G_N, W_N)$ satisfies \textbf{\emph{(BSCT)}}, \textbf{\emph{(EXP)}} and \textbf{\emph{(Green)}}.

Call $(\lambda^{(N)}_j)_{j=1}^N$ the eigenvalues of $H_N$ on $\ell^2(V_N)$, and let $(\psi^{(N)}_j)_{j=1}^N$ be a corresponding orthonormal eigenbasis. Assume the $(\psi_j)_{j=1}^N$ are real-valued.

Fix $R\in \N$. For each $N$, let ${\mathbf K}={\mathbf K}_N$ be an operator on $\ell^2(V_N)$ whose kernel $K=K_N : V_N \times V_N\To \C$ is such that $K(x, y)=0$ for $d(x, y)>R$ (in other words $K$ is supported at distance $\leq R$ from the diagonal). Assume that $\sup_N \sup_{x, y\in V_N}|K_N(x, y)| \leq 1.$

For $\gamma\in \C\setminus \IR$, define 
\begin{equation}\label{e:Klambda}
\la {\mathbf K}\ra_{\gamma}=  \sum_{\tilde x\in \cD_N, \tilde y\in \tilV_N}K(\tilde x, \tilde y) \Phi_{\gamma}^N(\tilde{x},\tilde{y}) \quad \text{where} \quad \Phi_{\gamma}^N(\tilde{x},\tilde{y}) = \frac{\Im {\tilde g^{\gamma}_N}(\tilde x, \tilde y)}{\sum_{\tilde x\in \cD_N} \Im {\tilde g^{\gamma}_N}(\tilde x,\tilde x)} \, .
\end{equation}
Then
\[
\lim_{\eta_0\downarrow 0}\lim_{N\to +\infty }\frac1{N} \sum_{\lambda^{(N)}_j\in I} \left| \la \psi^{(N)}_j, {\mathbf K}\psi^{(N)}_j\ra_{\ell^2(V_N)} - \la {\mathbf K}\ra_{\lambda^{(N)}_j+i\eta_0}
\right| =0 \, .
\]
\end{thm}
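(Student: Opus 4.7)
The approach I propose is the standard Shnirelman-style variance method, adapted to this graph setting by using Green-function moments in place of a pseudodifferential calculus. First, applying Cauchy--Schwarz with respect to the counting measure on $\{j:\lambda_j^{(N)}\in I\}$, the $L^{1}$-sum in the statement is bounded above by the square root of
\[
V_N(\eta_0):=\frac{1}{N}\sum_{\lambda_j^{(N)}\in I}\Bigl|\la\psi_j^{(N)},{\mathbf K}\psi_j^{(N)}\ra-\la{\mathbf K}\ra_{\lambda_j^{(N)}+\ii\eta_0}\Bigr|^{2},
\]
so it suffices to show $\lim_{\eta_0\downarrow 0}\lim_{N\to\infty}V_N(\eta_0)=0$. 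Viewing $\phi_{\eta_0}^{N}(\lambda):=\la{\mathbf K}\ra_{\lambda+\ii\eta_0}$ as a scalar function on the spectrum of $H_N$, the deviation equals $\la\psi_j,{\mathbf M}\psi_j\ra$ with ${\mathbf M}:={\mathbf K}-\phi_{\eta_0}^{N}(H_N)$. Since the $\psi_j$ are eigenfunctions, the Ces\`aro time-average ${\mathbf M}_T:=\frac{1}{T+1}\sum_{t=0}^{T}\eul^{\ii tH_N}{\mathbf M}\eul^{-\ii tH_N}$ has the same diagonal matrix elements as ${\mathbf M}$. Fixing a smooth cutoff $\chi$ with $\one_I\leq\chi\leq\one_{I_1}$, Cauchy--Schwarz and the spectral theorem produce the Hilbert--Schmidt bound
\[
V_N(\eta_0)\leq \frac{1}{N}\Tr\bigl(\chi(H_N)^{2}\,{\mathbf M}_T^{*}{\mathbf M}_T\bigr).
\]

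Next, I evaluate this trace as $N\to\infty$. Expanding ${\mathbf M}_T^{*}{\mathbf M}_T$ yields traces of products of ${\mathbf K}$, $\phi_{\eta_0}^{N}(H_N)$, $\chi(H_N)$ and propagators $\eul^{\pm\ii tH_N}$. Using a Helffer--Sj\"ostrand-type resolvent representation together with the method of images relating $G_N$ on $G_N$ to $\tilde g_N$ on $\widetilde G_N$, each such term can be rewritten as a multiple integral of entries of $\tilde g_N^{\lambda+\ii\eta}$ between vertices of $\widetilde G_N$ at bounded distance (the distance being controlled by the range $R$ of ${\mathbf K}$ and by $T$). Assumption \textbf{(BSCT)} then gives convergence of the normalised trace to an expectation with respect to the tree limit $[\cT,o,\cW]$ involving the tree Green function $\cG^{\lambda+\ii\eta_0}$; assumption \textbf{(Green)}, via Klein's analysis of neighbour moments of $\hat\zeta$, supplies the uniform $L^{s}$ bounds that make these integrals finite and regular as $\eta_0\downarrow 0$. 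By construction of $\phi_{\eta_0}^{N}$, the mean-field contribution to the cross terms cancels, and the remainder is a tree integral measuring the oscillation of $\eul^{\ii t\cH}{\mathbf K}\eul^{-\ii t\cH}$ around its spectral mean.

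I then send $T\to\infty$: since $\mu_o$ is absolutely continuous on $I_1$ by \textbf{(Green)}, a Riemann--Lebesgue argument applied to the spectral measure forces the Ces\`aro-averaged off-diagonal matrix elements of $\eul^{\ii t\cH}{\mathbf K}\eul^{-\ii t\cH}$ (restricted to the window $I$) to decay in $T$. Assumption \textbf{(EXP)} enters twice: it justifies, via a Combes--Thomas-type estimate coupled with the spectral gap, the replacement of $G_N$ by $\tilde g_N$ in the previous step (by killing contributions from the non-trivial deck transformations once $d(x,y)$ exceeds the typical injectivity radius used in \textbf{(BST)}), and it prevents pathological concentration in the counting measure on $\{\lambda_j\}$. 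A diagonal argument in $T=T(N,\eta_0)$ then combines the three limits in the correct order $N\to\infty$, $T\to\infty$, $\eta_0\downarrow 0$.

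The hardest part is the interplay of the two small scales $\eta_0$ and $1/T$: as $\eta_0$ shrinks, the predictor $\phi_{\eta_0}^{N}$ develops finer fluctuations, whereas the moments from \textbf{(Green)} are a priori uniform in $\eta_0$ only at the level of $\hat\zeta$, so the Hilbert--Schmidt bound must be refined to remain uniform in $\eta_0$ as $T\to\infty$. The reality assumption on $\psi_j$ enters precisely at the stage where the real off-diagonal matrix elements $\psi_j(x)\psi_j(y)$ must be matched against the real quantity $\Im\tilde g_N^{\gamma}(\tilde x,\tilde y)$; without it, one would need the additional control over $\IE\bigl[\sum_{y\sim o}\bigl|1-|\hat\zeta_o^{\gamma}(y)\hat\zeta_y^{\gamma}(o)|^{2}\bigr|^{-s}\bigr]$ mentioned after the statement to absorb a phase mismatch between $\psi_j(x)\psi_j(y)$ and the tree Green function.
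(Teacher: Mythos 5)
Your proposal is a clean rendering of the Shnirelman--Zelditch variance scheme, but it stalls at the crucial step of extracting smallness from the Ces\`aro time-average, and this is precisely where the paper departs from the continuum template.

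Concretely: the Hilbert--Schmidt bound $V_N(\eta_0)\lesssim \frac1N\Tr(\chi(H_N)^2 {\mathbf M}_T^*{\mathbf M}_T)$ is correct, and writing it out via the spectral theorem gives
\[
\frac1N\Tr\bigl(\chi(H_N)^2 {\mathbf M}_T^*{\mathbf M}_T\bigr)
=\frac1N\sum_{j,k}\chi(\lambda_j)^2\,\bigl|\langle\psi_j,{\mathbf M}\psi_k\rangle\bigr|^2\,
\Phi_T(\lambda_k-\lambda_j),
\]
where $\Phi_T$ is the Fej\'er kernel. As $T\to\infty$ this converges back to $\frac1N\sum_j\chi(\lambda_j)^2|\langle\psi_j,{\mathbf M}\psi_j\rangle|^2$, i.e.\ exactly the quantity you began with --- the off-diagonal $j\neq k$ terms are the ones that the Fej\'er kernel kills, while the diagonal terms (the variance) pass through unchanged. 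A Riemann--Lebesgue argument on the \emph{infinite} tree shows decay of off-diagonal matrix elements of $e^{\ii t\cH}{\mathbf K}e^{-\ii t\cH}$, but this says nothing about the diagonal and hence does not make the variance small. In the manifold proof the loop is closed by Egorov's theorem, which turns the time-averaged operator into the quantization of a \emph{classically} time-averaged symbol, so that the $T\to\infty$ smallness follows from the $L^2$ mean ergodic theorem for the geodesic flow. On a discrete graph there is no Egorov theorem, and no classical flow associated to $e^{\ii tH_N}$, so this reduction is unavailable. Your sketch also misassigns the role of \textbf{(EXP)}: it is not a Combes--Thomas input or a control on the spectral counting measure, but the source of the spectral gap (and hence uniform mixing rate) of a certain substochastic transfer operator that plays the role of the classical flow.

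This missing ``classical dynamical system'' is exactly what the paper supplies by a different route. It replaces $\psi_j$ by the non-backtracking vectors $f_j=\zeta^{\gamma_j}(\cdot)^{-1}\psi_j(\cdot)-\psi_j(\cdot)$ and $f_j^*=\iota f_j$, which are quasi-eigenvectors of a weighted non-backtracking operator $\mathcal{B}\zeta^{\gamma_j}$ (up to $O(\eta_0)$ errors), and defines a \emph{non-backtracking quantum variance} $\varnbi(K)=\frac1N\sum_j|\langle f_j^*,K_Bf_j\rangle|$. The invariance of this variance under the operators $\mathcal{R}^\gamma_{n,r}$ (your time-average step) then produces, after an exact algebraic computation, iterates of a genuinely stochastic transfer operator $\mathcal{S}_\gamma$ on non-backtracking paths, whose spectral gap follows from \textbf{(EXP)}. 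That is the Egorov-plus-ergodicity surrogate. The final Section~\ref{sec:retour} of the paper then shows how to control the ordinary variance $\vari$ in terms of several instances of $\varnbi$ and $\widetilde{\varnbi}$, and this is where the reality of $\psi_j$ enters; your remark on that point is broadly consistent with the paper, but the mechanism in the paper is the explicit identity behind Remark~\ref{rem:k=1} rather than a phase-matching for $\Im\tilde g_N^\gamma$. In short, the overall architecture of your argument is sound up to the Hilbert--Schmidt reduction, but the key dynamical input is missing and cannot be obtained by Fej\'er smoothing in time alone.
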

The ``kernel'' above is the matrix of ${\mathbf K}$ in the basis $(\delta_x)$, i.e. $K(x, y)=\la \delta_x, {\mathbf K}\delta_y\ra_{\ell^2(V_N)}$. To define \eqref{e:Klambda} properly, we lift $K$ to $\tilV_N \times \tilV_N$ by letting
\begin{equation}
K(\tilde x, \tilde y)=K(x, y)\bbbone_{dist_{\tilG_N}(\tilde x,  \tilde y)\leq R}\label{e:lift}
\end{equation}
if $x, y\in V_N=\Gamma_N\backslash \tilV_N$ are the projections of $\tilde x, \tilde y\in\tilV_N$.

If we know in addition that $\rho(\partial I_1)=0$, where $\rho$ is the integrated density of states measure \eqref{e:IDS1}, then our main theorems hold with $I$ replaced by $I_1$; see the end of Section~\ref{sec:retour}. Note that if \textbf{(Green)} holds on $\overline{I_1}$, then $\rho(\partial I_1)=0$.

Although we tend to skip it from the notation, the ``observables'' $\mathbf{K}$ and $a$ necessarily depend on $N$. On the other hand, they do not depend on $j$, the index of the eigenfunction (they are actually allowed to depend on $\lambda^{(N)}_j$ in the proof, but this dependence cannot be wild, it has to be at least continuous).
We interpret Corollary \ref{c:firstcor} as follows~: for a given observable $a$, the average $\sum_{x\in V_N} a(x)|\psi^{(N)}_j(x)|^2$ is close to $\la a \ra_{\lambda^{(N)}_j+i\eta_0}$ for most indices $j$. It follows similarly from Theorem \ref{thm:1} that
 $\sum_{x,y\in V_N}K(x, y)\overline{\psi^{(N)}_j(x)} \psi^{(N)}_j(y)$ is close to $\la {\mathbf K}\ra_{\lambda^{(N)}_j+i\eta_0}$ for most $j$. One of the subtleties of the result is that the indices $j$
 for which this holds may {\em{a priori}} depend on the observables $a$, $\mathbf{K}$. If we wanted to have a common set of indices $j$ that do the job for all observables (whose number is exponential in $N$), we would need to have an exponential rate of convergence in Theorems \ref{thm:2} and \ref{thm:1}. As is seen in the case of regular graphs and $W=0$ \cite{A}, our proof gives a rate that is at best a negative power of the girth, which is itself typically of order $\log N$. So, the result is far from showing that $|\psi^{(N)}_j(x)|^2$ is close to the uniform measure in total variation.
 
Note the presence of the extra parameter $\eta_0$, in comparison with the case of regular graphs \cite{ALM, A}. This is due to the fact that, generally speaking, the quantities $\la a \ra_{\lambda^{(N)}_j+i\eta_0}$ and $\la {\mathbf K}\ra_{\lambda^{(N)}_j+i\eta_0}$ are not necessarily bounded as $\eta_0\downarrow 0$ for fixed $N$. They will however stay bounded in the limits $N\to +\infty$ followed by $\eta_0\downarrow 0$ (as a result of \eqref{e:cool3} and \textbf{(Green)}).  
  
\subsection{Understanding the weighted averages.}          \label{sec:weightedav}
In order to clarify the relevance of Theorems \ref{thm:2} and \ref{thm:1}, we now investigate the meaning of the quantities $\la a \ra_{\lambda+i\eta_0}$ and $\langle \mathbf{K}\rangle_{\lambda_j+i\eta_0} $. Let us start with Theorem~\ref{thm:2}. A good illustration is to choose $a_N=\bbbone_{\Lambda_N}$, the characteristic function of a set $\Lambda_N\subset V_N$ of size $\approx \alpha N$ for some $\alpha \in (0,1)$, say $\alpha=\frac{1}{2}$.

In the special case where $(G_N)$ is regular and $H_N=\mathcal{A}_N$, and also for the anisotropic model treated in \cite{A}, the Green function $\tilde g^{\gamma}_N(\tilde{x},\tilde{y})$ does not depend on $N$, as it coincides with the limiting Green function $\mathcal{G}^{\gamma}(\tilde{x},\tilde{y})$. Moreover, $\mathcal{G}^{\gamma}(\tilde{x},\tilde{x})=\mathcal{G}^{\gamma}(o,o)$ for all $\tilde{x}\in \mathcal{D}_N$. It follows that $\langle \bbbone_{\Lambda_N} \rangle_{\lambda_j +i\eta_0}  = \sum_{x\in \Lambda_N} \frac{\mathcal{G}^{\lambda_j+i\eta_0}(o,o)}{N \mathcal{G}^{\lambda_j+i\eta_0}(o,o)}=\alpha$. So Corollary~\ref{c:firstcor} implies that $\| \bbbone_{\Lambda_N} \psi_j^{(N)}\|^2 \approx \alpha$ for most $\psi_j^{(N)}$. This shows that most $\psi_j^{(N)}$ are uniformly distributed, in the sense that if we consider any $\Lambda_N\subset V_N$ containing half the vertices, we find half the mass of $\|\psi_j^{(N)}\|^2$. As we show in the next subsection, such interpretation is also valid for the Anderson model.

For general models, we cannot assert that $\langle \bbbone_{\Lambda_N} \ra_{\lambda+i\eta_0} = \alpha$. Still, we prove in Section~\ref{sec:bsctaux} that there exists $c_{\alpha}>0$ such that for any $\Lambda_N\subset V_N$ with $|\Lambda_N| \geq \alpha N$, we have
\begin{equation}       \label{eq:weightedav}
\inf_{\eta_0\in(0,1)}\liminf_{N\To\infty} \inf_{\lambda\in I_1} \la \bbbone_{\Lambda_N} \ra_{\lambda+i\eta_0}\geq 2c_{\alpha} \, .
\end{equation}
Combined with Corollary \ref{c:firstcor}, this implies

\begin{cor}\label{c:secondcor}
For any $\alpha \in (0,1)$, there exists $c_{\alpha}>0$ such that for any $\Lambda_N\subset V_N$ with $|\Lambda_N| \geq \alpha N$, we have
\[
\frac1N \#\left\{\lambda^{(N)}_j\in I : \left\| \bbbone_{\Lambda_N} \psi_j^{(N)}\right\|^2  < c_{\alpha} \right\} \Lim_{N\To +\infty } 0 \, .
\]
\end{cor}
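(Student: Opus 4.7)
The strategy is to apply Corollary~\ref{c:firstcor} with the observable $a_N=\bbbone_{\Lambda_N}$, which satisfies $\sup_{x\in V_N}|a_N(x)|\leq 1$, and to combine it with the lower bound \eqref{eq:weightedav}. Note the key identity $\sum_{x\in V_N}a_N(x)|\psi^{(N)}_j(x)|^2 = \|\bbbone_{\Lambda_N}\psi^{(N)}_j\|^2$, so that quantum ergodicity for $a_N$ translates directly into information on the mass of $\psi_j^{(N)}$ carried by $\Lambda_N$.

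Let $c_\alpha>0$ be the constant furnished by \eqref{eq:weightedav} (proved in Section~\ref{sec:bsctaux}). Applying Corollary~\ref{c:firstcor} with $\varepsilon=c_\alpha/2$ yields
\[
\lim_{\eta_0\downarrow 0}\limsup_{N\to\infty}\frac1N\,\#\!\left\{\lambda_j^{(N)}\in I\,:\,\left|\,\|\bbbone_{\Lambda_N}\psi_j^{(N)}\|^2-\langle \bbbone_{\Lambda_N}\rangle_{\lambda_j^{(N)}+i\eta_0}\right|>\tfrac{c_\alpha}{2}\right\}=0.
\]
Given an arbitrary $\delta>0$, I first fix $\eta_0>0$ small enough so that the $\limsup_N$ of the above quantity is less than $\delta$. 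For this same $\eta_0$, the bound \eqref{eq:weightedav} gives, using $\bar I\subset I_1$, that $\langle\bbbone_{\Lambda_N}\rangle_{\lambda+i\eta_0}\geq 3c_\alpha/2$ for all $\lambda\in I$, uniformly in $\Lambda_N$ with $|\Lambda_N|\geq\alpha N$, provided $N$ is large enough. The triangle inequality then shows that whenever $\|\bbbone_{\Lambda_N}\psi^{(N)}_j\|^2<c_\alpha$ with $\lambda_j^{(N)}\in I$, one has
\[
\left|\langle\bbbone_{\Lambda_N}\rangle_{\lambda_j^{(N)}+i\eta_0}-\|\bbbone_{\Lambda_N}\psi^{(N)}_j\|^2\right|\geq \tfrac{3c_\alpha}{2}-c_\alpha=\tfrac{c_\alpha}{2}.
\]
Consequently, for $N$ large, the cardinality of $\{\lambda_j^{(N)}\in I:\|\bbbone_{\Lambda_N}\psi^{(N)}_j\|^2<c_\alpha\}$ is bounded above by that of the quantum-ergodicity exceptional set, giving $\limsup_{N\to\infty}\frac1N\#\{\cdots\}\leq \delta$. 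Since the event on the left is independent of $\eta_0$ and $\delta>0$ is arbitrary, the limit is $0$.

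The real work is not in this corollary itself but in the preparatory estimate \eqref{eq:weightedav}, which asserts that the weighted average $\langle\bbbone_{\Lambda_N}\rangle_{\lambda+i\eta_0}$ is bounded below by a positive constant depending only on $\alpha=|\Lambda_N|/N$. This requires controlling the ratio $\Im\tilde g^{\lambda+i\eta_0}_N(\tilde x,\tilde x)/\sum_{\tilde x\in\mathcal{D}_N}\Im\tilde g^{\lambda+i\eta_0}_N(\tilde x,\tilde x)$ uniformly from below on a set of proportion at least $\alpha$, using the Benjamini-Schramm convergence \textbf{(BSCT)} together with the $L^s$ moment bounds on the inverse of $\Im\hat\zeta^{\lambda+i\eta_0}_o$ provided by \textbf{(Green)}. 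In contrast, once \eqref{eq:weightedav} and Theorem~\ref{thm:2} are in hand, the corollary follows by the elementary two-parameter limit argument sketched above.
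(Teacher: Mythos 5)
Your proof is correct and is essentially the argument the paper leaves implicit: the paper states Corollary~\ref{c:secondcor} simply as a combination of Corollary~\ref{c:firstcor} and the lower bound \eqref{eq:weightedav}, and your triangle-inequality/double-limit argument is precisely the obvious way to carry out that combination. (Minor cosmetic point: \eqref{eq:weightedav} already gives the cleaner lower bound $2c_\alpha$ in the liminf, so quoting $3c_\alpha/2$ and then a $c_\alpha/2$ gap is slightly lossier than necessary, but harmless.)
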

Hence, while in the simple case we had $\|\bbbone_{\Lambda_N}\psi_j^{(N)}\|^2 \approx \alpha$ for most $\psi_j^{(N)}$, in the general case, we can still assert that $\|\bbbone_{\Lambda_N}\psi_j^{(N)}\|^2 \ge c_{\alpha}>0$ for most $\psi_j^{(N)}$. This indicates that our theorem can truly be interpreted as a delocalization theorem. The bad indices $j$ (for which $\| \bbbone_{\Lambda_N} \psi_j^{(N)}\|^2  <c_{\alpha}$) will a priori depend on $\Lambda_N$.
 
 We now turn to the general averages $\langle \mathbf{K}\rangle_{\gamma_j} $. Recall that $\Phi_{\gamma}^N(\tilde{x},\tilde{y}) = \frac{\Im {\tilde g^{\gamma}_N}(\tilde x, \tilde y)}{\sum_{\tilde x\in \cD_N} \Im {\tilde g^{\gamma}_N}(\tilde x,\tilde x)}$. We will show in Section~\ref{sec:bsctaux} that under assumption \textbf{(BSCT)}, we have
\begin{equation}        \label{e:conv2}
\frac1N \sum_{x\in V_N} \Im  {\tilde g^{\lambda+i\eta_0}_N}(x, x) \Lim_{N\To +\infty} \IE\left(\Im {\cG^{\lambda+i\eta_0}}(o, o)\right)
\end{equation}
uniformly in $\lambda\in I_0$. This already shows that $\Phi_{\gamma}^N(\tilde{x},\tilde{y})$ is of order $1/N$, since the denominator in its expression is of order $N$. We strengthen this observation by proving that for any continuous $F:\IR \to \IR$,
%and $\chi:\mathscr{G}_{\ast}^{D,A}\to \R$,
we have uniformly in $\lambda\in I_0$,
\begin{equation}        \label{eq:joint2}
\frac1N \sum_{x\in V_N}  \sum_{y, d(y, x)=k} F\left(N\Phi_{\lambda+i\eta_0}^N(\tilde{x},\tilde{y})\right)  \Lim_{N\To +\infty} \IE\left(   \sum_{v, d(v, o)=k} F\left(\frac{{\Im\cG^{\lambda+i\eta_0}}(o, v)}{\IE\left({\Im \cG^{\lambda+i\eta_0}}(o, o)\right)}\right)\right) \, .
\end{equation}
This says that the empirical distribution of $\left(N\Phi_{\gamma}^N(\tilde{x},\tilde{y})\right)$ (when $x$ is chosen uniformly at random in $V_N$ and $y$ is then chosen uniformly among the points at distance $k$ from $x$) converges to the law of $\left( \frac{{\Im\cG^{\gamma}}(o, v)}{\IE\left({\Im \cG^{\gamma}}(o, o)\right)}\right)$ ($v$ being chosen uniformly among the points at distance $k$ from the root $o$). This is a second way of saying that $\Phi_{\gamma}^N(\tilde{x},\tilde{y})$ is of order $1/N$~: when multiplied by $N$, it has a non-trivial limiting distribution.
  
\subsection{Case of the Anderson model\label{s:andersonthm}}
It is important to check that the models covered by the assumptions of our main theorems are not reduced to the case of the laplacian on regular graphs, already treated in \cite{ALM, BLML, A}. Here we consider the important case of the Anderson model on regular graphs, i.e. the laplacian with a random potential. We will show that, if the strength of the disorder is small enough, then the assumptions of Theorem~\ref{thm:2} and \ref{thm:1} are satisfied for almost every realization of the potential.

Let $\mathbb{T}_q$ be the $(q+1)$-regular tree.
Let $\nu$ be a probability measure on $\IR$, supported on a compact interval $[-A, A]$, and for every $\eps>0$ let $\nu_\eps$ be the image of $\nu$ under the homothety $x\mapsto \eps x$ ($\nu_\eps$ is now supported on $[-\eps A, \eps A]$).
Let $\Omega=\IR^{\mathbb{T}_q}$, and define $\mathbf{P}_\eps$ on $\Omega$ by $\mathbf{P}_\eps = \mathop \otimes_{v\in \mathbb{T}_q} \nu_\eps$. We shall denote by $\mathbf{E}_{\eps}$ the expectation with respect to $\mathbf{P}_\eps$. Given $\omega = (\omega_v) \in \Omega$, define $\cW^{\,\omega}(v) = \omega_v$ for $v\in \mathbb{T}_q$. Then the $\{\omega_v\}_{v\in \mathbb{T}_q}$ are i.i.d. random variables with common distribution $\nu_\eps$. Here $\eps\in \R$ is fixed and parametrizes the strength of the disorder.

Let $G_N=(V_N,E_N)$ be a (deterministic) sequence of $(q+1)$-regular graphs with $|V_N|=N$. This means that $\tilG_N= \mathbb{T}_q$ for all $N$. Let $\Omega_N = \IR^{V_N}$ and $\cP^\eps_N = \mathop\otimes_{x\in V_N} \nu_\eps$ on $\Omega_N$. We denote $\widetilde{\Omega} = \prod_{N\in \N} \Omega_N$ and let $\mathcal{P}_\eps$ be any probability measure on $\widetilde{\Omega}$ having $\cP^\eps_N$ as a marginal on the factor $\Omega_N$. Given $(\omega_N)_{N\in \N}\in \widetilde{\Omega}$, so that $\omega_N=(\omega_x)_{x\in V_N}\in \Omega_N$, we define $W^{\omega_N}(x) =  \omega_x$ for $x\in V_N$.

The results of this section are proved in a companion paper \cite{AS2}.

\begin{prp} \label{p:example}
Suppose $(G_N)$ satisfies \emph{\textbf{(BST)}}. Then
\emph{\textbf{(BSCT)}} holds for $\cP^\eps$-almost every realization of the potential. More precisely, for $\cP^{\eps}$-a.e. $(\omega_N)\in\widetilde{\Omega}$, the sequence $(G_N,W^{\omega_N})$ has a local weak limit $\prob_{\eps}$ which is concentrated on $\{[\mathbb{T}_q, o, \cW^{\,\omega}]:\omega\in\Omega\}$, where $o\in \mathbb{T}_q$ is fixed and arbitrary. The measure $\prob_{\eps}$ acts by taking the expectation w.r.t. $\mathbf{P}_{\eps}$, that is, if $D=q+1$, then
\[
\int_{\mathscr{G}_{\ast}^{D,\eps A}} f([G,v,W])\,\dd \prob_{\eps}([G,v,W]) = \int_{\Omega}f([\mathbb{T}_q,o,\cW^{\,\omega}])\,\dd\mathbf{P}_{\eps}(\omega) = \mathbf{E}_{\eps}[f([\mathbb{T}_q,o,\cW^{\,\omega}])]\,.
\]
\end{prp}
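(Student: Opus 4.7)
\medskip

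\textbf{Proof proposal.} The plan is to establish Benjamini-Schramm convergence by testing on a countable convergence-determining family of continuous, local functionals $f$ on $\mathscr{G}_\ast^{D,\eps A}$, then to upgrade in-expectation convergence to almost-sure convergence via a second moment computation and Borel-Cantelli. Fix such an $f$ depending only on the ball of radius $r$ around the root. The empirical measure of $(G_N, W^{\omega_N})$ applied to $f$ is
\[
X_N(f) := \frac{1}{N}\sum_{x\in V_N} f([G_N, x, W^{\omega_N}]).
\]
We first analyse $\mathbf{E}_\eps[X_N(f)]$. By assumption \textbf{(BST)}, if we let $V_N^r = \{x\in V_N : \rho_{G_N}(x) \geq r\}$, then $|V_N \setminus V_N^r|/N \to 0$. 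For $x\in V_N^r$, the ball $B_{G_N}(x,r)$ is isomorphic to the ball of radius $r$ in $\mathbb{T}_q$ rooted at $o$, so under $\cP^\eps_N$ the restriction of $W^{\omega_N}$ to $B_{G_N}(x,r)$ is an i.i.d.\ family with law $\nu_\eps$, exactly matching the restriction of $\cW^{\,\omega}$ to $B_{\mathbb{T}_q}(o,r)$ under $\mathbf{P}_\eps$. Since $f$ is local of radius $r$, this gives $\mathbf{E}_\eps[f([G_N, x, W^{\omega_N}])] = \mathbf{E}_\eps[f([\mathbb{T}_q, o, \cW^{\,\omega}])]$ for every $x\in V_N^r$; the contribution of the remaining vertices is $O(|V_N \setminus V_N^r|/N) \to 0$ since $f$ is bounded. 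Hence $\mathbf{E}_\eps[X_N(f)] \to \mathbf{E}_\eps[f([\mathbb{T}_q, o, \cW^{\,\omega}])]$.

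For the variance, observe that for $x_1, x_2 \in V_N$ with $d_{G_N}(x_1, x_2) > 2r$, the values $f([G_N, x_i, W^{\omega_N}])$ depend on disjoint sets of coordinates of $\omega_N$, hence are independent under $\cP^\eps_N$. Therefore
\[
\mathrm{Var}\bigl(X_N(f)\bigr) \leq \frac{\|f\|_\infty^2}{N^2}\, \#\{(x_1,x_2)\in V_N^2 : d_{G_N}(x_1,x_2)\leq 2r\} \leq \frac{\|f\|_\infty^2 (q+1)^{2r}}{N},
\]
using the uniform degree bound. Combining, $\cP^\eps_N$-probability of $|X_N(f) - \mathbf{E}_\eps[f]|>\delta$ is $O(N^{-1})$ for each $\delta>0$; passing to a subsequence $N_k = k^2$, Borel-Cantelli gives almost-sure convergence along $(N_k)$, and the deterministic estimate $|X_{N+1}(f) - X_N(f)| = O(1/N)$ (more precisely: comparing via the marginal $\cP^\eps_N$ coming from $\mathcal{P}_\eps$) yields convergence along the full sequence.

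Finally, the space $\mathscr{G}_\ast^{D,\eps A}$ with the local topology is Polish, so weak convergence is characterised by a countable family $\{f_m\}$ of bounded continuous local functionals (obtained by fixing a countable dense set of coloured rooted trees of each radius $r$ and using continuity in the colours, which live in the compact interval $[-\eps A, \eps A]$). Taking the intersection of the $\cP^\eps$-full-measure sets associated to each $f_m$ produces a single full-measure set on $\widetilde{\Omega}$ on which $X_N(f_m) \to \mathbf{E}_\eps[f_m([\mathbb{T}_q,o,\cW^{\,\omega}])]$ for every $m$; this is exactly \textbf{(BSCT)} with limit $\prob_\eps$ as described. The main subtlety to be careful about is the measurability and continuity of $f$ in the potential coordinates, which is handled because local functionals can be taken to depend continuously on the finitely many potential values seen inside $B(o,r)$, which lie in the compact set $[-\eps A, \eps A]$, and the map $\omega \mapsto [\mathbb{T}_q, o, \cW^{\,\omega}]$ is measurable onto the Polish space $\mathscr{T}_\ast^{D,\eps A}$.
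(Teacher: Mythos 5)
The paper itself does not include a proof of this proposition; it is deferred to the companion paper \cite{AS2}, so a line-by-line comparison is not possible here. On its own merits, your scheme (test against a countable convergence-determining family of local functionals, compute the expectation using \textbf{(BST)}, bound fluctuations, upgrade to almost-sure convergence) is the natural one and most of the steps are sound: the identification of $\mathbf{E}_\eps[f([G_N,x,W^{\omega_N}])]$ with $\mathbf{E}_\eps[f([\mathbb{T}_q,o,\cW^{\,\omega}])]$ for $x$ of injectivity radius $\ge r$ is correct, the independence of $f([G_N,x_1,\cdot])$ and $f([G_N,x_2,\cdot])$ for $d(x_1,x_2)>2r$ is correct, and the count $O(N)$ of close pairs using the degree bound is fine.

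There is, however, a genuine gap in the passage from the subsequence $N_k=k^2$ to the full sequence. You invoke a ``deterministic estimate $|X_{N+1}(f)-X_N(f)|=O(1/N)$'', but no such estimate holds and indeed it cannot: $G_N$ and $G_{N+1}$ are unrelated graphs on disjoint vertex sets, and the random potentials $\omega_N\in\Omega_N$ and $\omega_{N+1}\in\Omega_{N+1}$ are only tied together by the arbitrary coupling $\mathcal{P}_\eps$, which in the worst case is the product measure and gives no pathwise control whatsoever. So $X_{N+1}(f)$ and $X_N(f)$ can differ by an amount of order $1$. The interpolation argument is therefore vacuous, and since $\sum_N O(N^{-1})$ diverges, Chebyshev alone does not give a.s.\ convergence along the full sequence either. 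The standard fix is to strengthen the tail bound: either compute the fourth central moment of $X_N(f)$, which by the same locality argument is $O(N^{-2})$ (when you expand $\mathbf{E}[(X_N-\mathbf{E}X_N)^4]$ as a sum over quadruples, the only quadruples contributing are those in which each vertex is within $2r$ of some other, and there are $O(N^2)$ of these), and then $\sum_N \cP_N^{\eps}\{|X_N(f)-\mathbf{E}[X_N(f)]|>\delta\}\le\sum_N C_{r,\delta}N^{-2}<\infty$; or apply a bounded-differences (McDiarmid) inequality, noting that changing a single coordinate $\omega_x$ affects $X_N(f)$ by at most $2\|f\|_\infty |B_{G_N}(x,r)|/N\le C_r/N$, giving exponentially small tails. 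Either variant makes Borel--Cantelli applicable directly along the whole sequence $N$, with no subsequence and no interpolation. The rest of your argument -- in particular the construction of a countable separating family from clopen cylinder sets $\mathscr{C}_F$ and a countable dense set of continuous functions of the potential values in $[-\eps A,\eps A]^{|B_F(o,r)|}$ -- then goes through as you describe.
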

  
We make the following assumption on the random variables:

\medskip

\textbf{(POT)} The measure $\nu$ is H\"older continuous, i.e. there exist $C_{\nu}>0$ and $b\in (0,1]$ such that $\nu(I) \le C_{\nu} |I|^b$ for all bounded $I\subset \R$.

The following proposition is by no means trivial, it comes from the results of \cite{Klein, AW2}.
\begin{prp}  \label{p:AW}
Fix $0<\lambda_0<2\sqrt{q}$. There exists $\eps(\lambda_0)$ such that if $|\eps|<\eps(\lambda_0)$, then assumption \emph{\textbf{(Green)}} holds for the measure $\IP_{\eps}$ of Proposition~\ref{p:example} on $I_1= (-\lambda_0,\lambda_0)$.
\end{prp}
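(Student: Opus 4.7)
The plan is to reduce assumption \textbf{(Green)} to a statement about negative moments of the imaginary part of a single truncated Green function, and then invoke the work of Klein and Aizenman--Warzel to handle those moments at small disorder.

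Setting $\xi_w^{\gamma}(v) := -\hat{\zeta}_w^{\gamma}(v) = \mathcal{G}^{(v|w)}(v,v;\gamma)$, one has $\Im \xi > 0$ and, by Schur's complement formula on the tree,
\[
\xi_o^{\gamma}(y) = \frac{1}{\gamma - W(y) - \sum_{y' \sim y,\, y' \neq o} \xi_y^{\gamma}(y')} \, ,
\]
where, under the i.i.d.\ potential, the $q$ variables $\xi_y^{\gamma}(y')$ appearing on the right are independent copies of $\xi_o^{\gamma}(y)$, all independent of $W(y)$. By $(q+1)$-regularity and stationarity, all the $\hat{\zeta}^{\gamma}_o(y)$ for $y \sim o$ share the same law, so \textbf{(Green)} is equivalent to
\[
\sup_{\lambda \in I_1,\, \eta_0 \in (0,1)} \mathbf{E}_{\epsilon}\bigl[(\Im \xi^{\lambda+i\eta_0})^{-s}\bigr] < \infty \quad \text{for every } s > 0 \, .
\]

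For $\epsilon = 0$, the self-consistent equation reduces to a single scalar equation whose unique solution in the upper half-plane, $\xi_0^{\gamma}$, can be written down explicitly; for $\lambda_0 < 2\sqrt{q}$ one checks that $\Im \xi_0^{\lambda+i\eta_0} \geq c(\lambda_0) > 0$ uniformly in $\lambda \in [-\lambda_0,\lambda_0]$ and $\eta_0 \in [0,1]$. This reflects the fact that $\lambda$ lies in the interior of the absolutely continuous spectrum of the adjacency operator on $\mathbb{T}_q$.

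For $0 < |\epsilon| < \epsilon(\lambda_0)$, the idea is to show that the law of $\xi^{\gamma}$ is concentrated near $\xi_0^{\gamma}$ in a quantitative sense. This is precisely what Klein's hyperbolic-geometry contraction argument, together with its refinements by Aizenman--Warzel via fractional moment analysis, provides. Under \textbf{(POT)}, these yield tail estimates of the type $\mathbf{P}_{\epsilon}(\Im \xi^{\lambda+i\eta_0} \leq t) \leq C(\lambda_0,\epsilon)\, t^{\alpha}$ uniformly in $\lambda \in I_1$ and $\eta_0 \in (0,1)$; in fact the Aizenman--Warzel estimates are sub-exponential, which gives the displayed bound for every $s > 0$. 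Integrating by parts in the tail then produces the required uniform bound.

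The main obstacle lies in this last step: a naive iteration of the recursion, or a simple contraction estimate in the hyperbolic metric, yields at best finite positive moments of $|\xi|$ and negative moments of $\Im \xi$ up to some small threshold $s_0$. Capturing negative moments of \emph{every} order requires the full strength of the distributional-stability arguments developed by Aizenman and Warzel, for which the H\"older regularity of $\nu$ built into \textbf{(POT)} is essential (it is what prevents the denominator $\gamma - W(y) - S$ in the recursion from being anomalously small on a set of poorly controlled measure).
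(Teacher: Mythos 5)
The paper does not actually prove Proposition~\ref{p:AW}; it states flatly that ``the following proposition is by no means trivial, it comes from the results of \cite{Klein, AW2}'' and that ``the results of this section are proved in a companion paper \cite{AS2}.'' So there is no in-text proof to compare against; what can be assessed is whether your sketch reflects a viable route. Your first reduction is right: by $(q+1)$-regularity and stationarity of the i.i.d.\ potential, the $q+1$ truncated Green functions $\hat\zeta^\gamma_o(y)$ are equidistributed, so \textbf{(Green)} is indeed equivalent to $\sup_{\lambda\in I_1,\,\eta_0\in(0,1)}\mathbf{E}_\epsilon\bigl[(\Im\xi^{\lambda+i\eta_0})^{-s}\bigr]<\infty$ for every $s>0$. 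The identification of the unperturbed quantity $\xi_0^\gamma$ solving $q\xi^2-\gamma\xi+1=0$ (and the observation that $\Im\xi_0^{\lambda+i\eta_0}\geq c(\lambda_0)>0$ uniformly on $\{|\lambda|\leq\lambda_0,\ \eta_0\in[0,1]\}$ because $\lambda_0<2\sqrt{q}$) is also correct and is the reason the result should hold.

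Where your sketch is not on solid ground is in the last two paragraphs, and this is precisely the non-trivial part the authors defer to \cite{AS2}. You assert that Klein/Aizenman--Warzel give a uniform tail bound $\mathbf{P}_\epsilon(\Im\xi^{\lambda+i\eta_0}\leq t)\leq C\,t^\alpha$ and then ``in fact the Aizenman--Warzel estimates are sub-exponential,'' from which all negative moments follow by integration. A polynomial tail bound of that form would only give $\mathbf{E}[(\Im\xi)^{-s}]<\infty$ for $s<\alpha$, so the jump from there to ``every $s>0$'' requires either a genuine sub-exponential small-ball estimate or a bootstrapping scheme that upgrades the polynomial exponent, and neither is stated explicitly in \cite{Klein} or \cite{AW2}. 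Klein's Banach-space fixed-point argument controls the density of the law of $\xi$ with a weight that vanishes like a fixed power of $\Im\xi$ near the real axis; pushing this to all negative moments, uniformly down to $\eta_0\downarrow 0$ and over the whole interval $(-\lambda_0,\lambda_0)$, is exactly the quantitative work carried out in the companion paper \cite{AS2}. So your outline captures the right ingredients (stationarity, the explicit free fixed point, Klein's contraction, the H\"older assumption \textbf{(POT)} as the regularity input), but the claimed small-ball estimate is asserted rather than derived, and it is the crux of the proposition.
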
 
  
\begin{cor} If the graphs $G_N$ form an expander family and satisfy \emph{\textbf{(BST)}} and if the disorder $\eps$ is small enough, the conclusions of Theorems \ref{thm:2} and \ref{thm:1} hold true for $\cP_\eps$-a.e. realization $(\omega_N)\in \widetilde{\Omega}$, with $I_1=  (-\lambda_0,\lambda_0)$.
 \end{cor}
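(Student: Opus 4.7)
The plan is essentially bookkeeping: the corollary follows by verifying the three hypotheses of Theorems~\ref{thm:2} and \ref{thm:1} for $\mathcal{P}_\eps$-a.e.\ realization and then applying those theorems. There is no genuinely hard step; the substance is packaged inside Propositions~\ref{p:example} and \ref{p:AW}.

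First, assumption \textbf{(EXP)} is given outright in the hypothesis of the corollary, so nothing needs to be checked. Second, since the deterministic sequence $(G_N)$ is assumed to satisfy \textbf{(BST)}, Proposition~\ref{p:example} applies and yields a full-measure set $\widetilde{\Omega}_0\subset \widetilde{\Omega}$ (with $\mathcal{P}_\eps(\widetilde{\Omega}_0)=1$) such that for every $(\omega_N)\in\widetilde{\Omega}_0$, the sequence $(G_N, W^{\omega_N})$ has local weak limit $\IP_\eps$; that is, \textbf{(BSCT)} holds with limit measure exactly $\IP_\eps$.

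Third, provided $|\eps|<\eps(\lambda_0)$, Proposition~\ref{p:AW} asserts that $\IP_\eps$ satisfies \textbf{(Green)} on $I_1=(-\lambda_0,\lambda_0)$. The key observation here is that \textbf{(Green)} is a property of the (deterministic) limit measure $\IP_\eps$ alone, not of the particular realization $(\omega_N)$; therefore, once \textbf{(BSCT)} is secured on $\widetilde{\Omega}_0$ with this limit, \textbf{(Green)} is automatically in force for every $(\omega_N)\in\widetilde{\Omega}_0$. No second almost-sure restriction is needed, so the intersection does not shrink the full-measure set.

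Combining the three bullets, for every $(\omega_N)\in\widetilde{\Omega}_0$ the triple of assumptions \textbf{(EXP)}, \textbf{(BSCT)}, \textbf{(Green)} holds for $(G_N,W^{\omega_N})$ with the open set $I_1=(-\lambda_0,\lambda_0)$. Applying Theorems~\ref{thm:2} and \ref{thm:1} (for any interval $I$ with $\bar I\subset I_1$, or directly with $I_1$ if one invokes the remark that $\rho(\partial I_1)=0$ follows from \textbf{(Green)} on a slightly larger set) gives exactly the stated conclusions. The only point worth flagging is that Theorem~\ref{thm:1} requires the eigenfunctions to be real-valued, which is harmless since $H_N$ is real-symmetric, so a real orthonormal eigenbasis always exists.
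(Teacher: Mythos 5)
Your proof is correct and is the same straightforward bookkeeping the paper intends (the corollary is stated without proof, immediately after Propositions~\ref{p:example} and~\ref{p:AW}, precisely because it is obtained by combining them with the hypothesis \textbf{(EXP)}). Your observation that \textbf{(Green)} is a property of the deterministic limit measure $\IP_\eps$ alone—so no further almost-sure restriction is needed beyond the set from Proposition~\ref{p:example}—is exactly the right point, and flagging the real-valued eigenbasis requirement of Theorem~\ref{thm:1} is appropriate.
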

 
This gives a rich enough family of examples where the assumptions of Theorems \ref{thm:2} and \ref{thm:1} hold true. Thus the conclusions of the theorems hold for any observables $a_N, K_N$. If in addition $a_N$ or $K_N$ are independent of the disorder, some extra averaging takes place, and we may replace $\langle\mathbf{K}\rangle_{\lambda +i\eta_0} $ by a simpler average as follows.

\begin{thm}                    \label{t:thm3}
Assume that \textbf{\emph{(POT)}}, \textbf{\emph{(EXP)}} and \textbf{\emph{(BST)}} hold. Given $(\omega_N)\in \widetilde{\Omega}$, let $(\psi_i^{\omega_N})_{i=1}^N$ be an orthonormal basis of eigenfunctions of $H_N^{\omega}=\mathcal{A}_N+W^{\omega_N}$ in $\ell^2(V_N)$, with corresponding eigenvalues $(\lambda_i^{\omega_N})_{i=1}^N$.

Let $K_N:V_N\times V_N \to \C$, $\sup_N\sup_{x,y \in V_N} |K_N(x,y)| \le 1$, $K_N(x,y)=0$ if $d(x,y)>R$, and assume $K_N$ is independent of $(\omega_N)$. Fix $0<\lambda_0<2\sqrt{q}$. If $|\eps|<\eps(\lambda_0)$, we have for $\mathcal{P}_\eps$-a.e. $(\omega_N)$,
\[
\lim_{\eta_0\downarrow 0} \lim_{N \to \infty} \frac{1}{N} \sum_{\lambda_i^{\omega_N}\in (-\lambda_0,\lambda_0)} \big|\langle \psi_i^{\omega_N},K_N \psi_i^{\omega_N} \rangle - \langle K_N \rangle_{\lambda_i^{\omega_N}}^{\eta_0} \big|  = 0 \, ,
\]
where for $\gamma\in \IC\setminus \IR$
\begin{equation}\label{e:Klambda2}
\langle K \rangle_{\lambda}^{\eta_0} = \sum_{x,y \in V_N} K(\tilde{x},\tilde{y}) \widetilde{\Phi}_{\gamma}(\tilde{x},\tilde{y})  \quad \text{and} \quad \widetilde{\Phi}_{\gamma}(\tilde{x},\tilde{y}) = \frac{1}{N} \cdot \frac{\mathbf{E}_{\eps}[\Im \mathcal{G}^{\gamma}(\tilde{x},\tilde{y})]}{\mathbf{E}_{\eps}[\Im \mathcal{G}^{\gamma}(o,o)]} \, .
\end{equation}
\end{thm}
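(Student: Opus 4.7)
The plan is to deduce Theorem~\ref{t:thm3} from Theorem~\ref{thm:1} by establishing a self-averaging statement that lets one replace the random lifted Green function $\tilde g^{\gamma}_N(\tilde x,\tilde y)$ by its disorder expectation $\mathbf{E}_\eps[\mathcal{G}^{\gamma}(\tilde x,\tilde y)]$ inside the weighted average $\langle K_N\rangle_{\lambda_i+i\eta_0}$. First, I would verify that for $\mathcal{P}_\eps$-a.e. $(\omega_N)$, the deterministic sequence $(G_N,W^{\omega_N})$ satisfies the hypotheses of Theorem~\ref{thm:1}: \textbf{(EXP)} is standing, \textbf{(BSCT)} follows from Proposition~\ref{p:example}, and \textbf{(Green)} on $I_1=(-\lambda_0,\lambda_0)$ follows from Proposition~\ref{p:AW} provided $|\eps|<\eps(\lambda_0)$. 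Choosing the eigenbasis real-valued, Theorem~\ref{thm:1} then yields, for every finite union of intervals $I$ with $\overline I\subset(-\lambda_0,\lambda_0)$,
\[
\lim_{\eta_0\downarrow 0}\lim_{N\to\infty}\tfrac1N\sum_{\lambda_i^{\omega_N}\in I}\bigl|\langle\psi_i^{\omega_N},K_N\psi_i^{\omega_N}\rangle-\langle K_N\rangle_{\lambda_i+i\eta_0}\bigr|=0,
\]
and the continuity of the IDS $\rho_\eps$ ensured by \textbf{(POT)} lets me approximate $I=(-\lambda_0,\lambda_0)$ from inside.

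By the triangle inequality, it remains to prove, $\mathcal{P}_\eps$-a.s., that
\[
\lim_{\eta_0\downarrow 0}\lim_{N\to\infty}\tfrac1N\sum_{\lambda_i^{\omega_N}\in I}\bigl|\langle K_N\rangle_{\lambda_i+i\eta_0}-\langle K_N\rangle^{\eta_0}_{\lambda_i}\bigr|=0.
\]
Since the empirical spectral measure mass $\rho_N^{\mathrm{emp},\omega}(I)$ stays bounded, I would aim for the stronger uniform bound $\sup_{\lambda\in I}|\langle K_N\rangle_{\lambda+i\eta_0}-\langle K_N\rangle^{\eta_0}_\lambda|\to 0$ at each fixed $\eta_0>0$. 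Using \eqref{e:Klambda}--\eqref{e:Klambda2}, both quantities are ratios whose denominators are bounded below (the discrete one by the analogue of \eqref{eq:weightedav}, the tree one by \textbf{(Green)}), so the task reduces to two convergences uniform in $\lambda\in I$:
\[
(\textup{a})\ \tfrac1N\sum_{\tilde x\in\mathcal D_N}\Im\tilde g^{\lambda+i\eta_0}_N(\tilde x,\tilde x)\longrightarrow\mathbf{E}_\eps[\Im\mathcal{G}^{\lambda+i\eta_0}(o,o)],
\]
\[
(\textup{b})\ \tfrac1N\sum_{\tilde x\in\mathcal D_N,\tilde y}K(\tilde x,\tilde y)\bigl[\Im\tilde g^{\lambda+i\eta_0}_N(\tilde x,\tilde y)-\mathbf{E}_\eps[\Im\mathcal{G}^{\lambda+i\eta_0}(\tilde x,\tilde y)]\bigr]\longrightarrow 0.
\]
Claim (a) is essentially \eqref{e:conv2} and follows from \textbf{(BSCT)} together with continuity of the tree Green function.

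The main obstacle is claim (b): the weight $K(\tilde x,\tilde y)$ depends on the labeling of $V_N$ and is \emph{not} a local functional of $[G_N,x,W^{\omega_N}]$, so \textbf{(BSCT)} cannot be applied directly. Here the hypothesis that $K_N$ is independent of $\omega$ is decisive, and I would use a two-scale argument. For the mean, \textbf{(BST)} forces the injectivity radius $\rho_{G_N}(x)$ to tend to infinity for a fraction of vertices tending to one, so the lifted potential $\tilde W_N$ inside a ball around such a $\tilde x$ has exactly the same i.i.d.\ law as $\mathcal{W}^\omega$ near $o$; the fractional-moment bounds of \cite{Klein,AW2} underlying \textbf{(Green)} make $\tilde g^\gamma_N(\tilde x,\tilde y)$ essentially a functional of the potential in a fixed-size ball, and consequently $\mathbf{E}_\eps[\Im\tilde g^\gamma_N(\tilde x,\tilde y)]\approx\mathbf{E}_\eps[\Im\mathcal{G}^\gamma(\tilde x,\tilde y)]$ for typical $\tilde x$. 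For the fluctuations, taking the second moment of the left-hand side of (b) with respect to $\mathcal{P}_\eps$ and using independence of $K_N$ from $\omega$ yields a sum of covariances $\mathrm{Cov}_\eps(\Im\tilde g^\gamma_N(\tilde x,\tilde y),\Im\tilde g^\gamma_N(\tilde x',\tilde y'))$; the same exponential Green-function decay makes these summable, with \textbf{(EXP)} controlling the contribution of pairs whose $R$-neighbourhoods intersect, and gives a variance of order $o(1)$. Borel--Cantelli along a deterministic subsequence in $N$, combined with the analyticity of $\gamma\mapsto\langle K_N\rangle_\gamma$ on the upper half-plane to upgrade pointwise to uniform convergence in $\lambda$, finishes the argument at fixed $\eta_0>0$. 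The most delicate point is that the exponential decay rate of \cite{AW2} degrades as $\eta_0\downarrow 0$; since the $\eta_0$-limit is taken \emph{after} $N\to\infty$, I would carry out the variance analysis at fixed $\eta_0>0$ and exploit the $\eta_0$-uniform moment bound in \textbf{(Green)} only at the very end.
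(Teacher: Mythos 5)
The paper does not actually contain a proof of Theorem~\ref{t:thm3}: after Proposition~\ref{p:example} the text states ``The results of this section are proved in a companion paper \cite{AS2},'' and this deferral covers Theorem~\ref{t:thm3} as well. So there is no in-paper argument to compare your sketch against; I can only assess it on its own terms.

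Your strategy --- verify the hypotheses of Theorem~\ref{thm:1} via Propositions~\ref{p:example} and \ref{p:AW}, then show almost-sure self-averaging of the empirical weighted average $\langle K_N\rangle_{\lambda+i\eta_0}$ onto the disorder-averaged $\langle K_N\rangle^{\eta_0}_\lambda$ --- is the natural route, and the reduction via \eqref{e:Klambda}--\eqref{e:Klambda2} to the two convergences (a) and (b) is correct. Claim~(a) is indeed \eqref{e:conv2}. The two-scale treatment of (b) (locality of $\tilde g^{\gamma}_N$ at fixed $\eta_0$, then a covariance estimate for the fluctuations exploiting independence of $K_N$ from $\omega$) is sound in outline.

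There are two points that need repair. First, the decay of the Green function on $\mathbb{T}_q$ at fixed $\eta_0>0$ that makes $\tilde g^{\gamma}_N(\tilde x,\tilde y)$ approximately a local functional of the potential is the deterministic Combes--Thomas estimate, with a rate degenerating as $\eta_0\downarrow 0$; this has nothing to do with the fractional-moment bounds of \cite{Klein,AW2} underpinning \textbf{(Green)}, which concern integrability of $|\Im\hat\zeta|^{-s}$ in the absolutely continuous regime. Citing \cite{Klein,AW2} for exponential decay is a misattribution and would mislead a reader; the needed input is purely $\eta_0$-dependent locality of resolvents. Second, and more seriously, the decorrelation at fixed $\eta_0$ gives a variance for the left side of~(b) of order $O(1/N)$, which is not summable, so ``Borel--Cantelli along a deterministic subsequence'' only yields almost-sure convergence along that subsequence; since there is no monotonicity in $N$ to interpolate between subsequence values (the normalized sums are not partial sums of a fixed series), the full a.s.\ limit does not follow. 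One must instead control a higher moment --- the fourth centered moment is typically $O(1/N^2)$ for such weakly dependent sums, which is summable and gives Borel--Cantelli for the whole sequence --- or otherwise strengthen the concentration. As written, the a.s.\ convergence in $N$ is not established.
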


As in the previous theorems, if $R=0$, the $\psi_j$ are arbitrary, while if $R>0$, we assume the $\psi_j$ are real-valued.
 
For the Anderson model, $\mathbf{E}_\eps\left( \Im \cG^{\gamma}(v,w) \right)$ depends only on $d(v,w)$ : $\mathbf{E}_\eps\left( \Im \cG^{\gamma}(v,w) \right)=\mathbf{E}_\eps\left( \Im \cG^{\gamma}(o, u) \right)$ where $u$ is any vertex of $\IT_q$ such that $d(o, u)=d(v,w)$.

In the special case $R=0$, we have $\langle a_N \rangle_{\lambda}^{\eta_0} = \frac{1}{N} \sum_{x\in V_N} a(x)$. So choosing $a_N=\bbbone_{\Lambda_N}$, Theorem~\ref{t:thm3} implies the strong form of delocalization given by the uniform distribution of $\psi_j^{(N)}$ on $V_N$, as explained in Section~\ref{sec:weightedav}.
            
\subsection{Relation with previous work\label{s:other}}
%First, let us note that, like in our previous papers \cite{ALM, A}, the term ``quantum ergodicity'' is borrowed from the field of quantum chaos and

Our main Theorem \ref{thm:1} holds for {\em{deterministic}} sequences of graphs and potentials. For any sequence $(G_N, W_N)$ satisfying the assumptions of the theorem, the conclusion holds for any observable $K$; in particular, $K$ may depend on the graphs. As already noted, the result only says something about the delocalization of ``most'' eigenfunctions, where the ``good'' eigenfunctions exhibiting delocalization may depend on the choice of the observable $K$.

In the past years, there has been tremendous interest in spectral statistics and delocalization of eigenfunctions of {\em{random}} sequences of graphs and potentials. Many papers consider {\em{random}} regular graphs, with degree going slowly to infinity \cite{TranVu, Dumitriu, BKY, BHKY} or fixed \cite{Geisinger, BHY}, sometimes adding a random i.i.d potential \cite{Geisinger}. In particular, the recent papers \cite{BKY, BHKY, BHY} show ``quantum unique ergodicity'' for the \emph{adjacency matrix} of random regular graphs~: given an observable $a_N : \{1, \ldots, N\}\To \IR$, for most $(q+1)$-regular graphs on the vertices $\{1, \ldots, N\}$ we have that
$\sum_{x=1}^N a_N(x)|\psi_j^{(N)}(x)|^2$ is close to $\la a_N\ra$ for {\em{all}} indices $j$. This is a considerable strengthening of Corollary \ref{c:firstcor} (or of the similar result in \cite{ALM}), that only says something for {\em{most}} indices $j$. This possibility to prove QUE is, of course, due to the fact that $a_N$ has to be independent of the choice of the graph and that results holds for almost all graphs.
%It might well be that a positive proportion of graphs contradicts QUE, if we were allowed to choose observables $a_N$ depending on the graph (this is a completely open question).

When ``ergodicity'' of eigenfunctions is tested numerically as in the physics papers \cite{Alt1, Alt2}, it is natural to first pick a realization of the graph and of the potential, and then test the eigenfunctions one by one to determine if they can be localized in small parts of the graph. It is then natural to allow the test-observables to depend on the graph and the potential (which our Theorem \ref{thm:1} does, but not the results of \cite{BKY, BHY}), but {\em{also}} on the index $j$ of the eigenfunction, which neither of the rigourous mathematical results achieves. The numerical results of \cite{Alt2} seem to indicate that, as soon as a random disorder is turned on, the eigenfunctions will be localized in small parts of the graph. This is not in contradiction with our results~: the region of localization of $\psi_j^{(N)}$ might depend on $j$, but our result does not allow to test this. Note also that the results of \cite{Alt1, Alt2} were recently questioned in \cite{TMS}, where the authors argue that $N$ has not been taken large enough to see the delocalization take place.
 
The paper \cite{BaSz16} proves a very important result, saying that if $\psi_j$ is an ``almost eigenvector'' of the adjacency matrix on a random regular graph $G$, then for almost all $G$ and all $j$, the value distribution of $\psi_j(x)$ as $x$ runs over $\{1, \ldots, N\}$ is close to a Gaussian $\cN(0, \sigma_j^2)$ with $\sigma_j\leq 1$. Proving that $\sigma_j=1$ is a challenge; it would amount to proving that eigenfunctions cannot be localized in small parts of the graph. Our result does not say this, again because we can only test one observable $a$ at a time.
The indices $j$ for which Corollary \ref{c:firstcor} proves delocalization depend on $a$. If we wanted to have a common set of indices $j$ that do the job for all observables (whose number is exponential in $N$), we would need to have an exponential rate of convergence in Theorems \ref{thm:2}, \ref{thm:1}. Our proof gives a rate that is at best a negative power of the girth (itself typically of order $\log N$).
 
 Finally we would also like to mention the paper \cite{Bor15}, where existence of absolutely continuous spectrum for percolation graphs on the $(q+1)$-regular tree is proven, if the percolation parameter is close enough to $1$. Since the absolutely continuous spectrum is mixed with purely discrete spectrum, one cannot expect a quantum ergodicity result that claims delocalization of most eigenfunctions, but only a ``partial delocalization'' result for a {\em{positive proportion}} of eigenfunctions. These are the contents of \cite[Theorem 9]{Bor15}. It would be nice to investigate what the methods of our paper would give for that model.
 
\subsection{Outline of the proof\label{s:outline}}
We borrowed the name ``Quantum Ergodicity'' from a result about laplacian eigenfunctions on Riemannian manifolds \cite{Sni, Zel87, CdV85, ZelC}.
The proof in the setting of laplacian eigenfunctions on manifolds is made of 4 steps, of unequal difficulty . These 4 steps are also present in our proof~:

{\bf{Step 0.}} Define the quantum variance. The goal is to show that this goes to $0$ as $N\to \infty$. A novelty of our proof is that we replace the usual quantum variance \eqref{e:usualvar} by a ``non-backtracking'' one \eqref{e:varnb}, where we replace the eigenfunctions $\psi_j$ by eigenfunctions $f_j, f_j^*$ of a non-backtracking random walk (Section \ref{s:nb}). These new $f_j, f_j^*$ are thus eigenfunctions of a non-selfadjoint problem. This causes new difficulties, that however will be compensated by the fact that the non-backtracking random walk has simpler trajectories than the ``simple'' random walk generated by the adjacency matrix $\cA$.

{\bf{Step 1.}} Show that the quantum variance is controlled by the Hilbert-Schmidt norm of $K$. Although this is obvious for the original quantum variance, this will be much harder for the ``non-backtracking quantum variance'' (Section \ref{s:proof1}). This uses \textbf{(BSCT)} and \textbf{(Green)}.

{\bf{Step 2.}} Due to the fact that $f_j, f_j^*$ satisfy an eigenfunction problem, the quantum variance is invariant under certain transformations (Section \ref{s:inv}).

{\bf{Step 3.}} One should see behind these transformations the emergence of a ``classical dynamical system''. In the setting of laplacian eigenfunctions on manifolds, this is the geodesic flow. Here, what we get is a family of stationary Markov chains on the set of infinite non-backtracking paths (Section \ref{s:Mark}, Remark \ref{r:compat}). This step has been called ``classicalization'' by U. Smilansky in a private conversation; this is supposed to mean the opposite of ``quantization''.

{\bf{Step 4.}} Iterate the classical dynamical system, use its ergodicity to show that the quantum variance is small (Section \ref{s:step4}). Here, the ergodicity of our Markov chains (more precisely, the fact that the mixing rate is independent on $N$) comes from the \textbf{(EXP)} condition. Assumption \textbf{(Green)} is also used to control the probability transitions.

There is an additional step that does not exist in the traditional setting~:

{\bf{Step 5.}} Translate the result for the ``non-backtracking quantum variance'' (involving $f_j, f_j^*$) into a result for the original one, involving the $\psi_j$ (Section \ref{sec:retour}).
Assumptions \textbf{(EXP)}, \textbf{(BSCT)} and \textbf{(Green)} are used here again to show that the transformation sending $\psi_j$ to $f_j, f_j^*$ is well-behaved in the limit $N\To +\infty$.

\section{Basic identities}

\subsection{``Quantization procedure'' on trees and their quotients}                        \label{sec:basic}
Let $G=G_N$, $G=(V,E)$. Most of the time we will drop the subscript $N$ in the notation. As in Section~\ref{sec:results}, we regard $G$ as a quotient: $G=\Gamma \backslash \widetilde{G}$, and let $\pi:\tilV\to V$ denote the projection. Fix a fundamental domain $\mathcal{D} \subset \tilV$ for the action of $\Gamma$ on $\tilV$. Then $|\mathcal{D}| = |V|$.

Each edge $\{x_0,x_1\}\in \tilE$, gives rise to two oriented edges $e = (x_0,x_1)$ and $\hat{e} = (x_1,x_0)$ in the reverse direction. We let $o_e$ and $t_e$ be the origin and terminus of $e$, respectively. We then let $\tilB_1$, or simply $\tilB$, be the set of all such oriented edges of $\tilG$. More generally, let $\tilB_k$ be the set of non-backtracking paths of length $k$ in $\tilG$. By convention, $\tilB_0 :=\tilV$. If $\omega = (x_0,\ldots,x_k)$ and $\omega' = (x_0',\ldots,x_k') \in \tilB_k$, we write $\omega \rightsquigarrow \omega'$ if $x_0'=x_1,\ldots,x_{k-1}'=x_k$ and $(x_0,\ldots,x_k,x_k') \in \tilB_{k+1}$. We also denote $o_{\omega}=x_0$, $t_{\omega}=x_k$.

These notions descend to the quotient. We denote by $B_k := \Gamma\backslash \widetilde{B}_k$ the set of non-backtracking paths of length $k$ in $G$. By convention, $B_0 :=V$. For $k=1$ we let $B=B_1$. 
The set $B_k$ is in bijection with the subset $\cD^{(k)}\subset \tilB_k$ of elements having their origin in $\cD$.

%We introduce the space $\mathscr{H}$ of matrices $K:\tilV\times \tilV \to \C$, that satisfy the condition
%\[
%K(\gamma v,\gamma w) = K(v,w) \quad \text{for all } \gamma \in \Gamma   \, .
%\]
 
%The space $\mathscr{H}$ may be decomposed into the direct sum $\mathop \oplus_{k=0}^{\infty} \mathscr{H}_k$, where $\mathscr{H}_k \subset \mathscr{H}$ is the subspace defined by
%\[
%K(v,w) \neq 0 \implies dist_{\tilG}(v,w) = k \, .
%\]
%An element $K\in \mathscr{H}_k$ is completely determined by the values of $K(v, w)$ with $v\in \cD$.
%Since $K(v,w) = 0$ if $dist(v,w)>k$, the space $\mathscr{H}_k$ is finite-dimensional.

Let $\mathscr{H}_k =\IC^{B_k}$ (the complex-valued functions on $B_k$),  $\mathscr{H} = \mathop \oplus_{k=0}^{\infty} \mathscr{H}_k$ and $\mathscr{H}_{\leq k}:=\mathop \oplus_{\ell=0}^{k} \mathscr{H}_\ell$. 

It will be convenient to identify $\IC^{B_k}$ with the $\Gamma$-invariant elements of $\IC^{\tilB_k}$ or with $\IC^{\cD^{(k)}}$. For $K\in \mathscr{H}_k$ and $(x_0, \ldots, x_k)\in\tilB_k$, we will sometimes use the short-hand notation $K(x_0; x_k)$ for $K(x_0, \ldots, x_k)$. This is justified by the fact than on $\tilG$, the endpoints $(x_0; x_k)$ determine the path $(x_0, \ldots, x_k)$ uniquely. We will also use this short-hand notation on $B_k$, although in that case one should keep in mind that $K(x_0; x_k)$ actually depends on the full path $(x_0, \ldots, x_k)$.

Any $K \in  \mathscr{H}_k$ (regarded as a $\Gamma$-invariant element of $\C^{\widetilde{B}_k}$) may be used to define an operator $\widehat{K}$ on the space of finitely supported functions on $\tilV$, with kernel
$\la \delta_v, \widehat{K}\delta_w\ra_{\ell^2(\tilV)}=K(v; w)$. It also defines an operator $\widehat{K}_G$ on $\IC^V$, with kernel\[ %\label{e:kg0}
K_G(x,y)=\sum_{\gamma\in\Gamma}K (\tilde x; \gamma\cdot \tilde y) \, ,
\]
where $\tilde x, \tilde y\in \tilV$ are representatives of $x, y\in V$. The map $K\in \mathscr{H}_k\mapsto K_G$ is a priori not one-to-one. However, if $\rho_G(x) \ge k$, then $K_G(x, \cdot)$ determines $K(\tilde x, \cdot)$ uniquely. To see that $K\in \mathscr{H}_k\mapsto K_G$ is surjective, consider $\mathbf{k}: V\times V\To \IR$ supported at distance $k$ from the diagonal, and let $K(\tilde x, \tilde y)= \mathbf{k}(\pi(\tilde x), \pi(\tilde y)) \bbbone_{dist(\tilde x, \tilde y) \leq k} (\sharp\{\gamma \in \Gamma, dist(\tilde x, \gamma\cdot \tilde y) \leq k\})^{-1}$. Then $K_G=\mathbf{k}$ and this coincides with the lift \eqref{e:lift} except at the few points where $\rho_G(x)\leq k$.

Define the non-backtracking adjacency operator $\cB:\C^\tilB\to \C^\tilB$ by
\begin{equation}    \label{eq:nonback}
(\cB f)(x_0,x_1) = \sum_{x_2 \in \mathcal{N}_{x_1}\setminus \{x_0\}} f(x_1,x_2) \, 
\end{equation}
where $\mathcal{N}_{x}$ means the set of neighbours of $x$.
Then an element $K\in \mathscr{H}_k$ may also be used to define an operator $\widehat{K}_{\tilB}$ on $\ell^2(\tilB)$, with kernel
\[%\label{e:defK}
\la \delta_{b_1}, \widehat{K}_{\tilB}\delta_{b_2}\ra_{\ell^2(\tilB)} = \begin{cases} K(o_{b_1}; t_{b_2})&\text{if } \cB^{k-1}(b_1, b_2)\neq 0, \\
0 & \text{otherwise.} \end{cases}
\]
Thus $\la \delta_{b_1}, \widehat{K}_{\tilB}\delta_{b_2}\ra_{\ell^2(\tilB)}\not= 0$ only if there is a non-backtracking path of length $k$ in $\tilG$, starting with the oriented edge $b_1$ and ending with $b_2$.

Finally, $K\in \mathscr{H}_k$ also defines an operator $\widehat{K}_B$ on $\IC^B$, with matrix $K_B: B \times B \to \C$ given by
\[%\label{e:defKB}
K_B(b_1,b_2)=\sum_{\gamma\in\Gamma}K({\tilde b_1}; \gamma\cdot  {\tilde b_2}) \, ,
\]
where $\tilde b_1,  \tilde b_2 \in \tilB$ are lifts of $b_1, b_2\in B$.
By linearity, this extends to $K\in \mathscr{H}_{\leq k}$.

%The matrices $K_G$ and $K_B$ naturally define operators on $\ell^2(V)$ and $\ell^2(B)$, respectively.

Note that if $K\in \mathscr{H}_k$, then $\langle \psi,K_G\phi \rangle_{\ell^2(V)} = \sum_{(x_0,\ldots, x_k)\in B_k} \overline{\psi(x_0)} K(x_0;x_k) \phi(x_k)$ for any $\psi, \phi\in \ell^2(V)$. Similarly, if $f, g\in \ell^2(B)$, we have
\begin{equation}   \label{eq:kb}
\langle f, K_B g \rangle_{\ell^2(B)} = \sum_{(x_0,\ldots, x_k)\in B_k} \overline{f(x_0,x_1)} K(x_0;x_k) g(x_{k-1},x_k) \, ,
\end{equation}
\begin{equation}          \label{eq:KB2}
\|K_Bf\|^2_{\ell^2(B)} = \sum_{(x_0,x_1)\in B} \Big|\sum_{_{x_{0,1}}(x_2;x_k)} K(x_0;x_k) f(x_{k-1},x_k)\Big|^2 \, ,
\end{equation}
where $\sum_{_{x_{0,1}} (x_2;x_k)}$ sums over all $(x_2;x_k)\in B_{k-2}$ such that $x_2\in \mathcal{N}_{x_1}\setminus \{x_0\}$. Alternatively, we may simply sum over $(x_2;x_k)\in B_{k-2}$ but decide that $K(x_0;x_k)=0$ if the path $(x_0,\ldots, x_k)$ back-tracks. %Similarly, we sometimes denote $\sum_{(x_{-k+1};x_{-1})_{x_{0,1}}}$ for the sum over all $(x_{-k+1};x_{-1})\in B_{k-2}$ such that $x_{-1}\in \cN_{x_0}\setminus \{x_1\}$.

\begin{rem}The maps $K\mapsto \widehat{K}$, $K\mapsto \widehat{K}_G$, $K\mapsto\widehat{K}_{\tilB}$ and $K\mapsto\widehat{K}_B$ associate an operator to a function on the set of paths. It is tempting to view this as a form of ``quantization procedure'' as those used for quantum ergodicity on manifolds.
\end{rem}

\subsection{Green functions on trees\label{s:ident}}

Assumption \textbf{(BST)} says that our graphs have few short loops, in other words, that most balls of a given radius look like {\em{trees}}. One of the ingredients of our proof is that the Green function
on trees satisfies certain algebraic relations, that follow from the fact that removing a vertex (or cutting an edge) from a tree suffices to disconnect it.

Here we recall some standard facts that hold for an arbitrary {\em{tree}} $T=(V(T), E(T))$, endowed with a discrete Schr\"odinger of the form $H=\cA+W$ acting on $\ell^2(V(T))$, where $\cA$ is the adjacency matrix and $W: V(T)\To \IR$ is a bounded function. Given $\gamma \in \C \setminus \R$ and $v,w\in T$, the Green function is denoted in this section by
\[
G(v,w;\gamma)=\la \delta_v, (H-\gamma)^{-1}\delta_w\ra_{\ell^2(V(T))} \, .
\]

If $v \sim w$, we denote by ${T}^{(v|w)}$ the tree obtained by removing from ${T}$ the branch emanating from $v$ that passes through $w$. We define the restriction $H^{(v|w)}(u,u') = H(u,u')$ if $u,u'\in {T}^{(v|w)}$ and zero otherwise. The corresponding Green function is denoted by  $\tilg^{(v|w)}(\cdot, \cdot;\gamma)$. We finally denote
\[
\frac{-1}{2m_v^{\gamma}}=G(v,v;\gamma)  \quad \text{and} \quad \zeta_w^{\gamma}(v) = -\tilde{g}^{(v|w)}(v,v;\gamma) \, .
\]

Later on, we will apply these results for $(T, W)=(\tilG_N, \tilW_N)$. In this case the (full) Green function will be denoted by $\tilde g_N^{\gamma}(x,y)$, and the restricted one by $\zeta_x^{\gamma}(y)$. In the case $(T, W)=(\cT, \cW)$ (the random coloured rooted trees of assumption \textbf{(BSCT)}), the Green function will be denoted by $\cG^{\gamma}(v,w)$, and the restricted one by $\hat{\zeta}_w^{\gamma}(v)$. As a general rule, the objects defined on the limit $(\cT, \cW)$ will wear a hat $\hat{\cdot}$ to distinguish them from similar objects defined on $(\tilG_N, \tilW_N)$ (see also Remark \ref{rem:IDS1}).

The Green functions on trees satisfy some classical recursive relations; the following lemma is proved for instance in \cite{AS}. Given $v\in V(T)$, we denote by $\mathcal{N}_v$ its set of nearest neighbours.

\begin{lem}                     \label{lem:zetapot}
For any $v \in {T}$ and $\gamma  \in \C\setminus \IR$, we have
\begin{subequations}
\begin{align}
\gamma &= W(v) + \sum_{u \sim v} \zeta_v^{\gamma}(u)+2m^{\gamma}_v \,, \label{eq:green3}\\
\gamma &= W(v) + \sum_{u \in \mathcal{N}_v \setminus \{w\}} \zeta_v^{\gamma}(u) + \frac{1}{\zeta_w^{\gamma}(v)} \, .\label{eq:green3'}
\end{align}
\end{subequations}
For any non-backtracking path $(v_0;v_k)$ in $T$,
\begin{equation}             \label{eq:multigreen2}
G(v_0,v_k;\gamma) = \frac{-\prod_{j=0}^{k-1} \zeta_{v_{j+1}}^{\gamma}(v_j)}{2m^{\gamma}_{v_k}} \, ,
\end{equation}
\begin{equation}          \label{eq:multigreen3}
G(v_0,v_k;\gamma) = \zeta_{v_1}^{\gamma}(v_0) G(v_1,v_k;\gamma) = \zeta_{v_{k-1}}^{\gamma}(v_k) G(v_0,v_{k-1};\gamma) \, .
\end{equation}
Also, for any $w\sim v$, we have
\begin{equation}           \label{eq:mv}
\zeta_w^{\gamma}(v) = \frac{m_w^{\gamma}}{m_v^{\gamma}} \,\zeta_v^{\gamma}(w) \quad \text{and} \quad 
\frac{1}{\zeta_w^{\gamma}(v)} - \zeta_v^{\gamma}(w) = 2m^{\gamma}_v \, .
\end{equation}
For any $v,w\in T$, we have
\begin{equation}              \label{eq:greensym}
G(v,w;\gamma) = G(w,v;\gamma) \, .
\end{equation}
Next, if $\gamma = \lambda\pm i\eta$ with $\lambda\in\R$, $\eta>0$, then
\begin{equation}             \label{eq:sumzeta}
\sum_{u\in \mathcal{N}_v\setminus \{w\}} |\Im \zeta_v^{\gamma}(u)| = \frac{|\Im \zeta_w^{\gamma}(v)|}{|\zeta_w^{\gamma}(v)|^2} - \eta \, .
\end{equation}
Finally, if $\Psi_{\gamma,v}(w) = \Im G(v,w;\gamma)$, then for any path $(v_0,\dots,v_k)$ in $ {T}$, $k \ge 1$,
\begin{equation}              \label{eq:idpsi}
 \Psi_{\gamma,v_0}(v_k) - \zeta_{v_{k-1}}^{\gamma}(v_k)\Psi_{\gamma,v_0}(v_{k-1}) = \Im \zeta_{v_{k-1}}^{\gamma}(v_k) \cdot \overline{G(v_0,v_{k-1};\gamma)} \, .
\end{equation}
\end{lem}

Note that $|\zeta_v^{\lambda+i\eta}(u)|\le \eta^{-1}$. It follows from \eqref{eq:green3'} that for any $\lambda\in [-(A+D),A+D]$ and $\eta\in (0,1)$,
\begin{equation}\label{e:green3cons}
\left|\frac{1}{\zeta_w^{\lambda+i\eta}(v)}\right| \le c_{D,A} \eta^{-1}\,,
\end{equation}
where $c_{D,A} = 2(A+D)+1$.

\begin{cor}          \label{cor:psiiden}
Given $\gamma\in \C\setminus\IR$, for any $v_0,v_1\in T$, $v_0\sim v_1$, we have
\begin{equation}     \label{eq:psiiden1}
\Psi_{\gamma,v_1}(v_1)-\overline{\zeta_{v_0}^{\gamma}(v_1)}\Psi_{\gamma,v_1}(v_0) - \zeta_{v_0}^{\gamma}(v_1)\Psi_{\gamma,v_0}(v_1)+|\zeta_{v_0}^{\gamma}(v_1)|^2\Psi_{\gamma,v_0}(v_0) =   |\Im \zeta_{v_0}^{\gamma}(v_1)| \, .
\end{equation}
Also, for any non-backtracking path $(v_0;v_k)$ in $T$, $k \ge 1$, we have
\begin{equation}     \label{eq:psiiden2}
\Psi_{\gamma,v_0}(v_k) - \overline{\zeta_{v_1}^{\gamma}(v_0)}\Psi_{\gamma,v_1}(v_k) - \zeta_{v_{k-1}}^{\gamma}(v_k)\Psi_{\gamma,v_0}(v_{k-1}) + \overline{\zeta_{v_1}^{\gamma}(v_0)}\zeta_{v_{k-1}}^{\gamma}(v_k)\Psi_{\gamma,v_1}(v_{k-1})=0 \, .
\end{equation}
\end{cor}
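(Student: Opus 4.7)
The two identities follow from \eqref{eq:idpsi} combined with the product relation \eqref{eq:multigreen3} for Green functions on trees.

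For \eqref{eq:psiiden2} when $k \ge 2$, the plan is to apply \eqref{eq:idpsi} separately to the two non-backtracking paths $(v_0,v_1,\ldots,v_k)$ and $(v_1,v_2,\ldots,v_k)$. Both have length at least $1$, so \eqref{eq:idpsi} is valid, and the two resulting identities share the common factor $\Im \zeta_{v_{k-1}}^{\gamma}(v_k)$ on the right. Taking the first minus $\overline{\zeta_{v_1}^\gamma(v_0)}$ times the second assembles exactly the LHS of \eqref{eq:psiiden2}, while the right-hand side becomes
\[
\Im\zeta_{v_{k-1}}^\gamma(v_k) \cdot \overline{G(v_0,v_{k-1};\gamma) - \zeta_{v_1}^\gamma(v_0)\,G(v_1,v_{k-1};\gamma)}.
\]
This bracket vanishes by \eqref{eq:multigreen3} applied to the non-backtracking path $(v_0,v_1,\ldots,v_{k-1})$.

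The boundary cases---\eqref{eq:psiiden1} and the $k=1$ instance of \eqref{eq:psiiden2}---escape this trick, since the second path would have length $0$. I would handle them by direct substitution. Set $\alpha = \zeta_{v_0}^\gamma(v_1)$, $\beta = \zeta_{v_1}^\gamma(v_0)$, $g = G(v_0,v_0;\gamma)$ and $g' = G(v_1,v_1;\gamma)$. The length-$1$ case of \eqref{eq:multigreen3} gives $G(v_0,v_1;\gamma) = \alpha g = \beta g'$, while \eqref{eq:greensym} yields $\Psi_{\gamma,v_0}(v_1) = \Psi_{\gamma,v_1}(v_0)$. Feeding these into the LHS of \eqref{eq:psiiden1} and expanding $\Im(\alpha g) = \Re\alpha \cdot \Im g + \Im\alpha \cdot \Re g$ reduces it to $\Im(g' - \alpha^2 g)$. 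Writing $g' = (\alpha/\beta)g$ and invoking the key relation $1/\beta - \alpha = 2m_{v_0}^\gamma = -1/g$ from \eqref{eq:mv}, the bracket collapses to $-\alpha$, so the LHS equals $-\Im\alpha$. Since $\alpha = -\tilg^{(v_0|v_1)}(v_0,v_0;\gamma)$ and a resolvent diagonal matrix element has strictly positive imaginary part for $\Im\gamma>0$, we have $\Im\alpha<0$ and $-\Im\alpha = |\Im\alpha|$, which matches the right-hand side. The $k=1$ case of \eqref{eq:psiiden2} is treated by the same substitutions; the resulting four-term expression cancels identically once one uses both branches of \eqref{eq:mv}, namely $1/\beta-\alpha=-1/g$ and $1/\alpha-\beta=-1/g'$.

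The main obstacle is the algebraic bookkeeping in the boundary cases, where the two $\zeta$'s, the two diagonal Green functions, and the complex conjugations must be juggled carefully. A reassuring conceptual check is that, for $\Im\gamma>0$, the LHS of \eqref{eq:psiiden1} can also be recognized as $\eta\,\|(H-\gamma)^{-1}(\delta_{v_1}-\alpha\delta_{v_0})\|^2$ (using that $\Im(H-\gamma)^{-1}$ is positive and that $G$ is complex symmetric on a tree), which is manifestly nonnegative and matches the absolute value on the right-hand side.
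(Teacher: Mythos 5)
Your proposal is correct, and for the $k\ge 2$ case of \eqref{eq:psiiden2} it is essentially the paper's argument: apply \eqref{eq:idpsi} to the two paths $(v_0,\dots,v_k)$ and $(v_1,\dots,v_k)$, subtract $\overline{\zeta_{v_1}^\gamma(v_0)}$ times the second from the first, and use $G(v_0,v_{k-1};\gamma)=\zeta_{v_1}^\gamma(v_0)G(v_1,v_{k-1};\gamma)$ to kill the right-hand side.

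For \eqref{eq:psiiden1} and the $k=1$ case of \eqref{eq:psiiden2} you take a genuinely different route. The paper iterates \eqref{eq:idpsi} and derives a chain of intermediate identities ($\eqref{eq:diam}$, $\eqref{eq:greerel}$, $\eqref{eq:psi1rel}$) which it then combines; a side benefit is that \eqref{eq:psi1rel} is reused almost verbatim in the $k=1$ step. You instead substitute $G(v_0,v_1)=\alpha g=\beta g'$ and $\Psi_{\gamma,v_1}(v_0)=\Psi_{\gamma,v_0}(v_1)$ directly into the four-term expression and perform a closed-form algebraic reduction. Your reduction of the LHS of \eqref{eq:psiiden1} to $\Im(g'-\alpha^2 g)$ and the collapse $g'-\alpha^2 g=\alpha g(\beta^{-1}-\alpha)=-\alpha$ via \eqref{eq:mv} are correct; similarly the $k=1$ four-term expression for \eqref{eq:psiiden2} does cancel once one uses $\alpha g=\beta g'$ together with both relations $\beta^{-1}-\alpha=-g^{-1}$ and $\alpha^{-1}-\beta=-{g'}^{-1}$ from \eqref{eq:mv}. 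Your version is shorter and avoids the auxiliary displays, at the cost of slightly heavier bookkeeping with $\Re/\Im$ splittings; the paper's version has the mild advantage of producing reusable sub-identities. One small caveat shared with the paper's proof: the conclusion LHS $=-\Im\zeta_{v_0}^\gamma(v_1)=|\Im\zeta_{v_0}^\gamma(v_1)|$ requires $\Im\gamma>0$ (so that $\Im\zeta_{v_0}^\gamma(v_1)<0$); the corollary as stated allows $\gamma\in\C\setminus\R$, but in the paper $\gamma$ is always taken in the upper half-plane, so this is harmless.
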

\begin{proof}
By (\ref{eq:idpsi}), $\Psi_{\gamma,v_0}(v_1) - \zeta_{v_0}^{\gamma}(v_1)\Psi_{\gamma,v_0}(v_0) = \Im \zeta_{v_0}^{\gamma}(v_1)\overline{G(v_0,v_0;\gamma)}$. As $\Psi_{\gamma,v_1}(v_0)=\Psi_{\gamma,v_0}(v_1)$, we thus get using \eqref{eq:multigreen3},
\begin{equation}            \label{eq:diam}
\overline{\zeta_{v_0}^{\gamma}(v_1)}\Psi_{\gamma,v_1}(v_0)-|\zeta_{v_0}^{\gamma}(v_1)|^2\Psi_{\gamma,v_0}(v_0)  =    \Im \zeta_{v_0}^{\gamma}(v_1)\cdot \overline{G(v_0,v_1;\gamma)} \, .
\end{equation}
Next, since $G(v_1,v_1;\gamma) = \frac{G(v_0,v_1;\gamma)}{\zeta_{v_1}^{\gamma}(v_0)}$ and $\frac{1}{\zeta_{v_1}^{\gamma}(v_0)} = \zeta_{v_0}^{\gamma}(v_1) + 2m_{v_0}^{\gamma}$, we have
\begin{equation}           \label{eq:greerel}
G(v_1,v_1;\gamma) = \zeta_{v_0}^{\gamma}(v_1)G(v_0,v_1;\gamma) + 2m_{v_0}^{\gamma}G(v_0,v_1;\gamma) = \zeta_{v_0}^{\gamma}(v_1)G(v_0,v_1;\gamma) - \zeta_{v_0}^{\gamma}(v_1) \, ,
\end{equation}
so
\[
\Psi_{\gamma,v_1}(v_1)  = \Im \zeta_{v_0}^{\gamma}(v_1) [\Re G(v_0,v_1;\gamma) - 1] + \Re \zeta_{v_0}^{\gamma}(v_1) \Psi_{\gamma,v_0}(v_1) \, ,
\]
and thus
\begin{align}          \label{eq:psi1rel}
\Psi_{\gamma,v_1}(v_1) - \zeta_{v_0}^{\gamma}(v_1)\Psi_{\gamma,v_0}(v_1) & = \Im \zeta_{v_0}^{\gamma}(v_1)[\Re G(v_0,v_1;\gamma)-1] - i\Im \zeta_{v_0}^{\gamma}(v_1)\Psi_{\gamma,v_0}(v_1) \nonumber \\
& = \Im \zeta_{v_0}^{\gamma}(v_1) \overline{G(v_0,v_1;\gamma)} - \Im \zeta_{v_0}^{\gamma}(v_1) \, .
\end{align}
This completes the proof of the first claim, by (\ref{eq:diam}). Next, we use again that $\Psi_{\gamma,v_0}(v_1) - \zeta_{v_0}^{\gamma}(v_1)\Psi_{\gamma,v_0}(v_0) = \Im \zeta_{v_0}^{\gamma}(v_1)\overline{G(v_0,v_0;\gamma)}$. In addition, by (\ref{eq:psi1rel}),
\begin{align*}
\overline{\zeta_{v_1}^{\gamma}(v_0)}[\Psi_{\gamma,v_1}(v_1) - \zeta_{v_0}^{\gamma}(v_1)\Psi_{v_1}(v_0)] & = \Im \zeta_{v_0}^{\gamma}(v_1)[\overline{\zeta_{v_1}^{\gamma}(v_0)G(v_0,v_1;\gamma)}-\overline{\zeta_{v_1}^{\gamma}(v_0)}] \\
& = \Im \zeta_{v_0}^{\gamma}(v_1)\overline{G(v_0,v_0;\gamma)} \, ,
\end{align*}
where the last equality is proved as in (\ref{eq:greerel}). This proves the second claim for $k=1$.

Now let $k\ge 2$. If we apply (\ref{eq:idpsi}) with $v_1$ instead of $v_0$ and use \eqref{eq:multigreen3},  we get
\[
\overline{\zeta_{v_1}^{\gamma}(v_0)}\Psi_{\gamma,v_1}(v_k) - \overline{\zeta_{v_1}^{\gamma}(v_0)}\zeta_{v_{k-1}}^{\gamma}(v_k)\Psi_{\gamma,v_1}(v_{k-1}) = \Im \zeta_{v_{k-1}}^{\gamma}(v_k)\cdot \overline{G(v_0,v_{k-1};\gamma)} \, .
\]
The second claim for $k\ge 2$ now follows by (\ref{eq:idpsi}).
\end{proof}

We conclude by recalling the fact that for Lebesgue a.e. $\lambda\in \R$, the Green function has a finite limit on the real axis almost surely. Remember that $\mathscr{T}_{\ast}^{D,A}$ us the set of coloured rooted trees, and that $\IP$ is the probability measure on $\mathscr{T}_{\ast}^{D,A}$ appearing in (BSCT).

\begin{prp}             \label{lem:limits}
There exists a Lebesgue-null set $\mathfrak{A}\subset \IR$ such that, to each $\lambda \in \mathfrak{S} := \R \setminus \mathfrak{A}$, there is $\Omega_\lambda\subseteq \mathscr{T}_{\ast}^{D,A}$ with $\prob(\Omega_\lambda)=1$, such that if $[\mathcal{T},o,\mathcal{W}]\in \Omega_\lambda$, then the limit $G(v,w;\lambda+i0):=\lim_{\eta \downarrow 0} G(v,w;\lambda+i\eta)$ exists for any $v,w\in \mathcal{T}$.
\end{prp}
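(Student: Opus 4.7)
The plan is to reduce the statement to the classical Fatou/de~la~Vall\'ee Poussin theorem on the existence of non-tangential boundary values of Cauchy--Borel transforms of finite complex measures, and then to trade a realization-dependent null set for a universal one via Fubini.

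First, I would fix a realization $[\mathcal{T},o,\mathcal{W}]\in\mathscr{T}_{\ast}^{D,A}$ and a pair $v,w\in\mathcal{T}$. Since $\mathcal{H}=\mathcal{A}+\mathcal{W}$ is bounded and self-adjoint on $\ell^2(\mathcal{T})$, the spectral theorem provides a complex Borel measure $\mu_{v,w}$ on $\R$, of total variation at most $\|\delta_v\|\|\delta_w\|=1$, such that
\[
G(v,w;\gamma)=\int_{\R}\frac{\dd\mu_{v,w}(t)}{t-\gamma},\qquad \gamma\in\C\setminus\R.
\]
By the Fatou theorem for the Cauchy--Borel transform of a finite complex measure (or, equivalently, by writing this transform as a complex combination of Herglotz functions via polarization and invoking the classical Fatou theorem for Herglotz functions), the limit $\lim_{\eta\downarrow 0}G(v,w;\lambda+i\eta)$ exists and is finite for Lebesgue-a.e.\ $\lambda\in\R$. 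Call the exceptional set $\mathfrak{A}_{v,w}(\mathcal{T},\mathcal{W})$; it has Lebesgue measure zero.

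Second, the vertex set of $\mathcal{T}$ is countable, as $\mathcal{T}$ has degrees bounded by $D$. Hence
\[
\mathfrak{A}(\mathcal{T},\mathcal{W})\;:=\;\bigcup_{v,w\in\mathcal{T}}\mathfrak{A}_{v,w}(\mathcal{T},\mathcal{W})
\]
is still a Lebesgue-null set, and for every $\lambda\in\R\setminus\mathfrak{A}(\mathcal{T},\mathcal{W})$ the limits $G(v,w;\lambda+i0)$ exist simultaneously for all $v,w\in\mathcal{T}$.

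Third, to obtain a deterministic null set, I would apply Fubini. Consider
\[
\mathcal{E}:=\bigl\{(\lambda,[\mathcal{T},o,\mathcal{W}])\in\R\times\mathscr{T}_{\ast}^{D,A}:\lambda\in\mathfrak{A}(\mathcal{T},\mathcal{W})\bigr\}.
\]
Each slice $\{\lambda:(\lambda,[\mathcal{T},o,\mathcal{W}])\in\mathcal{E}\}$ has Lebesgue measure zero by the preceding step; therefore, provided $\mathcal{E}$ is jointly measurable, Fubini gives $(m\otimes\prob)(\mathcal{E})=0$, and dualizing yields a Lebesgue-null set $\mathfrak{A}\subset\R$ such that for every $\lambda\in\mathfrak{S}:=\R\setminus\mathfrak{A}$, the section $\Omega_\lambda=\{[\mathcal{T},o,\mathcal{W}]:\lambda\notin\mathfrak{A}(\mathcal{T},\mathcal{W})\}$ has $\prob$-measure $1$. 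By construction $G(v,w;\lambda+i0)$ exists and is finite for every $v,w\in\mathcal{T}$ whenever $[\mathcal{T},o,\mathcal{W}]\in\Omega_\lambda$.

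The only delicate point is the joint measurability of $\mathcal{E}$. This is where the main (mild) technical obstacle lies: one must parametrize the vertices of $\mathcal{T}$ in a Borel way. A convenient device is to label each $v\in\mathcal{T}$ by the finite non-backtracking path from the root $o$ to $v$; the space of such labels is countable, and for each fixed label the map $[\mathcal{T},o,\mathcal{W}]\mapsto G(v,w;\lambda+i\eta)$ is Borel, e.g.\ via the recursive identities \eqref{eq:green3}--\eqref{eq:multigreen2} (which express $G$ as a measurable function of the $\zeta^{\gamma}$'s, themselves limits of finite-volume quantities depending measurably on the colored rooted tree). Since non-existence of the limit can be encoded as
\[
\limsup_{\eta\downarrow 0}G(v,w;\lambda+i\eta)-\liminf_{\eta\downarrow 0}G(v,w;\lambda+i\eta)\neq 0
\]
(taking limits along rationals), $\mathcal{E}$ is a countable combination of Borel sets, and the Fubini argument goes through, completing the proof.
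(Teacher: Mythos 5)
Your proof is correct and follows essentially the same two-step structure as the paper's: first establish, for each fixed realization, that the boundary limit exists for Lebesgue-a.e.\ $\lambda$ and all (countably many) pairs $v,w$, and then apply Fubini to $\prob\otimes\mathrm{Leb}$ to trade the realization-dependent null set for a universal one. The only difference is that the paper cites \cite[Lemma 3.3]{AS} for the per-realization statement, whereas you rederive it directly from the spectral theorem and Fatou's theorem for Cauchy--Borel transforms of finite complex measures; your extra remarks on joint measurability, which the paper leaves implicit, are a welcome addition but do not change the route.
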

\begin{proof}
Fix $[\mathcal{T},o,\mathcal{W}]$. By \cite[Lemma 3.3]{AS}, there is a Lebesgue-null set $\mathfrak{A}_{[\mathcal{T},o,\mathcal{W}]} \subset \R$ such that for any $\lambda \in \mathfrak{S}_{[\mathcal{T},o,\mathcal{W}]} := \R \setminus \mathfrak{A}_{[\mathcal{T},o,\mathcal{W}]}$, $ G(v,w;\lambda+i0)$ exists for all $v,w\in \mathcal{T}$. Let $\mathfrak{D} = \{([\mathcal{T},o,\mathcal{W}],\lambda): \text{ the limit does not exist}\}$. Then
\[
(\prob\mathop\otimes Leb)(\mathfrak{D}) = \int_{\mathscr{T}_{\ast}^{D,A}} Leb(\mathfrak{D}_{[\mathcal{T},o,\mathcal{W}]})\,\dd \prob([\mathcal{T},o,\mathcal{W}]) \, ,
\]
where $\mathfrak{D}_{[\mathcal{T},o,\mathcal{W}]} = \{\lambda \in \R: ([\mathcal{T},o,\mathcal{W}],\lambda) \in \mathfrak{D}\}$. Since $\mathfrak{D}_{[\mathcal{T},o,\mathcal{W}]} \subseteq \mathfrak{A}_{[\mathcal{T},o,\mathcal{W}]}$, we have $Leb(\mathfrak{D}_{[\mathcal{T},o,\mathcal{W}]}) = 0$ for all $[\mathcal{T},o,\mathcal{W}]$. Hence,
\[
0 = (\prob\mathop\otimes Leb)(\mathfrak{D}) = \int_{\R} \prob(\mathfrak{D}_\lambda)\,\dd \lambda  \, ,
\]
where $\mathfrak{D}_\lambda = \{[\mathcal{T},o,\mathcal{W}]\in \mathscr{T}_{\ast}^{D,A} : ([\mathcal{T},o,\mathcal{W}],\lambda) \in \mathfrak{D}\}$. It follows that $\prob(\mathfrak{D}_\lambda)=0$ on a Lebesgue-full set $\mathfrak{A}$. Taking $\Omega_\lambda = \mathfrak{D}_\lambda^c$ completes the proof.
\end{proof}

\section{The non-backtracking quantum variance\label{s:nb}}

Our strategy follows the one discovered in \cite{A}. We find a transformation turning the eigenfunctions of $\cA+ W$ on $G=\Gamma\backslash \tilG$ into eigenfunctions of a ``non-backtracking'' random walk. The new operator is not self-adjoint, but this difficulty is superseded by the fact that the trajectories of non-backtracking random walks (on a tree) are much simpler than those of usual random walks.

The notation is the same as in the introduction except that we drop the subscript $N$. Suppose $(\psi_j)$ is an orthonormal basis of eigenfunctions for $H  = \mathcal{A}+W$, say $H \psi_j = \lambda_j \psi_j$.

Fix $\eta_0\in(0,1)$, let $\gamma_j=\lambda_j+i\eta_0$ and let
\[
f_j(x_0,x_1) =\zeta_{x_0}^{\gamma_j}({x}_1)^{-1} \psi_j(x_1) -\psi_j(x_0) \, ,
\]
where $\zeta_x^{\gamma}(y) = - \tilde{g}_N^{(y|x)}(y,y;\gamma)$ (see notation in \S \ref{s:ident}). If $\cB$ is the non-backtracking operator \eqref{eq:nonback}, we have
\begin{align*}
(\cB \zeta^{\gamma_j}f_j)(x_0,x_1) & = \sum_{x_2 \in \mathcal{N}_{x_1} \setminus \{x_0\}} [\psi_j(x_2)-\zeta^{\gamma_j}_{x_1}(x_2)\psi_j(x_1)]\\
& = [\lambda_j \psi_j(x_1) - W(x_1) \psi_j(x_1) - \psi_j(x_0)] - \psi_j(x_1)\left[\gamma_j - W(x_1) - \frac{1}{\zeta_{x_0}^{\gamma_j}(x_1)}\right] \\
& =   f_j(x_0,x_1) -i\eta_0 \,\psi_j(x_1) \, ,
\end{align*}
where we used (\ref{eq:green3'}). Hence we get
\begin{equation}\label{e:newef}
\cB (\zeta^{\gamma_j} f_j) = f_j - i\eta_0 \,\tau_+\psi_j\, 
\end{equation}
where $\tau_{\pm} :\ell^2(V) \to \ell^2(B)$ are defined by
\[
(\tau_-\psi)(x_0,x_1) = \psi(x_0) \quad \text{and} \quad (\tau_+\psi)(x_0,x_1) = \psi(x_1) \, .
\]
In \cite{A} it was possible to set $\eta_0=0$, and \eqref{e:newef} said exactly that $f_j$ was an eigenfunction of the weighted non-backtracking operator $\cB \zeta^{\gamma_j}$
for the eigenvalue $1$. At our level of generality, we do not know if $\zeta^{\lambda_j+i0}$ is well-defined on $\widetilde{G}_N$. We have to work with $\eta_0>0$ and let $\eta_0$ tend to $0$ only at the end of the proof, after $N$ has gone to $\infty$. Hence, $f_j$ is not exactly an eigenfunction, and our formulas will contain error terms of size $\eta_0$ that we will need to estimate precisely, to show that they disappear as $N\to +\infty$, followed by $\eta_0\downarrow 0$.

Similarly, if we put
\[
f_j^{\ast}(x_0,x_1) = \zeta_{x_1}^{\gamma_j}(x_0)^{-1}\psi_j(x_0) - \psi_j(x_1) \, ,
\] we note that $f_j^{\ast}=\iota f_j$ where $\iota$ is the edge reversal involution, and we get
\begin{equation}\label{e:newef2}
  \cB^{\ast} (\iota \zeta^{\gamma_j}f_j^{\ast}) = f_j^{\ast}  -i\eta_0\,\tau_-\psi_j\, .
\end{equation}

Let $I$ be an open interval such that $\overline{I}\subset I_1$. We define for $K \in \mathscr{H}_k$,
 
\begin{equation}\label{e:varnb}
 {\mathrm{Var_{nb, \eta_0}^I}}(K) = \frac{1}{N} \sum_{\lambda_j\in I} \left|\left\langle f_j^{\ast},K_Bf_j\right\rangle\right| \, .
\end{equation}
The dependence of this quantity on $\eta_0$ is hidden in the definition of $f_j, f_j^*$. The scalar product $\la\cdot, \cdot\ra$ is on $\ell^2(B)$ endowed with the uniform measure; cf. (\ref{eq:kb}).

\begin{rem} We call (\ref{e:varnb}) ``quantum variance'', in analogy to the quantity bearing this name in quantum chaos. However, there are some significant differences~:
\begin{itemize}
\item we use the functions $f_j$ and $f_j^*$ instead of the original $\psi_j$. They are (quasi)-eigenfunctions, respectively of the non-selfadjoint operators $\cB \zeta^{\gamma_j}$ and  $\cB^{\ast} \iota \zeta^{\gamma_j}$.
\item if $K$ is the identity operator $Id$, we do not have the normalization ${\mathrm{Var_{nb, \eta_0}^I}}(Id)=1$.
%In fact, in the models treated in \cite{A}, we have ${\mathrm{Var_{nb, \eta_0=0}^I}}(Id)=0$, which means that $f_j$ and $f_j^*$ are orthogonal.
\item we did not take the square of $\left|\left\langle f_j^{\ast},K_Bf_j\right\rangle\right|$ in the definition. This is purely for technical convenience, the square will appear later when we apply the Cauchy-Schwarz inequality.
\end{itemize}
\end{rem}
We will need to extend \eqref{e:varnb} to operators $K$ that depend on the eigenvalue $\lambda_j$ in a holomorphic fashion, as spelled out in the following definition. Note that $K$ also depends on $N$, also this tends to be implicit in our notation. We let $\C^+=\{\gamma\in \IC, \Im \gamma>0\}$.

\begin{defa}\label{d:hol}Assumptions {\bf (Hol)}.

We assume that $\gamma\mapsto K^\gamma=K_N^\gamma$ is a map from $\gamma\in\C^+$ to $\mathscr{H}_k$ such that~:
\begin{itemize}
\item For $\eta_0>0$, for each $N$ and $(x_0;x_k)$, the function $\lambda \mapsto K^{\lambda+i\eta_0}(x_0;x_k)$ from $\R\to \C$ has an analytic extension $K_{\eta_0}$ to the strip $\{z: |\Im z| < \eta_0/2\}$. 
\item Given $\eta_0>0$, we have $\sup_N \sup_{\Re z\in I_1, |\Im z|< \eta_0/2}\sup_{(x_0;x_k)}|K_{N, \eta_0}^z(x_0;x_k)|<+\infty$ and $\sup_N \sup_{\Re z\in I_1,|\Im z|< \eta_0/2}\sup_{(x_0;x_k)}|\partial_z K_{N, \eta_0 }^z(x_0;x_k)|<+\infty$. We write $\vertiii{K}_{\eta_0}$ for the maximum of these two quantities.
\item For all $s>0$,
\begin{equation}\label{e:relevantbound}
\sup_{\eta_1\in(0,1)}\limsup_{N\to+\infty}\sup_{\lambda\in I_1}\frac1N\sum_{(x_0;x_k)\in B_k} |K_{N}^{\lambda+i\eta_1}(x_0; x_k)|^s <+\infty \, .
\end{equation}
%\item We assume that, for a given $\eta_0$, for any $\eps>0$, there exists $d\in \IN$ and $K_N^k(x, y)$ ($ k=0,\ldots, d$) such that
%\[\sup_N \sup_{\eta_0/2<\Im z<3\eta_0/2}\sup_{x, y}|K_{N}^z(x, y)-\sum_{k=0}^d z^k K_N^k(x, y)|<\eps\]
%(approximation by a polynomial in $N$)
%\[\sup_N  \sup_{x, y}|K_N^k(x, y)|<+\infty.\]

%Note that this assumption is stronger than the previous one.
\end{itemize}
\end{defa}
If $\gamma\mapsto K^\gamma$ is holomorphic on $\C^+$, then it obviously satisfies the first point of the definition with $K_{\eta_0}(z)=K^{z+i\eta_0}$.
For instance, if $K^\gamma(x_0;x_k)$ has the form $\sum_{n\ge 0} a_{(x_0;x_k)}^{(n)}\gamma^n$, then we see that $\lambda \mapsto K^{\lambda+i\eta_0}(x_0;x_k)$ extends to $K_{\eta_0}(z)=\sum_{n\ge 0} a_{(x_0;x_k)}^{(n)}(z+i\eta_0)^n$. Note that, although $\gamma\mapsto \overline{K^\gamma}$ is not holomorphic, its restriction to an horizontal line is still a real-analytic map $\R\ni \lambda \mapsto \overline{K^{\lambda+i\eta_0}(x_0;x_k)}$, as it possesses an analytic extension given by $z\mapsto \sum_{n\ge 0} \overline{a_{(x_0;x_k)}^{(n)}}(z-i\eta_0)^n$. So $  \overline{K^\gamma}$ will satisfy  {\bf (Hol)} if $K^\gamma$ does.

Conditions \textbf{(Hol)} are stable under the sum and composition of operators.

We extend \eqref{e:varnb} to this setting, by letting
\begin{equation}\label{e:varnb2}
 {\mathrm{Var_{nb, \eta_0}^I}}(K^{\gamma}) = \frac{1}{N} \sum_{\lambda_j\in I} \left|\left\langle f_j^{\ast},K_{ B}^{\lambda_j+i\eta_0}f_j\right\rangle\right| \, .
\end{equation}

Most of the paper is devoted to showing~:
\begin{thm}\label{t:thm4}
Under assumptions \emph{\textbf{(EXP)}}, \emph{\textbf{(BSCT)}}, \emph{\textbf{(Green)}}, if $K^\gamma\in \mathscr{H}_k$ has the form $K^\gamma =\cF_\gamma K$ for the operators $\cF_\gamma$ in Corollary~\ref{cor:recurrence}, then
\[
\lim_{\eta_0\downarrow 0}\lim_{N\to +\infty} {\mathrm{Var_{nb, \eta_0}^I}}(K^{\gamma})=0 \, .
\]
\end{thm}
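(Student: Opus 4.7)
The plan is to follow the five-step scheme from Section~\ref{s:outline}, carrying out Steps 1--4 for the non-backtracking variance \eqref{e:varnb2} (Step~5 is left for Section~\ref{sec:retour}). Throughout, the key algebraic fact is that $f_j$ and $f_j^{\ast}$ are \emph{quasi-eigenfunctions}: by \eqref{e:newef} and \eqref{e:newef2}, $\mathcal{B}(\zeta^{\gamma_j}f_j) = f_j - i\eta_0\tau_+\psi_j$ and $\mathcal{B}^{\ast}(\iota\zeta^{\gamma_j}f_j^{\ast}) = f_j^{\ast}-i\eta_0\tau_-\psi_j$, so up to a controlled $O(\eta_0)$-error, $f_j$ is fixed by $\mathcal{B}\zeta^{\gamma_j}$ and $f_j^{\ast}$ by $\mathcal{B}^{\ast}\iota\zeta^{\gamma_j}$.

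\textbf{Step 1 (Hilbert--Schmidt bound).} Starting from Cauchy--Schwarz in \eqref{e:varnb2},
\[
\bigl(\mathrm{Var}^{I}_{nb,\eta_0}(K^{\gamma})\bigr)^{2}\;\le\;\Bigl(\tfrac1N\sum_{\lambda_j\in I}\|f_j^{\ast}\|_{\ell^2(B)}^{2}\Bigr)\Bigl(\tfrac1N\sum_{\lambda_j\in I}\|K_B^{\lambda_j+i\eta_0}f_j\|_{\ell^2(B)}^{2}\Bigr).
\]
The first factor is estimated by expanding $f_j^{\ast}(x_0,x_1)=\zeta_{x_1}^{\gamma_j}(x_0)^{-1}\psi_j(x_0)-\psi_j(x_1)$, using $\sum_j|\psi_j(x)|^2=1$ and then averaging over $x\in V_N$; via \textbf{(BSCT)} this empirical average converges to $\IE\bigl[\sum_{y\sim o}(|\hat\zeta_y^{\gamma}(o)|^{-2}+1)\bigr]$, which is bounded uniformly in $\eta_0$ by \textbf{(Green)} together with the identity \eqref{eq:sumzeta}. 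The second factor is treated analogously, using the completeness relation on $\ell^2(V_N)$ to convert $\sum_j\|K_B f_j\|^2$ into a trace-like expression involving $K^\gamma$ and its adjoint, which is then bounded by an appropriate weighted $\ell^2$-norm of the kernel~$K^\gamma$. The Definition~\ref{d:hol} hypothesis \eqref{e:relevantbound} is exactly what is needed so that these norms remain finite as $N\to\infty$.

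\textbf{Step 2 (Invariance).} Using the quasi-eigenvalue relations, for any $T\ge 1$ one may insert $(\mathcal{B}\zeta^{\gamma_j})^{t}$ on the right of $f_j$ and $(\mathcal{B}^{\ast}\iota\zeta^{\gamma_j})^{t}$ on the left of $f_j^{\ast}$, paying an $O(t\eta_0)$ error per insertion (controlled by the bounds on $\psi_j$ and $K$ from Step~1). Averaging over $t=0,\ldots,T-1$ replaces $K_B^{\gamma_j}$ by an ergodic time average $\mathcal{A}_T(K^{\gamma_j})$ under a conjugation by the weighted non-backtracking transfer operator. Combining with Step~1 gives
\[
\mathrm{Var}^{I}_{nb,\eta_0}(K^{\gamma})\;\le\;C(\eta_0)\,\bigl\|\mathcal{A}_T(K^{\gamma})\bigr\|_{\ell^{2}\text{-weighted}}+O(T\eta_0)+o_N(1),
\]
with $C(\eta_0)$ bounded as $\eta_0\downarrow 0$.

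\textbf{Steps 3--4 (Classicalization and mixing).} The operators $\mathcal{L}_\gamma$ obtained by conjugation by $\mathcal{B}\zeta^{\gamma}$ are transfer operators of a stationary Markov chain on the oriented edges $B$, with transition weights built from $|\zeta^{\gamma}|^{2}$ and stationary measure $\Phi_{\gamma}^{N}$. Its $N\to\infty$ limit, under \textbf{(BSCT)}, is the analogous Markov chain on the Benjamini--Schramm limit weighted by the random $\hat\zeta^{\gamma}$. The key input here is the spectral gap coming from \textbf{(EXP)}: it implies that for any $K^{\gamma}$ in the orthogonal complement of the invariant (constant-along-orbits) subspace, $\|\mathcal{A}_T(K^{\gamma})\|\to 0$ as $T\to\infty$, uniformly in $N$. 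The hypothesis $K^{\gamma}=\mathcal{F}_\gamma K$ in the statement is precisely the projection onto this orthogonal complement, so that $\mathcal{A}_T(K^\gamma)\to 0$. Choosing $T=T(N,\eta_0)$ that first tends to infinity with $T\eta_0\to 0$ (possible by taking $N\to\infty$ then $\eta_0\downarrow 0$) gives the claim.

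\textbf{Main obstacle.} The delicate point is Step~4: one must transfer the hypothesis \textbf{(EXP)}, which is a spectral gap for the \emph{simple} Laplacian $P_N$ on vertices, into a uniform mixing estimate for the weighted \emph{non-backtracking} Markov chain on $B_N$, whose weights $|\zeta^{\lambda+i\eta_0}|^2$ fluctuate with the disorder and depend on $\eta_0$. Establishing such a quantitative mixing rate that survives the double limit, \emph{and} interacts correctly with the $O(T\eta_0)$ error from Step~2 so that $T$ can be taken large while $T\eta_0\to 0$, is the heart of the proof. Assumption \textbf{(Green)}, through moment bounds on $\hat\zeta^{\gamma}$ and on $|1-|\hat\zeta\hat\zeta|^2|^{-s}$ implied by \eqref{eq:sumzeta}, is what ultimately allows the comparison between the two chains.
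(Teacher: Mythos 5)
Your sketch follows the right five-step scaffold, but several of the steps you describe would fail or are missing essential ideas.

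In Step~1, the plain Cauchy--Schwarz you write down does not produce the correct norm. The paper's \eqref{eq:firsteqonvar} uses a weighted splitting $\|\overline{\alpha_{\gamma_j}}^{-1}f_j^\ast\|\cdot\|\alpha_{\gamma_j}K_B^{\gamma_j}f_j\|$ with the specific weight $\alpha_{\gamma}(x_0,x_1)=|\Im\zeta_{x_1}^{\gamma}(x_0)|^{1/2}/\zeta_{x_1}^{\gamma}(x_0)$. This is what produces the weighted norm $\|\cdot\|_\gamma$ of \eqref{eq:normgamma}, whose weights $\frac{|\Im\zeta|}{|\zeta|^2}$ are precisely what make the edge measures $\mu_k^\gamma$ of Step~3 (approximately) stationary and Kolmogorov-compatible (Remark~\ref{r:compat}). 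With the unweighted Cauchy--Schwarz you propose, the "stationary Markov chain" does not appear. Moreover, "converting $\sum_j\|K_Bf_j\|^2$ into a trace-like expression via completeness" is not enough: the paper computes $\frac1N\sum_j\chi(\lambda_j)\|\alpha K_Bf_j\|^2$ as a Cauchy integral of finite-graph Green functions (equation \eqref{eq:intr}), then approximates the resolvent by a polynomial so that balls of radius $\rho_G(x_0)\ge d_{R,k,\eta}$ see only the universal cover, thereby replacing $g^z$ by $\tilde g^z$ (Lemma~\ref{lem:BSTvar}). This is how assumption \textbf{(BSCT)} enters and how the algebraic identities \eqref{eq:psiiden1}--\eqref{eq:psiiden2} of Corollary~\ref{cor:psiiden} collapse the double path sum; the completeness relation alone would not do it.

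The more fundamental gap is in Steps~3--4. The transfer operator that actually governs the invariance (equation~\eqref{e:nicer}) is $\mathcal{S}_{u^\gamma}=\mathcal{S}_\gamma M_{u^\gamma}$, which carries the unimodular phases $u_x^\gamma(y)=\overline{\zeta_x^\gamma(y)}/\zeta_x^\gamma(y)$. Your argument treats it as a stochastic Markov chain and invokes the spectral gap from \textbf{(EXP)} directly, but the phase-free mixing estimates (Propositions~\ref{lem:sp}, \ref{lem:7.5b}) only control $\mathcal{S}_\gamma$ on the orthogonal complement of $F$ or of constants; they say nothing about $\mathcal{S}_{u^\gamma}$. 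The paper needs an additional, Wielandt-type dichotomy (Proposition~\ref{p:Sualtmod}): either $\|\mathcal{S}_{u^\gamma}^4\|$ is strictly contracting, or the phases are forced into a rigid form $u_{x_1}^\gamma(x_0)\approx s\,e^{-i(\theta(x_0)+\theta(x_1))}/n_{x_0}^\gamma$. In the second alternative, the geometric sum over $r$ in \eqref{e:sum} does \emph{not} decay term by term; instead one shows (Proposition~\ref{p:a}, using the recursion \eqref{eq:green3} and \textbf{(Green)}) that the resulting constant $s=s_1s_2$ stays bounded away from $1$, so that $\sum_r s^r$ is bounded. This entire mechanism is absent from your proposal, and without it the decay of the time average $\mathcal{A}_T$ cannot be justified.

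Finally, your assertion that "$K^\gamma=\mathcal{F}_\gamma K$ is precisely the projection onto the orthogonal complement of the invariant subspace" is not what the hypothesis says. The operators $\mathcal{F}_\gamma$ in Corollary~\ref{cor:recurrence} are explicit compositions of $\mathcal{L}^\gamma d^{-1}\mathcal{S}_{T,\gamma}$, $\mathcal{U}_j^\gamma$, $\widetilde{\mathcal{T}}^\gamma$, $\mathcal{O}_j^\gamma$, $\mathcal{P}_j^\gamma$ coming from the Step~5 identities relating $\langle\psi_j,K_G\psi_j\rangle$ to $\langle f_j^\ast,(\cdot)_Bf_j\rangle$; they are not orthogonal projections. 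The orthogonal decomposition into constants plus a mean-zero part is performed \emph{inside} Section~\ref{s:step4}, and the constant part is disposed of by the $|s_1s_2-1|$ bound, not because $\mathcal{F}_\gamma K$ is already mean-zero.
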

These $\gamma \mapsto \cF_{\gamma}K$ satisfy \textbf{(Hol)}. The fact that this implies Theorem \ref{thm:1} is proven in Section \ref{sec:retour}, that may be read independently of the proof of Theorem \ref{t:thm4}.

\section{Step 1~: Bound on the non-backtracking quantum variance}                      \label{s:proof1} 

Given $\gamma\in \IC^+$, we introduce a norm on each $\mathscr{H}_k$, $k \ge 1$, defined by 
\begin{equation}      \label{eq:normgamma}
\|K\|_{{\gamma}}^2 =  \frac{1}{N} \sum_{(x_0;x_k)\in B_k} \frac{|\Im \zeta_{x_1}^{\gamma}(x_0)|}{|\zeta_{x_1}^{\gamma}(x_0)|^2} \cdot |K(x_0;x_k)|^2 \cdot \frac{|\Im \zeta_{x_{k-1}}^{\gamma}(x_k)|}{|\zeta_{x_{k-1}}^{\gamma}(x_k)|^2} \, .
\end{equation}
We denote by $\la\cdot, \cdot\ra_{\gamma}$ the associated scalar product. The reason for introducing the weight $\frac{|\Im \zeta_x^{\gamma}(y)|}{|\zeta_x^{\gamma}(y)|^2}$ will be apparent in Section~\ref{s:Mark}. The aim of this section is to prove Theorem~\ref{thm:upvar}. Here, we assume that $I=(a,b)$, with $[a,b]\subset I_1$. This implies that there is $\eta_{a,b}$ such that $(a-2\eta,b+2\eta)\subset I_1$ for all $\eta \le \eta_{a,b}$. We then assume that $\eta \le \min(\eta_0/2,\eta_{a,b})$.

\begin{thm}       \label{thm:upvar}
Under assumptions \emph{\textbf{(BSCT)}}, \emph{\textbf{(Green)}}, if $K^\gamma \in \mathscr{H}_k$ satisfies the set of assumptions \emph{\textbf{(Hol)}}, then for any interval $I=(a,b)$ as above,
\[
\lim_{\eta_0\downarrow 0}\limsup_{N\to +\infty}\varnbi(K^{\gamma})^2\leq D\,|I|\,\lim_{\eta_0 \downarrow 0}\lim_{\eta\downarrow 0}\limsup_{N\to \infty} \int_{a-2\eta}^{b+2\eta} \|K^{\lambda+i(\eta^4+\eta_0)}\|_{\lambda+i(\eta^4+\eta_0)}^2\, \dd \lambda \, .
\]
\end{thm}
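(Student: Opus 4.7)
The strategy is to reduce the non-backtracking quantum variance to a Hilbert--Schmidt-type estimate via three moves: a Cauchy--Schwarz over the eigenvalue index $j$, an analytic smoothing of the eigenvalue constraint by a Poisson kernel, and a spectral trace calculation that uses the Green-function identities of Lemma~\ref{lem:zetapot} to recover the weight appearing in \eqref{eq:normgamma}. First, I would apply the outer Cauchy--Schwarz,
\[
\varnbi(K^\gamma)^2 \leq \frac{\#\{j:\lambda_j\in I\}}{N}\cdot \frac{1}{N}\sum_{\lambda_j\in I}\bigl|\langle f_j^{*},K_B^{\gamma_j}f_j\rangle\bigr|^2 ,
\]
and bound the prefactor by $D|I|$ using \textbf{(BSCT)} and the uniform bound on $\IE(\Im \cG^\gamma(o,o))$ from \textbf{(Green)}, which controls the density of states of $H_N$ in a window contained in $I_1$.

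Next, using the definition $f_j(x_0,x_1)=\zeta_{x_0}^{\gamma_j}(x_1)^{-1}\psi_j(x_1)-\psi_j(x_0)$ and the analogous formula for $f_j^{*}$, rewrite the inner product as $\langle\psi_j,\mathbf L^{\gamma_j}\psi_j\rangle$, where $\mathbf L^\gamma$ is an operator on $\ell^2(V)$ built from $K^\gamma$, $\zeta^\gamma$ and $\overline{\zeta^\gamma}$. For fixed $\eta_0>0$ the map $\lambda\mapsto \mathbf L^{\lambda+i\eta_0}$ is real-analytic and splits into a finite sum of holomorphic and antiholomorphic pieces admitting a common extension to the strip $\{|\Im z|<\eta_0/2\}$, thanks to \textbf{(Hol)} and the analyticity of $\gamma\mapsto \zeta^\gamma$ on $\C^+$. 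Combining the trivial bound $|\langle\psi_j,\mathbf L^\mu\psi_j\rangle|^2\leq \|\mathbf L^\mu\psi_j\|^2$ with a Poisson mean-value inequality (Cauchy's formula applied on a horizontal contour in that strip) then shifts the evaluation point $\gamma_j=\lambda_j+i\eta_0$ up to $\mu=\lambda+i(\eta_0+\eta^4)$, yielding
\[
\bigl|\langle \psi_j,\mathbf L^{\gamma_j}\psi_j\rangle\bigr|^2 \leq \frac{1}{\pi}\int_\R \bigl\|\mathbf L^{\lambda+i(\eta_0+\eta^4)}\psi_j\bigr\|^2\,\frac{\eta^4}{(\lambda-\lambda_j)^2+\eta^{8}}\,\dd\lambda .
\]
The fine exponent $\eta^4$, rather than $\eta$, is imposed by the need to balance this smoothing against the error made when restricting the subsequent $\lambda$-integration to an $\eta$-neighbourhood of $I$.

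Summing over $j$ and exchanging sum and integral turns the right-hand side into a trace, since
\[
\sum_j \frac{\eta^4}{(\lambda-\lambda_j)^2+\eta^{8}}\,\psi_j\psi_j^{*} = \Im (H-\lambda-i\eta^4)^{-1}.
\]
Restricting $\lambda$ to $(a-2\eta,b+2\eta)$ truncates the Poisson tail at size $\eta^4/\eta^2=\eta^2$ times the full trace, which stays bounded in $N$ by \textbf{(Green)} and is therefore negligible. This produces the intermediate bound
\[
\frac{1}{N}\sum_{\lambda_j\in I}\bigl|\langle \psi_j,\mathbf L^{\gamma_j}\psi_j\rangle\bigr|^2 \leq \frac{1}{\pi N}\int_{a-2\eta}^{b+2\eta}\tr\bigl[(\mathbf L^{\lambda+i(\eta_0+\eta^4)})^{*}\mathbf L^{\lambda+i(\eta_0+\eta^4)}\,\Im(H-\lambda-i\eta^4)^{-1}\bigr]\dd\lambda + o(1).
\]

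The final and most delicate step is to show that the integrand is $\leq \|K^{\lambda+i(\eta_0+\eta^4)}\|^2_{\lambda+i(\eta_0+\eta^4)}+o(1)$. Writing $\mathbf L^\mu$ as the sum of the four bilinear pieces coming from the expansion of $\overline{f_j^{*}(x_0,x_1)}f_j(x_{k-1},x_k)$ in terms of $\psi_j(x_0)$, $\psi_j(x_1)$, $\psi_j(x_{k-1})$, $\psi_j(x_k)$, one has to pair the diagonal matrix elements of $\Im(H-\lambda-i\eta^4)^{-1}$ with the factors $\zeta^\mu$ and $\overline{\zeta^\mu}$ located at the starting and ending edges of each non-backtracking path $(x_0;x_k)$. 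The key algebraic identity $|\Im(1/\zeta_w^\gamma(v))|=|\Im\zeta_w^\gamma(v)|/|\zeta_w^\gamma(v)|^2$ extracted from \eqref{eq:mv} produces exactly the weight $\frac{|\Im\zeta^\mu_{x_1}(x_0)|}{|\zeta^\mu_{x_1}(x_0)|^2}\cdot\frac{|\Im\zeta^\mu_{x_{k-1}}(x_k)|}{|\zeta^\mu_{x_{k-1}}(x_k)|^2}$ that appears in \eqref{eq:normgamma}. The off-diagonal resolvent contributions sample pairs of distinct vertices and are $o(1)$ as $\eta\downarrow 0$ after $N\to\infty$ by \textbf{(BSCT)} and \textbf{(Green)}. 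I expect this trace computation, together with the careful matching of the two distinct imaginary parts $\eta_0+\eta^4$ (inside $\mathbf L^\mu$) and $\eta^4$ (inside the resolvent producing the Poisson kernel), to be the main technical obstacle of the proof.
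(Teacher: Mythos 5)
Your high-level template — Cauchy--Schwarz over $j$, analytic smoothing of the eigenvalue constraint, and collapse of the resulting spectral sum using the tree Green-function identities — is the paper's, and your observation that $\sum_j\frac{\eta^4}{(\lambda-\lambda_j)^2+\eta^8}\psi_j\psi_j^*=\Im(H-\lambda-i\eta^4)^{-1}$ plays the same role as the contour integral with the entire cutoff $\chi$. The gap is in \emph{where} you apply Cauchy--Schwarz. You write $\langle f_j^*,K_B^{\gamma_j}f_j\rangle=\langle\psi_j,\mathbf{L}^{\gamma_j}\psi_j\rangle$ and bound $|\langle\psi_j,\mathbf{L}^{\gamma_j}\psi_j\rangle|^2\leq\|\mathbf{L}^{\gamma_j}\psi_j\|^2$. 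The paper instead factorizes $\langle f_j^*, K_B^{\gamma_j}f_j\rangle=\langle\overline{\alpha_{\gamma_j}}^{-1}f_j^*,\alpha_{\gamma_j}K_B^{\gamma_j}f_j\rangle$ with the tailored weight $\alpha_\gamma(x_0,x_1)=\frac{|\Im\zeta_{x_1}^\gamma(x_0)|^{1/2}}{\zeta_{x_1}^\gamma(x_0)}$, and then Cauchy--Schwarz over $j$ produces two \emph{separate} sums: $\frac1N\sum_j\|\overline{\alpha_{\gamma_j}}^{-1}f_j^*\|^2$, controlled by $D|I|$ via \eqref{eq:psiiden1}, and $\frac1N\sum_j\chi(\lambda_j)\|\alpha_{\gamma_j}K_B^{\gamma_j}f_j\|^2$, controlled by $\int\|K^\mu\|_\mu^2\,\dd\lambda$. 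The weight $\alpha$ is not cosmetic: $|\alpha_\gamma(x_0,x_1)|^2=\frac{|\Im\zeta_{x_1}^\gamma(x_0)|}{|\zeta_{x_1}^\gamma(x_0)|^2}$ is precisely the factor at the \emph{initial} edge in \eqref{eq:normgamma}, while expanding $|f_j(x_{k-1},x_k)|^2$ and summing over $j$ produces the same factor at the \emph{terminal} edge. In your version, $\mathbf{L}^\mu$ carries $\overline{\zeta^\mu}^{-1}$ and $\zeta^{\mu\,-1}$ factors at both ends, and $\operatorname{tr}\bigl[(\mathbf{L}^\mu)^*\mathbf{L}^\mu\,\Im(H-\lambda-i\eta^4)^{-1}\bigr]$ mixes them with resolvent entries $\Im g$ between vertices at distance up to $2k$; nothing in the sketch shows this collapses to $\|K^\mu\|_\mu^2$, and there is no reason to expect it does.

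The second gap concerns the off-diagonal resolvent entries, which you dismiss as ``$o(1)$ as $\eta\downarrow 0$ after $N\to\infty$ by \textbf{(BSCT)} and \textbf{(Green)}.'' On a tree, $\Im\tilde g^\gamma(\tilde x,\tilde y)$ for $\tilde x\neq\tilde y$ at bounded distance is of order one — a product of $\zeta$-factors with $\Im\tilde g^\gamma(\tilde x,\tilde x)$ — and it does not vanish in the limit. In the paper, the off-diagonal contributions are not small; they cancel \emph{exactly}: the four-term combination \eqref{eq:greenterms2} vanishes when $(x_0;x_k)\neq(x_0;y_k)$ by the identity \eqref{eq:psiiden2} of Corollary~\ref{cor:psiiden}, and equals $\frac{|\Im\zeta^{z+i\eta_0}_{x_{k-1}}(x_k)|}{|\zeta^{z+i\eta_0}_{x_{k-1}}(x_k)|^2}$ when the paths coincide, by \eqref{eq:psiiden1}. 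This cancellation hinges on the four terms $\Psi_{\tilde x_k}(\tilde y_k)$, $\Psi_{\tilde x_k}(\tilde y_{k-1})$, $\Psi_{\tilde x_{k-1}}(\tilde y_k)$, $\Psi_{\tilde x_{k-1}}(\tilde y_{k-1})$ appearing with the precise coefficients $1$, $-\zeta_{e_k}^{-1}$, $-\overline{\zeta_{e_k'}}^{-1}$, $\zeta_{e_k}^{-1}\overline{\zeta_{e_k'}}^{-1}$, which is automatic from expanding $|f_j(x_{k-1},x_k)|^2$ inside $\|\alpha_{\gamma_j}K_B^{\gamma_j}f_j\|^2$, but which the $\mathbf{L}^*\mathbf{L}$ quadratic form does not reproduce. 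Without this step there is no mechanism that kills the off-diagonal Green-function entries, and the bound does not close.
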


In the scheme of \S \ref{s:outline}, this corresponds to Step 1. This is more complicated than usual, due to the fact that we have replaced the orthonormal family $(\psi_j)$ by non-orthogonal functions $(f_j), (f_j^*)$, and also because $K$ ``depends on $\lambda_j$'' in \eqref{e:varnb2}.

Recall that $D$ above is the maximal degree and we assumed $|W_N(x)|\le A$. In particular, any eigenvalue $\lambda_j \in I_0 :=[-(A+D),A+D]$. For $\lambda\in \IR$ and $\eta_0\in(0,1)$, let
\[
\alpha_{\lambda+i\eta_0}(x_0,x_1)=\frac{|\Im \zeta^{\lambda+i\eta_0}_{x_1}(x_0)|^{1/2}}{\zeta_{x_1}^{\lambda+i\eta_0}(x_0)} \, .
\]
Denoting $\gamma_j={\lambda_j+i\eta_0}$, we have (by a double application of the Cauchy-Schwarz inequality)
\begin{align}    \label{eq:firsteqonvar}
  {\mathrm{Var_{nb, \eta_0}^I}}(K^{\gamma})  
  &  \le \frac{1}{N} \sum_{\lambda_j\in I}  \left\| \overline{\alpha_{\gamma_j}}^{-1}f_j^{\ast}\right\|  \left\|  \alpha_{\gamma_j} K^{\gamma_j}_Bf_j\right\|  \nonumber \\
& \le \frac{1}{N} \Big(\sum_{\lambda_j\in I}   \left\|\overline{\alpha_{\gamma_j}}^{-1} f_j^{\ast}\right\|^2\Big)^{1/2} \Big(\sum_{\lambda_j\in I} \left\|  \alpha_{\gamma_j} K^{\gamma_j}_Bf_j\right\|^2\Big)^{1/2}.
\end{align}
We check at the end of the section that
\begin{equation}\label{e:flim'}
\lim_{\eta_0\downarrow 0} \limsup_{N\to +\infty}\frac{1}{N} \sum_{\lambda_j\in I}   \left\|\overline{\alpha_{\gamma_j}}^{-1} f_j^{\ast}\right\|^2 \le D \cdot |I| \, .
\end{equation}

We now introduce an approximation $\chi$ of $\bbbone_I$ by an entire function, by the standard convolution procedure~:

Fix $0 < \eta \le \eta_0/2$. Let $\phi(x) = \frac{1}{\pi^{1/2}}e^{-x^2}$ and denote $\phi_{\epsilon}(x) = \epsilon^{-1}\phi(x/\epsilon)$. Let $\chi$ be the convolution $\chi = \phi_{\eta^{3/2}} \ast \bbbone_I$ on $\R$. Then $\chi$ extends to an entire function on $\C$ given by 
\begin{equation}
\chi(z) = \frac{1}{\eta^{3/2}\pi^{1/2}} \int_I e^{-(z-y)^2/\eta^3}\,\dd y.\label{e:convol}
\end{equation}
Note that $0 \le \chi(x) \le 1$ for $x\in \R$, and $|\chi(z)| \le e^{\eta^5}$ for $|\Im z| \le \eta^4$. We assume $\eta$ is small enough so that $\chi  \ge \frac{1}{3} \bbbone_I$ and $|\chi(z)| \le e^{-1/\eta}$ on $\{z\in\IC: |\Im z|\leq \eta^4,\ d(\Re z, I)\geq 2\eta\}$. We finally note that $|\frac{\partial \chi}{\partial t_2}(t_1+it_2)| \le C\eta^{-3}e^{\eta^5}$ for any $z=t_1+it_2$ with $t_1\in I_0$ and $|t_2| \le \eta^4$.

%Introduce $0<\eta<\eta_0$ and let $\chi (z)=\frac{1}{\eta^{3/2} \pi^{1/2}}\int_I  e^{-(z-y)^2/\eta^3 }\,\dd y$. Then $\chi$ is an entire function on $\C$. Note that $0 \le \chi(x) \le 1$ for $x\in \R$, and $|\chi(z)| \le e^{\eta}$ for $|\Im z| \le \eta^2$. We assume $\eta$ is small enough so that $\chi \ge \frac{1}{3}  \bbbone_I$ and $|\chi(z)| \le e^{-1/\eta}$ on $\{z\in\IC: |\Im z|\leq \eta^2,\ d(\Re z, I)\geq 2\eta\}$. 

By (\ref{eq:firsteqonvar}) and (\ref{e:flim'}) we have
\begin{equation}    \label{e:very1st}
\limsup_{N\to \infty}{\mathrm{Var_{nb, \eta_0}^I}}(K^{\gamma})^2 \le \limsup_{N\to \infty}\frac{3D\,|I|}{N} \sum_{j=1}^N  \chi(\lambda_j)\,\|  \alpha_{\gamma_j}K_B^{\gamma_j}f_j\|^2 \, .
\end{equation}
Now by (\ref{eq:KB2}), we have
\begin{align*}
\|\alpha_{\gamma_j}K_B^{\gamma_j}f_j\|^2 & = \sum_{(x_0,x_1)\in B} \sum_{(x_2;x_k) ,(y_2;y_k) } |\alpha_{\gamma_j}(x_0,x_1)|^2K^{\gamma_j}(x_0;x_k)\overline{K^{\gamma_j}(x_0;y_k)} \\
& \qquad \cdot [\zeta_{x_{k-1}}^{\gamma_j}(x_k)^{-1}\psi_j(x_k)-\psi_j(x_{k-1})]\overline{[\zeta_{y_{k-1}}^{\gamma_j}(y_k)^{-1}\psi_j(y_k)-\psi_j(y_{k-1})]} \, ,
\end{align*}
where $(x_0;x_k)=(x_0,x_1,x_2,\dots,x_k)$, $(x_0;y_k)=(x_0,x_1,y_2,\dots,y_k)$ and with the convention that $K^{\gamma_j}(x_0;x_k)=0$ if the path $(x_0,x_1,x_2,\dots,x_k)$ backtracks. The function $\lambda \mapsto |\alpha_{\lambda+i\eta_0}(x_0,x_1)|^2=\frac{-\Im \zeta_{x_1}^{\lambda+i\eta_0}(x_0)}{|\zeta_{x_1}^{\lambda+i\eta_0}(x_0)|^2}$ extends analytically to the rectangle $\mathscr{R} = \{z\in \C: \Re z \in [-(A+D+\eta),(A+D+\eta)], \Im z \in [-\eta^4,\eta^4]\}$ through the formula $\frac{\zeta_{x_1}^{z-i\eta_0}(x_0) - \zeta_{x_1}^{z+i\eta_0}(x_0)}{2i\,\zeta_{x_1}^{z+i\eta_0}(x_0)\zeta_{x_1}^{z-i\eta_0}(x_0)}$. We denote this by ${\alpha}_{\eta_0}^z(x_0,x_1)$ (which is not the same as $|\alpha_{z+i\eta_0}(x_0,x_1)|^2$). The same is true for the other $\zeta$ terms. We denote the extension of $\lambda \mapsto K^{\lambda+i\eta_0}(x_0;x_k) \overline{K^{\lambda+i\eta_0}(x_0;y_k)}$ by ${K}^{z}_{\eta_0}(x_0;x_k,y_k)$. Again, if $(x_0;y_k) = (x_0;x_k)$, this is not the same as $|K^{z+i\eta_0}(x_0;x_k)|^2$. However, see Lemma~\ref{l:errors} to compare both.

Given $x, y\in V$ and $z\in \C \setminus \R$, let
\[
g^z(x, y)=\la \delta_x, (H-z)^{-1}\delta_y\ra_{\ell^2(V)}=\sum_{j=1}^N  \frac{ \psi_j(x)\overline{\psi_j(y)}}{\lambda_j-z}
\]
be the Green function of $H$ on the finite graph $G$. Then by Cauchy's integral formula,

\begin{align}      \label{eq:intr}
\frac{1}{N}\sum_{j=1}^N\chi(\lambda_j)\,\|\alpha_{\gamma_j}K_B^{\gamma_j}f_j\|^2 & = \frac{-1}{2i\pi N} \int_{z\in \partial \mathscr{R}} \sum_{(x_0,x_1)\in B} \sum_{(x_2;x_k) ,(y_2;y_k) } \chi(z){\alpha}_{\eta_0}^z(x_0,x_1) \nonumber \\
& \qquad {K}^{z}_{\eta_0}(x_0;x_k,y_k)\cdot \Big[\frac{g^z(x_k,y_k)}{\zeta_{x_{k-1}}^{z+i\eta_0}(x_k)\zeta_{y_{k-1}}^{z-i\eta_0}(y_k)} - \frac{g^z(x_k,y_{k-1})}{\zeta_{x_{k-1}}^{z+i\eta_0}(x_k)} \\
& \qquad \qquad \qquad \qquad \qquad   - \frac{g^z(x_{k-1},y_k)}{\zeta_{y_{k-1}}^{z-i\eta_0}(y_k)} + g^z(x_{k-1},y_{k-1})\Big] \,\dd z\, . \nonumber
\end{align}

We now observe that the integral over the vertical segments of the contour do not contribute as $\eta,\eta_0 \downarrow 0$. More precisely,

\begin{lem}     \label{l:intseg}
The integral $\frac{-1}{2i\pi N}\int_{z\in \partial \mathscr{R}} F(z)\,\dd z$ in \eqref{eq:intr} may be replaced by $\frac{1}{2i\pi N} (\int_{a-2\eta}^{b+2\eta} F(\lambda+i\eta^4)\,\dd \lambda - \int_{a-2\eta}^{b+2\eta} F(\lambda-i\eta^4)\,\dd \lambda$, up to an error term at most $C_{k,D,A}\eta_0^{-3}\eta^{-4}\vertiii{K}_{\eta_0}^2e^{-1/\eta}$.
\end{lem}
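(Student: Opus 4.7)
The plan is to split the rectangular contour $\partial\mathscr{R}$ into the two horizontal segments (restricted to $\Re z\in[a-2\eta,b+2\eta]$) that will produce the announced expression, and a ``remainder'' consisting of the two vertical sides together with the two horizontal tails $\{z\in\partial\mathscr{R}:\Im z=\pm\eta^4,\ d(\Re z,I)\ge 2\eta\}$. With the counterclockwise orientation of $\partial\mathscr{R}$, the bottom segment (oriented left to right) contributes $+F(\lambda-i\eta^4)$ and the top segment (oriented right to left) contributes $-F(\lambda+i\eta^4)$. Multiplying by the prefactor $-1/(2i\pi)$ produces exactly the two stated integrals once the tails are discarded, so the lemma reduces to bounding $\frac{1}{N}\int_{\mathcal R_0}|F(z)|\,|\dd z|$, where $\mathcal R_0$ is the remainder.

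The key point is that on $\mathcal R_0$ one has $|\chi(z)|\le e^{-1/\eta}$. On the horizontal tails this is by construction. On the vertical sides $|\Re z|=A+D+\eta$, and since $I\subset I_1\subset(-(A+D),A+D)$, for $\eta\le\eta_{a,b}$ one has $d(\Re z,I)\ge 2\eta$ as well; combined with $|\Im z|\le\eta^4$, the assumption on $\chi$ gives the exponential bound. This exponential damping drives the whole estimate.

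Next I bound the remaining factors of $F(z)$ pointwise. Throughout $\mathscr R$ one has $|\Im(z\pm i\eta_0)|\ge \eta_0/2$, so the trivial resolvent bound yields $|\zeta_{x_1}^{z\pm i\eta_0}(x_0)|\le 2/\eta_0$, and hence $|\alpha_{\eta_0}^z|\le C/\eta_0$. The identity $\frac{1}{\zeta_w^{\gamma}(v)}=\zeta_v^{\gamma}(w)+2m_v^{\gamma}$ of \eqref{eq:mv} combined with the elementary lower bound $|G(v,v;\gamma)|\ge c|\Im\gamma|$ (itself a consequence of $\|(H-\gamma)^{-1}\delta_v\|\ge(\|H\|+|\gamma|)^{-1}$) gives $|1/\zeta^{z\pm i\eta_0}|\le C/\eta_0$. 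The $K$-factor is controlled by $\|K\|_{\eta_0}^2$ by definition. The only factor behaving differently on the vertical versus horizontal parts of $\mathcal R_0$ is the finite-volume Green function $g^z(x,y)$: on the horizontal tails $|\Im z|=\eta^4$ gives $|g^z|\le\eta^{-4}$, while on the vertical sides $|\Re z|=A+D+\eta$ lies outside the spectrum of $H$ at distance $\ge\eta$, so $|g^z|\le\eta^{-1}$.

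It then suffices to sum and integrate. For fixed $(x_0,x_1)\in B$ the number of admissible pairs of non-backtracking continuations $((x_2;x_k),(y_2;y_k))$ is at most $D^{2(k-1)}$, and $|B|\le DN$, giving $C_{k,D}N$ terms. Multiplying the bound on the bracket (whose dominant piece is $|g^z|\cdot|\zeta^{-1}|^2$) by $|\alpha_{\eta_0}^z|\cdot\|K\|_{\eta_0}^2\cdot|\chi(z)|$, integrating over $\mathcal R_0$ (whose total length is bounded by $2(A+D)+4\eta^4$), and dividing by $N$ gives a bound of the announced form $C_{k,D,A}\eta_0^{-5}\eta^{-2}\|K\|_{\eta_0}^2 e^{-1/\eta}$; the vertical contribution is manifestly smaller than the horizontal one thanks to its $O(\eta^4)$ length. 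The only ``hard'' part is the bookkeeping of the powers of $\eta_0$ and $\eta$, together with the sub-lemma giving the lower bound on $|G(v,v;\gamma)|$ needed to invert $\zeta$; no genuinely new idea beyond the exponential smallness of $\chi$ and the crude pointwise estimates on $g^z$ and $\zeta$ is required.
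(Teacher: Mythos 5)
Your proof takes essentially the same route as the paper's: split $\partial\mathscr{R}$ into the two horizontal segments over $(a-2\eta,b+2\eta)$ plus a remainder consisting of the vertical sides and horizontal tails, invoke $|\chi(z)|\le e^{-1/\eta}$ on the remainder, estimate all other factors ($\alpha_{\eta_0}^z$, the $K$-factors, the $\zeta^{-1}$-weights, the finite-volume Green function $g^z$) by crude $\eta_0$- and $\eta$-dependent powers, and count paths to produce the factor $C_{k,D}N$ that cancels the $1/N$. This is exactly what the paper does, and your intermediate bounds are correct; you are in fact slightly sharper on two points (you bound $|\alpha_{\eta_0}^z|$ by $C/\eta_0$ via the identity $\alpha_{\eta_0}^z=\frac{1}{2i}(\zeta^{z+i\eta_0}_e{}^{-1}-\zeta^{z-i\eta_0}_e{}^{-1})$ whereas the paper uses the cruder $C\eta_0^{-3}$, and you distinguish the $O(\eta^{-1})$ behaviour of $g^z$ on the vertical sides from the $O(\eta^{-4})$ on the horizontal tails), but since everything is swallowed by $e^{-1/\eta}$ these refinements are immaterial. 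One small slip at the very end: your actual computation yields $\eta^{-4}$ from the horizontal tails, not $\eta^{-2}$ as you announce, but this discrepancy is present in the paper's own proof as well (their factors $\eta_0^{-3}\cdot 4\eta_0^{-2}\eta^{-4}$ give $\eta^{-4}$, not the $\eta^{-2}$ in the statement) and is harmless because $\eta^{-4}e^{-1/\eta}\to 0$ in any case.
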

\begin{proof}
The error is the integral of $F(z)$ on the two vertical paths $\{ \Re z = -A-D-\eta, \Im z\in [-\eta^4, \eta^4]\}$,
 $\{ \Re z = A+D+\eta, \Im z\in [-\eta^4, \eta^4]\}$, and the four connected components of the set $\{\Im z= \pm \eta^4, \Re z \in [-A-D-\eta, A+D+\eta]\setminus (a-2\eta,b+2\eta)\}$. On these pieces, we know that $|\chi(z)|\le e^{-1/\eta}$. Moreover, $| {K}^{z}_{\eta_0}(x_0;x_k,y_k)| \le \vertiii{K}_{\eta_0}^2$. Next, $|{\alpha}_{\eta_0}^z| = \frac{1}{2}\big|\frac{1}{\zeta_{x_1}^{z+i\eta_0}(x_0)} - \frac{1}{\zeta_{x_1}^{z-i\eta_0}(x_0)}\big| \le c_{D,A}\big(\frac{1}{\eta_0+\eta^4}+\frac{1}{\eta_0-\eta^4}\big)$ by (\ref{e:green3cons}). Since $\eta\le \eta_0/2$ by assumtpion, this yields $|{\alpha}_{\eta_0}^z| \le C_{D,A}\eta_0^{-1}$. The Green functions and $\zeta$ terms may be bounded similarly by $4c_{D,A}\eta_0^{-2}\eta^{-4}$. A factor $C_{k,D}$ comes from the number of paths, divided by $N$.
\end{proof}

Our next aim is to lift this expression to the universal cover $\widetilde{G}$. In other words, we wish to replace $g^z$ by $\tilde g^z$ everywhere, to be able to use the identities of \S \ref{s:ident}.

\begin{lem}          \label{lem:BSTvar}
Denote $z=\lambda+i\eta^4$. Given $R\in \N^{\ast}$, there is $d_{R,k,\eta}>0$ such that the integral $\frac{1}{2i\pi N} \int_{a-2\eta}^{b+2\eta} F(z)\,\dd \lambda$ in Lemma~\ref{l:intseg} may be replaced by
\begin{align*}
& \frac{1}{2i\pi N} \int_{a-2\eta}^{b+2\eta} \sum_{\rho_G(x_0) \ge d_{R,k,\eta}} \sum_{x_1\sim x_0} \sum_{(x_2;x_k) ,(y_2;y_k) } \chi(z){\alpha}_{\eta_0}^z(x_0,x_1) \\
& \qquad {K}^{z}_{\eta_0}(x_0;x_k,y_k)\cdot \Big[\frac{\tilg^z(\tilde{x}_k,\tilde{y}_k)}{\zeta_{e_k}^{z+i\eta_0}\zeta_{e_k'}^{z-i\eta_0}} - \frac{\tilg^z(\tilde{x}_k,\tilde{y}_{k-1})}{\zeta_{e_k}^{z+i\eta_0}} - \frac{\tilg^z(\tilde{x}_{k-1},\tilde{y}_k)}{\zeta_{e_k'}^{z-i\eta_0}} + \tilg^z(\tilde{x}_{k-1},\tilde{y}_{k-1})\Big] \,\dd \lambda \,,
\end{align*}
where $\zeta_{e_k}^{\gamma} = \zeta_{x_{k-1}}^{\gamma}(x_k)$ and $\zeta_{e_k'}^{\gamma} = \zeta_{y_{k-1}}^{\gamma}(y_k)$,
up to an error term $(\frac{\# \{\rho_G(x_0) < d_{R,k,\eta}\}}{N}\eta^{-4} + \frac{1}{R})C_{k,D,A}\eta_0^{-3} \vertiii{K}_{\eta_0}^2e^{\eta^5}$.

Similarly, $\frac{1}{2i\pi N} \int_{a-2\eta}^{b+2\eta}F(\bar{z})\,\dd \lambda$ in Lemma~\ref{l:intseg} may be replaced by
\begin{align*}
& \frac{1}{2i\pi N} \int_{a-2\eta}^{b+2\eta} \sum_{\rho_G(x_0) \ge d_{R,k,\eta}} \sum_{x_1\sim x_0} \sum_{(x_2;x_k) ,(y_2;y_k) }   \chi(\bar{z})  {\alpha}^{\bar{z}}_{\eta_0}(x_0,x_1) {K}^{\bar{z}}_{\eta_0}(x_0;x_k,y_k) \\
& \qquad \qquad \qquad\cdot \Big[\frac{\tilg^{\bar{z}}(\tilde{x}_k,\tilde{y}_k)}{\zeta_{e_k}^{\bar{z}+i\eta_0}\zeta_{e_k'}^{\bar{z}-i\eta_0}} - \frac{\tilg^{\bar{z}}(\tilde{x}_k,\tilde{y}_{k-1})}{\zeta_{e_k}^{\bar{z}+i\eta_0}}  - \frac{\tilg^{\bar{z}}(\tilde{x}_{k-1},\tilde{y}_k)}{\zeta_{e_k'}^{\bar{z}-i\eta_0}} + \tilg^{\bar{z}}(\tilde{x}_{k-1},\tilde{y}_{k-1})\Big]   \,\dd \lambda
\end{align*}
up to an error term $(\frac{\# \{\rho_G(x_0) < d_{R,k,\eta}\}}{N}\eta^{-4} + \frac{1}{R})C_{k,D,A}\eta_0^{-3} \vertiii{K}_{\eta_0}^2e^{\eta^5}$.
\end{lem}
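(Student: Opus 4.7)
The strategy rests on the \emph{lifting identity}
\[
g^z(x,y) \;=\; \sum_{\gamma \in \Gamma} \tilde g^z(\tilde x, \gamma \tilde y), \qquad \Im z > 0,
\]
valid because both sides are $\Gamma$-invariant solutions of $(H-z)\phi = \delta_y$ on $V$, with the right-hand side converging absolutely thanks to the Combes--Thomas exponential decay of $\tilde g^z$. Under the injectivity-radius assumption $\rho_G(x_0) \ge d_{R,k,\eta} := 2k+R$, I will show that only the identity term $\gamma = e$ contributes to leading order, up to an error of size $1/R$; the other error term comes from crudely estimating the contribution of vertices with $\rho_G(x_0) < d_{R,k,\eta}$.

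When $\rho_G(x_0) \ge 2k+R$, the ball $B_G(x_0,2k+R)$ is a tree, so $\pi$ restricted to $B_{\tilde G}(\tilde x_0,2k+R)$ is an isometric bijection onto it. This gives canonical lifts $\tilde x_{k-1},\tilde x_k,\tilde y_{k-1},\tilde y_k \in B_{\tilde G}(\tilde x_0,2k)$ such that the $\gamma=e$ term of the lifting identity produces exactly the universal-cover Green functions in the target expression. Any $\gamma \neq e$ sends $\tilde y_k$ (and $\tilde y_{k-1}$) outside $B_{\tilde G}(\tilde x_0,2k+R)$, hence places $\gamma \tilde y_k$ at distance $\ge R$ from $\tilde x_k$. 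For $x_0$ with $\rho_G(x_0)<d_{R,k,\eta}$, the bound from the proof of Lemma~\ref{l:intseg} on the integrand yields the first error term
$(\#\{\rho_G(x_0)<d_{R,k,\eta}\}/N)\,C_{k,D,A}\eta_0^{-5}\|K\|_{\eta_0}^2 e^{\eta^5}$.

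To bound the $\gamma \neq e$ contribution, I would combine the Plancherel identity
\[
\sum_{\tilde y' \in \tilde V_N} |\tilde g^{\lambda+i\eta^4}(\tilde x_k,\tilde y')|^2 \;=\; \frac{\Im \tilde g^{\lambda+i\eta^4}(\tilde x_k,\tilde x_k)}{\eta^4} \;\le\; \eta^{-8}
\]
with the multiplicative formula $\tilde g^z(\tilde x,\tilde y) = -(\prod_j \zeta_{v_{j+1}}^z(v_j))\,\tilde g^z(\tilde y,\tilde y)$ from \eqref{eq:multigreen2}, partitioning by distance $r = d(\tilde x_k, \gamma\tilde y_k) \ge R$ and applying Cauchy--Schwarz so as to offset the combinatorial growth $\le D^r$ (number of vertices at distance $r$ in $\tilde G_N$) against the $\ell^2$-summability of $\tilde g^z$. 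Integrating over $\lambda\in(a-2\eta,b+2\eta)$ and summing over the paths $(x_2;x_k),(y_2;y_k)$ yields a tail of order $1/R$ multiplied by the stated constants. The integral involving $F(\bar z)$ is handled identically via $\tilde g^{\bar z}(\tilde x,\tilde y) = \overline{\tilde g^z(\tilde x,\tilde y)}$.

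\textbf{Main obstacle.} The delicate point is producing a clean $1/R$ tail bound uniformly in $N$: the naive Combes--Thomas decay rate $c\eta^4$ degrades as $\eta \downarrow 0$ and cannot alone tame the exponential $D^r$ growth in the number of lifts at distance $r$. Consequently the $\ell^2$-based Plancherel argument (rather than a pure exponential-decay argument), and the specific order of limits $N\to\infty$, then $R\to\infty$, then $\eta,\eta_0\downarrow 0$, are essential. One should also carefully check that the "double-sum" structure over $(x_2;x_k)$ and $(y_2;y_k)$ does not reintroduce uncontrolled combinatorics---this is where the factor $e^{\eta^5}$ from $|\chi(z)|$ and the $\|K\|_{\eta_0}^2$ bounds from \textbf{(Hol)} are absorbed uniformly.
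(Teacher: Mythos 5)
Your approach --- lifting $g^z(x,y)$ via $g^z(x,y)=\sum_{\gamma\in\Gamma}\tilde g^z(\tilde x,\gamma\tilde y)$ and trying to show only the $\gamma = e$ term survives --- is genuinely different from the paper's, which approximates the resolvent $\lambda\mapsto -(\lambda-i\eta^4)^{-1}$ by a polynomial $q_\eta$ with $\|h_\eta - q_\eta\|_\infty < 1/R$ of degree $d_{R,\eta}$, exploits that $q_\eta(H-\lambda)(x_k,y_k)=q_\eta(\tilde H-\lambda)(\tilde x_k,\tilde y_k)$ whenever $\rho_G(x_0)\geq d_{R,\eta}+k$ (since a degree-$d$ polynomial of $H$ has range $d$), and then reverts to $\tilde g^z$ on the cover at the cost of another $1/R$.

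However, there is a genuine gap in your tail estimate, and you have (correctly) half-identified it yourself. The issue is not just that Combes--Thomas decay at rate $c\eta^4$ cannot overcome the $D^r$ growth; your proposed fix via Plancherel does not repair this. The Plancherel bound $\sum_{\tilde y'}|\tilde g^z(\tilde x_k,\tilde y')|^2 \leq \Im\tilde g^z(\tilde x_k,\tilde x_k)/\eta^4$ is a statement about the whole $\ell^2$ sum; it tells you the tail over $\{d(\tilde x_k,\tilde y')\geq R\}$ tends to $0$ as $R\to\infty$ for each fixed $N$, but it gives no \emph{rate} and, crucially, no uniformity in $N$. The lemma requires the error to be bounded by $C/R$ with $C$ independent of $N$ and of $\eta$ --- that quantitative, $N$-uniform bound is exactly what your argument cannot produce. (And since the error estimate in the lemma is a statement \emph{for each fixed $N$}, the later order of limits $N\to\infty$, $R\to\infty$, $\eta\downarrow 0$ cannot come to the rescue here.) Cauchy--Schwarz on shells $d\in[r,r+1)$ gives $D^{r/2}\eta^{-4}$ per shell, which diverges. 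The polynomial-approximation route sidesteps all of this: the replacement error is controlled by an operator-norm bound $\|h_\eta(H-\lambda)-q_\eta(H-\lambda)\|\leq\|h_\eta-q_\eta\|_\infty < 1/R$, which is automatically uniform in $N$ because the spectra of all the $H_N$ lie in a fixed compact set, and the exact locality of $q_\eta(H-\lambda)$ (it has range $\leq d_{R,\eta}$) means there is \emph{no} contribution at all from $\gamma\neq e$, rather than a tail one must estimate. If you want to rescue a lifting-identity approach, you would need a decay estimate for $\tilde g^z(\tilde x,\cdot)$ at distance $R$ that is uniform over all the covers $\tilde G_N$ and explicit in $R,\eta$; as written, neither Combes--Thomas nor Plancherel supplies this.
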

\begin{proof}
We first approximate $\lambda\mapsto g^{\lambda + i\eta^4}(x,y)$ by a polynomial on the compact interval $I_0$. Let $h_{\eta}(t)=-(t- i\eta^4)^{-1}$ and choose a polynomial $q_{ \eta}$ with $\|h_{ \eta}-q_{ \eta}\|_{\infty}<\frac{1}{R}$. Then $\|h_{\eta}(H-\lambda)-q_{\eta}(H-\lambda)\|<\frac{1}{R}$, so $|g^{\lambda + i\eta}(x,y)-q_{ \eta}(H-\lambda)(x,y)|<\frac{1}{R}$ for any $x,y$ and $\lambda$. So replacing each $g^{\lambda + i\eta^4}(x,y)$ by $q_{ \eta}(H-\lambda)(x,y)$ in the sums gives an error term $\frac{C_{k,D,A}\eta_0^{-3} \vertiii{K}_{\eta_0}^2e^{\eta^5}}{R}$ as in Lemma~\ref{l:intseg}.

Denote $C_{k,D,A,\eta_0} = C_{k,D,A}\eta_0^{-3}\|K\|_{\eta_0}^2$.

Let $d_{R,\eta}$ be the degree of $q_{\eta}$. Suppose $\rho_G(x_0) \ge d_{R,\eta}+k =: d_{R,k,\eta}$. Then it is easy to see that $q_{\eta}(H-\lambda)(x_k,y_k) = q_{\eta}(\widetilde{H}-\lambda)(\tilde{x}_k,\tilde{y}_k)$, c.f. Lemma~\ref{lem:rootspec}. The same holds for the other edges $(x_k,y_{k-1})$ and so on. The terms with $\rho_G(x_0) < d_{R,k,\eta}$ bring an error term $\frac{\# \{\rho_G(x_0) < d_{R,k,\eta}\}}{N}\eta^{-4}C_{k,D,A,\eta_0}$. Finally, we replace the $q_{\eta}(\widetilde{H}-\lambda)(\tilde{x},\tilde{y})$ by $\tilg^{\lambda + i\eta^4}(\tilde{x},\tilde{y})$ which yields again an error of the form $\frac{C_{k,D,A,\eta_0}}{R}$.

This proves the first statement, and the second one is proven similarly.
\end{proof}

We continue to simplify the expression and record the following.

\begin{lem}    \label{l:errors}
If we replace ${\alpha}_{\eta_0}^z(x_0,x_1){K}^{z}_{\eta_0}(x_0;x_k,y_k)$ and $  {\alpha}^{\bar{z}}_{\eta_0}(x_0,x_1)K^{\bar{z}}_{\eta_0}(x_0;x_k,y_k)$ in Lemma~\ref{lem:BSTvar} by $|\alpha_{z+ i\eta_0}(x_0,x_1)|^2 K^{z+ i\eta_0}(x_0;x_k) \overline{K^{z+ i\eta_0}(x_0;y_k)}$, then as $N\to \infty$, the error we get is at most $C_{k,D,A}\eta_0^{-6}\vertiii{K}_{\eta_0}^2e^{\eta^5} \eta^4$. We may also replace $\chi(\lambda\pm i\eta^4)$ by $\chi(\lambda)$, modulo the asymptotic error $C_{k,D,A} \eta_0^{-3}\vertiii{K}_{\eta_0}^2e^{\eta^5}\eta$. Finally, we may replace each $\zeta_{e_k}^{\bar{z} +i\eta_0}$ by $\zeta_{e_k}^{z + i \eta_0}$ and $\zeta_{e_k'}^{z-i\eta_0}$ by $\zeta_{e_k'}^{\bar{z}-i\eta_0}$, modulo an asymptotic error $C_{k,D,A}\eta_0^{-6}\vertiii{K}_{\eta_0}^2 e^{\eta^5}\eta^4$.
\end{lem}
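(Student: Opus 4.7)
\bigskip

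My plan is to prove all three replacements by the same mechanism: each pair of functions to be swapped extends analytically (or in one case is the restriction of an analytic function) to the strip $\{|\Im z|\leq \eta^4\}$, and they \emph{coincide} on the real axis $\Im z=0$. Consequently the error at $z=\lambda\pm i\eta^4$ can be estimated by $|\Im z|\leq \eta^4$ times a uniform derivative bound. All the $\eta_0^{-p}$ powers in the announced errors come from tracking how derivatives of $\zeta^{z\pm i\eta_0}$ and of $1/\zeta^{z\pm i\eta_0}$ blow up as $\eta_0\downarrow 0$; the $\eta^4$ gain comes from closeness to the real axis; the factor $e^{\eta^5}$ comes from $|\chi(z)|\leq e^{\eta^5}$ on the strip; the factor $\|K\|_{\eta_0}^2$ comes from assumption \textbf{(Hol)}; and the factor $C_{k,D,A}$ collects the $N^{-1}\sum_{(x_0;x_k),(y_2;y_k)}$ combinatorial factor (bounded by a constant depending on $k$ and the maximum degree) together with the length of $[a-2\eta,b+2\eta]$ and the constants from the potential.

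For the first claim, one first checks that at real $z=\lambda$,
\[
\alpha^\lambda_{\eta_0}(x_0,x_1)=\frac{\overline{\zeta^{\lambda+i\eta_0}_{x_1}(x_0)}-\zeta^{\lambda+i\eta_0}_{x_1}(x_0)}{2i\,|\zeta^{\lambda+i\eta_0}_{x_1}(x_0)|^2}=|\alpha_{\lambda+i\eta_0}(x_0,x_1)|^2,
\]
and similarly $K^\lambda_{\eta_0}(x_0;x_k,y_k)=K^{\lambda+i\eta_0}(x_0;x_k)\,\overline{K^{\lambda+i\eta_0}(x_0;y_k)}$ by definition of $K^z_{\eta_0}$. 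One then writes the error at $z=\lambda\pm i\eta^4$ as the integral of $\partial_{\Im z}$ of the difference between $0$ and $\pm\eta^4$. The derivative of $K^z_{\eta_0}$ and of $K^{z+i\eta_0}$ is bounded by $\|K\|_{\eta_0}$ by \textbf{(Hol)}; the derivative of $\alpha^z_{\eta_0}$ is bounded using Cauchy's formula on a circle of radius $\sim\eta_0$ together with the elementary bounds $|\zeta^\gamma|\leq C_A$ and $|\zeta^\gamma|\geq c\,\eta_0$, the latter coming from the recursive formula \eqref{eq:green3} (inverting $2m_v^\gamma=\gamma-W-\sum\zeta$). This yields $|\partial_z\alpha^z_{\eta_0}|\leq C_{D,A}\eta_0^{-3}$ and, after the product rule and summing over the $O_{k,D}(N)$ paths, the total bound $C_{k,D,A}\eta_0^{-7}\|K\|_{\eta_0}^2 e^{\eta^5}\eta^4$ (the conservative $\eta_0^{-7}$ absorbs the other $\zeta^{-1}$ factors still present in the square bracket, which are treated in a unified way below).

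For the second claim, the replacement $\chi(\lambda\pm i\eta^4)\rightsquigarrow\chi(\lambda)$ gives $|\chi(\lambda\pm i\eta^4)-\chi(\lambda)|\leq \eta^4\sup_{|t|\leq\eta^4}|\partial_t\chi(\lambda+it)|\leq C\eta^{-3}e^{\eta^5}\cdot\eta^4=C\eta\,e^{\eta^5}$, using the explicit bound from \eqref{e:convol}. The remaining factors in the integrand are uniformly bounded by $C_{k,D,A}\eta_0^{-5}\|K\|_{\eta_0}^2$ (with $|\tilde g^z|,|\zeta^{z\pm i\eta_0}|^{-1}\leq C\eta_0^{-2}$ after combining the $\eta^4\ll\eta_0$ assumption with \eqref{eq:green3}), producing the announced error after integration over $[a-2\eta,b+2\eta]$.

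For the third claim, observe that at $z=\lambda+i\eta^4$ we have $\bar z+i\eta_0=\lambda+i(\eta_0-\eta^4)$ and $z+i\eta_0=\lambda+i(\eta_0+\eta^4)$, both in the upper half-plane with $|\Im\cdot|\geq\eta_0/2$. Hence $\zeta^{\bar z+i\eta_0}_{e_k}-\zeta^{z+i\eta_0}_{e_k}$ is $O(\eta^4/\eta_0^2)$ by the derivative bound on $\zeta$, and similarly for the $\zeta_{e_k'}^{\bar z-i\eta_0}\leftrightarrow\zeta_{e_k'}^{z-i\eta_0}$ swap. Dividing out and reinserting into the bracketed combination of Green functions, then multiplying by $\chi(z){\alpha}_{\eta_0}^z K^z_{\eta_0}$, bounded by $C_{k,D,A}\eta_0^{-5}\|K\|_{\eta_0}^2 e^{\eta^5}$, gives the stated error. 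The main obstacle throughout is not the mechanism but the bookkeeping: for each factor one needs the sharp lower bound (which in every case is a positive power of $\eta_0$) in order to invert it safely inside the strip and to extract the derivative bound with the correct $\eta_0^{-p}$-weight; the assumption \textbf{(Hol)} plus Lemma~\ref{lem:zetapot} provides exactly what is needed.
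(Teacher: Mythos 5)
Your overall mechanism is the same as the paper's: each replacement is a Taylor expansion of quantities that agree at $\Im z=0$, and the error is controlled by a derivative bound times $|\Im z|\leq\eta^4$, while tracking the $\eta_0$-powers coming from the $\zeta$-factors, the factor $e^{\eta^5}$ from $|\chi|$, the factor $\|K\|_{\eta_0}^2$ from \textbf{(Hol)}, and a combinatorial $C_{k,D,A}$. The paper does the first step by direct algebraic manipulation (resolvent identity) rather than Cauchy's formula, but that difference is cosmetic.

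There is however one genuine gap in your argument, together with a couple of smaller arithmetic slips. The slips first: the elementary upper bound on $\zeta$ is $|\zeta^\gamma|\leq |\Im\gamma|^{-1}\sim\eta_0^{-1}$, not $C_A$; and the resulting derivative bound on $\alpha^z_{\eta_0}$ via Cauchy on a circle of radius $\sim\eta_0$ comes out to $C\eta_0^{-4}$ rather than your claimed $C\eta_0^{-3}$ (the paper's direct computation yields the even more conservative $\eta_0^{-5}$ for the $\alpha$-difference). These do not change the conclusion since the stated $\eta_0^{-7}$ is generous, but they are worth getting right.

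The genuine gap concerns the Green function factors $\tilg^z(\tilde x,\tilde y)$ with $z=\lambda\pm i\eta^4$. You claim a pointwise bound $|\tilg^z|\leq C\eta_0^{-2}$, but this is false: since $\Im z=\eta^4\ll\eta_0$, the pointwise bound is only $|\tilg^z|\leq\eta^{-4}$, which blows up as $\eta\downarrow 0$. Likewise, your statement that the sum over paths is ``bounded by a constant depending on $k$ and the maximum degree'' is not accurate: for fixed $N$, the quantity $\frac1N\sum_{(x_0,x_1)}\sum_{(x_2;x_k),(y_2;y_k)}|\tilg^{\lambda\pm i\eta^4}(\tilde x_k,\tilde y_k)|$ cannot be bounded by counting paths — each term is potentially of size $\eta^{-4}$. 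The paper resolves this by invoking Remark~\ref{rem:IDS3}, i.e.\ the Benjamini--Schramm convergence of the empirical Green function statistics established from \textbf{(BSCT)}, combined with the moment bounds from \textbf{(Green)}, which show that the \emph{average} $\frac1N\sum(\cdots)|\tilg^{\lambda\pm i\eta^4}|$ has a finite $\limsup_{N\to\infty}$, uniformly in $\lambda$ over the interval of integration. This convergence-based argument is an essential ingredient of the lemma, not a mere combinatorial constant, and your proof as written does not supply it.
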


\begin{proof}
We start with ${\alpha}_{\eta_0}^z(x_0,x_1){K}^{z}_{\eta_0}(x_0;x_k,y_k)$. Denote $e=(x_0, x_1)$ and $\zeta_{e}^{\gamma} = \zeta_{x_1}^{\gamma}(x_0)$. We note that
\begin{multline*}
 \left|{\alpha}_{\eta_0}^z(x_0,x_1)-|\alpha^{z+i\eta_0}(x_0,x_1)|^2\right|  = \left| \frac{\zeta_{e}^{z-i\eta_0}-\zeta_{e}^{z+i\eta_0}}{2i \zeta_{e}^{z+i\eta_0}\zeta_{e}^{z-i\eta_0}} - \frac{\zeta_{e}^{\bar{z}-i\eta_0} - \zeta_{e}^{z+i\eta_0}}{2i\zeta_{e}^{z+i\eta_0}\zeta_{e}^{\bar{z}-i\eta_0}}\right| \\
  = \frac{1}{2} \left| \frac{1}{\zeta_{e}^{\bar{z}-i\eta_0}} - \frac{1}{\zeta_{e}^{z-i\eta_0}}\right| \le C_{D,A}\eta_0^{-2} \left| \zeta_{e}^{z-i\eta_0} - \zeta_{e}^{\bar{z}-i\eta_0}\right|\\
 \le C_{D,A}\eta_0^{-4} |z - \bar{z}| = 2C_{D,A}\eta_0^{-4} \eta^4 \, ,
\end{multline*}
where we used (\ref{e:green3cons}) in the first inequality and the resolvent identity in the second one. Similarly, $K^{z+ i\eta_0}(x_0;x_k) \overline{K^{z+ i\eta_0}(x_0;y_k)}$ is the same as ${K}^{z}_{\eta_0}(x_0;x_k,y_k)$, but with each $z-i\eta_0$ replaced by $\bar{z}-i\eta_0$. It follows that $|{K}^{z}_{\eta_0}(x_0;x_k,y_k) - K^{z+ i\eta_0}(x_0;x_k) \overline{K^{z+ i\eta_0}(x_0;y_k)}| \le 2\sup |\partial_zK(v_0;v_k)| \sup |K(v_0;v_k)| \cdot |z-\bar{z}| \le 4\vertiii{K}_{\eta_0}^2 \eta^4$. Hence, ${\alpha}_{\eta_0}^z(x_0,x_1){K}^{z}_{\eta_0}(x_0;x_k,y_k)$ is the same as $|\alpha_{z+ i\eta_0}(x_0,x_1)|^2 K^{z+ i\eta_0}(x_0;x_k) \overline{K^{z+ i\eta_0}(x_0;y_k)}$, modulo $C_{D,A}\eta_0^{-4}\vertiii{K}_{\eta_0}^2 \eta^4$. This error is further multiplied by the function $\chi$. Bounding the $\zeta$ terms by some $c_{D,A}\eta_0^{-2}$ and $|\chi(z)|$ by $e^{\eta^5}$, we end up with an error term at most
\[
\int_{a-2\eta}^{b+2\eta}\frac{C_{D,A}\eta_0^{-6}\|K\|_{\eta_0}^2e^{\eta^5}\eta^4}{N}\sum_{(x_0,x_1)}\sum_{(x_2;x_k) ,(y_2;y_k) } |\tilg^{\lambda\pm i \eta^4}(\tilde{x}_k,\tilde{y}_k)| \,\dd \lambda
\]
and a similar upper bound for each term involving $\tilg^{\lambda\pm i\eta^4} $. Since $I_{\eta}=(a-2\eta,b+2\eta)\subset I_1$, we may use  Remark~\ref{rem:IDS3} to deduce that the integrand is uniformly bounded over $\lambda\in I_{\eta}$ by $C_{k,D,A}\eta_0^{-6}\vertiii{K}_{\eta_0}^2e^{\eta^5}\eta^4$ as $N\to \infty$. Note that $|I_{\eta}| \le |I_0| =2(D+A)$.

This proves the first claim. The second claim is similar, for example $|  {\alpha}^{\bar{z}}_{\eta_0}(x_0,x_1) - |\alpha^{z+i\eta_0}(x_0,x_1)|^2| \le C_{D,A}\eta_0^{-2} |\zeta_{e}^{z+i\eta_0}-\zeta_{e}^{\bar{z}+i\eta_0}| \le 2C_{D,A}\eta_0^{-4}\eta^4$. Moreover, $K^{\bar{z}}_{\eta_0}(x_0;x_k,y_k)$ is the same as $K^{z+ i\eta_0}(x_0;x_k) \overline{K^{z+ i\eta_0}(x_0;y_k)}$ with each $z+i\eta_0$ replaced by $\bar{z}+i\eta_0$, so the proof carries on. For the third claim, note that $|\chi(\lambda\pm i\eta^4) - \chi(\lambda)| \le \sup_{z\in \mathscr{R}} |\frac{\partial \chi}{\partial x_2}(z)| \cdot \eta^4 \le Ce^{\eta^5} \eta$. For the last claim, $|(\zeta_e^{z\pm i\eta_0})^{-1} - (\zeta_e^{\bar{z}\pm i\eta_0})^{-1}| \le 2C_{D,A}\eta_0^{-4}\eta^4$ as we previously saw when analyzing ${\alpha}_{\eta_0}^z$, so we get a similar error.
\end{proof}

By virtue of Lemma~\ref{lem:BSTvar} and ~\ref{l:errors}, denoting $z=\lambda+i\eta^4$,
we know at this stage that modulo some error terms, the expression \eqref{eq:intr} may be replaced by
\begin{multline}\label{eq:greenterms1}
  \frac{1}{\pi N}\int_{a-2\eta}^{b+2\eta} \sum_{\rho_G(x_0)\ge d_{R,k,\eta}}\sum_{x_1\sim x_0} \sum_{(x_2;x_k) ,(y_2;y_k) } \chi(\lambda)|\alpha_{z+ i\eta_0}(x_0,x_1)|^2\\
  \qquad  \qquad   \qquad   \qquad    \qquad   \qquad   \qquad  \qquad   \qquad   \cdot K^{z+ i\eta_0}(x_0;x_k) \overline{K^{z+ i\eta_0}(x_0;y_k)} \\
\cdot \left(\frac{\Im \tilg^z(\tilde{x}_k,\tilde{y}_k)}{\zeta_{e_k}^{z+i\eta_0}\zeta_{e_k'}^{\bar{z}-i\eta_0}} - \frac{\Im \tilg^z(\tilde{x}_k,\tilde{y}_{k-1})}{\zeta_{e_k}^{z+i\eta_0}}  - \frac{\Im \tilg^z(\tilde{x}_{k-1},\tilde{y}_k)}{\zeta_{e_k'}^{\bar{z}-i\eta_0}} + \Im \tilg^z(\tilde{x}_{k-1},\tilde{y}_{k-1})\right)   \,\dd \lambda\, . 
\end{multline}

 We now make the expression more homogeneous as follows:

\begin{lem}   \label{l:greenco}
Assume we have made all the replacements in Lemma~\ref{l:errors}. If we finally replace each of the four $\Im \tilg^z(\tilde{x},\tilde{y})$ by $\Im \tilg^{z+i\eta_0}(\tilde{x},\tilde{y})$ in \eqref{eq:greenterms1}, then the error term vanishes as $N\to \infty$, followed by $\eta\downarrow 0$, followed by $\eta_0\downarrow 0$.
\end{lem}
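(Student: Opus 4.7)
The plan is to take the three limits in the prescribed order ($N\to\infty$, then $\eta\downarrow 0$, then $\eta_0\downarrow 0$) and show that under each of the four summands in the bracket of \eqref{eq:greenterms1}, the sequences obtained with $\Im\tilg^z$ and with $\Im\tilg^{z+i\eta_0}$ converge to the same quantity involving $\Im\cG^{\lambda+i0}$ on the limit random tree, so that their difference vanishes. It suffices to analyse one representative term; write the joint prefactor (all factors except $\chi(\lambda)$ and the $\Im\tilg$ factor) as $F^{\eta_0}_\lambda(\tilde x,\tilde y)$, where $\tilde x,\tilde y$ lie at distance at most $2k$ from $\tilde x_0$. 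For $\eta_0>0$ fixed, $F^{\eta_0}_\lambda$ is smooth in $\lambda$ with modulus and $\lambda$-derivative bounded by some $C_{k,D,A}\eta_0^{-C}\norm{K}_{\eta_0}^2$.

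First I would take $N\to\infty$ via \textbf{(BSCT)}. For fixed $\eta>0$ and $\eta_0>0$ the integrand of the difference, divided by $N$, is a bounded, continuous functional of the rooted graph $[\tilG_N,\tilde x_0]$ (continuity uses that for $\Im z>0$, both the Green function $\cG^z$ and the branch functions $\zeta_w^z$ are continuous in the local weak topology on $\mathscr{T}_*^{D,A}$). Dominated convergence in $\lambda$ then gives
\[
\lim_{N\to\infty}\Delta_N=\frac{1}{\pi}\int_{a-2\eta}^{b+2\eta}\chi(\lambda)\,\IE\!\left[\tilde F^{\eta_0}_\lambda\bigl(\Im\cG^{\lambda+i\eta^4}(v,w)-\Im\cG^{\lambda+i(\eta^4+\eta_0)}(v,w)\bigr)\right]\,d\lambda,
\]
where $\tilde F^{\eta_0}_\lambda$ is the analogous expression on $[\cT,o,\cW]$ and $(v,w)$ are the vertices of $\cT$ corresponding to $(\tilde x,\tilde y)$.

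Next I would send $\eta\downarrow 0$ and then $\eta_0\downarrow 0$. Proposition~\ref{lem:limits} furnishes boundary limits $\cG^{\lambda+i0}(v,w)$ for Lebesgue-a.e.\ $\lambda$ and $\prob$-a.s., yielding
\[
\Im\cG^{\lambda+i\eta^4}(v,w)\xrightarrow{\eta\downarrow 0}\Im\cG^{\lambda+i0}(v,w),\quad \Im\cG^{\lambda+i(\eta^4+\eta_0)}(v,w)\xrightarrow{\eta\downarrow 0}\Im\cG^{\lambda+i\eta_0}(v,w),
\]
and the prefactor is unaffected by $\eta$ when $\eta_0$ is frozen. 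Sending $\eta_0\downarrow 0$ afterwards, one has similarly $\Im\cG^{\lambda+i\eta_0}(v,w)\to\Im\cG^{\lambda+i0}(v,w)$ and $\tilde F^{\eta_0}_\lambda\to\tilde F^{0}_\lambda$. Both iterated limits therefore produce the same expression $\frac{1}{\pi}\int_I\IE[\tilde F^{0}_\lambda\,\Im\cG^{\lambda+i0}(v,w)]\,d\lambda$, so their difference vanishes; the outer transitions $\chi(\lambda)\to\bbbone_I(\lambda)$ and $(a-2\eta,b+2\eta)\to I$ are handled by dominated convergence in $\lambda$.

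The main obstacle is legitimising the interchange of these pointwise limits with the expectation and the $\lambda$-integral, since the prefactor and the Green function simultaneously approach the real axis. This is precisely the role of \textbf{(Green)}: via the recursive identities of Lemma~\ref{lem:zetapot} (compare Remarks~\ref{rem:IDS2}--\ref{rem:IDS3}), \textbf{(Green)} provides uniform $L^s(\prob)$ moment bounds on $\Im\cG^{\lambda+i\eta}(v,w)$ and on the $\zeta^{\lambda+i\eta_0}$ factors entering $\tilde F^{\eta_0}_\lambda$, uniformly over $\lambda\in I_1$ and small $\eta,\eta_0$. Combined with the a.s.\ pointwise convergence from Proposition~\ref{lem:limits}, Vitali's convergence theorem then legitimises passing all the limits through $\IE$ and through $\int\cdot\,d\lambda$, completing the proof.
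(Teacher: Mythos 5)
Your plan arrives at the right conclusion and uses the same core ingredients (\textbf{(BSCT)} for $N\to\infty$, Proposition~\ref{lem:limits} for a.e.\ boundary values, \textbf{(Green)} for uniform moment bounds), but the organization is genuinely different from the paper's. The paper never computes a common limit for the two sides. Instead, it first applies H\"older's inequality (with exponents $p,q,r$) to split the error integrand into three factors: the $K$ part, the $\alpha/\zeta$ prefactor, and the Green-function difference $|\Im\tilg^z-\Im\tilg^{z+i\eta_0}|^r$. After sending $N\to\infty$ via \textbf{(BSCT)}, the first two factors stay bounded uniformly in $\eta,\eta_0$ (by \textbf{(Hol)} and \textbf{(Green)} via Remark~\ref{rem:IDS2}), and only the third factor is shown to vanish — via a triangle-inequality decomposition $X_\eta^{\eta_0}=Y_\eta^{\eta_0}+X^{\eta_0}$, a Markov/Chebyshev truncation for the $|Y_\eta^{\eta_0}|>M$ tail using \textbf{(Green)}, and dominated convergence for the bounded part using Proposition~\ref{lem:limits}.

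Your route — showing both variants converge to the same explicit iterated limit $\frac1\pi\int_I\IE[\tilde F_\lambda^{0}\Im\cG^{\lambda+i0}]\,d\lambda$ — requires more than the paper does. You would need a.e.\ boundary limits of the $\zeta^{\lambda+i0}$ factors entering $\tilde F_\lambda^0$, i.e.\ of Green functions of the \emph{restricted} operators $\cH^{(v|w)}$, whereas Proposition~\ref{lem:limits} is stated only for the full tree; this is available (e.g.\ via \cite[Lemma 3.3]{AS}, or from $\zeta_w^\gamma(v)=\cG^\gamma(v,w)/\cG^\gamma(w,w)$ wherever the denominator is nonzero), but it is extra work. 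You would also need uniform integrability of the \emph{product} of prefactor and Green function, rather than of each factor separately; that again follows from H\"older plus \textbf{(Green)}, but once you invoke H\"older you may as well separate the factors at the outset as the paper does and skip the boundary limit of the prefactor entirely. Finally, the claim that ``the prefactor is unaffected by $\eta$ when $\eta_0$ is frozen'' is not literally true — it still carries $\zeta^{z+i\eta_0}$ with $z=\lambda+i\eta^4$ — though it does converge continuously as $\eta\downarrow 0$ since $\eta_0>0$ keeps you off the real axis, so that gap is minor. In short: correct in spirit, heavier in execution; the paper's H\"older separation lets the prefactor be a bounded spectator and isolates all the delicacy in the single vanishing factor.
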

\begin{proof}
We only analyze the first error term, the other three are similar.

Choose $p, q, r$ such that $\frac1p+\frac1q+\frac1r=1$, and use the H\"older's inequality,
\begin{multline*} 
\Big|\frac{1}{\pi N}\int_{a-2\eta}^{b+2\eta} \sum_{\rho_G(x_0) \ge d_{R,k,\eta}} \sum_{x_1\sim x_0} \sum_{(x_2;x_k) ,(y_2;y_k) } \chi(\lambda) K^{z+ i\eta_0}(x_0;x_k)\overline{K^{z+ i\eta_0}(x_0;y_k)} \\
\frac{|\alpha_{z+ i\eta_0}(x_0,x_1)|^2}{\zeta_{e_k}^{z+i\eta_0}\zeta_{e_k'}^{\bar{z}-i\eta_0}} \big(\Im \tilg^{z}(\tilde{x}_k, \tilde{y}_k)-\Im \tilg^{z+i\eta_0}(\tilde{x}_k,\tilde{y}_k)\big) \,\dd \lambda\, \Big|\\
\leq \frac{e^{\eta^5}}{\pi N}\left(\int \sum_{(x_0,x_1)\in B} \sum_{(x_2;x_k) ,(y_2;y_k) } \left|K^{z+ i\eta_0}(x_0;x_k)K^{z+ i\eta_0}(x_0;y_k)\right|^p \,\dd \lambda\right)^{1/p}\\
\times \left(\int \sum_{(x_0,x_1)\in B} \sum_{(x_2;x_k) ,(y_2;y_k) } \Big|\frac{|\alpha_{z+ i\eta_0}(x_0,x_1)|^2}{\zeta_{e_k}^{z+i\eta_0}\zeta_{e_k'}^{\bar{z}-i\eta_0}} \Big|^q \,\dd \lambda\right)^{1/q}\\
\times
\left(\int \sum_{(x_0,x_1)\in B} \sum_{(x_2;x_k) ,(y_2;y_k) } \left|\Im \tilg^{z}(\tilde{x}_k,\tilde{y}_k)-\Im \tilg^{z+i\eta_0}(\tilde{x}_k,\tilde{y}_k)\right|^r \,\dd \lambda\right)^{1/r}.
\end{multline*}
Here $\int = \int_{a-2\eta}^{b+2\eta}$. The first sum is bounded by $D^{k-1}\sum_{(x_0;x_k)\in B_k} |K^{z+i\eta_0}(x_0;x_k)|^{2p}$. Assumption \textbf{(Hol)} on $K$ implies that
\[
\sup_{\eta_0, \eta}\limsup_{N\to \infty} \frac1N \int \sum_{(x_0;x_k)\in B_k}  |K^{\lambda+i\eta^4+ i\eta_0}(x_0;x_k))|^{2p} \,\dd \lambda <+\infty \, .
\]
Next, by Remark~\ref{rem:IDS1},
\begin{multline*}
\lim_{N\to \infty} \frac1N \int \sum_{(x_0,x_1)\in B} \sum_{(x_2;x_k) ,(y_2;y_k) } \Big|\frac{|\alpha_{z+ i\eta_0}(x_0,x_1)|^2}{\zeta_{e_k}^{z+i\eta_0}\zeta_{e_k'}^{\bar{z}-i\eta_0}} \Big|^q \,\dd \lambda \\
 = \int \IE\left( \sum_{(x_0;x_k),(y_0;y_k),x_0=y_0=o } \Big|\frac{|\hat{\alpha}_{z+ i\eta_0}(x_0,x_1)|^2}{\hat{\zeta}_{e_k}^{z+i\eta_0}\hat{\zeta}_{e_k'}^{\bar{z}-i\eta_0}} \Big|^q \right)\dd \lambda
\end{multline*}
and the RHS is uniformly bounded in $\eta,\eta_0\in(0,1)$ by Remark~\ref{rem:IDS2}. Remember the convention that objects wearing a hat $\hat{\cdot}$ are defined on the limit $(\cT, \cW)$,
by similar formulas to those on $G_N$. We also refer to \S \ref{s:ident} for notation related to Green functions.

 Finally, again by Remark~\ref{rem:IDS1} we have
\begin{multline*}
\lim_{N\to \infty} \frac1N \int\sum_{(x_0,x_1)\in B}  \sum_{(x_2;x_k) ,(y_2;y_k) } \left|\Im \tilg^{z}(\tilde{x}_k,\tilde{y}_k)-\Im \tilg^{z+i\eta_0}(\tilde{x}_k,\tilde{y}_k)\right|^r  \,\dd \lambda  \\
=\int\IE\left(\sum_{(v_0;v_k),(w_0;w_k),v_0=w_0=o } \left|\Im \cG^z(v_k,w_k)-
\Im \cG^{z+i\eta_0}(v_k,w_k) \right|^r \right) \dd \lambda \, .
\end{multline*}

We check that the RHS vanishes as $\eta,\eta_0\downarrow 0$. Let $X_{\eta}^{\eta_0} = \Im \mathcal{G}^{\lambda+i(\eta^4+\eta_0)}(v_k,w_k) - \Im \mathcal{G}^{\lambda+i\eta^4}(v_k,w_k)$, $X^{\eta_0} = \Im \mathcal{G}^{\lambda+i\eta_0}(v_k,w_k) - \Im \mathcal{G}^{\lambda+i0}(v_k,w_k)$ and $Y_{\eta}^{\eta_0} = X_{\eta}^{\eta_0}-X^{\eta_0}$. Denote $\sum_{v_k,w_k}=\sum_{(v_0;v_k),(w_0;w_k),v_0=w_0=o }$. For any $M>0$, we have $\int \expect \sum_{v_k,w_k} |Y_{\eta}^{\eta_0}|^r = \int \expect \sum_{v_k,w_k} |Y_{\eta}^{\eta_0}|^r 1_{|Y_{\eta}^{\eta_0}| \le M} + \int \expect \sum_{v_k,w_k} |Y_{\eta}^{\eta_0}|^r1_{|Y_{\eta}^{\eta_0}|> M}$.

By Proposition~\ref{lem:limits}, $\sum_{v_k,w_k} |Y_{\eta}^{\eta_0}|^r \to 0$ for Lebesgue-a.e. $\lambda\in \IR$ and $\IP$-a.e. $[\mathcal{T},o,\cW]\in \mathscr{T}_{\ast}^{D,A}$ as $\eta \downarrow 0$. So the first term tends to $0$ by dominated convergence. For the second, for any $s>r$, $\int \expect \sum_{v_k,w_k} |Y_{\eta}^{\eta_0}|^r1_{|Y_{\eta}^{\eta_0}|>M} \le \frac{1}{M^{s-r}} \int \expect \sum_{v_k,w_k} |Y_{\eta}^{\eta_0}|^s \le \frac{C_s}{M^{s-r}}$ by \textbf{(Green)}. This vanishes as $M\to\infty$. Thus, $\int \expect \sum_{v_k,w_k}|Y_{\eta}^{\eta_0}|^r \to 0$ as $\eta \downarrow 0$. Similarly, $\int \expect \sum_{v_k,w_k}|X^{\eta_0}|^r \to 0$ as $\eta_0 \downarrow 0$. Since $|X_{\eta}^{\eta_0}|^r \le 2^{r-1}(|Y_{\eta}^{\eta_0}|^r + |X^{\eta_0}|^r)$, it follows that $\int \expect \sum_{v_k,w_k} |X_{\eta}^{\eta_0}|^r \to 0$ as $\eta \downarrow 0$ followed by $\eta_0 \downarrow 0$.
\end{proof}

\bigskip

By virtue of Lemma~\ref{l:greenco}, denoting $\Psi_{\gamma,v}(w) = \Im \tilg^{\gamma}(v,w)$, the term in parentheses (\ref{eq:greenterms1}) may be replaced by
\begin{equation}      \label{eq:greenterms2}
\left(\frac{\Psi_{z+i\eta_0,\tilde{x}_k}(\tilde{y}_k)}{\zeta_{e_k}^{z+i\eta_0}\zeta_{e_k'}^{\bar{z}-i\eta_0}} - \frac{\Psi_{z+i\eta_0,\tilde{x}_k}(\tilde{y}_{k-1})}{\zeta_{e_k}^{z+i\eta_0}} - \frac{\Psi_{z+i\eta_0,\tilde{x}_{k-1}}(\tilde{y}_k)}{\zeta_{e_k'}^{\bar{z}-i\eta_0}} + \Psi_{z+i\eta_0,\tilde{x}_{k-1}}(\tilde{y}_{k-1}) \right) \, .
\end{equation}
Recall that $e_k=(x_{k-1},x_k)$, $e_k'=(y_{k-1},y_k)$ and that there are non-backtracking paths $(x_0,x_1,\dots,x_{k-1},x_k)$ and $(x_0,x_1,\dots,y_{k-1},y_k)$. Moreover, $\rho_G(x_0) \ge d_{R,\eta,k} \ge k$.

Suppose $e_k' \neq e_k$. Then there is a path $(v_0,\dots,v_s)$ with $v_0=\tilde{x}_k$, $v_1 = \tilde{x}_{k-1}$, $v_{s-1} = \tilde{y}_{k-1}$ and $v_s = \tilde{y}_k$. Taking the complex conjugate in identity \eqref{eq:psiiden2}, noting that $\Psi_{z+i\eta_0,v}(w)$ is real, we see that \eqref{eq:greenterms2} is zero. If $e_k=e'_k$, \eqref{eq:psiiden1} tells us \eqref{eq:greenterms2} equals $\frac{|\Im \zeta_{x_{k-1}}^{z+i\eta_0}(x_k)|}{|\zeta_{x_{k-1}}^{z+i\eta_0}(x_k)|^2}$.

Since $\rho_G(x_0) \ge k$ in Lemma~\ref{lem:BSTvar}, the paths $(x_0, x_1, x_2,\cdots, x_k) $ and $(x_0, x_1, y_2,  \cdots, y_k) $ are determined by $e_k$ and $e'_k$, respectively. So the terms in the sum are only nonzero if $(x_0, x_1, x_2,\cdots, x_k)=(x_0, x_1, y_2,  \cdots, y_k) $. Hence, if we make all replacements in Lemmas~\ref{l:errors} and \ref{l:greenco}, modulo the errors appearing in these lemmas, the expression \eqref{eq:intr} finally takes the form
\begin{align*}
& \frac{1}{\pi N} \int_{a-2\eta}^{b+2\eta} \sum_{\rho_G(x_0) \ge d_{R,k,\eta}} \sum_{x_1\sim x_0} \sum_{(x_2;x_k) } \chi(\lambda) |\alpha_{z+ i\eta_0}(x_0,x_1)|^2|K^{z+ i\eta_0}(x_0;x_k)|^2 \\
& \qquad \cdot \frac{|\Im \zeta_{x_{k-1}}^{z+i\eta_0}(x_k)|}{|\zeta_{x_{k-1}}^{z+i\eta_0}(x_k)|^2} \,\dd \lambda \le \frac{1}{\pi} \int_{a-2\eta}^{b+2\eta} \|K^{z+i\eta_0}\|_{z+i\eta_0}^2\,\dd \lambda \, ,
\end{align*}
where we used that $\chi(\lambda) \le 1$ on $\R$. Collecting all estimates on the error terms, taking $N\to \infty$, then $\eta \downarrow 0$, then $\eta_0 \downarrow 0$, then $R\to \infty$, we finally get $\frac{1}{N}\sum_{j=1}^N \chi(\lambda_j)\|\alpha_{\gamma_j}K_B^{\gamma_j}f_j\|^2 \lesssim \frac{1}{\pi} \int_{a-2\eta}^{b+\eta} \|K^{z+i\eta_0}\|_{z+i\eta_0}^2\,\dd \lambda$. Recalling \eqref{e:very1st}, if we prove \eqref{e:flim'}, then this will complete the proof of Theorem~\ref{thm:upvar}.

We have $\|\overline{\alpha_{\gamma_j}}^{-1}f_j^{\ast}\|^2=\sum_{(x_0,x_1)\in B} \frac{1}{|\Im \zeta_{x_1}^{\gamma_j}(x_0)|}|\psi_j(x_0)-\zeta_{x_1}^{\gamma_j}(x_0)\psi_j(x_1)|^2$. Repeating the same arguments, we see that modulo asymptotically vanishing error terms, we have
\begin{multline*}
\frac{1}{N}\sum_{\lambda_j\in I} \|\overline{\alpha_{\gamma_j}}^{-1}f_j^{\ast}\|^2 \lesssim \frac{3}{\pi N}\int_{a-2\eta}^{b+2\eta} \sum_{\rho_G(x_0)\ge d_{R,\eta}} \sum_{x_1\sim x_0} \frac{\chi(\lambda)}{|\Im \zeta_{x_1}^{z+i\eta_0}(x_0)|} \\
\cdot \big[ \Psi_{z+i\eta_0,\tilde{x}_0}(\tilde{x}_0) - \zeta_{x_1}^{z+i\eta_0}(x_0)\Psi_{z+i\eta_0,\tilde{x}_1}(\tilde{x}_0) -  \overline{\zeta_{x_1}^{z+i\eta_0}(x_0)}\Psi_{z+i\eta_0,\tilde{x}_0}(\tilde{x}_1) \\
+ |\zeta_{x_1}^{z+i\eta_0}(x_0)|^2\Psi_{z+i\eta_0,\tilde{x}_1}(\tilde{x}_1)\big]\,\dd\lambda \,.
\end{multline*}
The term in square brackets is just $|\Im \zeta_{x_1}^{z+i\eta_0}(x_0)|$ by \eqref{eq:psiiden1}. Hence, using $\chi(\lambda)\le 1$ we get $\frac{1}{N}\sum_{\lambda_j\in I} \|\overline{\alpha_{\gamma_j}}^{-1}f_j^{\ast}\|^2 \lesssim \frac{3(|I|+4\eta)D}{\pi}$ for any small $\eta>0$, and \eqref{e:flim'} follows.

\section{Step 2~: Invariance property of the quantum variance\label{s:inv}}

In the scheme of \S\ref{s:outline}, we are now in Step 2~: using the functional equations \eqref{e:newef} and \eqref{e:newef2} satisfied by $f_j, f_j^*$, we show that there are certain transformations $\mathcal{R}_{n,r}^{\gamma}:\mathscr{H}_k=\IC^{B_k}\to \mathscr{H}_{n+k}=\IC^{B_{n+k}}$ that leave the quantum variance \eqref{e:varnb} unchanged.

Recall from Section \ref{s:nb} that $\cB (\zeta^{\gamma_j} f_j) = f_j - i\eta_0\,\tau_+\psi_j$ and $\cB^{\ast} (\iota \zeta^{\gamma_j}f_j^{\ast}) = f_j^{\ast}-i\eta_0\,\tau_-\psi_j$ if $\gamma_j=\lambda_j+i\eta_0$. So
\[
(\cB \zeta^{\gamma_j})^2f_j = \cB \zeta^{\gamma_j}f_j - i\eta_0\cB \zeta^{\gamma_j}\tau_+\psi_j = f_j - i\eta_0(I+\cB \zeta^{\gamma_j})\tau_+\psi_j \, .
\]
Iterating $r$ times,
\[
(\cB \zeta^{\gamma_j})^rf_j = f_j-i\eta_0 \sum_{t=0}^{r-1}(\cB \zeta^{\gamma_j})^t\tau_+\psi_j \, .
\]
Similarly
\[
(\cB^{\ast} \iota \zeta^{\gamma_j})^{n-r}f_j^{\ast} = f_j^{\ast} - i\eta_0 \sum_{t'=0}^{n-r-1} (\cB^{\ast}  \iota \zeta^{\gamma_j})^{t'}\tau_-\psi_j \, .
\]

If we define for $r\le n$ and $\gamma\in \IC\setminus \IR$ the operator $\mathcal{R}_{n,r}^{\gamma}:\mathscr{H}_k\to \mathscr{H}_{n+k}$ by
\begin{multline*}
(\mathcal{R}_{n,r}^{\gamma}K)(x_0;x_{n+k}) = \overline{\zeta_{x_1}^{\gamma}(x_0)\zeta_{x_2}^{\gamma}(x_1)\cdots\zeta_{x_{n-r}}^{\gamma}(x_{n-r-1})} K(x_{n-r};x_{n-r+k}) \\
 \cdot\zeta_{x_{n-r+k}}^{\gamma}(x_{n-r+k+1})\zeta_{x_{n-r+k+1}}^{\gamma}(x_{n-r+k+2})\cdots\zeta_{x_{n+k-1}}^{\gamma}(x_{n+k}) \, ,
\end{multline*}
we thus get
\begin{align*}
\langle f_j^{\ast},(\mathcal{R}_{n,r}^{\gamma_j}K)_B f_j\rangle & = \sum_{(x_{n-r};x_{n-r+k})} \overline{\left[(\cB^{\ast}\iota \zeta^{\gamma_j})^{n-r}f_j^{\ast}\right](x_{n-r},x_{n-r+1})} K(x_{n-r};x_{n-r+k}) \\
& \qquad \qquad \qquad \qquad \cdot \left[(\cB \zeta^{\gamma_j})^rf_j\right](x_{n-r+k-1},x_{n-r+k}) \\
& = \left\langle (\cB^{\ast} \iota\zeta^{\gamma_j})^{n-r}f_j^{\ast}, K_B (\cB \zeta^{\gamma_j})^rf_j \right\rangle = \langle f_j^{\ast}, K_B f_j \rangle - \mathcal{E}_{n,r, j}(\eta_0,K) \, ,
\end{align*}
where the $\mathcal{E}$ stands for an ``error term'' that should vanish as $\eta_0 \downarrow 0$~:
\begin{align*}
\mathcal{E}_{n,r, j}(\eta_0,K) & = i\eta_0 \sum_{t=0}^{r-1} \langle f_j^{\ast},K_B(\cB \zeta^{\gamma_j})^t\tau_+\psi_j\rangle + i\eta_0 \sum_{t'=0}^{n-r-1}\langle (\cB^{\ast} \iota \zeta^{\gamma_j})^{t'}\tau_-\psi_j, K_B f_j\rangle \\
& \quad + \eta_0^2\sum_{t=0}^{r-1}\sum_{t'=0}^{n-r-1}\langle (\cB^{\ast} \iota \zeta^{\gamma_j})^{t'}\tau_-\psi_j, K_B(\cB \zeta^{\gamma_j})^t\tau_+\psi_j\rangle  \, .
\end{align*}

Since this holds for each $1 \le r \le n$ and $K=K^{\gamma}$, we get by the triangle inequality
\begin{equation}\label{e:R}
\varnbi(K^{\gamma}) \le  \varnbi\Big(\frac{1}{n}\sum_{r=1}^n\mathcal{R}_{n,r}^{\gamma}K^{\gamma}\Big) + \frac{1}{N} \sum_{\lambda_j\in I} \Big| \frac{1}{n}\sum_{r=1}^n \mathcal{E}_{n,r, j}(\eta_0,K^{\gamma})\Big| \, .
\end{equation}
  
We first show that the latter term may be neglected.

\begin{lem}   \label{l:morerem}
Suppose $K^\gamma\in \mathscr{H}_k$ satisfies assumptions \emph{\textbf{(Hol)}} and let $\bar{I}\subseteq I_1$. Then for all $n\in\IN$,
\[
\lim_{\eta_0\downarrow 0} \limsup_{N\to \infty} \bigg( \frac{1}{N}\sum_{\lambda_j\in I}  \Big| \frac{1}{n}\sum_{r=1}^n \mathcal{E}_{n,r, j}(\eta_0,K^{\gamma})\Big|\bigg)^2 =0 \, .
\]
\end{lem}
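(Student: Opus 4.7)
The three contributions to $\mathcal{O}_{n,r,j}(\eta_0,K^\gamma)$ all carry an explicit prefactor $\eta_0$ or $\eta_0^2$. The strategy is to isolate these prefactors by a Cauchy--Schwarz argument, and show that the remaining weighted $\ell^2$-norms are bounded uniformly in both $N$ and $\eta_0\in(0,1)$. Since $n$ is fixed, the number of terms in $\frac{1}{n}\sum_{r=1}^n \mathcal{O}_{n,r,j}$ contributes only a constant factor, and taking squares at the end just squares this uniform bound, so it suffices to treat one typical term of each type.

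For the first type, apply Cauchy--Schwarz on $\ell^2(B)$ using the weight $\alpha_{\gamma_j}$ from Section~\ref{s:proof1}, followed by Cauchy--Schwarz in $j$:
\[
\frac{\eta_0}{N}\!\!\sum_{\lambda_j\in I}\!\!\bigl|\langle f_j^\ast,K_B^{\gamma_j}(\mathcal{B}\zeta^{\gamma_j})^t\tau_+\psi_j\rangle\bigr|
\le \eta_0\Bigl(\tfrac1N\!\!\sum_{\lambda_j\in I}\!\!\|\overline{\alpha_{\gamma_j}}^{-1}f_j^\ast\|^2\Bigr)^{1/2}\!\Bigl(\tfrac1N\!\!\sum_{\lambda_j\in I}\!\!\|\alpha_{\gamma_j}K_B^{\gamma_j}(\mathcal{B}\zeta^{\gamma_j})^t\tau_+\psi_j\|^2\Bigr)^{1/2}\!.
\]
The first factor is bounded by $\sqrt{3D|I|}$ in the limit $N\to\infty$, uniformly in $\eta_0\in(0,1)$, by the estimate \eqref{e:flim'} already established. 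The second term is handled symmetrically (exchanging the roles of $f_j^\ast$ and $f_j$, and of $\mathcal{B}\zeta^{\gamma_j}$ and $\mathcal{B}^\ast\iota\zeta^{\gamma_j}$); for the third, a double Cauchy--Schwarz produces $\eta_0^2$ times two weighted $\ell^2$-norms of the form above.

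It remains to show that $\frac{1}{N}\sum_{\lambda_j\in I}\|\alpha_{\gamma_j}K_B^{\gamma_j}(\mathcal{B}\zeta^{\gamma_j})^t\tau_+\psi_j\|^2$ (and its symmetric counterpart) is bounded uniformly in $N$ and $\eta_0$. Using the kernel formula \eqref{eq:KB2}, opening the square produces a double sum over pairs of non-backtracking paths of length $k+t+1$ sharing their initial edge $(x_0,x_1)$. The identity $\prod_{s=1}^{t}\zeta^{\gamma_j}_{x_s}(x_{s+1})=-2m^{\gamma_j}_{x_1}\tilde g^{\gamma_j}_N(x_1,x_{t+1})$ from Lemma~\ref{lem:zetapot} converts the product of $\zeta$'s into a single Green function factor. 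Once the sum over $j$ has been inserted into a smooth cutoff $\chi\ge \mathbf{1}_I$ (with $\chi$ as in \eqref{e:convol}), the bracket $\sum_j\chi(\lambda_j)\psi_j(x)\overline{\psi_j(y)}$ equals $\chi(H)(x,y)$ and can be expressed by Cauchy's integral formula as a contour integral over $\partial\mathscr R$ of the finite-graph Green function $g^z(x,y)$; we then proceed exactly as in Lemmas~\ref{l:intseg}--\ref{l:greenco}, lifting to the universal cover via \textbf{(BST)}, approximating $g^z$ by polynomials and replacing it by $\tilde g^z_N$. All the resulting quantities---moments of Green functions on the cover and of $|\Im\zeta|/|\zeta|^2$---are controlled uniformly in $\eta_0\in(0,1)$ by \textbf{(Green)} together with Remarks~\ref{rem:IDS1}--\ref{rem:IDS3}. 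This yields a bound $C_{n,k,K,I}$ independent of $N$ and $\eta_0$; combined with the explicit prefactor $\eta_0$ (or $\eta_0^2$), the lemma follows upon letting $N\to\infty$ and then $\eta_0\downarrow 0$.

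The main obstacle is the last step: the $\zeta^{\gamma_j}$-products appearing in $(\mathcal{B}\zeta^{\gamma_j})^t\tau_\pm\psi_j$ are \emph{a priori} of order $\eta_0^{-t}$ pointwise, so a naive operator-norm bound would produce a factor $\eta_0^{-t}$ that destroys the cancellation. The point is that after summing over $j$ and moving to the Green-function formulation on the cover, these products reassemble into a single Green function $\tilde g^{\gamma_j}_N(x_1,x_{t+1})$, whose relevant moments are controlled by \textbf{(Green)} uniformly down to the real axis---just as in the proof of Theorem~\ref{thm:upvar}.
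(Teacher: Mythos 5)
Your proof plan is correct and follows essentially the same route as the paper: bound $\bigl(\frac1N\sum_j|\frac1n\sum_r\mathrm O_{n,r,j}|\bigr)^2$ by a fixed number of terms, isolate the explicit $\eta_0$ (or $\eta_0^2$) prefactors by Cauchy--Schwarz with the $\alpha_{\gamma_j}$-weight, control the first factor by \eqref{e:flim'}, and show the remaining weighted $\ell^2$-sums are bounded uniformly in $N$ and $\eta_0$ by repeating the machinery of Lemmas~\ref{l:intseg}--\ref{l:greenco} (smooth cutoff, Cauchy's integral formula, lift to the cover via \textbf{(BST)}, H\"older, then \textbf{(Green)} together with Remarks~\ref{rem:IDS1}--\ref{rem:IDS3}). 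The only inessential deviation is that you propose rewriting $\prod\zeta$ as $-2m^\gamma\,\tilde g_N^\gamma$ via \eqref{eq:multigreen2} before applying H\"older, whereas the paper applies H\"older directly to the $\zeta$-products and bounds their moments via Remark~\ref{rem:IDS2}; both are controlled by \textbf{(Green)}, so this is cosmetic.
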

\begin{proof}
We have $\big(\frac{1}{N} \sum_{\lambda_j\in I} |\frac{1}{n} \sum_{r=1}^n \mathcal{E}_{n,r,j} |\big)^2 \le \frac{1}{n} \sum_{r=1}^n \big(\frac{1}{N}\sum_{\lambda_j\in I} |\mathcal{E}_{n,r,j}|\big)^2$. Now, letting as above $\gamma_j=\lambda_j+i\eta_0$,
\begin{multline*}
 \bigg(\sum_{\lambda_j\in I}|\mathcal{E}_{n,r,j}|\bigg)^2 \le \eta^2_0 c_{n,r}\bigg\{\sum_{t=0}^{r-1} \Big(\sum_{\lambda_j\in I} \left|\left\langle f_j^{\ast},K_B^{\gamma_j}(\mathcal{B} \zeta^{\gamma_j})^t\tau_+\psi_j\right\rangle\right|\Big)^2\\
 \qquad\qquad\qquad + \sum_{t'=0}^{n-r-1} \Big(\sum_{\lambda_j\in I} \left|\left\langle (\mathcal{B}^{\ast}\iota \zeta^{\gamma_j}  )^{t'}\tau_-\psi_j, K_B^{\gamma_j} f_j\right\rangle\right|\Big)^2 \\
+ \eta^2_0\sum_{t,t'} \Big(\sum_{\lambda_j\in I} \left|\left\langle(\mathcal{B}^{\ast}\iota \zeta^{\gamma_j}  )^{t'}\tau_-\psi_j, K_B^{\gamma_j}(\mathcal{B} \zeta^{\gamma_j})^t\tau_+\psi_j\right\rangle\right|\Big)^2\bigg\} \, ,
\end{multline*}
where $c_{n,r}=n+r(n-r)$. So it suffices to show that $\limsup_N \big(\frac{1}{N}\sum_{\lambda_j\in I} |\langle \cdot, \cdot \rangle| \big)^2$ is uniformly bounded in $\eta_0$ for each $t,t'$. For the first term, we have
\begin{align*}
& \Big(\frac{1}{N} \sum_{\lambda_j \in I} |\langle f_j^{\ast},K_B^{\gamma_j}(\mathcal{B} \zeta^{\gamma_j})^t\tau_+\psi_j\rangle|\Big)^2  \le \frac{1}{N} \sum_{\lambda_j\in I} \|\overline{\alpha_{\gamma_j}}^{-1}f_j^{\ast}\|^2\cdot \frac{1}{N}\sum_{\lambda_j\in I}\|\alpha_{\gamma_j}K_B^{\gamma_j}(\mathcal{B} \zeta^{\gamma_j})^t\tau_+\psi_j\|^2\, .
\end{align*}
The first sum is uniformly bounded as $\eta_0\downarrow 0$, by (\ref{e:flim'}). Next, by (\ref{eq:KB2}), we have
\begin{multline*}
\|\alpha_{\gamma_j}K_B^{\gamma_j}(\mathcal{B} \zeta^{\gamma_j})^t\tau_+\psi_j\|^2   = \sum_{(x_0,x_1)\in B}\sum_{(x_2;x_k) ,(y_2;y_k) } |\alpha_{\gamma_j}(x_0,x_1)|^2K^{\gamma_j}(x_0;x_k) \\
\cdot \overline{K^{\gamma_j}(x_0;y_k)}\cdot[(\mathcal{B} \zeta^{\gamma_j})^t\tau_+\psi_j](x_{k-1},x_k)\overline{[(\mathcal{B} \zeta^{\gamma_j})^t\tau_+\psi_j](y_{k-1},y_k)} \, .
\end{multline*}
Arguing as in Section~\ref{s:proof1}, applying Lemmas~\ref{l:intseg} to ~\ref{l:errors}, we get for $z=\lambda+i\eta^4$,
\begin{multline*}
\frac{1}{N}\sum_{\lambda_j\in I} \|\alpha_{\gamma_j}K_B^{\gamma_j}(\cB \zeta^{\gamma_j})^t\tau_+\psi_j\|^2 \lesssim \frac{3}{\pi N} 
\int_{a-2\eta}^{b+2\eta} \sum_{\rho_G(x_0) \ge d_{R,k,t,\eta}} \sum_{x_1\sim x_0} \sum_{(x_2;x_{k+t}) ,(y_2;y_{k+t}) }  \\
\chi(\lambda) |\alpha_{z+i\eta_0}(x_0,x_1)|^2K^{z+i\eta_0}(x_0;x_k)\overline{K^{z+i\eta_0}(x_0;y_k)} \\
\zeta_{x_k}^{z+i\eta_0}(x_{k+1})\cdots \zeta_{x_{k+t-1}}^{z+i\eta_0}(x_{k+t}) \overline{\zeta_{y_k}^{z+i\eta_0}(y_{k+1})\cdots \zeta_{y_{k+t-1}}^{z+i\eta_0}(y_{k+t})} \Psi_{z,\tilde{x}_{k+t}}(\tilde{y}_{k+t})\,\dd \lambda\,.
\end{multline*}
Using H\"older's inequality as in Lemma~\ref{l:greenco}, we see that as $N \to \infty$, this quantity is uniformly bounded in $\eta,\eta_0$ by \textbf{(Hol)} and \textbf{(Green)}.
One bounds $\frac{1}{N}\sum_{\lambda_j}\|K_B^{\gamma_j}f_j\|^2$ similarly. Finally,
\begin{multline*}
\frac{1}{N}\sum_{\lambda_j \in I} \|(\cB^{\ast}\iota \zeta^{\gamma_j})^{t'}\tau_-\psi_j\|^2 = \frac{1}{N}\sum_{\lambda_j\in I}\sum_{(x_0;x_{t'+1})} |\psi_j(x_0)|^2 |\zeta_{x_1}^{\gamma_j}(x_0)\dots\zeta_{x_{t'-1}}^{\gamma_j}(x_{t'})|^2 \\
 \lesssim \frac{3}{\pi N} \int_{a-2\eta}^{b+2\eta} \sum_{(x_0;x_{t'+1}),\rho_G(x_0)\ge d_{R,\eta,t'}} \chi(\lambda) \Psi_{z,\tilde{x}_0}(\tilde{x}_0)|\zeta_{x_1}^{z+i\eta_0}(x_0)\dots\zeta_{x_{t'-1}}^{z+i\eta_0}(x_{t'})|^2\,\dd\lambda \,,
\end{multline*}
which is asymptotically bounded using H\"older's inequality again as in Lemma~\ref{l:greenco}.
\end{proof}

Using the invariance law \eqref{e:R}, Theorem~\ref{thm:upvar} with $\tilde{K}^{\gamma} = \frac{1}{n}\sum_{r=1}^n \mathcal{R}_{n,r}^{\gamma}K^{\gamma}$, and Lemma~\ref{l:morerem}, we deduce the following statement~:
 
\begin{prp}\label{p:mainbound2}
Under the assumptions of Theorem~\ref{thm:upvar},
\begin{multline*}
\lim_{\eta_0 \downarrow 0}\limsup_{N\to +\infty}\varnbi(K^{\gamma})^2
\\ \leq D\,|I|\,\lim_{\eta_0\downarrow 0}\lim_{\eta\downarrow 0}\limsup_{N\to \infty}  \int_{a-2\eta}^{b+2\eta} \Big\| \frac{1}{n}\sum_{r=1}^n\mathcal{R}_{n,r}^{\lambda+i(\eta^4+\eta_0)}K^{\lambda+i(\eta^4+\eta_0)} \Big\|_{\lambda+i(\eta^4+\eta_0)}^2\,\dd \lambda \, .
\end{multline*}
 
\end{prp}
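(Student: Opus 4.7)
The plan is to combine in sequence the invariance inequality \eqref{e:R}, the error control of Lemma~\ref{l:morerem}, and Theorem~\ref{thm:upvar} applied to the averaged observable $\tilde K^\gamma := \frac{1}{n}\sum_{r=1}^n \mathcal{R}_{n,r}^\gamma K^\gamma$. Setting
\[
E_N(\eta_0) := \frac{1}{N}\sum_{\lambda_j\in I}\Big|\frac{1}{n}\sum_{r=1}^n \mathrm{O}_{n,r,j}(\eta_0, K^\gamma)\Big|,
\]
inequality \eqref{e:R} reads $\varnbi(K^\gamma) \le \varnbi(\tilde K^\gamma) + E_N(\eta_0)$, and for any $\delta > 0$ the elementary bound $(a+b)^2 \le (1+\delta)a^2 + (1+\delta^{-1})b^2$ yields
\[
\varnbi(K^\gamma)^2 \le (1+\delta)\,\varnbi(\tilde K^\gamma)^2 + (1+\delta^{-1})\,E_N(\eta_0)^2.
\]
Taking $\limsup_{N\to\infty}$ then $\lim_{\eta_0\downarrow 0}$, the second summand vanishes by Lemma~\ref{l:morerem}. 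Applying Theorem~\ref{thm:upvar} to $\tilde K^\gamma$ produces exactly the right-hand side of the proposition (multiplied by $1+\delta$), and we conclude by letting $\delta \to 0$.

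The only non-mechanical step is to check that $\tilde K^\gamma$ satisfies the assumptions \textbf{(Hol)} of Definition~\ref{d:hol}, which is needed to invoke Theorem~\ref{thm:upvar}. The observable $\tilde K^\gamma$ is built from $K^\gamma$ by multiplication by a product of $n-r$ factors $\overline{\zeta^\gamma}$ and $r$ factors $\zeta^\gamma$. Analyticity on a strip holds because $\gamma \mapsto \zeta_w^\gamma(v)$ is holomorphic on $\C^+$, while $\overline{\zeta^{\lambda+i\eta_0}}$ agrees on $\lambda\in\R$ with $\zeta^{\lambda-i\eta_0}$ and extends analytically to $\{|\Im z|<\eta_0\}$ via $z\mapsto \zeta^{z-i\eta_0}$. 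The uniform bound $|\zeta_w^\gamma(v)|\le |\Im\gamma|^{-1}$ gives $|\zeta^{z\pm i\eta_0}|\le 2/\eta_0$ on $\{|\Im z|<\eta_0/2\}$, which combined with the corresponding bound on $K^\gamma$ controls $\norm{\tilde K}_{\eta_0}$.

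The main obstacle is the moment bound \eqref{e:relevantbound} for $\tilde K^\gamma$. It will follow by Hölder's inequality from the $s$-moment bound on $K^\gamma$ (available in \textbf{(Hol)} for $K^\gamma$) together with arbitrary-order moment bounds on the $\zeta$-factors; the latter are provided by \textbf{(Green)} combined with the identities of Lemma~\ref{lem:zetapot} (notably \eqref{eq:sumzeta} relating $|\zeta|$ and $|\Im \hat\zeta|^{-1}$), and by the local weak convergence \textbf{(BSCT)} through Remark~\ref{rem:IDS1} to transfer integrability from the random tree limit to finite averages on $G_N$. This is the single non-formal point in the argument and is the reason the strong moment assumption \textbf{(Green)} is needed.
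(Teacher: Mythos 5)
Your proof follows precisely the paper's argument: combine the invariance law \eqref{e:R} with Lemma~\ref{l:morerem} to dispose of the error term, then invoke Theorem~\ref{thm:upvar} applied to the averaged observable $\tilde K^\gamma$. You additionally spell out the elementary $(1+\delta)$-trick and the verification of \textbf{(Hol)} for $\tilde K^\gamma$, which the paper leaves implicit (it merely notes that \textbf{(Hol)} is stable under sums and compositions), but the route is the same.
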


\section{Step 3~: A stationary Markov chain appears\label{s:Mark}}

Denoting $\gamma=\lambda+i(\eta^4+\eta_0)$ in Proposition \ref{p:mainbound2}, we are now concerned with estimating
\begin{equation}\label{e:lastline}
\left\Vert \frac{1}{n}\sum_{r=1}^n\mathcal{R}_{n,r}^{\gamma}K^{\gamma} \right\Vert_{\gamma}^2 =
\frac{1}{n^2}\sum_{r, r'=1}^n \left\langle \mathcal{R}_{n,r}^{\gamma}K^{\gamma},\mathcal{R}_{n,r'}^{\gamma}K^{\gamma}\right\rangle_{\gamma} \, .
 \end{equation}

Suppose $r \ge r'$, so that $n-r\le n-r'$. Then
\begin{align*}
& \langle \mathcal{R}_{n,r}^{\gamma}K,\mathcal{R}_{n,r'}^{\gamma}K\rangle_{{\gamma}} =  \frac{1}{N}\sum_{(x_0;x_{n+k})\in B_{n+k}}  \frac{|\Im \zeta_{x_1}^{\gamma}(x_0)|}{|\zeta_{x_1}^{\gamma}(x_0)|^2} \cdot  |\zeta_{x_1}^{\gamma}(x_0)\cdots \zeta_{x_{n-r}}^{\gamma}(x_{n-r-1})|^2 \\
& \qquad \cdot|\zeta_{x_{n-r'+k}}^{\gamma}(x_{n-r'+k+1})\cdots\zeta_{x_{n+k-1}}^{\gamma}(x_{n+k})|^2  \\
& \qquad \cdot \overline{K(x_{n-r};x_{n-r+k})\,\zeta_{x_{n-r+k}}^{\gamma}(x_{n-r+k+1})\cdots \zeta_{x_{n-r'+k-1}}^{\gamma}(x_{n-r'+k})} \\
& \qquad \cdot \overline{\zeta_{x_{n-r+1}}^{\gamma}(x_{n-r})\cdots \zeta_{x_{n-r'}}^{\gamma}(x_{n-r'-1})}K(x_{n-r'};x_{n-r'+k}) \cdot \frac{|\Im \zeta_{x_{n+k-1}}^{\gamma}(x_{n+k})|}{|\zeta_{x_{n+k-1}}^{\gamma}(x_{n+k})|^2} \, .
\end{align*}
Letting ${\eta}_1=\Im \gamma$, (\ref{eq:sumzeta}) tells us that $\sum_{x_0 \in \mathcal{N}_{x_1}\setminus \{x_2\}} |\Im \zeta_{x_1}^{\gamma}(x_0)| = \frac{|\Im \zeta_{x_2}^{\gamma}(x_1)|}{|\zeta_{x_2}^{\gamma}(x_1)|^2} - \eta_1$. Similarly, we have $\sum_{x_{n+k}\in \mathcal{N}_{x_{n+k-1}} \setminus \{x_{n+k-2}\}} |\Im \zeta_{x_{n+k-1}}^{\gamma}(x_{n+k})| = \frac{|\Im \zeta_{x_{n+k-2}}^{\gamma}(x_{n+k-1})|}{|\zeta_{x_{n+k-2}}^{\gamma}(x_{n+k-1})|^2} - \eta_1$.
By iteration, this induces some simplifications~:
\begin{multline}\label{e:simplifi}
\langle \mathcal{R}_{n,r}^{\gamma}K,\mathcal{R}_{n,r'}^{\gamma}K\rangle_{{\gamma}} =  \frac{1}{N} \sum_{(x_{n-r};x_{n-r'+k})\in B_{k+r-r'}} \frac{|\Im \zeta_{x_{n-r+1}}^{\gamma}(x_{n-r})|}{|\zeta_{x_{n-r+1}}^{\gamma}(x_{n-r})|^2}\overline{K(x_{n-r};x_{n-r+k})} \\
 \qquad \cdot K(x_{n-r'};x_{n-r'+k}) \cdot \overline{\zeta_{x_{n-r+k}}^{\gamma}(x_{n-r+k+1})\cdots \zeta_{x_{n-r'+k-1}}^{\gamma}(x_{n-r'+k})} \\
 \qquad \cdot  \overline{\zeta_{x_{n-r+1}}^{\gamma}(x_{n-r})\cdots \zeta_{x_{n-r'}}^{\gamma}(x_{n-r'-1})}  \cdot \frac{|\Im \zeta_{x_{n-r'+k-1}}^{\gamma}(x_{n-r'+k})|}{|\zeta_{x_{n-r'+k-1}}^{\gamma}(x_{n-r'+k})|^2} - \mathbf{E}_{n,r,r'}(\eta_1,K) \, ,
\end{multline}
with the error term
\begin{align*}
\mathbf{E}_{n,r,r'}(\eta_1,K) & = \frac{\eta_1}{N} \sum_{s=1}^{n-r} \sum_{(x_s;x_{n+k})}|\zeta_{x_{s+1}}^{\gamma}(x_s)\cdots\zeta_{x_{n-r}}^{\gamma}(x_{n-r-1})|^2 \\
& \qquad \cdot|\zeta_{x_{n-r'+k}}^{\gamma}(x_{n-r'+k+1})\cdots\zeta_{x_{n+k-2}}^{\gamma}(x_{n+k-1})|^2 \cdot |\Im \zeta_{x_{n+k-1}}^{\gamma}(x_{n+k})|  \\
& \qquad \cdot \overline{K(x_{n-r};x_{n-r+k})\,\zeta_{x_{n-r+k}}^{\gamma}(x_{n-r+k+1})\cdots \zeta_{x_{n-r'+k-1}}^{\gamma}(x_{n-r'+k})}  \\
& \qquad \cdot \overline{\zeta_{x_{n-r+1}}^{\gamma}(x_{n-r})\cdots \zeta_{x_{n-r'}}^{\gamma}(x_{n-r'-1})}K(x_{n-r'};x_{n-r'+k})  \\
& \quad + \frac{\eta_1}{N} \sum_{s'=n-r'+k}^{n+k-1} \sum_{(x_{n-r'};x_{s'})} \frac{|\Im \zeta_{x_{n-r+1}}^{\gamma}(x_{n-r})|}{|\zeta_{x_{n-r+1}}^{\gamma}(x_{n-r})|^2} \\
& \quad \qquad \cdot |\zeta_{x_{n-r'+k}}^{\gamma}(x_{n-r'+k+1})\cdots \zeta_{x_{s'-1}}^{\gamma}(x_{s'})|^2 \\
& \quad \qquad \cdot \overline{K(x_{n-r};x_{n-r+k})\,\zeta_{x_{n-r+k}}^{\gamma}(x_{n-r+k+1})\cdots \zeta_{x_{n-r'+k-1}}^{\gamma}(x_{n-r'+k})} \\
& \quad \qquad \cdot \overline{\zeta_{x_{n-r+1}}^{\gamma}(x_{n-r})\cdots \zeta_{x_{n-r'}}^{\gamma}(x_{n-r'-1})}K(x_{n-r'};x_{n-r'+k}) \, .
\end{align*}
%For $s=n-r$, $s'=n-r+k$, we put respectively $\zeta_{x_{n-r+1}}^{\gamma}(x_{n-r})\zeta_{x_{n-r}}^{\gamma}(x_{n-r-1}):=1$ and $\zeta_{x_{n-r+k}}^{\gamma}(x_{n-r+k+1})\zeta_{x_{n-r+k-1}}^{\gamma}(x_{n-r+k}) :=1$.

The expression is slightly nicer if we replace $K$ by $Z_{\gamma} K$ defined by
\begin{equation}\label{e:ZK}
(Z_{\gamma}K)(x_0;x_k) = \zeta_{x_0}^{\gamma}(x_1)\cdots \zeta_{x_{k-1}}^{\gamma}(x_k) K(x_0;x_k) \, .
\end{equation}
If $\gamma\mapsto K^\gamma$ satisfies \textbf{(Hol)} then so does $\gamma\mapsto Z_\gamma K^\gamma$. Using (\ref{eq:mv}), we get in that case
\begin{multline}
\langle \mathcal{R}_{n,r}^{\gamma}Z_{\gamma}K^\gamma,\mathcal{R}_{n,r'}^{\gamma}Z_{\gamma}K^\gamma\rangle_{{\gamma}} =\frac{1}{N} \sum_{(x_{n-r};x_{n-r'+k})\in B_{k+r-r'}} \frac{|\Im \zeta_{x_{n-r+1}}^{\gamma}(x_{n-r})|}{|m_{x_{n-r+1}}^{\gamma}|^2 |\zeta_{x_{n-r}}^{\gamma}(x_{n-r+1})|^2} \\
 \qquad \cdot  |\zeta_{x_{n-r}}(x_{n-r+1})\cdots \zeta_{x_{n-r'+k-1}}(x_{n-r'+k})|^2\overline{m_{x_{n-r}}^{\gamma}K^{\gamma}(x_{n-r};x_{n-r+k})}  \\
 \qquad \cdot m^\gamma_{x_{n-r'}} K^{\gamma}(x_{n-r'};x_{n-r'+k}) \cdot   {u_{x_{n-r+1}}^{\gamma}(x_{n-r})\cdots u_{x_{n-r'}}^{\gamma}(x_{n-r'-1})}   \\
 \qquad  \cdot \frac{|\Im \zeta_{x_{n-r'+k-1}}^{\gamma}(x_{n-r'+k})|}{|\zeta_{x_{n-r'+k-1}}^{\gamma}(x_{n-r'+k})|^2} - \mathbf{E}_{n,r,r'}(\eta_1,Z_{\gamma}K^\gamma) \, ,
 \, \label{e:phew}
\end{multline}
where $u_x^{\gamma}(y)$ is the complex number of modulus $1$ given by
\begin{equation}\label{e:ugamma}
u_x^{\gamma}(y)=\overline{\zeta_{x}^{\gamma}(y)}\zeta_{x}^{\gamma}(y)^{-1}  \, .
\end{equation}
Let us define a positive measure $\mu^\gamma_k$ on the set $B_k$ of non-backtracking paths of length $k$, by putting
\begin{equation}\label{e:muk}
\mu^\gamma_k\left[(x_0; x_k)\right] = \frac{|\Im \zeta_{x_{1}}^{\gamma}(x_{0})|}{|m^\gamma_{x_{1}} \zeta_{x_{0}}^{\gamma}(x_{1})|^2}  \cdot  |\zeta_{x_{0}}(x_{1})\cdots \zeta_{x_{k-1}}(x_{k})|^2 \cdot 
\frac{|\Im \zeta_{x_{k-1}}^{\gamma}(x_{k})|}{|\zeta_{x_{k-1}}^{\gamma}(x_{k})|^2} \, .
\end{equation}
Let us also introduce the operator
\begin{equation}\label{e:Su}
(\mathcal{S}_{u^\gamma} K)(x_0;x_k) = \frac{|\zeta^\gamma_{x_1}(x_0)|^2}{|\Im \zeta^\gamma_{x_1}(x_0)|}\sum_{x_{-1}\in \mathcal{N}_{x_0} \setminus \{x_1\}} |\Im \zeta^\gamma_{x_0}(x_{-1})| \,\overline{u^\gamma_{x_0}(x_{-1})} K(x_{-1};x_{k-1}) \, . 
\end{equation}
Then, using (\ref{eq:mv}) again, we see that \eqref{e:phew}  takes the nicer form
\begin{equation}\label{e:nicer}
\langle \mathcal{R}_{n,r}^{\gamma}Z_{\gamma}K^\gamma,\mathcal{R}_{n,r'}^{\gamma}Z_{\gamma}K^\gamma\rangle_{{\gamma}}  = \frac{1}{N} \la \cS_{u^\gamma}^{{r-r'}}m^\gamma K^{\gamma}, m^\gamma K^{\gamma}\ra_{\ell^2(\mu^\gamma_k)} - \mathbf{E}_{n,r,r'}(\eta_1,Z_{\gamma}K^\gamma) \, ,
\end{equation} where we let
$(m^\gamma K)(x;y)=m^\gamma_x K(x;y)$. Let us also define
 \begin{equation}\label{e:Sgamma}
(\mathcal{S}_\gamma K)(x_0;x_k) = \frac{|\zeta^\gamma_{x_1}(x_0)|^2}{|\Im \zeta^\gamma_{x_1}(x_0)|}\sum_{x_{-1}\in \mathcal{N}_{x_0} \setminus \{x_1\}} |\Im \zeta^\gamma_{x_0}(x_{-1})|  \,K(x_{-1};x_{k-1}) \, .
\end{equation}
Such operators would be called ``transfer operators'' in ergodic theory, or ``transition matrices'' in the theory of Markov chains. Note that $\mathcal{S}_\gamma$ has non-negative coefficients and that $\mathcal{S}_{u^\gamma}$ just differs from $\mathcal{S}_\gamma$
by the ``phases'' $\overline{u^\gamma_{x_0}(x_{-1})}$. The effect of adding a phase to a stochastic operator is a much studied subject in the theory of Markov chains, or more generally in ergodic theory (see Wielandt's theorem \cite[Chapter 8]{Mey01}, or in the context of hyperbolic dynamical systems \cite[Chapter 4]{PP}).

The matrix elements of $\cS_\gamma$ are given by 
\begin{equation}\label{e:sgamma}\cS_\gamma (\omega,\omega') =\frac{|\zeta_{x_1}^{\gamma}(x_0)|^2}{|\Im \zeta_{x_1}^{\gamma}(x_0)|}  |\Im \zeta_{x_0}^{\gamma}(x_{-1})|
\end{equation}
if $\omega=(x_0;x_k)$, $\omega'=(x_{-1};x_{k-1})$ and $\omega'\rightsquigarrow \omega$, and $\cS_\gamma (\omega,\omega') =0$ otherwise. Recall from \S \ref{sec:basic} that if $\omega=(x_0;x_k)$, we write $\omega' \rightsquigarrow \omega$ if $\omega'=(x_{-1},x_0,\dots,x_{k-1})$ for some $x_{-1}\in \mathcal{N}_{x_0}\setminus \{x_1\}$. 

Note that $\cS_\gamma$ is substochastic~: $\sum_{\omega'\in B_k}\cS_\gamma (\omega,\omega') \le 1$ for any $\omega \in B_k$, by \eqref{eq:sumzeta}. More precisely, if $\omega=(x_0;x_k)$ and $\eta_1=\Im \gamma >0$, then
\begin{equation}\label{e:sumzeta2}
\sum_{\omega'\in B_k}\cS_\gamma (\omega,\omega') = 1-\eta_1\frac{|\zeta_{x_1}^{\gamma}(x_0)|^2}{|\Im \zeta_{x_1}^{\gamma}(x_0)|} \, .
\end{equation}
Taking the adjoint in $\ell^2(\mu_k^{\gamma})$, a direct calculation gives
\[
(\mathcal{S}_{\gamma}^{\ast}K)(x_0;x_k) = \frac{|\zeta_{x_{k-1}}^{\gamma}(x_k)|^2}{|\Im \zeta_{x_{k-1}}^{\gamma}(x_k)|} \sum_{x_{k+1} \in \mathcal{N}_{x_k} \setminus \{x_{k-1}\}} |\Im \zeta_{x_k}^{\gamma}(x_{k+1})| \,K(x_1;x_{k+1}) \, .
\]
 The adjoint $\cS_{\gamma}^{\ast}$ is also substochastic, with
\begin{equation}
\label{e:sumzeta3}
\sum_{\omega'\in B_k}\cS_{\gamma}^{\ast} (\omega,\omega') = 1-\eta_1 \frac{|\zeta_{x_{k-1}}^{\gamma}(x_k)|^2}{|\Im \zeta_{x_{k-1}}^{\gamma}(x_k)|}.
\end{equation}

\begin{rem}\label{r:compat}
By \eqref{eq:sumzeta}, for any $(x_0;x_{k-1})\in B_{k-1}$, we have
\begin{equation}
\sum_{x_k\in \mathcal{N}_{x_{k-1}}\setminus \{x_{k-2}\}} \mu^\gamma_k\left[(x_0;x_k)\right]\leq  \mu^\gamma_{k-1}\left[(x_0;x_{k-1})\right]\label{e:compat}
\end{equation}
and
for any $(x_1;x_k)\in  B_{k-1}$,
\begin{equation}
\sum_{x_0\in \mathcal{N}_{x_1}\setminus \{x_2\}} \mu^\gamma_k\left[(x_0;x_k)\right]\leq   \mu^\gamma_{k-1}\left[(x_1; x_{k}) \right]\label{e:inv}
\end{equation}

In \eqref{e:lastline} we take $\gamma=\lambda+i(\eta^4+\eta_0)$ (c.f. Proposition~\ref{p:mainbound2}), and thus $\eta_1=\Im \gamma = \eta^4+\eta_0$. In the limiting case $\eta_1=0$, \eqref{e:compat} and \eqref{e:inv} turn into equalities. Equation \eqref{e:compat}
is then the Kolmogorov compatibility condition~: it tells us that the family of measures $(\mu_k^\gamma)$ may be extended to a positive measure (actually, a Markov measure) on the set $B_\infty$ of infinite non-backtracking paths. Equality in condition \eqref{e:inv} means that this Markov chain is stationary. This stationarity is the property that makes the measures $\mu_k^\gamma$ nice, and this is the reason for introducing (somewhat artificially) the weight $\frac{\Im \zeta_x^{\gamma}(y)}{|\zeta_x^{\gamma}(y)|^2}$ in \eqref{eq:normgamma}.

This family of stationary Markov chains (indexed by $\gamma$) is in some sense the ``classical dynamical system'' that we were seeking in \S \ref{s:outline}.

Since $\eta_1=\eta^4+\eta_0$ is non-zero (but small), we do not actually have exact equality in \eqref{e:compat} and \eqref{e:inv}. This causes some error terms that we need to control as $\eta, \eta_0\To 0$.
\end{rem}

\section{Spectral gap and mixing\label{s:mixing}}

In this section, we convert the expanding assumption \textbf{(EXP)} into an estimate on the rate of mixing of the ``Markov chains'' $(\mu_k^\gamma)$ defined in \eqref{e:muk}. Every transitive Markov chain is mixing, but here we need
 estimates that are uniform both as $N\To +\infty$ {\em{and}} as $\gamma$ approaches the real axis.
 
 A technical difficulty is that the measures $(\mu_k^\gamma)$ are not {\em{a priori}} bounded from above, and the transition probabilities are not bounded from below as $\gamma$ approaches the real axis. Peaks of $(\mu_k^\gamma)$, as well as small transition probabilities, tend to ``disconnect'' the graph and are bad for mixing. So we will need to show that there are few peaks and few small transitions (Proposition \ref{lem:bad1}).
 
Let 
\begin{equation}\label{e:nuk}\nu_k^{\gamma} = \frac{1}{\mu_k^{\gamma}(B_k)}\mu_k^{\gamma}
\end{equation}
be the normalized measure. We denote by $\ell^2(\nu_k^{\gamma})$ the set $\ell^2(B_k)$ endowed with the scalar product $\langle f,g\rangle_{\nu_k^{\gamma}} = \sum_{\omega\in B_k} \nu_k^{\gamma}(\omega)\overline{f(\omega)}g(\omega)$.

We anticipate the calculations of Section \ref{sec:retour}, where we will need to consider the non-backtracking quantum variance of operators $K_\gamma$ of the form $K_\gamma= \cF_\gamma K$ where $K$ is independent of $\gamma$, and $\cF_\gamma:\mathscr{H}_m\to\mathscr{H}_k$ is a $\gamma$-dependent operator for some $1\le k\le m+1$, having the form $\cF_\gamma=\mathcal{L}^{\gamma}d^{-1}\mathcal{S}_{T,\gamma}$, $\mathcal{T}^\gamma$, $\mathcal{O}_1^\gamma$, $\mathcal{U}^{\gamma}_j$, $\mathcal{O}_j^\gamma$, $\mathcal{P}_j^\gamma$, $j\ge 2$, or a polynomial combination thereof. See (\ref{e:ST}, \ref{e:cL}, \ref{e:T}, \ref{e:Um}, \ref{e:Om}, \ref{e:Pm}) for the definitions. In the case $\cF_\gamma=\mathcal{L}^{\gamma}d^{-1}\mathcal{S}_{T,\gamma}$, the operator depends on an additional parameter $T\in\N^{\ast}$, that has to be taken arbitrarily large in Corollary~\ref{cor:recurrence}.

Comparing with \eqref{e:nicer}, this means that we will need to deal with $\la \cS_{u^\gamma}^{{r-r'}}K^\gamma , K^\gamma\ra_{\mu^\gamma_k} $ where now $K^\gamma=B_{\gamma} K$, $K$ is $\gamma$-independent, and $B_\gamma:\mathscr{H}_m\to \mathscr{H}_k$ is defined by
\[
B_\gamma=m^\gamma Z_{\gamma}^{-1} \cF_\gamma \,.
\]
 
For simplicity, the calculations below are written for $k=1$. This suffices for our purposes, as we shall see in Section~\ref{s:step4}. Like in the statement of Theorem \ref{thm:1}, we will always assume that the $\gamma$-independent operator $K$ satisfies $\norm{K}_\infty:=\sup_{x,y\in V}|K(x,y)|\leq 1$.

The main results of this section are the two following propositions, that estimate the norm of the transfer operator $\cS_\gamma $ \eqref{e:sgamma} on proper subspaces. We call $F$ the space of functions $f$ on $B$ such that $f(e)$ ``depends only on the terminus'', that is, $f(e)= f(e')$ if $t_e=t_{e'}$. The first proposition estimates the norm of $\cS_\gamma $ on the orthogonal of $F$, and the second one estimates the norm of $\cS_{\gamma}^2 $ on the orthogonal of constant functions.

We denote by $\ell^2(B_1,U)$ the set $\ell^2(B_1)$ endowed with the scalar product $\langle f,g\rangle_U = \frac{1}{N}\sum_{e\in B_1} \overline{f(e)}g(e)$. Let $P_{F,U}$ be the orthogonal projector on $F$ in $\ell^2(B_1, U)$~:
\begin{equation}    \label{eq:pfu}
P_{F,U} K(e)= \frac{1}{d(t_e)} \sum_{e': \,t_{e'}=t_e} K(e') \, .
\end{equation}

We use as a ``reference operator'' the transfer operator $\mathcal{S}$ defined by
\begin{eqnarray*}
\cS: \ell^2(B, U) &\To&  \ell^2(B, U)  \\
\cS f(e)&=&\frac{1}{q(o_e)}\sum_{e' \rightsquigarrow e} f(e')
\end{eqnarray*}
where $q(x)=d(x)-1$. Both $\cS$ and $\cS^*$ are stochastic, if the adjoint of $\cS$ is taken in $\ell^2(B_1,U)$. The influence of the spectral gap assumption {\bf{(EXP)}} on the spectrum of $\cS$ is studied in \cite{A17} and we will use these results below.

We denote $\mathcal{Q}=\mathcal{S}^{\ast}\mathcal{S}$ and $\mathcal{Q}_2=\mathcal{S}^{2\,\ast}\mathcal{S}^2$. 
Note that $\mathcal{Q} (e,e')=0$ unless there exists $e''$ such that $e \rightsquigarrow e''$ and $e' \rightsquigarrow e''$. In this case, we say that $[e,e']$ is a \emph{pair}; $[e,e']$ form a pair iff they share the same terminus. The set of pairs is denoted by $P(B_1)$.

\begin{prp}          \label{lem:sp}
Let $B_{\gamma}K\in \mathscr{H}_1$. Let $w=P_{F^{\bot},\nu}B_{\gamma}K$ be the orthogonal projection of $B_\gamma K$ on $F^\perp$ in $\ell^2(\nu_1^{\gamma})$.
Then for any $M>0$ we have
\[
\|\mathcal{S}_{\gamma} w\|_{\nu_1^{\gamma}}^2 \le (1-\tfrac{3}{4}M^{-2})\cdot \|w\|_{\nu_1^{\gamma}}^2  + C_{N,M}(B_\gamma) \cdot \|K\|_{\infty}^2\, ,
\]
where
\begin{multline}     \label{eq:cnmb}
C_{N,M}(B_\gamma)  = \sup_{\|K\|_{\infty}=1}  \frac{\cs^{-1}}{2N} \sum_{[e,e'] \in Badp(M)} \mathcal{Q}(e,e') |B_\gamma K(e)- B_\gamma K(e')|^2
\\ + \cs^{-2} \sum_{e\,\in Bad(M)} \nu_1^{\gamma}(e)|B_\gamma K(e)-P_{F,U} B_\gamma K(e)|^2.
\end{multline}

\end{prp}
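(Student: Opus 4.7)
The strategy is to compare $\mathcal{S}_\gamma$ with the reference transfer operator $\mathcal{S}$ and exploit the spectral gap of $\mathcal{Q} = \mathcal{S}^*\mathcal{S}$ on $F^\perp \subset \ell^2(B_1,U)$, which follows from $\mathbf{(EXP)}$ via the results of \cite{A17}. The two obstructions are that the $\gamma$-dependent weights $|\zeta^\gamma_{x_1}(x_0)|^2/|\Im\zeta^\gamma_{x_1}(x_0)|$ and $|\Im\zeta^\gamma_{x_0}(x_{-1})|$ defining $\mathcal{S}_\gamma$ may degenerate as $\eta_1 = \Im\gamma \downarrow 0$, and that the stationary measure $\nu_1^\gamma$ may deviate from the uniform measure $U$. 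The sets $Bad(M)$ and $Badp(M)$ are designed to quantify exactly where these ratios leave the range controlled by $M$, and the proof will show that on the complementary good sets the operator $\mathcal{S}_\gamma$ behaves essentially like $\mathcal{S}$.

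First I would expand $\|\mathcal{S}_\gamma w\|_{\nu_1^\gamma}^2 = \langle w, \mathcal{S}_\gamma^*\mathcal{S}_\gamma w\rangle_{\nu_1^\gamma}$; since the kernel of $\mathcal{S}_\gamma^*\mathcal{S}_\gamma$ is supported on pairs $[e,e']\in P(B_1)$, this is a sum over pairs weighted by products of the transition amplitudes defining $\mathcal{S}_\gamma$. Using the symmetrisation
$$2\,\Re\,\overline{w(e)}w(e') = |w(e)|^2 + |w(e')|^2 - |w(e)-w(e')|^2,$$
together with the substochasticity identities \eqref{e:sumzeta2}--\eqref{e:sumzeta3}, I would rewrite
$$\|w\|_{\nu_1^\gamma}^2 - \|\mathcal{S}_\gamma w\|_{\nu_1^\gamma}^2 = \tfrac{1}{2}\sum_{[e,e']} \mathcal{S}_\gamma^*\mathcal{S}_\gamma(e,e')\,|w(e)-w(e')|^2 + O(\eta_1)\,\|w\|_{\nu_1^\gamma}^2,$$
converting the desired contraction estimate into a Dirichlet-form comparison over pairs.

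Next I would split the pair sum along $P(B_1) = (P(B_1)\setminus Badp(M)) \cup Badp(M)$. Off $Badp(M)$ the weights of $\mathcal{S}_\gamma^*\mathcal{S}_\gamma$ are comparable to those of $\mathcal{Q}$ up to multiplicative factors of order $M$, and, restricting to $w\in F^\perp$, I would invoke the spectral-gap estimate of \cite{A17}: there exists an absolute $c>0$ with
$$\frac{1}{2N}\sum_{[e,e']\in P(B_1)} \mathcal{Q}(e,e')\,|u(e)-u(e')|^2 \;\geq\; c\,\|u\|_U^2 \qquad \text{for all } u\in F^\perp.$$
Applied to $u = B_\gamma K - P_{F,U}B_\gamma K$, this yields the contraction factor $(1-\tfrac{3}{4}M^{-2})$ on the good part, while the removed pair contribution is precisely the first term of $C_{N,M}(B_\gamma)$.

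Finally, to transfer between the uniform norm and the weighted one, I use that $\nu_1^\gamma$ differs from the uniform reference only on $Bad(M)$: replacing $\|P_{F^\perp,\nu}B_\gamma K\|_{\nu_1^\gamma}$ by $\|P_{F^\perp,U}B_\gamma K\|_U$ produces the second term of $C_{N,M}(B_\gamma)$, with the remainder controlled by $\|K\|_\infty$ through the definition of $B_\gamma$. The hardest step will be the comparison on the good pair set: one must carefully relate the weights of $\mathcal{S}_\gamma^*\mathcal{S}_\gamma$ (which involve the factors $|m^\gamma|^{\pm 2}$, $|\Im\zeta|$ and $|\zeta|^2$ coming from \eqref{e:muk} and \eqref{e:Sgamma}) to those of $\mathcal{Q}$, and to check that the resulting multiplicative loss is no worse than $M^2$, so that the final contraction rate $\tfrac{3}{4}M^{-2}$ is indeed achieved.
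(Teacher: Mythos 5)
Your overall strategy — a Dirichlet-form identity for $\mathcal{S}_\gamma^\ast\mathcal{S}_\gamma$, removal of the bad pairs and bad edges, and a spectral-gap estimate for the reference operator $\mathcal{Q}$ on $F^\perp$ — is the same as the paper's, but there are two genuine gaps in the execution.

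First, you attribute the gap of $\mathcal{Q}$ on $F^\perp$ to \textbf{(EXP)} via \cite{A17}. This is a confusion with the companion Proposition~\ref{lem:7.5b}: for $F^\perp$ the estimate is entirely elementary and has nothing to do with expansion. If $J\perp F$ in $\ell^2(B_1,U)$, then $\sum_{x\sim y}J(x,y)=0$ for every $y$, which forces $(\mathcal{Q}J)(x_0,x_1)=J(x_0,x_1)/q(x_1)^2$, and since $q\ge 2$ one gets $\|\mathcal{Q}J\|_U\le\frac14\|J\|_U$ directly (Remark~\ref{rem:F2}); this is where the explicit constant $3/4$ comes from. The result of \cite{A17}, and hence \textbf{(EXP)}, is only needed when one projects onto $\mathbf{1}^\perp$ instead of $F^\perp$.

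Second, and more seriously, you jump from the Dirichlet form of $w=P_{F^\perp,\nu}B_\gamma K$ to applying the spectral gap to $u=B_\gamma K-P_{F,U}B_\gamma K$, but these are projections in two different Hilbert spaces ($\ell^2(\nu_1^\gamma)$ versus $\ell^2(B_1,U)$) and your proposal never bridges them. The paper closes this in three steps that you should not skip: (a) since $F=\ker\mathcal{M}^\gamma$ and $K_\gamma-w\in F$, one has $\langle w,\mathcal{M}^\gamma w\rangle_\nu=\langle K_\gamma,\mathcal{M}^\gamma K_\gamma\rangle_\nu$, so the Dirichlet form can be taken with $K_\gamma$ in place of $w$; (b) after dropping the bad pairs, the $U$-Dirichlet form of $K_\gamma$ is bounded below by $\frac34\|K_\gamma-P_{F,U}K_\gamma\|_U^2$, and restricting to good edges (where $\nu_1^\gamma(e)\le M/N$) converts the $U$-norm back into the $\nu$-norm at the cost of another $M^{-1}$ and the bad-edge remainder; (c) one concludes with $\|K_\gamma-P_{F,U}K_\gamma\|_\nu\ge\|w\|_\nu$, which holds precisely because $w$ is the best $F^\perp$ approximation of $K_\gamma$ in $\ell^2(\nu_1^\gamma)$. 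Without step (c) the estimate does not close back onto $\|w\|_\nu^2$.

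Two smaller remarks. The step you flag as ``hardest'' — comparing $\mathcal{Q}^\gamma$ to $\mathcal{Q}$ on good pairs — is in fact trivial once the bad pairs are defined: on a good pair $\nu_1^\gamma(e)\mathcal{Q}^\gamma(e,e')\ge M^{-1}/N$ by definition, and since $\mathcal{Q}(e,e')\le 1$ this already gives $\nu_1^\gamma(e)\mathcal{Q}^\gamma(e,e')\ge\frac{M^{-1}}{N}\mathcal{Q}(e,e')$; no finer comparison is needed. Finally, the $O(\eta_1)\|w\|^2$ term you introduce is unnecessary: $\|w\|^2-\|\mathcal{S}_\gamma w\|^2=\langle w,(I-D^\gamma)w\rangle+\langle w,\mathcal{M}^\gamma w\rangle$ and the first summand is nonnegative by substochasticity, so one simply keeps the inequality $\ge\langle w,\mathcal{M}^\gamma w\rangle$ and carries no $\eta_1$-dependent error at this stage.
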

The sets $Bad(M)$ of bad edges and $Badp(M)$ of bad pairs of edges will be defined in the course of the proof. They correspond to the aforementioned peaks of $\mu_1^{\gamma}$ and problems of small transition probabilities. If there were no bad edges and bad pairs, Proposition \ref{lem:sp} would be a genuine spectral gap estimate.

\begin{prp}         \label{lem:7.5b}
Let $B_{\gamma}K\in \mathscr{H}_1$. Let $f=P_{\mathbf{1}^{\bot},\nu}B_{\gamma}K$ be the orthogonal projection of $B_\gamma K$ on $\mathbf{1}^{\bot}$ in $\ell^2(\nu_1^{\gamma})$.
Then for any $M>0$ we have
\[
\|\cS_{\gamma}^2  f\|_{\nu_1^{\gamma}}^2 \le (1-M^{-2}c(D, \beta)) \cdot \| f\|_{\nu_1^{\gamma}}^2 + C_{N,M,2}(B_\gamma) \cdot \|K\|_{\infty}^2 \, ,
\]
where $c(D, \beta)>0$ is explicit and depends only on $D$ (upper bound on the degree) and the spectral gap $\beta$ of \emph{\textbf{(EXP)}}, and
\begin{multline*}
C_{N,M,2}(B_\gamma)  =  \sup_{ \|K\|_{\infty}=1} \frac{\cs^{-1}}{2N} \sum_{[e,e'] \in Badp(2, M)} \mathcal{Q}_2(e,e') |B_\gamma K(e)- B_\gamma K(e')|^2
\\ + \cs^{-2} \sum_{e\,\in Bad(M)} \nu_1^{\gamma}(e) |B_\gamma K(e)-P_{\mathbf{1},U} B_\gamma K(e)|^2 \,,
\end{multline*}
where $P_{\mathbf{1},U}$ is the orthogonal projector on $\mathbf{1}$ in $\ell^2(B_1,U)$.
\end{prp}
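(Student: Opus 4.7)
The plan is to imitate the strategy of Proposition~\ref{lem:sp} but to obtain the second contraction factor, on functions in $F\cap \mathbf{1}^{\perp}$, from the expander hypothesis \textbf{(EXP)} by exploiting the structure of $\mathcal{S}_\gamma$ applied twice. I would begin by decomposing $w=w_F+w_{F^{\perp}}$ orthogonally in $\ell^2(\nu_1^{\gamma})$, with $w_F=P_{F,\nu}w$ and $w_{F^{\perp}}=P_{F^{\perp},\nu}w$. Since $\mathbf{1}\subset F$ and $w\perp \mathbf{1}$, this forces $w_F\in F\cap \mathbf{1}^{\perp}$, so $w_F$ may be identified with a function $g:V\to \C$ on vertices that is orthogonal to constants for the explicit weighted measure on $V$ coming from pushing $\nu_1^{\gamma}$ forward by $e\mapsto t_e$. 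The $F^{\perp}$-part is dealt with by Proposition~\ref{lem:sp}: one application of $\mathcal{S}_\gamma$ already contracts $w_{F^{\perp}}$ by $(1-\tfrac{3}{4M^2})$, and the second is a near-contraction up to $O(\eta_1)$ by \eqref{e:sumzeta2}, absorbable into the error term $C_{N,M,2}(B_{\gamma})$.

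The core estimate is on $\|\mathcal{S}_\gamma^2 w_F\|_{\nu_1^{\gamma}}^2$. Writing $w_F(e)=g(t_e)$ and using \eqref{e:Sgamma} together with \eqref{e:sumzeta2}, one finds
\[
(\mathcal{S}_\gamma w_F)(x_0,x_1)=g(x_0)\Bigl(1-\eta_1\frac{|\zeta^{\gamma}_{x_1}(x_0)|^2}{|\Im\zeta^{\gamma}_{x_1}(x_0)|}\Bigr),
\]
so that, up to an $O(\eta_1)$ multiplicative correction, $\mathcal{S}_\gamma w_F$ depends only on the \emph{origin} of the edge. Applying $\mathcal{S}_\gamma$ once more produces
\[
(\mathcal{S}_\gamma^2 w_F)(x_0,x_1)\approx \frac{|\zeta^{\gamma}_{x_1}(x_0)|^2}{|\Im\zeta^{\gamma}_{x_1}(x_0)|}\sum_{x_{-1}\in \mathcal{N}_{x_0}\setminus\{x_1\}}|\Im\zeta^{\gamma}_{x_0}(x_{-1})|\,g(x_{-1}),
\]
which is a weighted non-backtracking nearest-neighbour average of $g$. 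On the complement of $Bad(M)\cup Badp(2,M)$, the weights $|\zeta^{\gamma}|^2/|\Im\zeta^{\gamma}|$ and $|\Im\zeta^{\gamma}|$ are bounded above and below by powers of $M$, so the restriction of $\mathcal{S}_\gamma^2$ to this good part is comparable, uniformly in $N$ and $\gamma$, to an averaging operator on $V$ whose spectrum is controlled by that of $P_N$. The uniform spectral gap $\beta$ from \textbf{(EXP)} then produces a contraction on $g\perp \mathbf{1}$ with an explicit constant; transferring this bound to the $\nu_1^{\gamma}$-norm and accounting for the $M^2$ loss in the comparison of weights yields the factor $(1-M^{-2}c(D,\beta))$.

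To combine the two bounds I would expand
\[
\|\mathcal{S}_\gamma^2 w\|_{\nu_1^{\gamma}}^2=\|\mathcal{S}_\gamma^2 w_F\|^2+2\Re\langle \mathcal{S}_\gamma^2 w_F,\mathcal{S}_\gamma^2 w_{F^{\perp}}\rangle+\|\mathcal{S}_\gamma^2 w_{F^{\perp}}\|^2,
\]
apply Cauchy--Schwarz to the cross-term and use the identity $\|w_F\|^2+\|w_{F^{\perp}}\|^2=\|w\|^2$ together with the elementary inequality $at^2+b(1-t)^2\geq \min(a,b)$ to consolidate the two contraction factors into a single one, at the (harmless) cost of replacing the constant by a smaller one still of the form $c(D,\beta)$. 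The main technical obstacle, already encountered in Proposition~\ref{lem:sp}, is the precise bookkeeping of the $\eta_1$-errors coming from \eqref{e:sumzeta2}--\eqref{e:sumzeta3} and from the approximations above, and the proof that all contributions from the bad sets $Bad(M)$ and $Badp(2,M)$ fit into $C_{N,M,2}(B_{\gamma})$ through a quantity of the same shape as in Proposition~\ref{lem:sp}; this is eventually controlled via assumption \textbf{(Green)}, which bounds negative moments of $|\Im\hat\zeta|$ and hence the $\nu_1^{\gamma}$-mass of the bad sets in the Benjamini--Schramm limit.
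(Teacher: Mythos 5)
Your plan takes a genuinely different route from the paper, and in doing so it runs into two problems, one structural and one substantive. The paper does not decompose $w$ into $w_F+w_{F^{\perp}}$ at all; it applies the Dirichlet identity directly to $\mathcal{Q}^{\gamma}_2=\mathcal{S}_{\gamma}^{2\,\ast}\mathcal{S}_{\gamma}^2$, obtaining
\[
\|w\|_{\nu}^2-\|\mathcal{S}_{\gamma}^2 w\|_{\nu}^2\ \ge\ \tfrac12\sum_{[e,e']\in P_2(B_1)}\nu_1^{\gamma}(e)\,\mathcal{Q}^{\gamma}_2(e,e')\,|K_{\gamma}(e)-K_{\gamma}(e')|^2 ,
\]
discards the $Badp(2,M)$ pairs and $Bad(M)$ edges to pass to the reference form $\langle K_\gamma,(I-\mathcal{Q}_2)K_\gamma\rangle_U$, and then invokes the lower bound from \cite[Theorem 1.1]{A17}, which is the exact analogue of Remark~\ref{rem:F2} with $F$ replaced by $\mathbf{1}$. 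No $\eta_1$-errors arise, because the $\eta_1$-deficit in \eqref{e:sumzeta3} only sharpens the inequality $\|\mathcal{S}_{\gamma}^2w\|_\nu^2\le\langle w,D_2 w\rangle_\nu$; they appear only later, in Proposition~\ref{p:iter}, when one iterates.

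The structural problem with your decomposition $w=w_F+w_{F^{\perp}}$ is the cross-term. Even granting the two contractions $\|\mathcal{S}_\gamma^2 w_F\|\le(1-a)\|w_F\|$ and $\|\mathcal{S}_\gamma^2 w_{F^\perp}\|\le(1-a)\|w_{F^\perp}\|$, Cauchy--Schwarz together with $\|w_F\|^2+\|w_{F^\perp}\|^2=\|w\|^2$ only gives $\|\mathcal{S}_\gamma^2 w\|^2\le(1-a)^2(\|w_F\|+\|w_{F^\perp}\|)^2\le 2(1-a)^2\|w\|^2$, which is no contraction at all unless $(1-a)^2<1/2$; the elementary inequality you cite controls $at^2+b(1-t)^2$, not $(\alpha t+\beta(1-t))^2$, which is what actually appears. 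One would need approximate orthogonality of $\mathcal{S}_\gamma^2 w_F$ and $\mathcal{S}_\gamma^2 w_{F^\perp}$, or the stronger statement that $\mathcal{S}_\gamma^2$ preserves $F$, neither of which you establish. The substantive gap is in the contraction on $w_F$ itself: your computation correctly shows that $\mathcal{S}_\gamma^2w_F(x_0,x_1)$ is (modulo $\eta_1$) a weighted sum over $x_{-1}\in\mathcal{N}_{x_0}\setminus\{x_1\}$ of $g(x_{-1})$ --- but because of the non-backtracking constraint this is still a genuine function of the \emph{edge} $(x_0,x_1)$, not of the vertex $x_0$. Passing from a spectral gap for the simple random walk $P_N$ to a contraction for this non-backtracking average on $\mathbf{1}^\perp$ is precisely the nontrivial content of \cite[Theorem 1.1]{A17}; it is not a matter of the weights being pinched between powers of $M$. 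You cannot avoid invoking such a result (or reproving it), so the detour through $F$ and $F^\perp$ buys nothing and introduces the cross-term difficulty; the paper's one-step variational argument is the cleaner path.
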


Here, $Badp(2,M)$ is another set of bad edge-couples defined in the proof.

The quantities $C_{N,M}(B_\gamma), C_{N,M,2}(B_\gamma)$ are estimated in Proposition~\ref{p:lesscrude}.

\begin{proof}[Proof of Proposition \ref{lem:sp}]
Let $\mathcal{Q}^\gamma =\mathcal{S}_{\gamma}^{\ast}\cS_\gamma $ (where now the adjoint is considered in $\ell^2(\nu_1^\gamma)$). 
The operator $\mathcal{Q}^\gamma $ being self-adjoint on $\ell^2(\nu_1^{\gamma})$ is equivalent to the relation 
\begin{equation}            \label{eq:qself}
\nu_1^{\gamma}(e)\mathcal{Q}^\gamma (e,e')=\nu_1^{\gamma}(e')\mathcal{Q}^\gamma (e',e)
\end{equation}
for all $e,e'\in B_1$. Note that
 $\mathcal{Q}^\gamma (e,e')=0$ unless $[e,e']$ is a pair.

Define $D^\gamma(e) = \sum_{e'} \mathcal{Q}^\gamma (e,e')\le 1$ and $\mathcal{M}^\gamma (e,e') = D^\gamma(e)\delta_{e=e'}-\mathcal{Q}^\gamma (e,e')$.

Then using \eqref{eq:qself}, we have the \emph{Dirichlet identity}
\begin{equation}          \label{eq:Diri}
\frac{1}{2}\sum_{e,e'}\nu_1^{\gamma}(e)\mathcal{Q}^\gamma (e,e')|K(e)-K(e')|^2 =  \langle K,\mathcal{M}^\gamma K\rangle_{\nu_1^{\gamma}} \, .
\end{equation}

We observe that for any $K\in \ell^2(\nu_1^{\gamma})$,
\begin{equation}       \label{eq:SPnorm}
\|\cS_\gamma  K \|_{\nu_1^{\gamma}} \le \|K\|_{\nu_1^{\gamma}} \, .
\end{equation}
Indeed, denoting $\la\cdot, \cdot\ra_{\nu} :=\la\cdot, \cdot\ra_{\nu_1^{\gamma}}$, we have $\|\cS_\gamma  K \|_{\nu}^2 = \langle K, \mathcal{Q}^\gamma K\rangle_{\nu}$ and $\langle K,\mathcal{M}^\gamma  K \rangle_{\nu} \ge 0$ by Dirichlet, so $\|K\|_{\nu}^2 \ge \langle K,D^\gamma K\rangle_{\nu} \ge \langle K,\mathcal{Q}^\gamma  K \rangle_{\nu}$ as claimed. 

\begin{rem}            \label{rem:F}
The Dirichlet identity shows that
\[
F = \{K \in \IC^B : \mathcal{M}^\gamma  K =0\}= \{K \in \IC^B : (I-\cQ)K=0\} \, .
\]
\end{rem}

\begin{rem}     \label{rem:F2}
If $J\perp F$ in $\ell^2(B_1,U)$, then $\langle J,(I-\cQ)J\rangle_U \ge \frac34\,\|J\|_U^2$.

Indeed, $\langle \tau_+\delta_y, J\rangle_U=0$ for all $y\in V$, so $\sum_{x\sim y} J(x,y)=0$ for all $y\in V$ and thus $(\cQ J)(x_0,x_1)=(\cS^{\ast}\cS J)(x_0,x_1) = \frac{J(x_0,x_1)}{q(x_1)^2}$ (recall that $q(x)=d(x)-1$ where $d(x)$ is the degree of $x$). As $\min q(x)\ge 2$, we get $\|\cQ J\|_U \le \frac{1}4\,\|J\|_U$ and the claim follows.
\end{rem}

Fix a large $\cs>0$. We call $e\in B_1$ \emph{bad} if $\nu_1^{\gamma}(e) > \frac{\cs}{N}$. We call a pair $[e,e']\in P(B_1)$ \emph{bad} if $\nu_1^{\gamma}(e)\mathcal{Q}^\gamma (e,e') <\frac{\cs^{-1}}{N}$. We call $Bad(M)$ and $Badp(M)$ the sets of bad $e$ and $[e,e']$, respectively.

To prove Proposition~\ref{lem:sp}, we first note that by (\ref{eq:Diri}), and letting $K_\gamma=B_\gamma K$,
\begin{multline}\label{e:badpair}
\|w\|_{\nu}^2 - \|\cS_\gamma w\|_{\nu}^2  \ge \langle w, \mathcal{M}^\gamma  w\rangle_{\nu} = \langle K_\gamma, \mathcal{M}^\gamma  K_\gamma\rangle_{\nu} \\
=\frac{1}{2}\sum_{[e,e']\in P(B_1)} \nu_1^{\gamma}(e)\mathcal{Q}^\gamma (e,e')|K_{\gamma}(e)-K_{\gamma}(e')|^2 \\
 \ge \frac{\cs^{-1}}{2N} \sum_{[e,e'] \not\in Badp(M)} \mathcal{Q}(e,e') |K_{\gamma}(e)- K_{\gamma}(e')|^2\\
= \cs^{-1} \langle  K_{\gamma},(I-\mathcal{Q}) K_{\gamma}\rangle_U -  \frac{\cs^{-1}}{2N} \sum_{[e,e'] \in Badp(M)} \mathcal{Q}(e,e') |K_{\gamma}(e)- K_{\gamma}(e')|^2 \, ,
\end{multline}
where we used $\cQ(e,e')\le 1$. By Remark~\ref{rem:F2},
\begin{align*}
\left\langle K_{\gamma},(I-\mathcal{Q}) K_{\gamma}\right\rangle_U & =\left\langle  K_{\gamma}-P_{F,U} K_{\gamma}, (I-\mathcal{Q})( K_{\gamma}-P_{F,U} K_{\gamma})\right\rangle_U \\
& \ge \frac34 \cdot \| K_{\gamma}-P_{F,U} K_{\gamma}\|_U^2 \, .
\end{align*}
Now
\begin{multline}\label{e:badedge}
\| K_{\gamma}-P_{F,U} K_{\gamma}\|_U^2  \ge \cs^{-1} \sum_{e\, \not\in Bad(M)} \nu_1^{\gamma}(e)| K_{\gamma}(e)-P_{F,U} K_{\gamma}(e)|^2 \\
 = \cs^{-1}\|K_{\gamma}-P_{F,U}K_{\gamma}\|_{\nu}^2 - \cs^{-1} \sum_{e\,\in Bad(M)} \nu_1^{\gamma}(e)| K_{\gamma}(e)-P_{F,U} K_{\gamma}(e)|^2 \,
 \\ \geq \cs^{-1} \|w\|_{\nu}^2 - \cs^{-1} \sum_{e\,\in Bad(M)} \nu_1^{\gamma}(e)| K_{\gamma}(e)-P_{F,U} K_{\gamma}(e)|^2 .
\end{multline}
We used that $\|K_{\gamma}-P_{F,U}K_{\gamma}\|_{\nu}^2 \geq \|w\|_{\nu}^2$ since $w = P_{F^{\bot},\nu}(K_{\gamma}-P_{F,U}K_{\gamma})$.
The result is obtained by putting together \eqref{e:badpair} and \eqref{e:badedge}.
\end{proof}

\begin{proof}[Proof of Proposition \ref{lem:7.5b}]
We now let $\mathcal{Q}^\gamma_2 = \mathcal{S}_{\gamma}^{2\,\ast}\mathcal{S}_{\gamma}^2$ (where the adjoint is taken in $\ell^2(\nu_1^\gamma)$). Then
 $\mathcal{Q}^\gamma_2(e,e') \neq 0$ iff there exists $e'', e_1, e'_1$ such that $e\rightsquigarrow e_1 \rightsquigarrow e''$ and $e'\rightsquigarrow e_1'\rightsquigarrow e''$. We denote the set of such couples $[e,e']$ by $P_2(B_1)$ and let $\mathcal{M}_2^{\gamma}(e,e')=D_2 \delta_{e=e'}-\mathcal{Q}_2(e,e')$, where $D_2(e)=\sum_{e'}\mathcal{Q}_2^{\gamma}(e,e')\le 1$.
 
 Fix $M>0$. We say that $[e,e']\in P_2(B_1)$ is \emph{bad} if $\nu_1^{\gamma}(e)\mathcal{Q}_2(e,e') < \frac{M^{-1}}{N}$. We call $Badp(2, M)$ the set of bad couples in $P_2(B_1)$.
 
The proof is then similar to Proposition~\ref{lem:sp}, replacing the space $F$ by the space of constant functions and using \cite[Theorem 1.1]{A17} instead of Remark~\ref{rem:F2}. In particular, the quantity $c(\beta, D)$ is the one appearing in \cite[Theorem 1.1]{A17}.
\end{proof}

Later on, we will need to iterate the result of Proposition \ref{lem:7.5b}, considering $\mathcal{S}_{\gamma}^{2r}$ instead of $\mathcal{S}_{\gamma}^{2}$. Since $\mathcal{S}_{\gamma}^*$ is not exactly stochastic, $\mathcal{S}_{\gamma}$ does not preserve the orthogonal of constants. Still, we can iterate \eqref{e:sumzeta3} to get $\mathcal{S}_{\gamma}^{\ast\,l} \mathbf{1} = 1 - \eta_1 \sum_{s=0}^{l-1} \mathcal{S}_{\gamma}^{\ast\,s}\xi^{\gamma}$, where $\xi^{\gamma}(x_0,x_1) = \frac{|\zeta_{x_0}^{\gamma}(x_1)|^2}{|\Im \zeta_{x_0}^{\gamma}(x_1)|}$. Hence, for any $K$ we have $\langle \mathbf{1},\mathcal{S}_{\gamma}^lK\rangle_{\nu} = \langle \mathbf{1},K\rangle_{\nu} - \eta_1\langle \sum_{s=0}^{l-1}\mathcal{S}_{\gamma}^{\ast\,s}\xi^{\gamma},K\rangle_{\nu}$. Denoting
\[
\mathcal{Z}_l K :=  \xi^{\gamma} \sum_{s=0}^{2l-1} \mathcal{S}_{\gamma}^sK \, , \qquad \mathcal{Z}_0 K := 0 \, ,
\]
we see that if $K\perp \mathbf{1}$, then $(\mathcal{S}_{\gamma}^{2l}K + \eta_1 \mathcal{Z}_l K) \perp \mathbf{1}$.

\begin{prp}             \label{p:iter}
Let $K\in \mathscr{H}_{m}$. Let $ f=P_{\mathbf{1}^\perp, \nu}B_\gamma K$ be the orthogonal projection of $B_\gamma K$ on $\mathbf{1}^{\bot}$ in $\ell^2(\nu_1^{\gamma})$.
Then for any $M>0$ we have

\begin{align*}
\| \mathcal{S}_{\gamma}^{2r }  f \|_{\nu} & \le \left(1-M^{-2}c(D,\beta)\right)^{r/2}  \| f\|_{\nu} +   \sum_{l=0}^{r-1}C_{N,M,l,2}(B_\gamma)^{1/2} \|K\|_{\infty}  + 2\eta_1  \sum_{l=1}^{r-1} \| \mathcal{Z}_{l}  f\|_{\nu}\, .
\end{align*}
where $C_{N,M,l,2}(B_\gamma)= C_{N,M,2}((\mathcal{S}_{\gamma}^{2l} + \eta_1 \mathcal{Z}_l )P_{\mathbf{1}^\perp, \nu}B_\gamma)$.
\end{prp}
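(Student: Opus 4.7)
The plan is to iterate Proposition~\ref{lem:7.5b}, but since $\mathcal{S}_\gamma$ is only substochastic (its adjoint does not preserve constants when $\eta_1>0$), the orbit $\mathcal{S}_\gamma^{2l}w$ drifts out of $\mathbf{1}^\perp$. The idea is to use the auxiliary vectors
\[
w_l := \mathcal{S}_\gamma^{2l}w + \eta_1\,\mathcal{Z}_l w,
\]
which, by the identity $\mathcal{S}_\gamma^{\ast\,l}\mathbf{1} = \mathbf{1}-\eta_1\sum_{s=0}^{l-1}\mathcal{S}_\gamma^{\ast\,s}\xi^\gamma$ stated just before the proposition (and the fact that $w\perp\mathbf{1}$), do lie in $\mathbf{1}^\perp$. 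Note $w_0=w$ since $\mathcal{Z}_0=0$.

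The point is that $w_l = P_{\mathbf{1}^\perp,\nu}\tilde B_{\gamma,l} K$, with $\tilde B_{\gamma,l} := (\mathcal{S}_\gamma^{2l}+\eta_1\mathcal{Z}_l)P_{\mathbf{1}^\perp,\nu}B_\gamma$, so Proposition~\ref{lem:7.5b} applies and yields
\[
\|\mathcal{S}_\gamma^{2} w_l\|_\nu \le (1-M^{-2}c(D,\beta))^{1/2}\|w_l\|_\nu + C_{N,M,l,2}(B_\gamma)^{1/2}\|K\|_\infty,
\]
where we took the square root using $\sqrt{a+b}\le\sqrt{a}+\sqrt{b}$, and $C_{N,M,l,2}(B_\gamma)=C_{N,M,2}(\tilde B_{\gamma,l})$ as defined in the statement.

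Next I would relate $w_{l+1}$ to $\mathcal{S}_\gamma^{2}w_l$. Expanding,
\[
w_{l+1}=\mathcal{S}_\gamma^{2}\mathcal{S}_\gamma^{2l}w+\eta_1\mathcal{Z}_{l+1}w=\mathcal{S}_\gamma^{2}(w_l-\eta_1\mathcal{Z}_l w)+\eta_1\mathcal{Z}_{l+1}w,
\]
so by the triangle inequality and the contraction bound $\|\mathcal{S}_\gamma^2\|_{\nu\to\nu}\le 1$ from \eqref{eq:SPnorm},
\[
\|w_{l+1}\|_\nu \le (1-M^{-2}c(D,\beta))^{1/2}\|w_l\|_\nu + C_{N,M,l,2}(B_\gamma)^{1/2}\|K\|_\infty + \eta_1\|\mathcal{Z}_l w\|_\nu + \eta_1\|\mathcal{Z}_{l+1}w\|_\nu.
\]
Setting $\rho=(1-M^{-2}c(D,\beta))^{1/2}$ and iterating this one-step estimate from $w_0=w$ up to $w_\ell$, using $\rho^{\ell-1-l}\le 1$ to bound the running errors, one obtains
\[
\|w_\ell\|_\nu \le \rho^\ell\|w\|_\nu + \sum_{l=0}^{\ell-1}C_{N,M,l,2}(B_\gamma)^{1/2}\|K\|_\infty + \eta_1\sum_{l=0}^{\ell-1}\bigl(\|\mathcal{Z}_l w\|_\nu+\|\mathcal{Z}_{l+1}w\|_\nu\bigr).
\]

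Finally, since $\mathcal{S}_\gamma^{2\ell}w = w_\ell - \eta_1\mathcal{Z}_\ell w$, one more application of the triangle inequality and the convention $\mathcal{Z}_0=0$ collapse the telescoping sum of $\|\mathcal{Z}_l w\|_\nu$ terms to essentially $2\eta_1\sum_{l=1}^{\ell-1}\|\mathcal{Z}_l w\|_\nu$, giving the claim. There is no real obstacle here; the only subtle point is the initial observation that $w_l\in\mathbf{1}^\perp$ (which allows one to re-enter Proposition~\ref{lem:7.5b} at each step) and the careful use of $\|\mathcal{S}_\gamma\|_{\ell^2(\nu)}\le 1$ to absorb the defect $-\eta_1\mathcal{S}_\gamma^2\mathcal{Z}_l w$ uniformly in $l$.
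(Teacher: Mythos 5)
Your approach is substantially the same as the paper's: the key observation, which the paper also uses, is that $w_l := (\mathcal{S}_\gamma^{2l}+\eta_1\mathcal{Z}_l)w$ is orthogonal to $\mathbf{1}$ (because $\mathcal{S}_\gamma^{\ast\,l}\mathbf{1}=\mathbf{1}-\eta_1\sum_{s<l}\mathcal{S}_\gamma^{\ast\,s}\xi^\gamma$), can be written as $P_{\mathbf{1}^\perp,\nu}\tilde B_{\gamma,l}K$ with $\tilde B_{\gamma,l}=(\mathcal{S}_\gamma^{2l}+\eta_1\mathcal{Z}_l)P_{\mathbf{1}^\perp,\nu}B_\gamma$, and therefore Proposition~\ref{lem:7.5b} can be re-applied at each step. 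That identification, together with the substochastic bound $\|\mathcal{S}_\gamma^2\|_{\nu\to\nu}\le 1$, is the whole content of the argument.

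However, your bookkeeping does not reproduce the stated constants. By iterating the one-step inequality on the auxiliary quantity $\|w_l\|_\nu$ (which brings in both $\|\mathcal{Z}_l w\|_\nu$ and $\|\mathcal{Z}_{l+1}w\|_\nu$ at each step), and then converting back to $\|\mathcal{S}_\gamma^{2\ell}w\|_\nu$ at the very end, you obtain a final bound with $2\eta_1\sum_{l=1}^{\ell}\|\mathcal{Z}_l w\|_\nu$ rather than the stated $2\eta_1\sum_{l=1}^{\ell-1}\|\mathcal{Z}_l w\|_\nu$. Concretely, the sum $\sum_{l=0}^{\ell-1}(\|\mathcal{Z}_l w\|+\|\mathcal{Z}_{l+1}w\|)=2\sum_{l=1}^{\ell-1}\|\mathcal{Z}_l w\|+\|\mathcal{Z}_\ell w\|$ (using $\mathcal{Z}_0=0$), and the last conversion step adds one more $\eta_1\|\mathcal{Z}_\ell w\|$, giving $2\eta_1\|\mathcal{Z}_\ell w\|$ on top of the claimed bound. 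You write ``essentially,'' which is precisely where the gap is. The extra term is harmless downstream ($\eta_1\to 0$), but it does mean your argument as written proves a strictly weaker inequality than the one stated. The fix is to organize the iteration around $\|\mathcal{S}_\gamma^{2l}w\|_\nu$ itself, e.g.\ as a one-step bound
\[
\|\mathcal{S}_\gamma^{2(l+1)}w\|_\nu \le \big(1-M^{-2}c(D,\beta)\big)^{1/2}\|\mathcal{S}_\gamma^{2l}w\|_\nu + C_{N,M,l,2}(B_\gamma)^{1/2}\|K\|_\infty + \big(1+\rho\big)\eta_1\|\mathcal{Z}_l w\|_\nu \,,
\]
obtained by first bounding $\|\mathcal{S}_\gamma^{2(l+1)}w\|_\nu\le\|\mathcal{S}_\gamma^2 w_l\|_\nu+\eta_1\|\mathcal{Z}_l w\|_\nu$ and then $\|w_l\|_\nu\le\|\mathcal{S}_\gamma^{2l}w\|_\nu+\eta_1\|\mathcal{Z}_l w\|_\nu$; iterating this (or arguing by induction on the claim, as the paper does) only ever looks ``backward'' to $\mathcal{Z}_l$, never ahead to $\mathcal{Z}_{l+1}$, and the $\mathcal{Z}_0=0$ convention then eliminates exactly the unwanted boundary term.
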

\begin{proof}
The proof is by induction on $r$. This holds for $r=1$ by Proposition~\ref{lem:7.5b}. Assume the result holds for $r$. If $f\perp \mathbf{1}$, we have just seen that $(\mathcal{S}_{\gamma}^{2r} + \eta_1 \mathcal{Z}_r)f \perp \mathbf{1}$ in $\ell^2(\nu_1^{\gamma})$. So using Proposition~\ref{lem:7.5b} and \eqref{eq:SPnorm},
\begin{align*}
& \|\mathcal{S}_{\gamma}^{2(r+1)}f\|_{\nu} \le \|\mathcal{S}_{\gamma}^{2}(\mathcal{S}_{\gamma}^{2r } + \eta_1 \mathcal{Z}_{r})f\|_{\nu} + \eta_1 \|\mathcal{Z}_{r} w\|_{\nu} \\
& \quad \le \left(1-M^{-2}c(D,\beta)\right)^{1/2} \|(\mathcal{S}_{\gamma}^{2r}  + \eta_1 \mathcal{Z}_{r})f\|_{\nu} + \,C_{N,M,r,2}(B_\gamma)^{1/2} \, \|K\|_{\infty} + \eta_1 \|\mathcal{Z}_{r} f\|_{\nu}\, .
\end{align*}
Since $\|(\mathcal{S}_{\gamma}^{2r} + \eta_1 \mathcal{Z}_{r})f\| \le \|\mathcal{S}_{\gamma}^{2r} f\| + \eta_1\| \mathcal{Z}_{r}f\|$, the claim follows.
\end{proof}

The rest of this section is devoted to estimating the ``bad'' quantities.

\begin{prp}          \label{lem:bad1}
Under assumptions \emph{\textbf{(BSCT)}} and \emph{\textbf{(Green)}}, for any $s \ge 1$, there exists $C_s$ such that for all $M>1$ we have
\[
\sup_{ \eta_1\in(0,1)} \limsup_{N\to \infty} \sup_{\Re \gamma \in I_1, \Im\gamma=\eta_1} \nu_1^{\gamma}(Bad(M)) \le C_{s}\cs^{-s} \quad \text{and} \ \limsup_{N\to \infty} \frac{\# Badp(M)}{N} \le C_{s}\cs^{-s} \, .
\]
\end{prp}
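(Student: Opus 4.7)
The strategy is a Markov/Chebyshev argument: convert each defining inequality into a moment bound, then use (BSCT) to reduce the resulting expression to an expectation on the limit random rooted tree, and finally invoke (Green) to bound that expectation.

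For the bad edges, Chebyshev gives
\[
\nu_1^\gamma(Bad(M)) \leq \sum_e \nu_1^\gamma(e)\bigl(\tfrac{N\nu_1^\gamma(e)}{M}\bigr)^s = M^{-s}\bigl(\tfrac{N}{\mu_1^\gamma(B_1)}\bigr)^{s+1}\cdot\tfrac{1}{N}\sum_e\mu_1^\gamma(e)^{s+1},
\]
so the problem reduces to two uniform estimates: an upper bound on $\tfrac1N\sum_e\mu_1^\gamma(e)^{s+1}$ and a lower bound on $\mu_1^\gamma(B_1)/N$. Both are $\Gamma_N$-invariant sums over oriented edges of local functionals of the $\zeta^\gamma$'s and $m^\gamma$'s at the endpoints. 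By (BSCT), and in the spirit of \eqref{e:conv2} proved in Section~\ref{sec:bsctaux}, these averages pass to expectations over the limit measure $\IP$. The upper bound then follows by expanding $\mu_1^\gamma(e)^{s+1}$ via Lemma~\ref{lem:zetapot} (which writes everything in terms of $|\Im\zeta^\gamma|$ and $|m^\gamma|$, the latter being bounded from above by $|\gamma - W(v)|/2 + \tfrac{1}{2}\sum_u |\zeta_v^\gamma(u)|$) and then invoking (Green) with a sufficiently large moment order. The lower bound on $\mu_1^\gamma(B_1)/N$ follows from (Green) by H\"older's inequality: since $\IE[|\Im\hat\zeta_o^\gamma(y)|^{-s}]$ is uniformly bounded, one has $\IE[|\Im\hat\zeta_o^\gamma(y)|]\geq c>0$ uniformly in $\gamma$, which keeps the limit of $\mu_1^\gamma(B_1)/N$ bounded away from zero.

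For bad pairs, the analogous reverse estimate reads
\[
\tfrac{\#Badp(M)}{N}\leq M^{-s}\cdot\tfrac{1}{N}\sum_{[e,e']}\bigl(N\nu_1^\gamma(e)\cQ^\gamma(e,e')\bigr)^{-s}.
\]
Using the explicit form of $\cS_\gamma$ given in \eqref{e:Sgamma} and the identities of Lemma~\ref{lem:zetapot}, a direct computation shows that, for $e=(x_0,x_1)$ and $e'=(x_0',x_1)$ sharing the terminus $x_1$,
\[
\nu_1^\gamma(e)\cQ^\gamma(e,e')=\frac{|\Im\zeta_{x_1}^\gamma(x_0)|\,|\Im\zeta_{x_1}^\gamma(x_0')|}{\mu_1^\gamma(B_1)\,|m_{x_1}^\gamma|^2}\sum_{x_2\in\cN_{x_1}\setminus\{x_0,x_0'\}}\frac{|\zeta_{x_2}^\gamma(x_1)|^2\,|\Im\zeta_{x_1}^\gamma(x_2)|}{|\Im\zeta_{x_2}^\gamma(x_1)|},
\]
which is again a local quantity at the common terminus $x_1$. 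Summing the $-s$-th power of this expression over $V_N$ (parametrized by $x_1$ and two of its neighbors $x_0, x_0'$), applying (BSCT) to identify the limit with an expectation, and bounding the expectation by (Green) (together with the lower bound on $\mu_1^\gamma(B_1)/N$ already secured), yields the second conclusion.

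The main obstacle, and what distinguishes this from a routine moment calculation, is securing \emph{uniformity} in the spectral parameter $\gamma$ as $\Re\gamma$ ranges over $I_1$ and $\Im\gamma=\eta_1$ ranges over $(0,1)$. For each fixed $\gamma$, the combination of (BSCT) and (Green) immediately delivers the required convergence and bounds; upgrading to estimates that are uniform in $\gamma$ requires exploiting the holomorphic dependence of the Green functions on $\gamma$ together with the uniform integrability built into the formulation of (Green), in the spirit of the arguments surrounding \eqref{e:conv2} and Remark~\ref{rem:IDS2}. Once this uniformity is established, the remaining steps are direct substitutions and bookkeeping.
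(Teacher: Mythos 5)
Your proposal is correct and follows essentially the same route as the paper: a Chebyshev/Markov estimate on the defining inequality, passage to the limit via Benjamini--Schramm convergence (Remark~\ref{rem:IDS1}) to reduce to an expectation on the limit tree, and then \textbf{(Green)} (through Remark~\ref{rem:IDS2}) to bound that expectation uniformly in $\gamma$. The paper's proof of the bad-edge bound is literally the display you wrote; for the bad pairs the paper just says ``the second assertion is proved similarly,'' whereas you spell out the (correct) explicit formula
\[
\nu_1^\gamma(e)\cQ^\gamma(e,e')=\frac{|\Im\zeta_{x_1}^\gamma(x_0)|\,|\Im\zeta_{x_1}^\gamma(x_0')|}{\mu_1^\gamma(B_1)\,|m_{x_1}^\gamma|^2}\sum_{x_2\in\cN_{x_1}\setminus\{x_0,x_0'\}}\frac{|\zeta_{x_2}^\gamma(x_1)|^2\,|\Im\zeta_{x_1}^\gamma(x_2)|}{|\Im\zeta_{x_2}^\gamma(x_1)|}\,,
\]
which follows from $\nu_1^\gamma(e)\cQ^\gamma(e,e')=\sum_{e''}\nu_1^\gamma(e'')\cS_\gamma(e'',e)\cS_\gamma(e'',e')$ together with \eqref{e:muk}, \eqref{e:Sgamma}, \eqref{eq:mv}; this is a useful addition. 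One small slip: you justify the lower bound on $\lim_N \mu_1^\gamma(B_1)/N$ by arguing that $\IE[|\Im\hat\zeta_o^\gamma(y)|^{-s}]<\infty$ forces $\IE[|\Im\hat\zeta_o^\gamma(y)|]\ge c>0$; but $\hat\mu_1^\gamma(o,y)$ is not $|\Im\hat\zeta_o^\gamma(y)|$ --- it also carries the factor $|\hat m_y^\gamma|^{-2}|\hat\zeta_o^\gamma(y)|^{-2}$, which could a priori be small. The correct argument is to apply Jensen (or Cauchy--Schwarz) directly to $\sum_{o'\sim o}\hat\mu_1^\gamma(o,o')$, using that $\IE[\sum_{o'\sim o}(\hat\mu_1^\gamma(o,o'))^{-1}]$ is bounded --- which is again a consequence of \textbf{(Green)} recorded in Remark~\ref{rem:IDS2}. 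This is the implicit content of the paper's appeal to that remark, and it closes the gap in your wording without changing the overall argument.
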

\begin{proof}
We have $\nu_1^{\gamma}(Bad) = \nu_1^{\gamma}\{e : \nu_1^{\gamma}(e)>\frac{\cs}{N}\} $, so
\[
\nu_1^{\gamma}(Bad)  \le \cs^{-s}N^s \sum_{e\in B_1} \nu_1^{\gamma}(e) \nu_1^{\gamma}(e)^s= \cs^{-s} \Big(\frac{N}{\mu_1^{\gamma}(B_1)}\Big)^{s+1}\frac{1}{N}\sum_{e\in B_1} \mu_1^{\gamma}(e)^{s+1} \, .
\]
Recalling the definition of $\mu_1^{\gamma}$ \eqref{e:muk}, and using Remark~\ref{rem:IDS1}, we get
\[
\Big(\frac{N}{\mu_1^{\gamma}(B_k)}\Big)^{s+1}\frac{1}{N}\sum_{e\in B_1} \mu_1^{\gamma}(e)^{s+1} \Lim_{N\To +\infty } \frac{1}{ \expect[\sum_{o'\sim o}\hat{\mu}_1^{\gamma}(o,o')]^{s+1} }\expect\bigg[\sum_{o'\sim o}\hat{\mu}_1^{\gamma}(o,o')^{s+1}\bigg]
\]
uniformly in $\Re \gamma\in I_1$, for any fixed $\Im \gamma=\eta_1$. By Remark~\ref{rem:IDS2}, this is bounded by some constant $C_s$. The second assertion is proved similarly.
 \end{proof}

\begin{prp}\label{p:lesscrude}
For all $t\in\N$,
\begin{multline*}
C_{N,M}(\cS_{u^\gamma}^t B_\gamma)\leq  \frac{2\cs^{-1}}{N}   \#  Badp(M)^{1/3}  \left(\sum_e \frac{1}{\nu_1^{\gamma} (e)}\right)^{1/3}   \left( \sum_e \nu_1^{\gamma} (e)  \Big(\sum_w |B_\gamma (e, w)|\Big)^6\right)^{1/3}\\
+2 \cs^{-2} \nu_1^{\gamma}(Bad(M))^{1/2}\left(\sum_{e } \nu_1^{\gamma}(e) \Big( \sum_w |B_\gamma (e, w)|\Big)^4\right)^{1/2} \\
+2  \cs^{-2}  \nu_1^{\gamma}(Bad(M))^{1/2}\left(\sum_{e } \frac{ [(P_{F,U} \nu_1^{\gamma})(e)]^2}{\nu_1^{\gamma}(e)} \right)^{1/4}\left(\sum_{e } \nu_1^{\gamma}(e)  \Big(\sum_w |B_\gamma (e, w)|\Big)^8\right)^{1/4} \, ,
\end{multline*}
where $(P_{F,U}\nu_1^{\gamma})(e) = \frac{1}{d(t_e)} \sum_{t_{e'}=t_e} \nu_1^{\gamma}(e')$, and
\begin{multline}
C_{N,M, 2}(\cS_{u^\gamma}^t B_\gamma) \\
\leq  \frac{2\cs^{-1}}{N}   \#  Badp(2, M)^{1/3}  \left(\sum_e \frac{1}{\nu_1^{\gamma} (e)}\right)^{1/3}   \left( \sum_e \nu_1^{\gamma} (e)  \Big(\sum_w |B_\gamma (e, w)|\Big)^6\right)^{1/3}\\
+2 \cs^{-2} \nu_1^{\gamma}(Bad(M))^{1/2}\left(\sum_{e } \nu_1^{\gamma}(e) \Big( \sum_w |B_\gamma (e, w)|\Big)^4\right)^{1/2} \\
+2  \cs^{-2}  \nu_1^{\gamma}(Bad(M))^{1/2}\left(\frac{1}{N^2}\sum_{e } \frac{ 1}{\nu_1^{\gamma}(e)} \right)^{1/4}\left(\sum_{e } \nu_1^{\gamma}(e)  \Big( \sum_w |B_\gamma (e, w)|\Big)^8\right)^{1/4}\,.\label{e:2}
\end{multline}
Similar estimates hold if $B_\gamma$ is replaced by $P_{{\mathbf{1}}^\perp, \nu}B_\gamma$, where $P_{{\mathbf{1}}^\perp, \nu}$ is the projection on the orthogonal of constants in $\ell^2(\nu_1^\gamma)$.
\end{prp}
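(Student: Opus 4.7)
The plan is to bound each of the two summands defining $C_{N,M}(\cS_{u^\gamma}^t B_\gamma)$ by reducing everything to the pointwise majorant $\phi(e):=\sum_w |B_\gamma(e,w)|$. Since $|u^\gamma|=1$, the matrices $\cS_{u^\gamma}^t$ and $\cS_\gamma^t$ share the same modulus entrywise, so for $\|K\|_\infty\le 1$ one has the pointwise bound $|\cS_{u^\gamma}^t B_\gamma K(e)|\le (\cS_\gamma^t\phi)(e)$. Moreover, because $\cS_\gamma^t$ is substochastic, Jensen's inequality, applied after completing the subprobability $\cS_\gamma^t(e,\cdot)$ to a true probability by assigning the missing mass to a fictitious state where $\phi=0$, gives $(\cS_\gamma^t\phi)^p(e)\le \cS_\gamma^t(\phi^p)(e)$ for any $p\ge 1$; integrating against $\nu_1^\gamma$ and using the substochasticity of the adjoint $\cS_\gamma^{t\,*}$ (cf.~\eqref{e:sumzeta3}) then yields $\sum_e \nu_1^\gamma(e)(\cS_\gamma^t\phi)^p(e)\le \sum_e \nu_1^\gamma(e)\phi(e)^p$. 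These two facts are the workhorses of the proof.

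For the pair contribution, I would first use $\mathcal{Q}(e,e')\le 1$ and the identity $|BK(e)-BK(e')|^2\le 2(|BK(e)|^2+|BK(e')|^2)$ with $B:=\cS_{u^\gamma}^t B_\gamma$; exchanging $e,e'$ in the second summand reduces matters to $\sum_e n(e)|BK(e)|^2$ with $n(e)=\#\{e':[e,e']\in Badp(M)\}$, so $\sum_e n(e)=2\#Badp(M)$ and $n(e)\le D$. Then Hölder's inequality with exponents $(3,3,3)$ applied to the factorization $n(e)^{2/3}\cdot n(e)^{1/3}\nu_1^\gamma(e)^{-1/3}\cdot \nu_1^\gamma(e)^{1/3}|BK(e)|^2$, followed by the pointwise/integration bound with $p=6$ to replace $|BK(e)|^6$ by $\phi(e)^6$, yields $\#Badp(M)^{1/3}\bigl(\sum_e\nu_1^\gamma(e)^{-1}\bigr)^{1/3}\bigl(\sum_e\nu_1^\gamma(e)\phi(e)^6\bigr)^{1/3}$ up to combinatorial factors (coming from $n\le D$) absorbed into the coefficient $2\cs^{-1}/N$.

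For the edge contribution, I would split $|BK(e)-P_{F,U}BK(e)|^2\le 2|BK(e)|^2+2|P_{F,U}BK(e)|^2$. The first piece is handled by Cauchy--Schwarz, $\sum_{e\in Bad(M)}\nu(e)|BK(e)|^2\le \nu_1^\gamma(Bad(M))^{1/2}\bigl(\sum_e\nu(e)|BK(e)|^4\bigr)^{1/2}$, then $|BK|^4\le \cS_\gamma^t\phi^4$ and integration-by-adjoint produce the second summand in the RHS. For the second piece, I would bound $|P_{F,U}BK(e)|^4\le (P_{F,U}\cS_\gamma^t\phi)^4(e)\le P_{F,U}((\cS_\gamma^t\phi)^4)(e)$ (Jensen on the finite average $P_{F,U}$ defined in \eqref{eq:pfu}), then interchange the roles of $e$ and $e'$ via the identity $\sum_e\nu(e)(P_{F,U}g)(e)=\sum_{e'}g(e')(P_{F,U}\nu)(e')$, and close with a further Cauchy--Schwarz $\sum_{e'}(P_{F,U}\nu)(e')g(e')\le \bigl(\sum_{e'}\tfrac{[(P_{F,U}\nu)(e')]^2}{\nu(e')}\bigr)^{1/2}\bigl(\sum_{e'}\nu(e')g(e')^2\bigr)^{1/2}$ with $g=(\cS_\gamma^t\phi)^4$; one last application of the pointwise/integration bound with $p=8$ yields the third summand of the RHS.

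The analogous bound on $C_{N,M,2}(\cS_{u^\gamma}^t B_\gamma)$ follows verbatim upon replacing $\mathcal{Q},Badp(M),P_{F,U}$ by $\mathcal{Q}_2,Badp(2,M),P_{\mathbf{1},U}$; the simpler factor $\tfrac{1}{N^2}\sum_e\nu(e)^{-1}$ in \eqref{e:2} comes from $(P_{\mathbf{1},U}\nu)(e)=|B_1|^{-1}$ together with $|B_1|\ge N$. Finally, the ``similar estimates'' for $P_{\mathbf{1}^\perp,\nu}B_\gamma$ result from the pointwise majoration $\sum_w|(P_{\mathbf{1}^\perp,\nu}B_\gamma)(e,w)|\le \phi(e)+\sum_{e'}\nu(e')\phi(e')$; the constant addend is absorbed by Jensen $\bigl(\sum_e\nu(e)\phi(e)\bigr)^p\le \sum_e\nu(e)\phi(e)^p$, so the same RHS appears up to a multiplicative constant. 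The most delicate point of this plan is ensuring the Jensen step $(\cS_\gamma^t\phi)^p\le \cS_\gamma^t(\phi^p)$ remains valid when $\cS_\gamma^t$ is only substochastic (the fictitious-mass trick resolves this) and then carefully tracking the bounded-degree combinatorics so they are absorbed into the overall constants.
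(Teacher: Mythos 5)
Your overall strategy is the same as the paper's: reduce $|\cS_{u^\gamma}^t B_\gamma K(e)|$ to the positive majorant $\cS_\gamma^t|B_\gamma K|(e)\le\cS_\gamma^t\phi(e)$, exploit (sub)stochasticity of $\cS_\gamma^t$ and $\cS_\gamma^{t\,*}$ to push powers through, and finish with Hölder/Cauchy--Schwarz. The edge contribution, the $C_{N,M,2}$ variant (including the $|B_1|\geq N$ observation for $P_{\mathbf 1,U}$) and the $P_{\mathbf 1^\perp,\nu}$ extension are handled exactly as in the paper, and your fictitious-state version of Jensen and the paper's Hölder step $(\sum a_{e'}b_{e'})^p\le(\sum a_{e'})^{p-1}(\sum a_{e'}b_{e'}^p)$ are interchangeable.

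There is, however, a genuine gap in the pair-contribution step, and you flag the symptom yourself. You discard $\mathcal Q(e,e')$ immediately via $\mathcal Q\le 1$ and set $n(e)=\#\{e':[e,e']\in Badp(M)\}$, then appeal to $n(e)\le D$ to control $\sum_e n(e)^2$ and $\sum_e n(e)\nu_1^\gamma(e)^{-1}$; this produces an unavoidable factor of order $D^{2/3}$ in front of the first summand, which the stated $\frac{2\cs^{-1}}{N}$ cannot absorb. The paper instead keeps the $\mathcal Q$-weight, defining $n(e)=\sum_{e':[e,e']\in Badp(M)}\mathcal Q(e,e')$, and then uses Hölder in the form
\[
n(e)^3\leq\Big(\sum_{e'}\bbbone_{[e,e']\in Badp(M)}\Big)\Big(\sum_{e'}\mathcal Q(e,e')^{3/2}\Big)^2\leq\sum_{e'}\bbbone_{[e,e']\in Badp(M)},
\]
where the second factor is $\leq 1$ because $\mathcal Q$ is stochastic with entries $\leq 1$. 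This yields $\sum_e n(e)^3\leq\#Badp(M)$ with no $D$-dependence, together with the factorization $n(e)\cdot\nu_1^\gamma(e)^{-1/3}\cdot\nu_1^\gamma(e)^{1/3}|\cS_{u^\gamma}^tB_\gamma K(e)|^2$ rather than your three-way split of $n(e)$. So the fix is small: keep $\mathcal Q(e,e')$ inside $n(e)$ and let the stochasticity of $\mathcal Q$ replace the degree bound. As written, your argument proves a weaker inequality (same shape, worse constant), which is enough for Corollary~\ref{c:Minfty} but not for the stated proposition with its explicit $\frac{2\cs^{-1}}{N}$.
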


We first deduce the following corollary. Recall that the operators $\cF_{\gamma}$ from Corollary~\ref{cor:recurrence} depend on a parameter $T\in\N^{\ast}$, and $B_{\gamma} = m^{\gamma}Z_{\gamma}^{-1}\cF_{\gamma}$. In this section, $T$ is fixed, but will be taken to $+\infty$ in Section \ref{sec:retour}.

\begin{cor} \label{c:Minfty}
For any $s>0$, there exists $C_{s,T}$ such that, for all $M$, 
\[
\sup_{ \eta_1\in(0,1)} \limsup_{N\to \infty} \sup_{\Re \gamma \in I_1, \Im\gamma=\eta_1}\sup_{t\in\N} C_{N,M}(\cS_{u^\gamma}^t B_\gamma) \leq C_{s,T} M^{-s}
\]
and
\[
\sup_{ \eta_1\in(0,1)} \limsup_{N\to \infty} \sup_{\Re \gamma \in I_1, \Im\gamma=\eta_1} \sup_{t\in\N} C_{N,M, 2}(\cS_{u^\gamma}^t B_\gamma) \leq C_{s,T} M^{-s} \, .
\]
Similar estimates hold if $B_\gamma$ is replaced by $P_{{\mathbf{1}}^\perp, \nu}B_\gamma$.
\end{cor}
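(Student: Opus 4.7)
The plan is to plug the two estimates of Proposition \ref{p:lesscrude} into Proposition \ref{lem:bad1}, and to control the remaining ``moment'' factors by the contraction property of $\cS_\gamma$ on weighted $L^p$ spaces together with \textbf{(BSCT)}+\textbf{(Green)}.

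First, I observe that the operator $\cS_{u^\gamma}$ differs from $\cS_\gamma$ only by multiplication of its matrix entries by phases $\overline{u_{x_0}^\gamma(x_{-1})}$ of modulus $1$, so $|\cS_{u^\gamma}^t(e,e')|\le \cS_\gamma^t(e,e')$. Setting $\phi_\gamma(e):=\sum_w |B_\gamma(e,w)|$, we therefore get the pointwise bound $\sum_w |(\cS_{u^\gamma}^t B_\gamma)(e,w)|\le (\cS_\gamma^t \phi_\gamma)(e)$. By \eqref{e:sumzeta2} and \eqref{e:sumzeta3}, both $\cS_\gamma$ and $\cS_\gamma^{\ast}$ are sub-stochastic on $\ell^\infty$ and on $\ell^1(\nu_1^\gamma)$ respectively, so by Riesz--Thorin interpolation $\cS_\gamma$ is a contraction on $L^p(\nu_1^\gamma)$ for every $p\ge 1$. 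Consequently, for any $p\in\{4,6,8\}$,
\[
\sum_e \nu_1^\gamma(e)\Big(\sum_w |(\cS_{u^\gamma}^t B_\gamma)(e,w)|\Big)^p\le \|\phi_\gamma\|_{L^p(\nu_1^\gamma)}^p,
\]
uniformly in $t\in\IN$. This removes the iteration $\cS_{u^\gamma}^t$ and reduces everything to bounds on $B_\gamma$ itself. The same reduction works for $P_{\mathbf 1^\perp,\nu}B_\gamma$, whose kernel is dominated by $B_\gamma$ plus a constant average.

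Next, using Remark \ref{rem:IDS1} and the explicit form of $\mu_1^\gamma$ in \eqref{e:muk}, for every $p\ge 1$ the quantities
\[
\frac1N \sum_{e\in B_1} \mu_1^\gamma(e)^{1-p},\qquad \frac1N \sum_{e\in B_1}\mu_1^\gamma(e)^p, \qquad \frac1N \sum_e \nu_1^\gamma(e)\phi_\gamma(e)^p
\]
converge, uniformly in $\Re\gamma\in I_1$ and $\eta_1=\Im\gamma\in(0,1)$, to expectations on the limiting random rooted tree. Each of these expectations involves only finitely many factors of $\Im\hat\zeta^{\gamma}$, $\hat\zeta^\gamma$ and their inverses, together with kernels from the explicit operators $\cF_\gamma$ appearing in Corollary \ref{cor:recurrence}. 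Assumption \textbf{(Green)} bounds all moments $\IE\bigl[\sum_{y\sim o}|\Im\hat\zeta_o^\gamma(y)|^{-s}\bigr]$ uniformly in $\gamma$, and this (combined with $|\hat\zeta_v^\gamma(w)|\le\eta_1^{-1}$ being harmless in the sense that it appears only multiplied by matching $\Im\hat\zeta$ factors, as in Remark \ref{rem:IDS2}) yields constants $C_{s,p,T}$ such that, uniformly in $\eta_1\in(0,1)$, $\Re\gamma\in I_1$ and $t$,
\[
\limsup_{N\to\infty} \Big(\frac1{N^2}\sum_e \frac{1}{\nu_1^\gamma(e)}\Big)^{1/4},\quad \limsup_{N\to\infty}\Big(\frac1N\sum_e \frac1{\nu_1^\gamma(e)}\Big)^{1/3}\!N^{-2/3},\quad \limsup_{N\to\infty}\|\phi_\gamma\|_{L^p(\nu_1^\gamma)}
\]
are all finite, and so is $\limsup_N \bigl(\sum_e [(P_{F,U}\nu_1^\gamma)(e)]^2/\nu_1^\gamma(e)\bigr)^{1/4}$ (this last uses that $P_{F,U}\nu_1^\gamma(e)=\frac{1}{d(t_e)}\sum_{t_{e'}=t_e}\nu_1^\gamma(e')$ is of order $1/N$ so the sum is of order $1$).

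Now I combine. Proposition \ref{lem:bad1} gives, for any $s\ge 1$,
\[
\limsup_{N\to\infty}\frac{\#Badp(M)}{N}\le C_s M^{-s},\qquad \sup_{\eta_1,\Re\gamma\in I_1}\limsup_{N\to\infty}\nu_1^\gamma(Bad(M))\le C_s M^{-s};
\]
an entirely parallel argument, based on the same density bounds for $\mu_1^\gamma$ extended to pairs of edges with the appropriate $\mathcal Q_2$-weight, gives $\limsup_N \#Badp(2,M)/N\le C_s M^{-s}$. Plugging these into Proposition \ref{p:lesscrude} together with the moment bounds above yields, after keeping track of the factors $N^{-1}\cdot\#Badp^{1/3}\cdot N^{2/3}=O(M^{-s/3})$ in the pair term and $\nu_1^\gamma(Bad)^{1/2}=O(M^{-s/2})$ in the two edge terms,
\[
\sup_{t,\eta_1,\Re\gamma\in I_1}\limsup_{N\to\infty} C_{N,M}(\cS_{u^\gamma}^t B_\gamma)\le C_{s,T}\,M^{-s/3},
\]
and similarly for $C_{N,M,2}$. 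Since $s$ is arbitrary, the claim follows. The same reasoning applies verbatim to $P_{\mathbf 1^\perp,\nu}B_\gamma$.

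The main obstacle is a bookkeeping one: making sure that every factor appearing in Proposition \ref{p:lesscrude} really is bounded uniformly in $t$, $\eta_1$, $\Re\gamma\in I_1$ and $N$. The $L^p$-contractivity of $\cS_\gamma$ takes care of the $t$-uniformity in a completely general way, while \textbf{(Green)} is precisely the moment input needed to absorb the singular factors $\frac1{\nu_1^\gamma(e)}$, $\frac1{|\Im\zeta|}$ via H\"older (this is why the exponents $p=4,6,8$ in Proposition \ref{p:lesscrude} can be matched to negative moments of $\Im\hat\zeta$ that are all finite). All other dependence on the structure of the operators $\cF_\gamma$ is absorbed into the $T$-dependent constant $C_{s,T}$.
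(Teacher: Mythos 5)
Your proposal follows the same route as the paper: plug Proposition \ref{lem:bad1} into Proposition \ref{p:lesscrude} and then verify that the remaining moment factors in Proposition \ref{p:lesscrude} stay bounded uniformly in $N$, $\gamma$, $t$, by invoking \textbf{(BSCT)} convergence and the moment control from \textbf{(Green)}. Two remarks. First, your preliminary step (pointwise domination $\sum_w|(\cS_{u^\gamma}^tB_\gamma)(e,w)|\le(\cS_\gamma^t\phi_\gamma)(e)$, followed by $L^p(\nu_1^\gamma)$-contraction of $\cS_\gamma$ via Riesz--Thorin) is redundant: Proposition \ref{p:lesscrude} already gives a bound whose right-hand side depends only on $B_\gamma$, not on $\cS_{u^\gamma}^tB_\gamma$; the $t$-uniformity has been absorbed inside that proposition, which internally uses exactly the substochasticity argument you redo. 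Second, your verification of the moment bound $\sum_e\nu_1^\gamma(e)(\sum_w|B_\gamma(e,w)|)^\alpha=O_T(1)$ (the paper's estimate \eqref{e:nu-2}) is stated at a level of generality that hides the only nontrivial content: the paper has to check this \emph{for each} $B_\gamma=\frac{m^\gamma}{Z_\gamma}\cF_\gamma$ with $\cF_\gamma$ as in Corollary \ref{cor:recurrence}, and the cases $\cF_\gamma=\mathcal{L}^\gamma d^{-1}\mathcal{S}_{T,\gamma}$ and $\cF_\gamma=\widetilde{\mathcal{T}}^\gamma$ each require an explicit computation of the kernel sum $\sum_w|B_\gamma(e,w)|$ (giving rise to terms like $(P^sd^{-1}N_\gamma)(x)$ or $|\hat{m}_o^\gamma|/(2\Im\hat{m}_o^\gamma)$) followed by H\"older and \textbf{(Green)}. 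Your sketch asserts this without carrying it out; it is a real computation, not a formality, though the strategy you indicate is the correct one. Also, the displayed quantity $\frac1N\sum_e\nu_1^\gamma(e)\phi_\gamma(e)^p$ has an extraneous factor $1/N$ (it should be $\sum_e\nu_1^\gamma(e)\phi_\gamma(e)^p$), though your subsequent bookkeeping suggests this is only a typo. Aside from these issues the accounting of powers of $M$ and $N$ in the final step is correct and matches the paper.
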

\begin{proof}[Proof of Corollary~\ref{c:Minfty}]
This will follow from Propositions \ref{lem:bad1} and \ref{p:lesscrude} if we show that
\begin{equation}\label{e:nu-1}
\sup_{ \eta_1\in(0,1)} \limsup_{N\to \infty} \sup_{\Re \gamma \in I_1, \Im\gamma=\eta_1} N^{-2}\sum_e \frac{1}{\nu_1^{\gamma} (e)} <+\infty
\end{equation}
\begin{equation}\label{e:nu-2}
\sup_{ \eta_1\in(0,1)} \limsup_{N\to \infty} \sup_{\Re \gamma \in I_1, \Im\gamma=\eta_1} \sum_e \nu_1^{\gamma} (e)  \Big(\sum_w |B_\gamma (e, w)|\Big)^\alpha <+\infty
\end{equation}
($\alpha=4, 6, 8$) and
\begin{equation}\label{e:nu-3}
\sup_{ \eta_1\in(0,1)} \limsup_{N\to \infty} \sup_{\Re \gamma \in I_1, \Im\gamma=\eta_1} \sum_e  \frac{1}{\nu_1^{\gamma}(e)} \frac{1}{d(t_e)^2} \Big(\sum_{t_{e'}=t_e} \nu_1^{\gamma}(e')\Big)^2 <+\infty\,.
\end{equation}
For \eqref{e:nu-1}, we have by Remark~\ref{rem:IDS1} that
\[
N^{-2}\sum_e\frac{1}{\nu_1^{\gamma}(e)} = \frac{\sum_e \mu_1^{\gamma}(e)}{N} \cdot \frac{1}{N}\sum_e \frac{1}{\mu_1^{\gamma}(e)} \Lim_{N\to \infty } \IE\left(\sum_{o'\sim o}\hat{\mu}_1^{\gamma} (o, o')\right) \cdot \IE\left(\sum_{o'\sim o}\frac{1}{\hat{\mu}_1^{\gamma} (o, o')}\right)
\]
uniformly in $\Re\gamma \in I_1$, for any fixed $\Im \gamma=\eta_1$. So the claim follows Remark~\ref{rem:IDS2}.

For \eqref{e:nu-3}, we have $\frac{1}{d(t_e)^2}(\sum_{t_{e'}=t_e}\nu_1^{\gamma}(e'))^2 \le \sum_{t_{e'}=t_e} \nu_1^{\gamma}(e')^2$, so we deduce the upper bound $D (\sum_e \frac{1}{\nu_1^{\gamma}(e)^2})^{1/2}(\sum_e\nu_1^{\gamma}(e)^4)^{1/2}$, which is uniformly bounded by
\[
\frac{D}{\expect\left(\sum_{o'\sim o} \hat{\mu}_1^{\gamma}(o,o')\right)} \expect\left(\sum_{o'\sim o} \frac{1}{\hat{\mu}_1^{\gamma}(o,o')^2}\right)^{1/2}\expect\left(\sum_{o'\sim o}\hat{\mu}_1^{\gamma}(o,o')^4\right)^{1/2}.
\]

Finally, for \eqref{e:nu-2}, we write
\begin{multline*}
\sum_e \nu_1^{\gamma}(e)\Big(\sum_w |B_{\gamma}(e,w)|\Big)^{\alpha} \\
 \le \frac{N}{\sum_e\mu_1^{\gamma}(e)} \left(\frac{1}{N}\sum_e\frac{\mu_1^{\gamma}(e)^2(m_{o_e}^{\gamma})^{2\alpha}}{\zeta^{\gamma}_{o_e}(t_e)^{2\alpha}}\right)^{1/2}\left(\frac{1}{N}\sum_e\Big(\sum_w\cF_{\gamma}(e,w)|\Big)^{2\alpha}\right)^{1/2} .
\end{multline*}
The first two terms are bounded by $\frac{1}{\expect(\sum_{o'\sim o}\hat{\mu}_1^{\gamma}(o,o'))}(\expect\sum_{o'\sim o}\frac{\hat{\mu}_1^{\gamma}(o,o')^2(\hat{m}_o^{\gamma})^{2\alpha}}{\hat{\zeta}^{\gamma}_o(o')^{2\alpha}})^{1/2}$ and the last term is shown to be uniformly bounded in Remark~\ref{rem:holverif}. This completes the proof.
\end{proof}

\begin{proof}[Proof of Proposition \ref{p:lesscrude}]
An important point here is to obtain a bound that does not depend on $t$. Recalling \eqref{eq:cnmb}, we first estimate
\begin{multline}    \label{eq:cnmbpreuve}
 \sum_{[e,e'] \in Badp(M)} \mathcal{Q}(e,e') |\cS_{u^\gamma}^t B_\gamma K(e)- \cS_{u^\gamma}^t B_\gamma K(e')|^2 \\ 
 \leq  4\sum_{[e,e'] \in Badp(M)} \mathcal{Q}(e,e') |\cS_{u^\gamma}^t B_\gamma K(e)|^2  = 4 \sum_e n(e) |\cS_{u^\gamma}^t B_\gamma K(e)|^2 \,,
\end{multline}
 where $n(e)=\sum_{e' : [e,e'] \in Badp(M)} \mathcal{Q}(e,e')$. Using H\"older, this is less than
\[
 4 \left(\sum_e n^3(e)\right)^{1/3}  \left(\sum_e \frac{1}{\nu_1^{\gamma} (e)}\right)^{1/3}   \left( \sum_e \nu_1^{\gamma} (e)|\cS_{u^\gamma}^t B_\gamma K(e)|^6\right)^{1/3} \, .
\]
But again by H\"older and the fact that $\mathcal{Q}$ is stochastic, we have
\[
\sum_e n^3(e) \le \sum_e \Big(\sum_{e'} \bbbone_{[e,e']\in Badp(M)}\Big)\Big(\sum_{e'}\mathcal{Q}(e,e')^{3/2}\Big)^2 \le \#  Badp(M)  \, .
\]
Next, recalling \eqref{e:Su}, \eqref{e:Sgamma}, we have $|\mathcal{S}_{u^{\gamma}}^tB_{\gamma}K(e)| \le (\mathcal{S}_{\gamma}^t|B_{\gamma}K|)(e)$. As $\cS_{\gamma}^t$ and $\cS_{\gamma }^{\ast\,t}$ are substochastic, and $\nu_1^{\gamma}(e)\mathcal{S}_{\gamma}^t(e,e')=\nu_1^{\gamma}(e')\mathcal{S}_{\gamma}^{\ast\,t}(e',e)$, we have
\begin{multline*}
\sum_e \nu_1^{\gamma}(e) [\mathcal{S}_{\gamma}^t|B_{\gamma}K|(e)]^6  \le \sum_e \nu_1^{\gamma}(e)\Big(\sum_{e'}\mathcal{S}^t_{\gamma}(e,e')\Big)^5\Big(\sum_{e'}\mathcal{S}^t_{\gamma}(e,e')[|B_{\gamma}K|(e')]^6\Big) \\
\le \sum_{e,e'}\nu_1^{\gamma}(e')\mathcal{S}_{\gamma}^{\ast\,t}(e,e')[|B_{\gamma}K|(e')]^6 \le \sum_{e'}\nu_1^{\gamma}(e') [|B_{\gamma}K|(e')]^6 \, .
\end{multline*}
Collecting the estimates, we showed that \eqref{eq:cnmbpreuve} is bounded by
\[
4  \left(\#  Badp(M) \right)^{1/3}  \left(\sum_e \frac{1}{\nu_1^{\gamma} (e)}\right)^{1/3}   \left( \sum_e \nu_1^{\gamma} (e)  \left[|B_\gamma K|(e)\right]^6\right)^{1/3}.
\]
For the second term in \eqref{eq:cnmb}, we have
\begin{multline}
 \sum_{e\,\in Bad(M)} \nu_1^{\gamma}(e)|\cS_{u^\gamma}^t B_\gamma K(e)- P_{F,U} \cS_{u^\gamma}^t B_\gamma K(e)|^2
 \\ \leq  2\sum_{e\,\in Bad(M)} \nu_1^{\gamma}(e)\left(\left[\cS_{\gamma}^t |B_\gamma K|(e)\right]^2+\left[P_{F,U}\cS_{\gamma}^t | B_\gamma K|(e)\right]^2\right)
\end{multline}
and again, as $\mathcal{S}_{\gamma}^t$ and $\mathcal{S}_{\gamma}^{\ast\,t}$ are substochastic,
\[
\sum_{e\,\in Bad(M)} \nu_1^{\gamma}(e)\left[\cS_{\gamma}^t \,|B_\gamma K|(e)\right]^2 \le \nu_1^{\gamma}(Bad(M))^{1/2}\left(\sum_{e }  \nu_1^{\gamma}(e) \left[|B_\gamma K|(e)\right]^4\right)^{1/2} \, .
\]
Also,
\begin{multline*}
\sum_{e\,\in Bad(M)} \nu_1^{\gamma}(e) \left[P_{F,U} \cS_{\gamma}^t\, | B_\gamma K|(e)\right]^2 \\
\leq  \nu_1^{\gamma}(Bad(M))^{1/2}\left(\sum_{e }  \nu_1^{\gamma}(e)\left[P_{F,U} \cS_{\gamma}^t \,| B_\gamma K|(e)\right]^4\right)^{1/2} \, .
\end{multline*}
Using that $P_{F,U}$ is stochastic and $\mathcal{S}_{\gamma}^t$ is substochastic, we have
\begin{multline*}
\sum_{e }  \nu_1^{\gamma}(e)\left[P_{F,U} \cS_{\gamma}^t | B_\gamma K|(e)\right]^4 \le \sum_{e,e'} \nu_1^{\gamma}(e)P_{F,U}(e,e') \left[\mathcal{S}_{\gamma}^t\,|B_{\gamma}K|(e')\right]^4 \\ 
\le \left(\sum_{e'} \frac{[(P_{F,U}\nu_1^{\gamma})(e')]^2}{\nu_1^{\gamma}(e')}\right)^{1/2}\left(\sum_{e'}\nu_1^{\gamma}(e')\left[\mathcal{S}_{\gamma}^t\,|B_{\gamma}K|(e')\right]^8\right)^{1/2} \\
  \leq  \left(\sum_{e } \frac{ [(P_{F,U} \nu_1^{\gamma})(e)]^2}{\nu_1^{\gamma}(e)} \right)^{1/2}\left(\sum_{e } \nu_1^{\gamma}(e)  \left[| B_\gamma K|(e)\right]^8\right)^{1/2} .
 \end{multline*}
 This yields the first inequality. The second one is proven similarly.
\end{proof}

\begin{rem}\label{r:bigonotation}
Note that if $\|K\|_{\infty}\le 1$, then
\begin{equation}\label{eq:bgammak}
\|B_{\gamma}K\|_{\nu_1^{\gamma}}^2 = \sum_{e\in B}\nu_1^{\gamma}(e)|B_{\gamma}K(e)|^2 \le \sum_e\nu_1^{\gamma}(e)\Big(\sum_w |B_{\gamma}(e,w)|\Big)^2 \, ,
\end{equation}
so $\sup_{ \eta_1 >0} \limsup_{N\to \infty} \sup_{\Re \gamma \in I_1, \Im\gamma=\eta_1}\|B_{\gamma}K\|_{\nu_1^{\gamma}}^2  \le C_T$ by the proof in Corollary~\ref{c:Minfty}, see also Remark~\ref{rem:holverif}.

For a quantity $A(N, \gamma, \kappa)$ depending on $N, \gamma$ (and possibly on an additional parameter $\kappa$), we will write $A(N, \gamma, \kappa)=O_\kappa(1)_{N\To +\infty, \gamma}$
to mean that, for any given $\kappa$,
\[
\sup_{ \eta_1\in (0,1)} \limsup_{N\to \infty} \sup_{\Re \gamma \in I_1, \Im\gamma=\eta_1}A(N, \gamma, \kappa) < +\infty \, .
\]
For instance, if $\|K\|_{\infty}\le 1$, then $\|B_{\gamma}K\|_{\nu_1^{\gamma}}^2= O_T(1)_{N\To +\infty, \gamma}$. This is true more generally for $\|B_{\gamma}K\|_{\nu_k^{\gamma}}^2$, with $B_{\gamma}=\frac{m^{\gamma}}{Z_{\gamma}}\cF_{\gamma}:\mathscr{H}_m\to\mathscr{H}_k$, and $\cF_{\gamma}$ as in Corollary~\ref{cor:recurrence}.

Similarly, for the operator $\mathcal{Z}_l$ appearing in Proposition~\ref{p:iter}, the arguments in Corollary~\ref{c:Minfty} and Remark~\ref{rem:holverif} show that $\|\mathcal{Z}_l f\|_{\nu_1^{\gamma}} = O_{l,T}(1)_{N\To +\infty, \gamma}$.

Finally, by Corollary \ref{c:Minfty}, $\sup_t C_{N,M,2}(\mathcal{S}_{u^{\gamma}}^tB_\gamma)$ is uniformly bounded by $C_{s,T} M^{-s}$ for any $M$ and $s$, as $N\to +\infty$. We use the notation $O_T(M^{-\infty})_{N\To +\infty, \gamma}$ to express this.
\end{rem}

\section{Transition matrices with phases}        \label{sec:su}
We now consider the operator $\mathcal{S}_{u^{\gamma}}$ given in \eqref{e:Su}. If we denote by $M_{u^{\gamma}}$ the multiplication operator $(M_{u^{\gamma}}K)(x_0;x_k) = \overline{u_{x_1}^{\gamma}(x_0)}K(x_0;x_k)$, where ${u_{x_1}^{\gamma}(x_0)}$ is the function of modulus $1$ defined in \eqref{e:ugamma}, then $\mathcal{S}_{u^{\gamma}} =\cS_\gamma  M_{u^{\gamma}}$.

It is well known that putting phases into a matrix with positive entries will strictly diminish its spectral radius, unless the phases satisfy very special relations~: this is the contents of Wielandt's theorem \cite[Chapter 8]{Mey01}. This is reflected in Proposition~\ref{p:Sualtmod} below. Without the error term, part (i) says that the norm of $\cS_{u^{\gamma}}^4$ is strictly smaller than one, in contrast to $\cS_{\gamma}^4$ (the latter only contracts the norm on proper subspaces, see Section~\ref{s:mixing}). The contraction property of $\cS_{u^{\gamma}}^4$ holds true except in special cases, described in part (ii) of Proposition~\ref{p:Sualtmod}.

Note that we are not using Wielandt's theorem directly, as we want some information on the norm of the operator $\cS_{u^{\gamma}}^4$ instead of its spectral radius.
In addition, as in Section~\ref{s:mixing}, we need estimates that are uniform both as $N\to \infty$ and as $\gamma$ approaches the real axis.

Recall from Section \ref{s:mixing} that $B_\gamma$ is an operator $\mathscr{H}_m\to \mathscr{H}_k$ with $1\le k\le m$. As in Section \ref{s:mixing}, the case $k=1$ suffices for our purposes, but we need more general operators $A_{\gamma}:\mathscr{H}_m\to\mathscr{H}_1$ defined in terms of $B_{\gamma}$. The quantities $C_{N,M}(A_\gamma), C_{N,M,2}(A_\gamma)$ were introduced in Propositions \ref{lem:sp} and \ref{lem:7.5b}. In particular, $C_{N,M,2}(I)$ corresponds to the case where $A_\gamma$ is the identity operator. The measure $\nu_1^{\gamma}$ is defined in \eqref{e:muk} and \eqref{e:nuk}.

\begin{prp}\label{p:Sualtmod}
Fix $\gamma\in\C^+$, $A_{\gamma}K\in \mathscr{H}_1$, $\varepsilon \in (0,1)$, $\cs>0$ and a graph $G=G_N$. Denote $\eta_1=\Im \gamma$. Then
 \begin{enumerate}[\rm (i)]
\item Either we have
\begin{equation}\label{e:decay}
\| \mathcal{S}_{u^{\gamma}}^{4}A_\gamma K\|_{\nu_1^{\gamma}}^2 \le (1-\varepsilon)^2\|A_\gamma K\|_{\nu_1^{\gamma}}^2 + \tilde{C}_{N,\cs, 2}(A_\gamma)\cdot \|K\|_{\infty}^2
\end{equation}
with
\[
\tilde{C}_{N,\cs,2}(A_\gamma) = \max \{C_{N,\cs}(A_\gamma),C_{N,\cs,2}(A_\gamma), C_{N,\cs}(\mathcal{S}_{u^{\gamma}}A_\gamma),C_{N,\cs,2}(\mathcal{S}^2_{u^{\gamma}}A_\gamma) \} \, ,
\]
\item or there exist $\theta : V \to \R$ and constants $s_j$ with $|s_j| \le 1$, $j=1,2$, such that
\[
\Big\|u^{\gamma}_{x_1}(x_0) - s_2 \frac{e^{-i[\theta(x_0) + \theta(x_1)]}}{n_{x_0}^{\gamma}}\Big\|^2_{\nu_1^{\gamma}} \le c_{\cs,\beta} \left[\varepsilon^{1/2} + \eta_1 \, \|\xi^{\gamma}\|_{\nu_1^{\gamma}}+ \eta^2_1\, \|\xi^{\gamma}\|^2_{\nu_1^{\gamma}}\right] +C'_{N,\cs }\, ,
\]
and
\[
\|u^{\gamma}_{x_1}(x_0) - s_1 n_{x_1}^{\gamma}e^{i[\theta(x_0) + \theta(x_1)]}\|^2_{\nu_1^{\gamma}} \le  c_{\cs,\beta} \left[\varepsilon^{1/2} + \eta_1 \, \|\xi^{\gamma}\|_{\nu_1^{\gamma}}+ \eta^2_1 \, \|\xi^{\gamma}\|^2_{\nu_1^{\gamma}}\right] +C'_{N,\cs }\,,
\]
where $\xi^{\gamma}(x_0,x_1) = \frac{|\zeta_{x_0}^{\gamma}(x_1)|^2}{|\Im \zeta_{x_0}^{\gamma}(x_1)|}$, $n_x^{\gamma} = (\overline{m_x^{\gamma}})(m_x^{\gamma})^{-1}$ and $C'_{N,\cs } = \frac{8\cs^2C_{N,\cs, 2 }(I)}{c(D, \beta)}$. 

Moreover, there is an explicit $f(\beta, D)$, depending only on the spectral gap $\beta$ and on the degree, such that $c_{\cs, \beta}\leq f(\beta, D)M^{3}$ as $M\to +\infty$.
\end{enumerate}
In particular, in case \emph{(ii)},
\begin{equation}        \label{eq:uu}
\|u_{x_0}^{\gamma}(x_1)u_{x_1}^{\gamma}(x_0) - s_1s_2\|_{\nu_1^{\gamma}}^2 \le 4c_{\cs,\beta } \left[\varepsilon^{1/2} + \eta_1 \, \|\xi^{\gamma}\|_{\nu_1^{\gamma}}+ \eta^2_1 \, \|\xi^{\gamma}\|^2_{\nu_1^{\gamma}}\right] + 4C'_{N,\cs}
\, .
\end{equation}
\end{prp}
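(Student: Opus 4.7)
The proposition is a quantitative Wielandt/Perron--Frobenius dichotomy for the phase-twisted operator $\cS_{u^\gamma}=\cS_\gamma M_{u^\gamma}$: either it strictly contracts on its fourth iterate, or the unimodular weights $u^\gamma_{x_1}(x_0)$ must almost factor as a symmetric coboundary. The plan is to assume (i) fails at rate $(1-\varepsilon)^2$ and derive (ii).

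\emph{Step 1 (reduction via the triangle inequality and iterated spectral gap).} Put $w=A_\gamma K$. Since $|M_{u^\gamma}h|=|h|$ pointwise and $\cS_\gamma$ has non-negative entries, the triangle inequality gives $|\cS_{u^\gamma}^4 w|(e)\le\cS_\gamma^4|w|(e)$, and therefore
$\|\cS_{u^\gamma}^4 w\|^2_{\nu_1^\gamma}\le\|\cS_\gamma^4|w|\|^2_{\nu_1^\gamma}\le\|w\|^2_{\nu_1^\gamma}$.
Failure of (i) forces each of the two gaps in this chain to be $\lesssim\varepsilon\|w\|^2+\tilde C_{N,\cs,2}(A_\gamma)\|K\|_\infty^2$. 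Write $|w|=c\mathbf{1}+v$ with $v\perp\mathbf{1}$ in $\ell^2(\nu_1^\gamma)$. Constants are only approximately invariant under $\cS_\gamma$ because $\cS_\gamma\mathbf{1}=\mathbf{1}-\eta_1\xi^\gamma$, which accounts for the contributions of order $\eta_1\|\xi^\gamma\|_{\nu_1^\gamma}$ and $\eta_1^2\|\xi^\gamma\|^2_{\nu_1^\gamma}$. Applying Proposition~\ref{p:iter} with $\ell=2$ to $v$ and rearranging the first gap yields
\[
\|v\|^2_{\nu_1^\gamma}\le c_{\cs,\beta}\bigl(\varepsilon+\eta_1\|\xi^\gamma\|_{\nu_1^\gamma}+\eta_1^2\|\xi^\gamma\|^2_{\nu_1^\gamma}\bigr)+C'_{N,\cs}\|K\|_\infty^2,
\]
where $c_{\cs,\beta}$ is polynomial in $\cs$ because the gap in Proposition~\ref{lem:7.5b} is of size $\cs^{-2}c(D,\beta)$. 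So $|w|$ is $\nu_1^\gamma$-close to the constant $c$.

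\emph{Step 2 (phase alignment from saturation).} Write $w(e)=|w(e)|e^{i\phi(e)}$ and expand $\cS_{u^\gamma}^4 w(e)$ as a sum over non-backtracking paths $(x_{-4},\dots,x_1)$ ending at $e=(x_0,x_1)$, weighted by the positive kernel $\cS_\gamma^4(\omega_e,\cdot)$ and carrying the product $\prod_{j=0}^{-3}\overline{u^\gamma_{x_j}(x_{j-1})}$. Polarising the second gap shows that, for $\nu_1^\gamma$-most $e$, the numbers
\[
\Phi(x_{-4},\dots,x_1)\;=\;e^{i\phi(x_{-4},x_{-3})}\prod_{j=0}^{-3}\overline{u^\gamma_{x_j}(x_{j-1})}
\]
are, in the $\cS_\gamma^4$-weighted $L^2$ sense, close to a single value $\alpha(e)\in\mathbb S^1$. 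Combined with the fact that $|w|\approx c$ from Step~1, the endpoint factor $e^{i\phi(x_{-4},x_{-3})}$ is eliminated by comparing two length-$4$ paths ending at the same $e$ that share the tail $(x_{-4},x_{-3})$ but differ in one inner vertex $x_j$: the ratio isolates $u^\gamma_{x_j}(x_{j-1})/u^\gamma_{x_j}(x'_{j-1})$. Assumption \textbf{(BST)} guarantees that most edges $e$ admit enough non-backtracking room to supply such comparison paths; iterating over the four indices encodes $u^\gamma$ as an approximate coboundary.

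\emph{Step 3 (two alternatives and the product formula).} Executing the extraction on $\cS_{u^\gamma}$ produces $\theta:V\to\R$ and $|s_1|\le 1$ with $u^\gamma_{x_1}(x_0)\approx s_1 n_{x_1}^\gamma e^{i[\theta(x_0)+\theta(x_1)]}$ in $\ell^2(\nu_1^\gamma)$; the factor $n_{x_1}^\gamma=\overline{m_{x_1}^\gamma}/m_{x_1}^\gamma$ appears when passing from the directed-edge coboundary to the symmetric form $\theta(x_0)+\theta(x_1)$, via identity (\ref{eq:mv}) which relates $u^\gamma_{x_1}(x_0)$ to $u^\gamma_{x_0}(x_1)$ through $n^\gamma$. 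Running the identical argument on the adjoint $\cS_{u^\gamma}^\ast$ (whose kernel reads the phases in reversed direction) produces the dual form $u^\gamma_{x_1}(x_0)\approx s_2 e^{-i[\theta(x_0)+\theta(x_1)]}/n_{x_0}^\gamma$ with the \emph{same} $\theta$. Multiplying the two estimates and using $|u^\gamma|=1$ gives (\ref{eq:uu}) with $4c_{\cs,\beta}$ on the right. The cubic dependence $c_{\cs,\beta}\le f(\beta,D)M^3$ is the price of three cascaded inversions of the $M^{-2}$-gap of $\cS_\gamma^2$: one in Step~1 for $|w|\approx c$, one for the phase alignment and one for the pointwise factorisation.

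\emph{Main obstacle.} The delicate point is the passage in Step~2 from approximate alignment of a \emph{product of four} phases (an $L^2$-statement against the random measure $\cS_\gamma^4(\omega_e,\cdot)$) to an approximate \emph{pointwise} factorisation of $u^\gamma$ itself, while keeping the error of order $\varepsilon^{1/2}+\eta_1\|\xi^\gamma\|_{\nu_1^\gamma}$ and the $M$-dependence polynomial. This is where the combinatorial geometry of non-backtracking paths, together with the large injectivity radius supplied by \textbf{(BST)}, does the essential work.
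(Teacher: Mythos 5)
Your proposal has the right overall flavour (a quantitative Wielandt dichotomy: failure of contraction forces the phases to almost be a coboundary, and the $M^3$ scaling arises from cascading spectral-gap inversions), but Step~2 contains a genuine gap and an incorrect assumption.

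The paper's proof of this proposition never expands $\cS_{u^\gamma}^4 w$ over non-backtracking paths and never compares paths sharing a common tail, and crucially it never uses \textbf{(BST)}. The proposition is a statement about a single fixed graph $G_N$; \textbf{(BST)} is an asymptotic condition on the sequence and plays no role here (it enters only later, via Proposition~\ref{lem:bad1} and Corollary~\ref{c:Minfty}, to show that the error terms $C_{N,\cs}$, $C_{N,\cs,2}$ become negligible as $N\to\infty$). So your Step~2 cannot invoke \textbf{(BST)} to supply ``enough non-backtracking room'', since the statement must hold even for very small graphs where the error terms absorb everything. More fundamentally, you acknowledge that passing from $L^2$-alignment of the four-fold phase product against $\cS_\gamma^4(\omega_e,\cdot)$ to a near-pointwise factorisation of $u^\gamma$ is ``the delicate point'', but leave it entirely unresolved; making it quantitative with the stated $\varepsilon^{1/2}$ and $M^3$ dependence is exactly the hard part.

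The paper's argument avoids this combinatorial difficulty entirely by exploiting the subspace $F$ of functions on $B_1$ that depend only on the terminus. First it shows, via the two spectral-gap inequalities of Propositions~\ref{lem:sp} and~\ref{lem:7.5b}, that if the four-step contraction fails, then $M_{u^\gamma}K_\gamma$ is $\ell^2(\nu_1^\gamma)$-close to $\|K_\gamma\|_\nu\frac{f}{|f|}$ with $f=P_F M_{u^\gamma}K_\gamma\in F$, so $\frac{f}{|f|}(x_0,x_1)=e^{i\theta(x_1)}$; and likewise one step later with some $\theta'$. Applying $\cS_\gamma$ to a terminus-function yields (up to the $\eta_1\xi^\gamma$ error) the same function evaluated at the origin, so $\cS_{u^\gamma}K_\gamma\approx\|K_\gamma\|_\nu e^{i\theta(x_0)}$, and multiplying by $M_{u^\gamma}$ and comparing with the $\theta'$ form gives $\overline{u_{x_1}^\gamma(x_0)}e^{i\theta(x_0)}\approx e^{i\theta'(x_1)}$ directly, in one shot, with fully quantitative constants. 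Part~(c) then uses the algebraic identity $u_{x_1}^\gamma(x_0)=u_{x_0}^\gamma(x_1)\,n_{x_1}^\gamma/n_{x_0}^\gamma$ coming from \eqref{eq:mv}, the edge-reversal involution, and one more projection onto constants to produce the symmetric form of~(ii); it is not a second run of the same argument on $\cS_{u^\gamma}^{\ast}$ as you suggest. The $M^3$ comes from one inversion of $\delta_2^{-1}\sim M^2$ multiplied by $\mathbf{c}^{1/2}\sim M$, i.e.\ two gap-inversions with a square root, not three inversions.
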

 
\begin{proof}
(a) We start with some preliminary inequalities. Denote $\la\cdot, \cdot\ra_{\nu}=\la\cdot, \cdot\ra_{\nu_1^{\gamma}}$.

Recall that we denote by $F$ the space of functions on $B$ which depend only on the terminus.

Let $\delta_1=\frac34\cs^{-2}$, $K_{\gamma}=A_{\gamma}K$ and let $w=P_{F^{\bot}} K_{\gamma}$ be the orthogonal projection of $K_{\gamma}$ on $F^{\bot}$ in $\ell^2(\nu_1^{\gamma})$. By the proof of Proposition~\ref{lem:sp},
\[
\langle w, \mathcal{M}^\gamma  w \rangle_{\nu} \ge \delta_1\,\|w\|_{\nu}^2 - C_{N,M}(A_\gamma) \|K\|_{\infty}^2.
\]
By Remark~\ref{rem:F} and the fact that $\mathcal{M}^{\gamma \ast}=\mathcal{M}^\gamma $, we have
\[
\langle w,\mathcal{M}^\gamma  w\rangle_{\nu} = \langle K_{\gamma}, \mathcal{M}^\gamma  K_{\gamma} \rangle _{\nu} \le \|K_{\gamma}\|_{\nu}^2 - \|\cS_\gamma K_{\gamma}\|_{\nu}^2 \, .
\]
So if $f = P_F K_{\gamma}=K_{\gamma}-w \in F$ is the projection of $K_{\gamma}$ on $F$, we have
\begin{equation}       \label{eq:Su1}
\|K_{\gamma} - f\|_{\nu}^2 \le  \delta_1^{-1} \left( \|K_{\gamma}\|_{\nu}^2 - \|\mathcal{S}_\gamma K_{\gamma}\|_{\nu}^2 + C_{N,M}(A_\gamma)\|K\|_{\infty}^2 \right)  \, .
\end{equation}
Similarly, if $\delta_2 = \cs^{-2}c(D, \beta)$ and $C\,\mathbf{1} = P_{\mathbf{1}} |K_{\gamma}|$ is the projection of $|K_{\gamma}|$ on $\mathbf{1}$, then using Proposition~\ref{lem:7.5b}, we get
\begin{equation}       \label{eq:Su2}
\left\| \,|K_{\gamma}|-C\,\mathbf{1}\right\|_{\nu}^2 \le \delta_2^{-1}\left(\|K_{\gamma}\|_{\nu}^2 - \|\cS_{\gamma}^2\,|K_{\gamma}|\,\|_{\nu}^2 + C_{N,\cs,2}(A_\gamma)\|K\|_{\infty}^2\right) .
\end{equation}
Now
\[
\left\|K_{\gamma} - \|K_{\gamma}\|_{\nu}\frac{f}{|f|}\right\|_{\nu} \le \left\|K_{\gamma}-f\right\|_{\nu} + \left\|f-\|K_{\gamma}\|_{\nu}\frac{f}{|f|}\right\|_{\nu}
\]
and
\[
\left\|f-\|K_{\gamma}\|_{\nu}\frac{f}{|f|}\right\|_{\nu} = \left\|\,|f|-\|K_{\gamma}\|_{\nu}\,\mathbf{1}\right\|_{\nu}
\]
(this is true even if $f$ vanishes, if we give an arbitrary value of modulus $1$ to $\frac{f}{|f|}$ in this case).
Also,
\[
\left\|\,|f|-\|K_{\gamma}\|_{\nu}\,\mathbf{1}\right\|_{\nu}  \le \left\|\,|K_{\gamma}| - |f| \right\|_{\nu} + \left\|\,|K_{\gamma}| - \|K_{\gamma} \|_{\nu}\,\mathbf{1}\right\|_{\nu}
\]
and
\[
\left\|\,|K_{\gamma}| - |f| \right\|_{\nu} \le \|K_{\gamma} - f \|_{\nu} \, .
\]
Finally, $\left\|\,|K_{\gamma}| - \|K_{\gamma}\|_{\nu}\,\mathbf{1}\right\|_{\nu} \le \left\|\,|K_{\gamma}| - C\,\mathbf{1}\right\|_{\nu} + \left|\,\|K_{\gamma}\|_{\nu}-C\,\right| \le  2\, \left\|\,|K_{\gamma}|-C\,\mathbf{1}\right\|_{\nu}$. Putting all these inequalities together, we obtain
\begin{equation}       \label{eq:Su3}
\left\| K_{\gamma} - \|K_{\gamma}\|_{\nu}\frac{f}{|f|}\right\|_{\nu} \le 2 \left\|K_{\gamma} - f\right\|_{\nu} + 2 \left\|\,|K_{\gamma}|-C\,\mathbf{1}\right\|_{\nu} \, .
\end{equation}
Comparing with \eqref{eq:Su1} and \eqref{eq:Su2}, this says the following~: if $ \|\cS_{\gamma}^2\,|K_{\gamma}|\,\|_{\nu}$ is close to $\|K_{\gamma}\|_{\nu}$ and if $\|\mathcal{S}_{\gamma} K_{\gamma}\|_{\nu}$ is close to $\|K_{\gamma}\|_{\nu}$,
then $K_{\gamma}$ must be close to $\|K_{\gamma}\|_{\nu}\frac{f}{|f|}$, where $f$ is a function that depends only on the terminus.

Repeating the arguments of \eqref{eq:Su1} with $M_{u^{\gamma}}\mathcal{S}_{u^{\gamma}}K_{\gamma}$ instead of $K_{\gamma}$, then taking $\tilde{f} = P_F M_{u^{\gamma}} \mathcal{S}_{u^{\gamma}} K_{\gamma} \in F$, we get
\begin{equation}       \label{eq:Su4}
\|M_{u^{\gamma}}\mathcal{S}_{u^{\gamma}}K_{\gamma} - \tilde{f}\|_{\nu}^2 \le  \delta_1^{-1} \left( \|\mathcal{S}_{u^{\gamma}}K_{\gamma}\|_{\nu}^2 - \|\mathcal{S}_{u^{\gamma}}^2K_{\gamma}\|_{\nu}^2 + C_{N,M}( \mathcal{S}_{u^{\gamma}}A_\gamma)\|K\|_{\infty}^2 \right) \, .
\end{equation}
Similarly to \eqref{eq:Su2}, if $\tilde{C}\,\mathbf{1} = P_{\mathbf{1}} |\mathcal{S}_{u^{\gamma}}K_{\gamma}|$, we get
\begin{equation}       \label{eq:Su5}
\left\| \,|\mathcal{S}_{u^{\gamma}}K_{\gamma}|-\tilde{C}\,\mathbf{1}\right\|_{\nu}^2 \le \delta_2^{-1}\left(\left\|\mathcal{S}_{u^{\gamma}}K_{\gamma}\right\|_{\nu}^2 - \left\|\cS_{\gamma}^2\,|\mathcal{S}_{u^{\gamma}}K_{\gamma}|\,\right\|_{\nu}^2 + C_{N,\cs,2}( \mathcal{S}_{u^{\gamma}}A_\gamma)\|K\|_{\infty}^2\right) \, .
\end{equation}
Finally, arguing as in (\ref{eq:Su3}), we have
\begin{multline}       \label{eq:Su6}
\left\| M_{u^\gamma}\mathcal{S}_{u^{\gamma}}K_{\gamma} - \|K_{\gamma}\|_{\nu}\frac{\tilde{f}}{|\tilde{f}|}\right\|_{\nu} \\
\le 2 \left\| M_{u^\gamma}\mathcal{S}_{u^{\gamma}}K_{\gamma} - \tilde{f}\right\|_{\nu} + 2 \left\|\,|\mathcal{S}_{u^{\gamma}}K_{\gamma}|-\tilde{C}\,\mathbf{1}\right\|_{\nu} +  \|K_{\gamma}\|_{\nu}-\|\mathcal{S}_{u^{\gamma}}K_{\gamma}\|_{\nu}  \, .
\end{multline}

\medskip

(b) We can now start the proof itself. Suppose (i) is not true~:
\[
 \| \mathcal{S}_{u^{\gamma}}^{4}K_{\gamma}\|_{\nu}^2 > (1-\varepsilon)^2\|K_{\gamma}\|_{\nu}^2 + \tilde{C}_{N,\cs, 2}(A_\gamma)\cdot \|K\|_{\infty}^2.
\]
Using $\| \mathcal{S}_{u^{\gamma}}^{4}K_{\gamma}\|_{\nu}\le   \|\mathcal{S}_{u^{\gamma}}K_{\gamma}\|_{\nu} = \|\mathcal{S}_\gamma M_{u^\gamma}K_{\gamma}\|_{\nu}$, $\| \mathcal{S}_{u^{\gamma}}^{4}K_{\gamma}\|_{\nu} \le \|\cS_{\gamma}^2\,|K_{\gamma}|\|_{\nu}=\|\cS_{\gamma}^2\,| M_{u^\gamma} K_{\gamma}|\|_{\nu}$, $\| \mathcal{S}_{u^{\gamma}}^{4}K_{\gamma}\|_{\nu}  \le \|\cS_{\gamma}^2\,|\mathcal{S}_{u^{\gamma}}K_{\gamma}|\|_{\nu}$ and $\|K_{\gamma}\|_{\nu}\ge\|\mathcal{S}_{u^{\gamma}}K_{\gamma}\|_{\nu}$, we see that we must also have
\[
\|K_{\gamma}\|_{\nu}^2 - \|\mathcal{S}_\gamma M_{u^\gamma}K_{\gamma}\|^2_{\nu}< 2\varepsilon\,\|K_{\gamma}\|_{\nu}^2 - \tilde{C}_{N,\cs, 2}(A_\gamma)\cdot \|K\|_{\infty}^2 \, ,
\]
\[
\|K_{\gamma}\|_{\nu}^2-\|\cS_{\gamma}^2\,| M_{u^\gamma} K_{\gamma}|\|^2_{\nu} < 2\varepsilon\,\|K_{\gamma}\|^2_{\nu} - \tilde{C}_{N,\cs, 2}(A_\gamma)\cdot \|K\|^2_{\infty}
\]
\[
\|\mathcal{S}_{u^{\gamma}}K_{\gamma}\|_{\nu}^2-\|\cS_{\gamma}^2\,|\mathcal{S}_{u^{\gamma}}K_{\gamma}|\|^2_{\nu} < 2\varepsilon\,\|\mathcal{S}_{u^{\gamma}}K_{\gamma}\|^2_{\nu} - \tilde{C}_{N,\cs, 2}(A_\gamma)\cdot \|  K\|^2_{\infty}
\]
as well as
\[
\|\mathcal{S}_{u^{\gamma}}K_{\gamma}\|_{\nu}^2-\| \mathcal{S}_{u^{\gamma}}^{2}K_{\gamma}\|^2_{\nu} < 2\varepsilon\,\|\mathcal{S}_{u^{\gamma}} K_{\gamma}\|^2_{\nu} - \tilde{C}_{N,\cs, 2}(A_\gamma)\cdot \|  K\|^2_{\infty} \, .
\]
Applying (\ref{eq:Su1}), (\ref{eq:Su2}) and (\ref{eq:Su3}) to $M_{u^\gamma}K_{\gamma}$ instead of $K_{\gamma}$, and $f=P_F M_{u^\gamma} K_{\gamma}$, it follows that
\begin{equation}      \label{eq:Su7}
\left\| M_{u^\gamma} K_{\gamma} - \|K_{\gamma}\|_{\nu}\frac{f}{|f|}\right\|_{\nu}^2 \le 16(\delta_1^{-1}+ \delta_2^{-1})\, \varepsilon\cdot\norm{K_{\gamma}}^2_{\nu}  \, .
\end{equation}
Applying \eqref{eq:Su4}, \eqref{eq:Su5} and \eqref{eq:Su6} yields
\begin{equation}       \label{eq:Su8}
\left\|  M_{u^\gamma}\mathcal{S}_{u^{\gamma}}K_{\gamma} - \|K_{\gamma}\|_{\nu}\frac{\tilde{f}}{|\tilde{f}|}\right\|_{\nu}^2 \le  24(\delta_1^{-1}+ \delta_2^{-1})\, \varepsilon\cdot\norm{K_{\gamma}}^2_{\nu}+ 3\varepsilon \cdot \|K_{\gamma}\|_{\nu}^2\, .
\end{equation}
As $f,\tilde{f}\in F$, we have $\frac{f}{|f|}(x_0,x_1) = e^{i\theta(x_1)}$ and $\frac{\tilde{f}}{|\tilde{f}|}(x_0,x_1) = e^{i\theta'(x_1)}$ for some $\theta,\theta':V\to \R$. Note that in this case, $(\mathcal{S}_\gamma \frac{f}{|f|})(x_0,x_1) =  e^{i\theta(x_0)} - \eta_1 \xi^{\gamma}(x_1,x_0) e^{i\theta(x_0)} $, where $\xi^{\gamma}(x_0,x_1) = \frac{|\zeta_{x_0}^{\gamma}(x_1)|^2}{|\Im \zeta_{x_0}^{\gamma}(x_1)|}$, using \eqref{e:sumzeta2}. Applying $\cS_\gamma$ to \eqref{eq:Su7}, we thus get
\begin{align*}
\left\|\mathcal{S}_{u^{\gamma}}K_{\gamma}- \|K_{\gamma}\|_{\nu} e^{i\theta(x_0)}\right\|_{\nu}^2 & \le 2 \left\|\mathcal{S}_\gamma M_{u^\gamma}K_{\gamma}-\|K_{\gamma}\|_{\nu}\mathcal{S}_\gamma \frac{f}{|f|}\,\right\|_{\nu}^2 + 2 \eta_1^2\, \|\xi^{\gamma}\|_{\nu}^2\cdot\|K_{\gamma}\|_{\nu}^2 \\
& \leq  32(\delta_1^{-1}+ \delta_2^{-1})\, \varepsilon\cdot\norm{K_{\gamma}}^2_{\nu} + 2 \eta_1^2\, \|\xi^{\gamma}\|_{\nu}^2\cdot\|K_{\gamma}\|_{\nu}^2,
\end{align*}
Applying $M_{u^\gamma} $ and comparing with \eqref{eq:Su8}, it follows that
\begin{equation}\label{e:firstu}
\left\|\overline{u_{x_1}^{\gamma}(x_0)}e^{i\theta(x_0)} - e^{i\theta'(x_1)} \right\|_{\nu}^2 \le  (2\times32+2\times 24)(\delta_1^{-1}+ \delta_2^{-1})\cdot \varepsilon  + 4 \eta_1^2\, \|\xi^{\gamma}\|_{\nu}^2   + 6\varepsilon    \, .
\end{equation}
Repeating the procedure with $K_{\gamma}$ replaced by $\mathcal{S}_{u^{\gamma}} K_{\gamma}$, and $f$ replaced by $\tilde f$, the same arguments show that there exists $\theta'':V\to \R$ such that
\begin{equation}\label{e:secondu}
\left\|\overline{u_{x_1}^{\gamma}(x_0)}e^{i\theta'(x_0)} - e^{i\theta''(x_1)} \right\|_{\nu}^2 \le (112\delta_1^{-1}+ 112\delta_2^{-1}+6)\cdot \varepsilon  + 4 \eta_1^2 \|\xi^{\gamma}\|_{\nu}^2    \, .
\end{equation}
Hence we have proved that $u_{x_1}^{\gamma}(x_0)$ is close to both $e^{i(\theta(x_0)-\theta'(x_1))}$ and $e^{i(\theta'(x_0)-\theta''(x_1))}$.

(c) Because of relation \eqref{eq:mv}, the function $u$ satisfies $u_{x_1}^{\gamma}(x_0) = u_{x_0}^{\gamma}(x_1) \frac{n_{x_1}^{\gamma}}{n_{x_0}^{\gamma}}$, where $n_x^{\gamma} =  (\overline{m_x^{\gamma}})(m_x^{\gamma})^{-1}$. 

To conclude the proof, we show~: if $e^{i(\theta(x_0)-\theta'(x_1))}$ and $e^{i(\theta'(x_0)-\theta''(x_1))}$ are close to $u^\gamma$, and if the function $u_{x_1}^{\gamma}(x_0)$ satisfies
the relation above, then this gives constraints on $\theta, \theta', \theta''$ that imply part (ii) of the proposition.

Let $g(x_0,x_1) = e^{i(\theta(x_0)-\theta'(x_1))}$ and ${\mathbf{c}}=(112\delta_1^{-1}+ 112\delta_2^{-1}+6)$. We have shown in (b) that $\|u_{x_1}^{\gamma}(x_0)-g\|_{\nu}^2 \le {\mathbf{c}}\varepsilon + 4\eta_1^2\|\xi^{\gamma}\|_{\nu}^2$. Recall that we denote by $\iota$ the involution of edge reversal.
Hence, if we define $\tilde{g}(x_0,x_1) = g(x_1,x_0) \frac{n_{x_1}^{\gamma}}{n_{x_0}^{\gamma}}$, we get 
\begin{equation}\label{e:ug}
\|\tilde{g} - u_{x_1}^{\gamma}(x_0)\|_{\nu}^2 = \|\iota g - u_{x_0}^{\gamma}(x_1)\|_{\nu}^2 \le {\mathbf{c}}\varepsilon + 4\eta_1^2\, \|\xi^{\gamma}\|_{\nu}^2 \, .
\end{equation}
Thus, $\|\tilde{g} - g\|_{\nu}^2 \le 4{\mathbf{c}}\varepsilon + 16\eta_1^2\,\|\xi^{\gamma}\|_{\nu}^2$. Hence, defining
\[
h_1(x_0,x_1) = n_{x_1}^{\gamma} e^{i[\theta(x_1)+\theta'(x_1)]} \quad \text{and} \quad h_2(x_0,x_1) = n_{x_0}^{\gamma}e^{i[\theta(x_0)+\theta'(x_0)]} \, ,
\]
we get
\[
\| h_1-h_2\|_{\nu}^2 = \|\tilde{g}-g\|_{\nu}^2 \le 4{\mathbf{c}}\varepsilon + 16\eta_1^2 \,\|\xi^{\gamma}\|_{\nu}^2 \, .
\]
Note that the functions $h_1, h_2$ have modulus $1$, and $\cS_\gamma  h_1 = h_2 - \eta_1 \iota \xi^{\gamma} h_2$, so
\[
\|\cS_{\gamma}^2h_1 - h_1\|_{\nu} \le 2 \,\|\cS_\gamma  h_1-h_1\|_{\nu} \le 2\left(\|h_2-h_1\|_{\nu} + \eta_1 \|\xi^{\gamma}\|_{\nu}\right)\leq 4{\mathbf{c}}^{1/2}\varepsilon^{1/2} + 8\eta_1 \,\|\xi^{\gamma}\|_{\nu} \, .
\]
Consider $P_{\mathbf{1}, \nu} h_1=s \,\mathbf{1}$, the projection of $h_1$ to the space of constant functions. Arguing as in \eqref{eq:Su2}, we can write $\|h_1 - s\,\mathbf{1}\|_{\nu}^2 \le \delta_2^{-1}(\|h_1\|_{\nu}^2 - \|\cS_{\gamma}^2h_1\|_{\nu}^2 + 4C_{N,\cs,2}(I))$ . But $\|h_1\|^2-\|\cS_{\gamma}^2h_1\|^2 = (\|h_1\|+\|\cS_{\gamma}^2h_1\|)(\|h_1\|-\|\cS_{\gamma}^2h_1\|) \le 2\,\|\mathcal{S}_{\gamma}^2h_1-h_1\|$. Hence,
\begin{align*}
\|h_1-s\,\mathbf{1}\|_{\nu}^2  \le 8\delta_2^{-1}{\mathbf{c}}^{1/2}\varepsilon^{1/2} + 16\eta_1\delta_2^{-1} \|\xi^{\gamma}\|_{\nu} +4 \delta_2^{-1}C_{N,\cs,2}(I)
%\\
%& \le C_{\delta_1,\delta_2,j_0} \varepsilon^{1/2} + C_{\delta_2,j_0} \cdot \eta_1 \cdot \|Z\|_{\nu} + \delta_2^{-1}C_{N,\cs,2} \, ,
\end{align*}
%where $C_{\delta_1,\delta_2,j_0} = 2^{3/2}\delta_2^{-1}j_0{\mathbf{c}}^{1/2}$ and $C_{\delta_2,j_0} = 6j_0\delta_2^{-1}$.
We observe that $\|h_1-s\,\mathbf{1}\| = \|n_{x_1}^{\gamma}e^{i(\theta(x_1)+\theta'(x_1))}-s\,\mathbf{1}\| = \|\tilde{g}n_{x_0}^{\gamma}e^{i(\theta'(x_0)+\theta'(x_1))}-s\,\mathbf{1}\| = \|\tilde{g}-\frac{e^{-i(\theta'(x_0)+\theta'(x_1))}}{n_{x_0}^{\gamma}}s\|$. Thus, comparing with \eqref{e:ug},
\begin{align*}
\Big\|u_{x_1}^{\gamma}(x_0) - s\,\frac{e^{-i(\theta'(x_0)+\theta'(x_1))}}{n_{x_0}^{\gamma}}\Big\|_{\nu}^2 & \le
16\delta_2^{-1}{\mathbf{c}}^{1/2}\varepsilon^{1/2} + 32\eta_1\delta_2^{-1} \|\xi^{\gamma}\|_{\nu} \\
& \quad +8 \delta_2^{-1}C_{N,\cs,2}(I)
+2 {\mathbf{c}}\varepsilon + 8\eta_1^2 \|\xi^{\gamma}\|_{\nu}^2
\end{align*}
This is the first half of (ii) with 
\begin{equation}\label{e:csb}
c_{\cs, \beta}=\max\{16\delta_2^{-1}{\mathbf{c}}^{1/2},2 {\mathbf{c}} , 32\delta_2^{-1}, 8\} \, .
\end{equation}
 Remembering that $\delta_1=\frac34 M^{-2}$, $\delta_2 = \cs^{-2}c(D, \beta)$ and ${\mathbf{c}}=(112\delta_1^{-1}+ 112\delta_2^{-1}+6)$, we see that there is an explicit $f(\beta, D)$ such that $c_{\cs, \beta}\leq f(\beta, D)M^{3}$ as $M\to +\infty$. Note that $|s| \le 1$ since $\|h_1\|_{\nu} =1$.

The second half of (ii) is proven similarly, using \eqref{e:secondu} instead of \eqref{e:firstu}. Here we take $g'(x_0,x_1)=e^{i(\theta'(x_0)-\theta''(x_1))}$, $h_1'(x_0,x_1)=\frac{1}{n_{x_1}^{\gamma}}e^{-i[\theta'(x_1)+\theta''(x_1)]}$, $s'\mathbf{1}=P_{\mathbf{1}} h_1'$ and $h_2'(x_0,x_1)=\frac{1}{n_{x_0}^{\gamma}}e^{-i[\theta'(x_0)+\theta''(x_0)]}$.

To prove (\ref{eq:uu}), we write $\big\|u_{x_1}^{\gamma}(x_0)^2 - ss' \frac{n_{x_1}^{\gamma}}{n_{x_0}^{\gamma}} \big\|^2 \le 2\, \big\| u_{x_1}^{\gamma}(x_0)[u_{x_1}^{\gamma}(x_0) - s\frac{e^{-i\widetilde{\theta}(x_0,x_1)}}{n_{x_0}^{\gamma}}]\big\|^2  + 2 \,\big\| s\frac{e^{-i\widetilde{\theta}(x_0,x_1)}}{n_{x_0}^{\gamma}} [u_{x_1}^{\gamma}(x_0) - s'e^{i\widetilde{\theta}(x_0,x_1)}n_{x_1}^{\gamma}]\big\|^2$, where we put $\widetilde{\theta}(x_0,x_1) = \theta'(x_0)+\theta'(x_1)$. Since $u_{x_1}^{\gamma}(x_0)^2 \frac{n_{x_0}^{\gamma}}{n_{x_1}^{\gamma}} = u_{x_1}^{\gamma}(x_0)u_{x_0}^{\gamma}(x_1)$, the proof is complete.
\end{proof}

\section{Step 4~: End of the proof of Theorem \ref{t:thm4}\label{s:step4}}
Our aim is to show that $\lim_{\eta_0\downarrow 0}\lim_{N\to +\infty} {\mathrm{Var_{nb, \eta_0}^I}}(\cF_{\gamma}K)=0$, for the operators $\cF_{\gamma}$ that appear in Corollary~\ref{cor:recurrence}. A main step was carried out in Proposition~\ref{p:mainbound2}, and the upper bound was put in a convenient form in \eqref{e:nicer}. We now use the estimates of Sections~\ref{s:mixing} and \ref{sec:su} to complete the proof. We denote $B_{\gamma}=\frac{m^{\gamma}}{Z_{\gamma}}\cF_{\gamma}:\mathscr{H}_m\to\mathscr{H}_k$ as in Section~\ref{s:mixing}, where $Z_\gamma$ is defined in \eqref{e:ZK}. It should be kept in mind that $\cF_{\gamma}$ may depend on a parameter $T$ that is fixed in this section, but will be taken arbitrarily large in the next one.

Recall that we take $\gamma= \lambda+i(\eta^4+\eta_0)$, where $\lambda, \eta, \eta_0$ come from Proposition~\ref{p:mainbound2}. In other words,  $\gamma=\lambda+i\eta_1\in \C^+$ with $\lambda\in I_1$ and $\eta_1=\eta^4+\eta_0$. Let $K\in \mathscr{H}_m$ so that $B_\gamma K\in \mathscr{H}_k$. Applying \eqref{e:nicer}, recalling that $\nu_k^{\gamma} = \frac{1}{\mu_k^{\gamma}(B_k)} \mu_k^{\gamma}$, we obtain
\begin{multline}\label{e:sum}
\frac{1}{n^2}\sum_{r, r'=1}^n \langle \mathcal{R}_{n,r}^{\gamma}\cF_{\gamma}K,\mathcal{R}_{n,r'}^{\gamma}\cF_{\gamma}K \rangle_{\gamma} = \frac{\mu_k^{\gamma}(B_k)}{Nn^2}\sum_{r'\leq r\leq n}\la \cS_{u^\gamma}^{r-r'}B_\gamma K,B_\gamma K\ra_{\nu^\gamma_k}\\
+\frac{\mu_k^{\gamma}(B_k)}{Nn^2}\sum_{r< r'\leq n}\la B_\gamma K,\cS_{u^\gamma}^{r'-r}B_\gamma K\ra_{\nu^\gamma_k} + \frac{1}{n^2}\sum_{r, r'=1}^n \mathbf{E}_{n,r,r'}(\eta_1,\cF_\gamma K)  \, .
\end{multline}

%Extracting subsequences, we may always assume that the limit exists in $[0, +\infty]$, and we want to show that it vanishes. Thus we can safely extract further subsequences at any stage of the proof.

Fix $M$ very large and take $n= M^9$.  We apply Proposition \ref{p:Sualtmod} with $\varepsilon=M^{-8}$ to the family of operators $\{\cS^{4j}_{u^\gamma}  B_\gamma K\}_{j=1}^{M^9}$. Call ${\tilde{\tilde C}}_{N, M}(B_\gamma)=\max_{j=1}^{M^9}\tilde C_{N, M, 2}(\cS^{4j+k-1}_{u^\gamma}  B_\gamma )^{1/2}\cdot \sqrt{\frac{\mu_1^{\gamma}(B)}{\mu_k^{\gamma}(B_k)}}$. We use the notation in Remark~\ref{r:bigonotation} throughout the section. In particular, ${\tilde{\tilde C}}_{N, M}(B_\gamma) = O_T(M^{-\infty})_{N\To +\infty, \gamma}$ thanks to Corollary \ref{c:Minfty}.

\begin{rem}        \label{rem:Sunorm}
It is useful to note that the norm $\|\mathcal{S}_{u^{\gamma}}^j \|_{\nu_k^{\gamma} \to \nu_k^{\gamma}}$ for $k>1$ is controlled by the same norm for $k=1$. To see this, note that for $K\in \ell^2(\nu_k^{\gamma})$, we have $(\mathcal{S}_{u^{\gamma}}^{k-1} K)(x_0;x_k) = \sum_{(x_{-k+1};x_{-1})_{x_{0,1}}} \Lambda (x_{-k+1};x_1) K(x_{-k+1};x_1)$ for some function $\Lambda (x_{-k+1};x_1)$.
Here the sum is over those $(x_{-k+1};x_{-1})$ for which the path $(x_{-k+1}, x_{-k+2},\ldots, x_{-1}, x_0, x_1)$ does not backtrack, cf. \eqref{eq:KB2}. So $(\mathcal{S}_{u^{\gamma}}^{k-1} K)(x_0;x_k)$ only depends on $(x_0,x_1)$~: we may define $\phi_K\in\ell^2(\nu_1^{\gamma})$ by $\phi_K(x_0,x_1)=(\mathcal{S}_{u^{\gamma}}^{k-1} K)(x_0;x_k)$. If $\mathscr{I}: \ell^2(\nu_1^{\gamma}) \to \ell^2(\nu_k^{\gamma})$ is the map $(\mathscr{I}\phi)(x_0;x_k) = \phi(x_0,x_1)$, we have for any $j \ge k$, $[\mathcal{S}_{u^{\gamma}}^{j-k+1} \mathscr{I}\phi_K](x_0;x_k) = (\mathcal{S}_{u^{\gamma}}^jK)(x_0;x_k)$. Moreover, $[\mathcal{S}_{u^{\gamma}}\mathscr{I}\phi](x_0;x_k) = [\mathscr{I}(\mathcal{S}_{u^{\gamma}}\phi)](x_0;x_k)$. Thus,
\[
\|\mathcal{S}_{u^{\gamma}}^jK\|_{\nu_k}^2 = \|\mathcal{S}_{u^{\gamma}}^{j-k+1}\mathscr{I}\phi_K\|_{\nu_k}^2 = \|\mathscr{I}(\mathcal{S}_{u^{\gamma}}^{j-k+1}\phi_K)\|_{\nu_k}^2 \le \frac{\mu_1^{\gamma}(B)}{\mu_k^{\gamma}(B_k)} \,\|\mathcal{S}_{u^{\gamma}}^{j-k+1}\phi_K\|_{\nu_1}^2 \, ,
\]
where we used that $\sum_{_{x_{0,1}}(x_2;x_k)} \mu_k(x_0;x_k) \le \mu_1(x_0,x_1)$ by \eqref{e:compat}. Hence,
\[
\|\mathcal{S}_{u^{\gamma}}^jK\|_{\nu_k}^2 \le \frac{\mu_1^{\gamma}(B)}{\mu_k^{\gamma}(B_k)}\,\|\mathcal{S}_{u^{\gamma}}^{j-k+1}\|_{\nu_1 \to \nu_1}^2 \cdot \|\phi_K\|_{\nu_1}^2 \, .
\]
But using \eqref{eq:sumzeta} repeatedly we have
\begin{multline*}
\sum_{(x_{-k+1};x_{-1})_{x_{0,1}}} |\Lambda(x_{-k+1};x_1)| \\
=\sum_{(x_{-k+1},x_{-1})_{x_{0,1}}} \frac{|\zeta_{x_1}^{\gamma}(x_0)\zeta_{x_0}^{\gamma}(x_{-1})\cdots\zeta_{x_{-k+3}}^{\gamma}(x_{-k+2})|^2|\Im\zeta_{x_{-k+2}}^{\gamma}(x_{-k+1})|}{|\Im \zeta_{x_1}^{\gamma}(x_0)|} \le 1\,,
\end{multline*}
and $\mu_1^{\gamma}(x_0,x_1)|\Lambda(x_{-k+1};x_1)| = \mu_k^{\gamma}(x_{-k+1};x_1)$ by \eqref{e:muk} and \eqref{eq:mv}. Hence,
\begin{align*}
\|\phi_K\|_{\mu_1}^2 & = \sum_{(x_0,x_1)}\mu_1^{\gamma}(x_0,x_1)\Big|\sum_{(x_{-k+1};x_{-1})_{x_{0,1}}} \Lambda(x_{-k+1};x_1) K(x_{-k+1};x_1)\Big|^2 \\
& \le \sum_{(x_0,x_1)} \mu_1^{\gamma}(x_0,x_1) \sum_{(x_{-k+1};x_{-1})_{x_{0,1}}}  |\Lambda(x_{-k+1};x_1)|\cdot |K(x_{-k+1};x_1)|^2 \\
& = \sum_{(x_{-k+1};x_1)} \mu_k^{\gamma}(x_{-k+1};x_1) \cdot |K(x_{-k+1};x_1)|^2 = \|K\|_{\mu_k}^2 \, .
\end{align*}
So $\|\phi_K\|_{\nu_1}^2\le \frac{\mu_k^{\gamma}(B_k)}{\mu_1^{\gamma}(B)}\|K\|_{\nu_k^{\gamma}}^2$. Summarizing, we have shown that for any $j \ge k$, we have
\[
\|\mathcal{S}_{u^{\gamma}}^j \|_{\nu_k \to \nu_k} \le \|\mathcal{S}_{u^{\gamma}}^{j-k+1}\|_{\nu_1 \to \nu_1} \, .
\]
\end{rem}
 
\bigskip

{\bf{First alternative~:}}
For $\gamma$, $\varepsilon$ as above, assume that 
%there is a subsequence $(G_{N_k})$ such that
case (i) of Proposition \ref{p:Sualtmod} is satisfied for all the operators $\{\cS^{4j}_{u^\gamma}  B_\gamma K\}_{j=1}^{M^9}$. Applying \eqref{e:decay} for $\cS^{4t}_{u^\gamma}  B_\gamma K$, $t\le j$, we obtain if $k=1$,
\begin{equation}\label{e:bigdecay}
\| \mathcal{S}_{u^{\gamma}}^{4j}B_\gamma K\|_{\nu_1^{\gamma}} \le (1-\varepsilon)^{j}\|B_\gamma K\|_{\nu_1^{\gamma}} +j \max_{1\le t\le j}\{\tilde C_{N, M, 2}(\cS^{4t}_{u^\gamma}  B_\gamma )^{1/2}\}\cdot \|K\|_{\infty} \, .
\end{equation}
For higher $k$, we apply \eqref{e:bigdecay} to $\phi_{B_{\gamma}K}(x_0, x_1)=(\mathcal{S}_{u^{\gamma}}^{k-1} B_\gamma K)(x_0;x_k)=(A_{\gamma}K)(x_0,x_1)$, where $A_{\gamma}=\cS_{u^{\gamma}}^{k-1}B_{\gamma}$, instead of $B_{\gamma}K$. We get by Remark~\ref{rem:Sunorm},
\[
\| \mathcal{S}_{u^{\gamma}}^{4j+k-1}B_\gamma K\|_{\nu_k^{\gamma}} \le (1-\varepsilon)^{j}\|B_\gamma K\|_{\nu_k^{\gamma}} +j {\tilde{\tilde C}}_{N, M}(B_\gamma) \cdot \|K\|_{\infty} \, .
\]
Using the euclidean division $r'-r-k+1=4m_{r,r'}+n_{r,r'}$ with $n_{r,r'}<4$, we see that for $r'-r\ge 4+k-1$,
\[
|\la B_\gamma K,\cS_{u^\gamma}^{r'-r}B_\gamma K\ra_{\nu^\gamma_k}|\leq c_k(1-\varepsilon)^{(r'-r)/4}\|B_\gamma K\|^2_{\nu_k^{\gamma}} +n{\tilde{\tilde C}}_{N, M}(B_\gamma)\cdot \|K\|_{\infty}\|B_\gamma K\|_{\nu_k^{\gamma}} \, ,
\]
where $c_k=\frac{1}{(1-\varepsilon)^{(k-1+n_{r,r'})/4}} \le 2^{\frac{k+2}{4}}$ if $\varepsilon \le \frac{1}{2}$. Note that $(1-\varepsilon)^{1/4} \le (1-\frac{\varepsilon}{5})$. Hence, since $4+k-1\le 4k$, we have
\begin{align*}
&\Big| \sum_{ r'\leq n}\sum_{r< r'}\la B_\gamma K,\cS_{u^\gamma}^{r'-r} B_\gamma K\ra_{\nu^\gamma_k} \Big|  \le \Big(\sum_{ r'\leq n}\sum_{r\le r'-4k}|\la B_\gamma K,\cS_{u^\gamma}^{r'-r}B_\gamma K\ra_{\nu^\gamma_k}| +4nk\|B_{\gamma}K\|_{\nu_k}^2\Big)\\
& \qquad \leq \Big[4nk+nc_k\sum_{m=1}^n (1-\varepsilon)^{m/4}\Big]
\|B_\gamma K\|^2_{\nu_k^{\gamma}} +n^3{\tilde{\tilde C}}_{N, M}(B_\gamma)\cdot \|K\|_{\infty}\|B_\gamma K\|_{\nu_k^{\gamma}} \\
& \qquad \leq \frac{n(5c_k+4k)}{\varepsilon}
\|B_\gamma K\|^2_{\nu_k^{\gamma}} +n^3{\tilde{\tilde C}}_{N, M}(B_\gamma)\cdot \|K\|_{\infty}\|B_\gamma K\|_{\nu_k^{\gamma}} \,.
\end{align*}
%Thus
%\begin{align*}
%& \frac{1}{n^2}\sum_{r, r'=1}^n \left\langle \mathcal{R}_{n,r}^{z+i\eta_0}GK^{z+i\eta_0},\mathcal{R}_{n,r'}^{z+i\eta_0}GK^{z+i\eta_0}\right\rangle_{z+i\eta_0}\\
%& \qquad \leq \frac{8}{n\varepsilon}
%\|B_\gamma K\|^2_{\ell^2(\nu_1^{\gamma})} +2n{\tilde{\tilde C}}_{N, M}(B_\gamma)\cdot \|K\|_{\infty}\|B_\gamma K\|_{\ell^2(\nu_1^{\gamma})} + \frac{1}{n^2}\sum_{r, r'=1}^n \mathrm{O}_{n,r,r'}(\gamma,GK^\gamma)
%\end{align*}
Recall that $\varepsilon=M^{-8}$ and $n= M^9$. Comparing with \eqref{e:sum}, we get
\begin{multline}     \label{e:1stalt}
\Big\|\frac{1}{n}\sum_{r=1}^n \mathcal{R}_{n,r}^{\gamma}\cF_{\gamma}K\Big\|_{\gamma}^2
\leq  \frac{\mu_k^{\gamma}(B_k)}{N}\Big(\frac{c_k'}{M}\|B_\gamma K\|^2_{\nu_k^{\gamma}}+ M^9 {\tilde{\tilde C}}_{N, M}(B_\gamma)\cdot \|K\|_{\infty}\|B_\gamma K\|_{\nu_k^{\gamma}}\Big)
\\ + \frac{1}{n^2}\sum_{r, r'=1}^n \mathbf{E}_{n,r,r'}(\eta_1,\cF_\gamma K)\, .
\end{multline}

 \bigskip

{\bf{Second alternative~:}}
Now assume case (ii) of Proposition \ref{p:Sualtmod} is satisfied; with some complex numbers $s_j=s_j(N)$ and some function $\theta$. We denote $\|\,\|_{\nu}=\|\,\|_{\ell^2(\nu_k^{\gamma})}$, $\theta_0(x_0; x_k)=\theta(x_0)$, $\theta_1(x_0; x_k)=\theta(x_1)$, $n_0^\gamma(x_0; x_k)=n^\gamma_{x_0}$ and $n_1^\gamma(x_0; x_k)=n^\gamma_{x_1}$. Then we have

\begin{prp}
Let $\|K\|_{\infty}\le 1$. For $A_\gamma K=\cS_{u^\gamma}^\ell B_\gamma K$, we have for any $t\in\N^{\ast}$,
\begin{multline*}
\left|\la B_\gamma K, \cS_{u^\gamma}^{2t} A_\gamma K\ra_{\nu} -(\overline{s_1s_2})^t\la B_\gamma K,  e^{i \theta_0}\cS_{\gamma}^{ 2t} e^{-i \theta_0}A_\gamma K\ra_{\nu} \right|
\\ \leq t\left( c_{\cs,\beta} \left[\varepsilon^{1/2} + \eta_1 O(1)_{N\To +\infty, \gamma}\right] +C'_{N,\cs }\right)^{1/4}O_T(1)_{N\To +\infty, \gamma} \, .
\end{multline*}
\end{prp}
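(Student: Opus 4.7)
The plan is to combine a non-commutative telescoping identity at the operator level with a pointwise cancellation for products of two consecutive $u^\gamma$-phases, then control the remainder via Cauchy--Schwarz and an $L^4$-boosting trick powered by $|u|=1$.

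Set $T_u := \cS_{u^\gamma}^2$ and $T_0 := \overline{s_1s_2}\, e^{i\theta_0}\cS_\gamma^2 e^{-i\theta_0}$. Since the intermediate conjugations $e^{-i\theta_0}e^{i\theta_0}$ cancel, one has $T_0^t = (\overline{s_1s_2})^t e^{i\theta_0}\cS_\gamma^{2t}e^{-i\theta_0}$, so the proposition amounts to controlling $|\la B_\gamma K, (T_u^t - T_0^t)A_\gamma K\ra_{\nu_k^\gamma}|$. The telescoping identity $T_u^t - T_0^t = \sum_{r=0}^{t-1}T_u^{t-1-r}(T_u-T_0)T_0^r$, together with the contractions $\|T_u\|_{\nu_k^\gamma\to\nu_k^\gamma}\le 1$ and $\|T_0\|_{\nu_k^\gamma\to\nu_k^\gamma}\le 1$ (since $|u|=|e^{i\theta}|=1$, $|s_j|\le 1$, and $\cS_\gamma$ is $L^2$-contractive modulo $\eta_1$-corrections by the near-stationarity of $\nu_k^\gamma$ in Remark~\ref{r:compat}), reduces the task via Cauchy--Schwarz to the uniform-in-$r$ single-step bound
\[
\|(T_u-T_0)\, T_0^r\, A_\gamma K\|_{\nu_k^\gamma}\ \le\ \mathrm{br}^{1/4}\, O_T(1)_{N\to+\infty,\gamma}\, \|K\|_\infty\,,
\]
where $\mathrm{br} := c_{\cs,\beta}[\varepsilon^{1/2}+\eta_1\|\xi^\gamma\|_{\nu_1^\gamma}+\eta_1^2\|\xi^\gamma\|_{\nu_1^\gamma}^2]+C'_{N,\cs}$.

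The core is the kernel identity
\[
((T_u-T_0)g)(x_0;x_k) = \sum_{x_{-1},x_{-2}}\Lambda(x_0,x_{-1},x_{-2})\, \Delta(x_0,x_{-1},x_{-2})\, g(x_{-2};x_{k-2}),
\]
where $\Lambda$ is the positive $\cS_\gamma^2$-kernel and $\Delta := \overline{u_{x_0}^\gamma(x_{-1})u_{x_{-1}}^\gamma(x_{-2})} - \overline{s_1s_2}\, e^{i\theta(x_0)-i\theta(x_{-2})}$. The crucial cancellation is that, applying the $s_2$-type bound of Proposition~\ref{p:Sualtmod}(ii) (with factor $1/n_{x_{-1}}^\gamma$) to the first $u$ and the $s_1$-type bound (with factor $n_{x_{-1}}^\gamma$) to the second $u$, both $n_{x_{-1}}^\gamma$'s multiply to $1$ and the intermediate phases $e^{\pm i\theta(x_{-1})}$ cancel, yielding $u_{x_0}^\gamma(x_{-1})u_{x_{-1}}^\gamma(x_{-2})\approx s_1s_2\, e^{-i\theta(x_0)+i\theta(x_{-2})}$. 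Writing the two individual approximation errors as $\varepsilon_2(x_0,x_{-1})$ and $\varepsilon_1(x_{-1},x_{-2})$, and using $|u|=|n|=1$, $|s_j|\le 1$, one obtains the pointwise estimate $|\Delta|\le |\varepsilon_2| + |\varepsilon_1|$, with both $\|\varepsilon_j\|_{\nu_1^\gamma}^2\le \mathrm{br}$.

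Next, Cauchy--Schwarz in the substochastic weight $\Lambda$, combined with the direct calculation $\mu_k^\gamma(x_0;x_k)\Lambda(x_0,x_{-1},x_{-2}) = \mu_{k+2}^\gamma(x_{-2};x_k)$ (from~\eqref{eq:mv} and~\eqref{e:muk}, reflecting the Markov stationarity), gives $\|(T_u-T_0)g\|_{\nu_k^\gamma}^2 \le \tfrac{\mu_{k+2}^\gamma(B_{k+2})}{\mu_k^\gamma(B_k)}\int|\Delta|^2|g|^2\, d\nu_{k+2}^\gamma$, with the ratio uniformly $O(1)$. H\"older with exponents $(4,4)$ together with the trivial bound $|\Delta|\le 2$, which upgrades the $L^2$ estimate into $\|\Delta\|_{L^4(\nu_{k+2}^\gamma)}^4 \le 4\|\Delta\|_{L^2(\nu_{k+2}^\gamma)}^2$, produce $\|\Delta\|_{L^4(\nu_{k+2}^\gamma)} \lesssim \mathrm{br}^{1/4}$ after reducing $\nu_{k+2}^\gamma$- to $\nu_1^\gamma$-norms via the compatibility relations~\eqref{e:compat}--\eqref{e:inv}. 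For $g = T_0^r \cS_{u^\gamma}^\ell B_\gamma K$, the pointwise bound $|g|\le \cS_\gamma^{2r+\ell}|B_\gamma K|$ combined with $L^4(\nu_k^\gamma)$-contractivity of $\cS_\gamma$ (modulo $\eta_1$-corrections) gives $\|g\|_{L^4(\nu_{k+2}^\gamma)} \lesssim \|B_\gamma K\|_{L^4(\nu_k^\gamma)} = O_T(1)\|K\|_\infty$ by the $L^p$-estimates in Remark~\ref{r:bigonotation}. Summing over $r=0,\ldots,t-1$ yields the claim. The main obstacle will be the systematic bookkeeping of the $\eta_1$-corrections arising throughout (near-stationarity of $\nu_k^\gamma$, the compatibility relations~\eqref{e:compat}--\eqref{e:inv}, and the $L^p$-contractivity of $\cS_\gamma$); each such correction is dominated by $\eta_1\|\xi^\gamma\|_{\nu_1^\gamma}$ times a quantity of order $O_T(1)_{N\to+\infty,\gamma}$ via~\textbf{(Green)}, and is thus already absorbed into the bracket of the statement.
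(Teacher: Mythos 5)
Your proof is correct and reaches the bound via a genuinely different decomposition than the paper's. Where the paper splits the $t=1$ difference $\cS_{u^\gamma}^2 A_\gamma K - \overline{s_1s_2}\,e^{i\theta_0}\cS_\gamma^2 e^{-i\theta_0}A_\gamma K$ by a triangle inequality at the level of the two multiplication operators $M_{u^\gamma}$ (replacing the outer $\overline{u}$ by $\overline{s_2}\,n_0^\gamma e^{i[\theta_0+\theta_1]}$ in the first piece and the inner one by $\overline{s_1}\,e^{-i[\theta_0+\theta_1]}/n_1^\gamma$ in the second, then peeling off each multiplier by H\"older $(4,4)$ after contracting the outer $\cS_\gamma$ via \eqref{eq:SPnorm}), you open up the kernel of $T_u - T_0$ at once, isolate the scalar difference $\Delta=\overline{u_{x_0}^\gamma(x_{-1})u_{x_{-1}}^\gamma(x_{-2})}-\overline{s_1s_2}\,e^{i[\theta(x_0)-\theta(x_{-2})]}$, and only afterwards split $\Delta$ into two errors via the product formula $u_au_b-v_av_b=(u_a-v_a)u_b+v_a(u_b-v_b)$. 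Your route has the merit of making the algebraic cancellation of $n_{x_{-1}}^\gamma$ and $\theta(x_{-1})$ explicit at the kernel level, at the cost of requiring the identity $\mu_k^\gamma\,\Lambda=\mu_{k+2}^\gamma$ (a variant of the computation in Remark~\ref{rem:Sunorm}) together with the compatibility inequalities \eqref{e:compat}--\eqref{e:inv} to pass between $\nu_{k+2}^\gamma$- and $\nu_1^\gamma$-norms; the paper's splitting avoids this because the $u$-errors are already functions of a single edge, living naturally on $\nu_k^\gamma$. Both arguments upgrade the $L^2$ bound of Proposition~\ref{p:Sualtmod}(ii) to $L^4$ using $|u|,|s_j|\le 1$, and both employ the same telescoping for $t\ge 2$ (with the roles of $T_u$, $T_0$ merely swapped; your pointwise bound $|T_0^r A_\gamma K|\le\cS_\gamma^{2r+\ell}|B_\gamma K|$ plays the role of the paper's $\|\cS_{u^\gamma}^{2(i-1)+\ell}B_\gamma K\|_{\ell^4}=O_T(1)$). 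One small clarification: $L^p$-contractivity of $\cS_\gamma$ on $\ell^p(\nu_k^\gamma)$ needs no $\eta_1$-correction, since $\cS_\gamma$ and $\cS_\gamma^*$ are exactly substochastic by \eqref{e:sumzeta2}--\eqref{e:sumzeta3}; the $\eta_1$-corrections only arise from the near-stationarity \eqref{e:compat}--\eqref{e:inv}, and there the inequalities already point in the favourable direction for your estimates.
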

\begin{proof}
Recall that $\mathcal{S}_{u^{\gamma}} = \mathcal{S}_{\gamma}M_{u^{\gamma}}$ with $M_{u^{\gamma}}$ the multiplication by $\overline{u_{x_1}^{\gamma}(x_0)}$. We have
\begin{multline*}
\left\Vert\cS^2_{u^\gamma} A_\gamma K-\overline{s_1s_2}  e^{i\theta_0}\cS_\gamma^2  e^{-i\theta_0}A_\gamma K
\right\Vert_\nu =  \left\Vert\cS^2_{u^\gamma} A_\gamma K-\overline{s_1s_2} \cS_\gamma n_0^{\gamma}e^{i[\theta_0+\theta_1]}\mathcal{S}_{\gamma}\frac{e^{-i[\theta_0+\theta_1]}}{n_1^{\gamma}}A_{\gamma}K
\right\Vert_\nu \\
\le  \left\Vert\cS_{\gamma}M_{u^{\gamma}}\cS_{\gamma}M_{u^{\gamma}} A_\gamma K-\overline{s_2} \cS_\gamma n_0^{\gamma}e^{i[\theta_0+\theta_1]}\mathcal{S}_{\gamma} M_{u^{\gamma}}A_{\gamma}K
\right\Vert_{\nu} \\
+  \left\Vert \overline{s_2} \cS_\gamma n_0^{\gamma}e^{i[\theta_0+\theta_1]}\mathcal{S}_{\gamma} M_{u^{\gamma}}A_{\gamma}K-\overline{s_1s_2} \cS_\gamma n_0^{\gamma}e^{i[\theta_0+\theta_1]}\mathcal{S}_{\gamma}\frac{e^{-i[\theta_0+\theta_1]}}{n_1^{\gamma}}A_{\gamma}K
\right\Vert_{\nu} \, .
\end{multline*}
Using \eqref{eq:SPnorm} and Cauchy-Schwarz, the first term is bounded by
\[
\left\| \overline{u_{x_1}^{\gamma}(x_0)} - \overline{s_2} n_0^{\gamma} e^{i[\theta_0+\theta_1]}\right\|_{\ell^4(\nu_k^{\gamma})} \left\| \mathcal{S}_{\gamma}M_{u^{\gamma}} A_{\gamma}K\right\|_{\ell^4(\nu_k^{\gamma})}\, .
\]
But $u^\gamma, s_2, n_0^{\gamma}$ all have modulus bounded by $1$, so $|\overline{u_{x_1}^{\gamma}(x_0)} - \overline{s_2} n_0^{\gamma} e^{i[\theta_0+\theta_1]}|^4 \le 4 \,|\overline{u_{x_1}^{\gamma}(x_0)} - \overline{s_2} n_0^{\gamma} e^{i[\theta_0+\theta_1]}|^2$. Hence, $\| \overline{u_{x_1}^{\gamma}(x_0)} - \overline{s_2} n_0^{\gamma} e^{i[\theta_0+\theta_1]} \|_{\ell^4(\nu_1^{\gamma})} \le ( 4c_{\cs,\beta} \left[\varepsilon^{1/2} + \eta_1 O(1)_{N\To +\infty, \gamma}\right] +4C'_{N,\cs } )^{1/4}$ by the first part of (ii). For higher $k$, using $\sum_{_{x_{0,1}}(x_2;x_k)} \mu_k(x_0;x_k) \le \mu_1(x_0,x_1)$ by \eqref{e:compat}, we get $\| \overline{u_{x_1}^{\gamma}(x_0)} - \overline{s_2} n_0^{\gamma} e^{i[\theta_0+\theta_1]} \|_{\ell^4(\nu_k^{\gamma})} \le (\frac{\mu_1^{\gamma}(B)}{\mu_k^{\gamma}(B_k)})^{1/4}\| \overline{u_{x_1}^{\gamma}(x_0)} - \overline{s_2} n_0^{\gamma} e^{i[\theta_0+\theta_1]} \|_{\ell^4(\nu_1^{\gamma})}$.

Next, $\| \mathcal{S}_{\gamma}M_{u^{\gamma}} A_{\gamma}K\|_{\ell^4(\nu_k^{\gamma})} = \| \cS_{u^{\gamma}}^{\ell+1} B_{\gamma}K\|_{\ell^4(\nu_k^{\gamma})}$. Arguing as in Proposition~\ref{p:lesscrude} and Corollary~\ref{c:Minfty}, we see this is $O_T(1)_{N\To +\infty, \gamma}$. Bounding the second term similarly, we get
\begin{multline*}
\left\Vert\cS^2_{u^\gamma} A_\gamma K-\overline{s_1s_2}  e^{i\theta_0}\cS_\gamma^2  e^{-i\theta_0}A_\gamma K
\right\Vert_{\nu} \\
\le \left( c_{\cs,\beta} \left[\varepsilon^{1/2} + \eta_1 O(1)_{N\To +\infty, \gamma}\right] +C'_{N,\cs }\right)^{1/4}O_T(1)_{N\To +\infty, \gamma}  \, .
\end{multline*}
Since $\|B_{\gamma}K\|_{\nu} = O_T(1)_{N\To +\infty, \gamma}$ (see Remark~\ref{r:bigonotation}), this proves the result for $t=1$.

For higher $t$, let $X=\overline{s_1s_2}e^{i\theta_0}\cS_{\gamma}^2e^{-i\theta_0}$ and $Y=\cS_{u^{\gamma}}^2$. Then $\|(X^t-Y^t)A_{\gamma}K\| = \|\sum_{i=1}^t X^{t-i}(X-Y)Y^{i-1}A_{\gamma}K\| \le \sum_{i=1}^t \|(X-Y)Y^{i-1}A_{\gamma}K\|$. Again, $\|Y^{i-1}A_{\gamma}K\|_{\ell^4(\nu_k^{\gamma})}=O_T(1)_{N\To+\infty,\gamma}$ for each $i$ and the claim follows.
\end{proof}
 
 \bigskip
 
In sums like \eqref{e:sum}, we can make packets of size $2t$, and we have for all $m$ and for any $t$
\begin{multline}\label{e:packets}
\left|\sum_{r=0}^{t-1}\la B_\gamma K, \cS_{u^\gamma}^{2r+m} B_\gamma K\ra_{\nu} -\sum_{r=0}^{t-1}(\overline{s_1s_2})^r\la B_\gamma K,  e^{i \theta_0}\cS_{\gamma}^{ 2r} e^{-i \theta_0}\cS_{u^\gamma}^{m}  B_\gamma K\ra_{\nu} \right|
\\ \leq t^2\left( c_{\cs,\beta} \left[\varepsilon^{1/2} + \eta_1 O(1)_{N\To +\infty, \gamma}\right] +C'_{N,\cs }\right)^{1/4}O_T(1)_{N\To+\infty,\gamma}\,. 
\end{multline}
As we will see below, the size $2t$ of packets should be chosen so that $t (c_{\cs,\beta} \varepsilon^{1/2})^{1/4}$ is small as $M$ gets large. Remembering that $c_{\cs,\beta}\leq f(D, \beta)M^3$ and $\varepsilon=M^{-8}$,
we take $t=M^{\alpha}$ with $0<\alpha< 1/4$. We then group the sum \eqref{e:sum} into packets and write
\begin{multline*}
\Big|\sum_{r'\leq r\leq n}\la \cS_{u^\gamma}^{r-r'}B_\gamma K,B_\gamma K\ra_{\nu}\Big|
=\Big|\sum_{r'=1}^n\sum_{r=0}^{n-r'}\la \cS_{u^\gamma}^{r}B_\gamma K,B_\gamma K\ra_{\nu}\Big|\\
\le \Big|\sum_{r'=1}^n \sum_{a=0}^{\lfloor\frac{n-r'}{2t}\rfloor-2}\sum_{r=2ta}^{2t(a+1)-1}\la \cS_{u^\gamma}^{r}B_\gamma K,B_\gamma K\ra_{\nu}\Big| + 4nt\, \norm{B_\gamma K}^2_{\nu} \, ,
 \end{multline*}
where we estimated $|\sum_{r'=1}^n \sum_{r=2t(\lfloor\frac{n-r'}{2t}\rfloor-1)}^{n-r'}\la \cS_{u^\gamma}^{r}B_\gamma K,B_\gamma K\ra_{\nu}| \le 4nt \norm{B_\gamma K}^2_{\nu}$. Note that $\sum_{r=2ta}^{2t(a+1)-1}\la \cS_{u^\gamma}^{r}\cdot,\cdot\ra = \sum_{r=0}^{t-1}\la \cS_{u^\gamma}^{2r+2ta}\cdot,\cdot\ra + \sum_{r=0}^{t-1}\la \cS_{u^\gamma}^{2r+1+2ta}\cdot,\cdot\ra$. So using \eqref{e:packets},
\begin{multline} \label{e:gathering}
\Big|\sum_{r'=0}^n \sum_{a=0}^{\lfloor\frac{n-r'}{2t}\rfloor-2}\sum_{r=2ta}^{2t(a+1)-1}\la \cS_{u^\gamma}^{r}B_\gamma K,B_\gamma K\ra_{\nu}\Big|\\
\le \Big|\sum_{r'=0}^n \sum_{a=0}^{\lfloor\frac{n-r'}{2t}\rfloor-2}\sum_{r=0}^{t-1}(\overline{s_1s_2})^r\left(\la B_\gamma K,  e^{i \theta_0}\cS_{\gamma}^{ 2r} e^{-i \theta_0}(\cS_{u^\gamma}^{2ta}  +\cS_{u^\gamma}^{2ta+1})  B_\gamma K\ra_{\nu}\right) \Big|
\\ + n\cdot \frac{n}t \cdot t^2\left( c_{\cs,\beta} \left[\varepsilon^{1/2} +\eta_1 O(1)_{N\To +\infty, \gamma}\right] +C'_{N,\cs }\right)^{1/4}O_T(1)_{N\To+\infty,\gamma} \, .
\end{multline} 
 
\begin{lem}
Let $\|K\|_{\infty}\le 1$. For $A_\gamma K=\cS_{u^\gamma}^{2ta} B_\gamma K$ or $\cS_{u^\gamma}^{2ta+1}  B_\gamma K$ we have for any $L$
\begin{multline*}
\left|\sum_{r=0}^{t-1}(\overline{s_1s_2})^r\la B_\gamma K,  e^{i \theta_0}\cS_{\gamma}^{ 2r} e^{-i \theta_0}A_\gamma K\ra_{\nu}\right| \le \frac{L^2c_k}{c(D, \beta)} O_T(1)_{N\To \infty, \gamma}+   t\,O_T(L^{-\infty})_{N\To \infty, \gamma} \\
 + \eta_1 O_{M,T}(1)_{N\To +\infty, \gamma} +\frac1{|s_1s_2 -1|}O_T(1)_{N\To \infty, \gamma}  \, .
\end{multline*}
\end{lem}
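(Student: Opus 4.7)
The plan is to decompose the middle factor into its projection onto constants and its orthogonal complement, then to exploit the spectral gap of $\mathcal{S}_\gamma$ on the latter while summing the geometric series in $(\overline{s_1s_2})^r$ on the former. First, since $e^{i\theta_0}$ is a unit-modulus multiplier, conjugation rewrites
\[
\langle B_\gamma K, e^{i\theta_0}\mathcal{S}_\gamma^{2r}e^{-i\theta_0}A_\gamma K\rangle_\nu = \langle \tilde B, \mathcal{S}_\gamma^{2r}\tilde h\rangle_\nu
\]
with $\tilde B = e^{-i\theta_0}B_\gamma K$ and $\tilde h = e^{-i\theta_0}A_\gamma K$. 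I decompose $\tilde h = c\mathbf 1 + h_\perp$ in $\ell^2(\nu_k^\gamma)$ with $h_\perp \perp \mathbf 1$, so that $|c|\cdot\|\mathbf 1\|_\nu \le \|\tilde h\|_\nu = \|A_\gamma K\|_\nu = O_T(1)$ by Remark~\ref{r:bigonotation} (substochasticity of $\mathcal{S}_{u^\gamma}$ preserves this $O_T(1)$ control).

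For the constant part, iterating \eqref{e:sumzeta2} gives $\mathcal{S}_\gamma^{2r}\mathbf 1 = \mathbf 1 - \eta_1\sum_{s=0}^{2r-1}\mathcal{S}_\gamma^s\xi^\gamma$. The leading term contributes
\[
c\langle\tilde B,\mathbf 1\rangle_\nu \sum_{r=0}^{t-1}(\overline{s_1s_2})^r = c\langle\tilde B,\mathbf 1\rangle_\nu\cdot\frac{1-(\overline{s_1s_2})^t}{1-\overline{s_1s_2}},
\]
bounded by $\frac{2\|A_\gamma K\|_\nu\|B_\gamma K\|_\nu}{|s_1s_2-1|} = \frac{O_T(1)}{|s_1s_2-1|}$ by Cauchy--Schwarz. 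The $\eta_1$-correction contains $\eta_1\sum_{r=0}^{t-1}\sum_{s=0}^{2r-1}|\langle\tilde B,\mathcal{S}_\gamma^s\xi^\gamma\rangle_\nu|$, and using substochasticity of $\mathcal{S}_\gamma^s$ together with $\|\xi^\gamma\|_{\nu_k^\gamma}=O_T(1)$ (Remark~\ref{rem:IDS2}) this is at most $\eta_1 \, t^2\,O_T(1)$; with $t=M^\alpha$ this is absorbed into $\eta_1\,O_{M,T}(1)$.

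For the orthogonal part, Cauchy--Schwarz yields $|\langle\tilde B,\mathcal{S}_\gamma^{2r}h_\perp\rangle_\nu| \le \|\tilde B\|_\nu\|\mathcal{S}_\gamma^{2r}h_\perp\|_\nu$. I invoke Proposition~\ref{p:iter} with parameter $L$ in place of $M$, after reducing from $\mathscr{H}_k$ to $\mathscr{H}_1$ via the argument of Remark~\ref{rem:Sunorm} (which replaces $\mathcal{S}_\gamma^{2r}$ by $\mathcal{S}_\gamma^{2r-k+1}$ on a suitable $\phi_{h_\perp}\in\ell^2(\nu_1^\gamma)$ and is what brings in the constant $c_k$), to obtain
\[
\|\mathcal{S}_\gamma^{2r}h_\perp\|_\nu \le c_k^{1/2}(1-L^{-2}c(D,\beta))^{r/2}\|h_\perp\|_\nu + \sum_{l=0}^{r-1}C_{N,L,l,2}^{1/2}\|K\|_\infty + 2\eta_1\sum_{l=1}^{r-1}\|\mathcal{Z}_l h_\perp\|_\nu.
\]
Summing over $r=0,\dots,t-1$: the geometric series is bounded by $\frac{2L^2}{c(D,\beta)}$, multiplying by $c_k^{1/2}\|h_\perp\|_\nu\|\tilde B\|_\nu$ gives $\frac{L^2 c_k}{c(D,\beta)}O_T(1)$; the double sum $\sum_r\sum_{l<r}C_{N,L,l,2}^{1/2}$ is controlled by Corollary~\ref{c:Minfty} (uniform bound $O_T(L^{-\infty})$) and absorbed, with one factor of $t$ kept explicit, into $t\,O_T(L^{-\infty})$; and the $\eta_1$ terms, using $\|\mathcal{Z}_l h_\perp\|_\nu = O_{l,T}(1)$ (Remark~\ref{r:bigonotation}) together with $t=M^\alpha$, go into $\eta_1\,O_{M,T}(1)$.

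The main technical obstacle is handling the reduction from $\nu_k$ to $\nu_1$ cleanly: Proposition~\ref{p:iter} is proved for $w\in\mathscr{H}_1$ but our $h_\perp$ sits in $\mathscr{H}_k$, and the off-by-one in the iterate count (shift by $k-1$) must be absorbed into the explicit constant $c_k$ without spoiling the geometric decay. The secondary subtlety is tracking powers of $t$ versus $L^{-\infty}$ in the double-sum error term, which requires exploiting that Corollary~\ref{c:Minfty} gives a bound uniform in the iterate index so that the $O_T(L^{-\infty})$ decay dominates any polynomial growth in $t$ (with one extra $t$-factor left visible in the stated bound).
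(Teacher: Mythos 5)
Your proposal is correct and follows essentially the same route as the paper: decompose $e^{-i\theta_0}A_\gamma K$ into its projection onto constants plus the orthogonal complement in $\ell^2(\nu_k^\gamma)$, sum the geometric series in $(\overline{s_1s_2})^r$ for the constant piece using \eqref{e:sumzeta2}, and control the orthogonal piece via Proposition~\ref{p:iter} with parameter $L$, reducing from $\nu_k$ to $\nu_1$ exactly as in Remark~\ref{rem:Sunorm}. The only slip is cosmetic (in $\mathcal{S}_\gamma^{2r}\mathbf{1} = \mathbf{1} - \eta_1\sum_{s}\mathcal{S}_\gamma^s\iota\xi^\gamma$ you omit the $\iota$), which has no bearing since you then pass to $\|\xi^\gamma\|_\nu$; otherwise your bookkeeping of $c_k$, the $t$-factor, and the $O_T(L^{-\infty})$ matches the paper's.
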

 
%and 
%\begin{multline*}\Big|\frac{1}{n^2}\sum_{r'\leq r\leq n}\la \cS_{u^\gamma}^{\gamma\,\,{r-r'}}B^\gamma K,B^\gamma K\ra_{\ell^2(\nu^\gamma_k)}-
%\\  \frac{1}{n^2}\sum_{r'\leq r\leq n}(s_1s_2)^{r-r'}\la B_\gamma K,  e^{-i \theta^o}\cS^{\gamma 2k} e^{i \theta^o}B_\gamma K\ra_{\nu^\gamma}\Big|
%\\ \leq 2n\left( c_{\cs,\beta} \left[\varepsilon^{1/2} + \eta \cdot \|Z\|_{\ell^2(\nu_1^{\gamma})}+ \eta^2 \cdot \|Z\|^2_{\ell^2(\nu_1^{\gamma})}\right] +C'_{N,\cs }\right)\norm{B_\gamma K}_{\ell^4(\nu^\gamma)}\norm{B_\gamma K}_{\ell^4(\nu^\gamma)}
%\\ \leq 2n\left( f(D, \beta) M^{3} \left[M^{-4} + \eta \cdot \|Z\|_{\ell^2(\nu_1^{\gamma})}+ \eta^2 \cdot \|Z\|^2_{\ell^2(\nu_1^{\gamma})}\right] +C'_{N,\cs }\right)\norm{B_\gamma K}_{\ell^4(\nu^\gamma)}\norm{B_\gamma K}_{\ell^4(\nu^\gamma)}
%\leq 
%\end{multline*}

\begin{proof}
First assume $k=1$. We decompose $ e^{-i \theta_0}A_\gamma K = C\mathbf{1} +  f$ where $f\perp \mathbf{1} $ in $\ell^2(\nu^\gamma_1)$. So $\cS_{\gamma}^{ 2r} e^{-i \theta_0}A_\gamma K = C\cS_{\gamma}^{ 2r}\mathbf{1} +  \cS_{\gamma}^{ 2r} f$.

For the term $\cS_{\gamma}^{ 2r}f$ we use Proposition \ref{p:iter}, which yields, for any $L$,
\begin{align*}
\| \mathcal{S}_{\gamma}^{2r }f \|_{\nu} & \le \left(1-L^{-2}c(D,\beta)\right)^{r/2}  \|f\|_{\nu} +  \sum_{l=0}^{r-1}C_{N,L,l,2}( e^{-i \theta_0}A_\gamma)^{1/2}  + 2\eta_1  \sum_{l=1}^{r-1} \| \mathcal{Z}_{l} f\|_{\nu}\, .
\end{align*}
By Corollary~\ref{c:Minfty} (recalling that $r\leq t\le M^{\alpha}$), we have $\sum_{l=0}^{r-1}C_{N,L,l,2}( e^{-i \theta_0}A_\gamma)^{1/2}=t\,O_T(L^{-\infty})_{N\To +\infty, \gamma} $. Indeed, the term $e^{-i\theta_0}$ has no impact, as it can be bounded by $1$ in the proof of Proposition~\ref{p:lesscrude}. We also have $\|f\|_{\nu}\le \|A_{\gamma}K\|_{\nu}\le \|B_{\gamma}K\|_{\nu}=O_T(1)_{N\To \infty, \gamma}$, and $\| \mathcal{Z}_{l} f\|_{\nu}=O_{l,T}(1)_{N\To \infty, \gamma}$ by Remark~\ref{r:bigonotation}. Thus,
\begin{multline*}
\left|\sum_{r=0}^{t-1}(\overline{s_1s_2})^r\la B_\gamma K,  e^{i \theta_0}\cS_{\gamma}^{ 2r}f\ra_{\nu}\right| \\
\le \frac{2L^2}{c(D, \beta)} O_T(1)_{N\To \infty, \gamma} +  t\,O_T(L^{-\infty})_{N\To \infty, \gamma}  + \eta_1 O_{M,T}(1)_{N\To \infty, \gamma}  \, .
\end{multline*}
For the term $C\cS_{\gamma}^{ 2r} \mathbf{1}$, we have $\mathcal{S}_{\gamma}^{l} \mathbf{1} = \mathbf{1} - \eta_1 \sum_{s=0}^{l-1} \mathcal{S}_{\gamma}^{\,s}\iota\xi^{\gamma}= \mathbf{1}+ \eta_1 O_l(1)_{N\To \infty, \gamma}$ by \eqref{e:sumzeta2}. Thus,
\begin{multline*}
\Big|\sum_{r=0}^{t-1}(\overline{s_1s_2})^r\la B_\gamma K,  e^{i \theta_0}\cS_{\gamma}^{ 2r}   \mathbf{1}\ra_{\nu}\Big| \le \Big|\sum_{r=0}^{t-1}(\overline{s_1s_2})^r\la B_\gamma K,  e^{i \theta_0}    \mathbf{1}\ra_{\nu}\Big| +\eta_1 O_M(1)_{N\To \infty, \gamma}\|B_{\gamma}K\|_{\nu}\\
= \Big|\frac{(\overline{s_1s_2})^t-1}{\overline{s_1s_2} -1} \la B_\gamma K,  e^{i \theta_0}   \mathbf{1}\ra_{\nu} \Big| +\eta_1O_M(1)_{N\To \infty, \gamma}\|B_{\gamma}K\|_{\nu}
\\\le \bigg(\frac2{|s_1s_2 -1|}  +\eta_1O_M(1)_{N\To \infty, \gamma}\bigg)\|B_{\gamma}K\|_{\nu} \, .
\end{multline*}
Since $|C|\le \|A_{\gamma}K\|_{\nu}\le \|B_{\gamma}K\|_{\nu}$, this completes the proof for $k=1$.

For higher $k$, as in Remark~\ref{rem:Sunorm}, we have $\|\cS_{\gamma}^{2r}f\|_{\nu_k} \le \sqrt{\frac{\mu_1^{\gamma}(B)}{\mu_k^{\gamma}(B_k)}}\|\cS_{\gamma}^{2r-k+1}\phi_f\|_{\nu_1}$, where now $\phi_f(x_0,x_1) = (\cS_{\gamma}^{k-1}f)(x_0;x_k)$. We then note that $f\perp \mathbf{1}$ in $\ell^2(\nu_k^{\gamma})$ iff $\phi_f \perp \mathbf{1}$ in $\ell^2(\nu_1^{\gamma})$. Indeed, $\langle \mathbf{1},\phi_f\rangle_{\nu_1} = \frac{\mu_k^{\gamma}(B_k)}{\mu_1^{\gamma}(B)}\langle \mathbf{1},f\rangle_{\nu_k}$, since $\langle \mathbf{1},\phi_f\rangle_{\nu_1} = \sum_{(x_0,x_1)}\nu_1(x_0,x_1)(\cS_{\gamma}^{k-1}f)(x_0;x_k)$, so applying \eqref{e:Sgamma}, \eqref{e:muk} and \eqref{eq:mv}, the claim follows. Hence, $\|\cS_{\gamma}^{2r-k+1}\phi_f\|_{\nu_1} \lesssim c(1-L^{-2}C)^{r/2}\|\phi_f\|_{\nu_1}$, where $c=\frac{1}{(1-L^{-2})^{(k+3)/4}} \le 2^{k+1}$ for large $L$. The error terms are the same, this time with $\|\cZ_l\phi_f\|_{\nu_1}=O_{l,T}(1)_{N\To \infty, \gamma}$. Finally, $\|\phi_f\|_{\nu_1} \le \sqrt{\frac{\mu_k^{\gamma}(B_k)}{\mu_1^{\gamma}(B)}}\|f\|_{\nu_k}$.
\end{proof}
 
Starting from \eqref{e:gathering} and applying the lemma, we obtain for $\|K\|_{\infty}\le 1$,
\begin{multline}\label{e:gather2}
\frac{1}{n^2}\left| \sum_{ r'\leq n}\sum_{r\geq r'}\la \cS_{u^\gamma}^{r-r'}B_\gamma K,B_\gamma K\ra_{\nu}\right| \le  \frac{1}t \bigg[\frac{2L^2}{c(D, \beta)} O_T(1)_{N\To \infty, \gamma}
 +   t\,O_T(L^{-\infty})_{N\To \infty, \gamma}  \\
  + \eta_1 O_{M,T}(1)_{N\To +\infty, \gamma} +\frac1{|s_1s_2 -1|}O_T(1 )_{N\To +\infty, \gamma}  \bigg] \\
 +  t\left( c_{\cs,\beta} \left[\varepsilon^{1/2} +\eta_1 O(1)_{N\To +\infty, \gamma}\right] + O_T(M^{-\infty})_{N\To \infty, \gamma}\right)^{1/4}O_T(1)_{N\To+\infty,\gamma} \\
 +4n^{-1}t \,\norm{B_\gamma K}^2_{\nu} \, .
\end{multline} 
Remember that $n=M^9$ and $t=M^\alpha$ with $0<\alpha<1/4$. For the term $\frac{1}t \frac{2L^2}{c(D, \beta)}$ to be small, we choose $L=M^{\alpha'}$ with $0<2\alpha'<\alpha$. For instance, take $\alpha=3/16$ and $\alpha'=1/16$. For the other terms, we have $t (c_{\cs,\beta} \varepsilon^{1/2})^{1/4} = O(M^{\alpha-1/4})$ and $n^{-1}t=M^{-9+\alpha}$. The terms $\eta_1 O_{M,T}(1)_{N\To +\infty, \gamma}$ tend to $0$ as $\eta_1=\eta_0+\eta\To 0$, $M$ and $T$ being fixed. Finally, $\|B_{\gamma}K\|_{\nu}^2=O_T(1)_{N\To +\infty, \gamma}$ assuming $\|K\|_{\infty}\le 1$.

We can gather the first and second alternative into one statement~:
\begin{prp}     \label{prp:bothalt}
Let $A>0$.

For all $M$, for all $\gamma$ that fall either into the first alternative or into the second one with $|s^\gamma_1(N)s^\gamma_2(N)-1|\geq A$, we have for $\|K\|_{\infty}\le 1$ and for $n=M^9$,
\begin{multline*}
\Big\|\frac{1}{n}\sum_{r=1}^n\mathcal{R}_{n,r}^{\gamma}\cF_{\gamma}K\Big\|_{\gamma}^2 \le \frac{1}{M^{3/16}} \bigg[\frac{2M^{1/8}}{c(D, \beta)}  O_T(1)_{N\To \infty, \gamma} +   O_T(M^{-\infty})_{N\To \infty, \gamma}   \\
  + \eta_1 O_{M,T}(1)_{N\To +\infty, \gamma} +\frac1{A}O_T(1 )_{N\To +\infty, \gamma} \bigg] \\
 + O_T(M^{-1/16})_{N\To +\infty, \gamma} +\eta^{1/4}_1 O_{M,T}(1)_{N\To +\infty, \gamma}  \,.
\end{multline*}
\end{prp}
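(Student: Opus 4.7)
The plan is to gather the two cases analyzed above into a single inequality by applying the dichotomy of Proposition~\ref{p:Sualtmod} to the family $\{\mathcal{S}_{u^\gamma}^{4j}B_\gamma K\}_{j=1}^{M^9}$ with $\varepsilon=M^{-8}$. I start from the identity \eqref{e:sum}, which writes $\|\frac{1}{n}\sum_{r=1}^n\mathcal{R}_{n,r}^\gamma\mathcal{F}_\gamma K\|_\gamma^2$ as the prefactor $\mu_k^\gamma(B_k)/N$ times a double sum of scalar products $\langle\mathcal{S}_{u^\gamma}^{|r-r'|}B_\gamma K,B_\gamma K\rangle_{\nu_k^\gamma}$, plus $\frac{1}{n^2}\sum_{r,r'}\mathrm{O}_{n,r,r'}(\eta_1,\mathcal{F}_\gamma K)$. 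The prefactor is $O_T(1)_{N\to+\infty,\gamma}$ by Remark~\ref{rem:IDS1} combined with \textbf{(Green)}. The error piece inherits an overall $\eta_1$ from the Kolmogorov defects \eqref{e:sumzeta2}--\eqref{e:sumzeta3}; a H\"older estimate in the style of Lemma~\ref{l:greenco}, using \textbf{(Hol)} and \textbf{(Green)}, shows that this contribution is $\eta_1 O_{M,T}(1)_{N\to+\infty,\gamma}$.

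If case (i) of Proposition~\ref{p:Sualtmod} applies to every $\mathcal{S}_{u^\gamma}^{4j}B_\gamma K$ (the \emph{first alternative}), iterating \eqref{e:decay} produces \eqref{e:bigdecay}, and a geometric summation combined with Remark~\ref{rem:Sunorm} yields \eqref{e:1stalt}. Since $\tilde{\tilde C}_{N,M}(B_\gamma)=O_T(M^{-\infty})_{N\to+\infty,\gamma}$ by Corollary~\ref{c:Minfty} and $\|B_\gamma K\|_{\nu_k^\gamma}^2=O_T(1)_{N\to+\infty,\gamma}$ by Remark~\ref{r:bigonotation}, the leading double sum is bounded by $(c_k/M)\cdot O_T(1)+O_T(M^{-\infty})$, which is absorbed into the claimed inequality (for large $M$ the factor $1/M$ is smaller than $M^{-1/16}$, and the $A^{-1}$ term is trivially nonnegative).

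If instead some $\mathcal{S}_{u^\gamma}^{4j}B_\gamma K$ satisfies case (ii) with $|s_1 s_2 - 1|\ge A$ (the \emph{second alternative}), I would apply the packet bound \eqref{e:gather2} with packet-length $t=M^{3/16}$ and spectral-gap truncation $L=M^{1/16}$. These choices balance the errors: the mixing term $t^{-1}\cdot 2L^2/c(D,\beta)$ reproduces the $M^{1/8}/(M^{3/16}c(D,\beta))$ factor of the claim; the $O_T(L^{-\infty})$ piece is harmless; the resonance term is at most $(tA)^{-1}O_T(1)=M^{-3/16}A^{-1}O_T(1)$; and the phase error $t(c_{\cs,\beta}\varepsilon^{1/2})^{1/4}$, using $c_{\cs,\beta}\le f(D,\beta)M^{3}$ and $\varepsilon^{1/2}=M^{-4}$, becomes $M^{3/16}\cdot M^{-1/4}\cdot O_T(1)=O_T(M^{-1/16})$. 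The residual $\eta_1$-pieces regroup into $\eta_1^{1/4}O_{M,T}(1)$, and the boundary term $4n^{-1}t\|B_\gamma K\|_{\nu_k^\gamma}^2=O_T(M^{-9+3/16})$ is negligible. The main obstacle is twofold: first, verifying that $\frac{1}{n^2}\sum_{r,r'}\mathrm{O}_{n,r,r'}=\eta_1 O_{M,T}(1)_{N\to+\infty,\gamma}$ despite the fact that each summand features products of many $\zeta$-factors whose individual moments blow up as $\eta_1\downarrow 0$---this requires the same H\"older/\textbf{(Green)} machinery as in Lemma~\ref{l:greenco}; second, a small bookkeeping argument that case (ii) arising for even a single iterate $j$ is enough to trigger the packet-based estimate uniformly in $r,r'$, so the bound for one iterate controls the full sum.
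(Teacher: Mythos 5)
Your proposal matches the paper's proof in structure and in all essential estimates: the paper's argument is precisely to combine \eqref{e:sum}, \eqref{e:1stalt} and \eqref{e:gather2} with the observation that $\frac{1}{n^2}\sum_{r,r'}\mathrm{O}_{n,r,r'}(\eta_1,\cF_\gamma K) = \eta_1\, O_{n,T}(1)_{N\to\infty,\gamma}$ via the same H\"older/\textbf{(Green)} machinery used for \eqref{e:nu-2}, and your parameter choices $t=M^{3/16}$, $L=M^{1/16}$, $\varepsilon=M^{-8}$, $n=M^9$ and their arithmetic reproduce the exponents in the claimed bound. One phrase is slightly misleading: the $\zeta$-factors' moments do \emph{not} blow up as $\eta_1\downarrow 0$ --- that uniformity is exactly what \textbf{(Green)} buys; only the pointwise bound $|\zeta|\le\eta_1^{-1}$ degenerates. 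Also, the paper keeps the $\eta_1 O_{M,T}(1)$ and $\eta_1^{1/4}O_{M,T}(1)$ terms separately rather than regrouping them, but this is cosmetic. The argument is correct.
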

\begin{proof}
The arguments in the proof of \eqref{e:nu-2} readily show that $\frac{1}{n^2}\sum_{r,r'=1}^n\mathbf{E}_{n,r,r'}(\eta_1,F_{\gamma}K) = \eta_1\,O_{n,T}(1)_{N\To \infty, \gamma}$. The assertion follows from \eqref{e:sum}, \eqref{e:1stalt} and \eqref{e:gather2}.
\end{proof}

\begin{prp}\label{p:a}
Let $I\subset I_1$ with $\bar{I}\subset I_1$. There exists $a_0$ such that, if $a\leq a_0$, $M$ is large enough, $\eta_1$ is small enough \emph{($M\geq M(a)$, $\eta_1\leq \eta(a)$)}, and $N$ is large enough~:

For any $\gamma$ falling into the second alternative on $G_N$, the sequence $s^\gamma(N)=s^\gamma_1(N)s^\gamma_2(N)$ must satisfy $|s^\gamma(N)-1| > a^{13}$, if $\gamma$ stays in a set of the form
\[
A_{a, \eta_1}=\left\{\gamma : \Re \gamma\in I, \Im\gamma=\eta_1, \IP(|\cW(o)-\gamma|< a)\leq 1-a\right\} \,.
\]
\end{prp}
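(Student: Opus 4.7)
My plan is to argue by contradiction: assume there exist a large $N$ and $\gamma=\lambda+i\eta_1\in A_{a,\eta_1}$ for which case (ii) of Proposition~\ref{p:Sualtmod} applies with $|s^\gamma(N)-1|\leq a^{13}$, and derive a contradiction from the spread of $\cW(o)$. The first step is to upgrade the hypothesis into an $L^2$-estimate on $u^\gamma_{x_0}(x_1)u^\gamma_{x_1}(x_0)-\mathbf{1}$. Applying \eqref{eq:uu} with the choices $\varepsilon=M^{-8}$ and $\cs=M$ of Section~\ref{s:step4}, and using the bound $c_{\cs,\beta}\leq f(\beta,D)M^3$, the principal term $c_{\cs,\beta}\varepsilon^{1/2}$ is of order $M^{-1}$. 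Choosing $M$ so large that $M^{-1}\leq a^{30}$, then $\eta_1$ so small that the $\|\xi^\gamma\|_{\nu_1^\gamma}$-terms (uniformly bounded via \textbf{(Green)} and Remark~\ref{r:bigonotation}) contribute at most $a^{30}$, and finally $N$ so large that $C'_{N,\cs}\leq a^{30}$, yields $\|u^\gamma_{x_0}(x_1)u^\gamma_{x_1}(x_0)-s^\gamma\|^2_{\nu_1^\gamma}\leq a^{26}$. Combined with $|s^\gamma-1|^2\leq a^{26}$ and $\|\mathbf{1}\|_{\nu_1^\gamma}=1$, the triangle inequality gives $\|u^\gamma_{x_0}(x_1)u^\gamma_{x_1}(x_0)-\mathbf{1}\|^2_{\nu_1^\gamma}\leq Ca^{26}$. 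Passing to the Benjamini-Schramm limit via \textbf{(BSCT)} and Remarks~\ref{rem:IDS1}-\ref{rem:IDS2} (whose moment bounds justify the exchange of limit and expectation), along a subsequence $N\to\infty$ one obtains
\begin{equation*}
\expect_{\IP}\Bigl[\sum_{o'\sim o}\hat\mu_1^\gamma(o,o')\bigl|\hat u^\gamma_{o'}(o)\hat u^\gamma_o(o')-1\bigr|^2\Bigr]\leq Ca^{26}\,\expect_{\IP}\Bigl[\sum_{o'\sim o}\hat\mu_1^\gamma(o,o')\Bigr].
\end{equation*}

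The heart of the proof is to lower-bound this quantity by a power of $a$ strictly smaller than $a^{26}$ using the spread of $\cW(o)$. By the second identity in \eqref{eq:green3},
\begin{equation*}
\hat\zeta^\gamma_o(o')\hat\zeta^\gamma_{o'}(o)=\frac{\hat\zeta^\gamma_o(o')}{\gamma-\cW(o)-\hat A(o,o')},\qquad \hat A(o,o'):=\sum_{u\in\cN_o\setminus\{o'\}}\hat\zeta^\gamma_o(u),
\end{equation*}
where crucially \emph{both} $\hat\zeta^\gamma_o(o')$ and $\hat A(o,o')$ are measurable with respect to the environment $\omega$ outside the root and are independent of the value $\cW(o)$. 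Since
\[
\bigl|\hat u^\gamma_{o'}(o)\hat u^\gamma_o(o')-1\bigr|^2 = 4\,\bigl|\Im[\hat\zeta^\gamma_o(o')\hat\zeta^\gamma_{o'}(o)]\bigr|^2\big/\bigl|\hat\zeta^\gamma_o(o')\hat\zeta^\gamma_{o'}(o)\bigr|^2,
\]
the map $t\mapsto\hat\zeta^\gamma_o(o')/(\gamma-t-\hat A(o,o'))$ is a M\"obius function of $t=\cW(o)$ whose imaginary part vanishes at a single real point $t_0(\omega)$, and a direct computation gives
\begin{equation*}
\bigl|\hat u^\gamma_{o'}(o)\hat u^\gamma_o(o')-1\bigr|^2 = \frac{4\,|\Im\hat\zeta^\gamma_o(o')|^2\,(\cW(o)-t_0(\omega))^2}{|\hat\zeta^\gamma_o(o')|^2\,|\gamma-\cW(o)-\hat A(o,o')|^2}.
\end{equation*}

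The final step is the probabilistic lower bound, and it is the main obstacle. The difficulty is that $t_0(\omega)$ depends on the environment, so $\cW(o)$ could a priori track it; the hypothesis only constrains the marginal law of $\cW(o)$. I would overcome this by using the moment estimates of \textbf{(Green)} to restrict to an event $\mathcal{E}$ of $\IP$-probability close to $1$ on which the environment-dependent quantities $|\hat\zeta^\gamma_o(o')|^{\pm 1}$, $|\Im\hat\zeta^\gamma_o(o')|^{-1}$, $|\hat A(o,o')|$ and the weight $\hat\mu_1^\gamma(o,o')$ all lie in a fixed compact set of $(0,\infty)$; in particular $|t_0(\omega)|\leq C$ on $\mathcal{E}$. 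Covering the interval $[-C,C]$ by finitely many balls of radius $a/4$, the marginal hypothesis $\IP(|\cW(o)-\gamma|\geq a)\geq a$ then forces, by a pigeonhole/union bound argument, $\IP(\{|\cW(o)-t_0(\omega)|\geq a/4\}\cap\mathcal{E})\geq c\,a$. Substituting into the displayed identity and using H\"older's inequality to integrate against the weight $\hat\mu_1^\gamma(o,o')$ (with the higher-order \textbf{(Green)}-moments providing the required integrability and fixing the precise exponent) yields a lower bound of order $a^{k}$ with some $k\leq 24$, which for $a\leq a_0$ small enough contradicts the upper bound $Ca^{26}$. The value $13$ in the statement reflects the worst-case exponent loss arising from this H\"older/moment balancing.
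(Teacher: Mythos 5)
Your algebraic reduction is correct and genuinely different from the paper's: identifying $\hat\zeta^{\gamma}_o(o')\hat\zeta^{\gamma}_{o'}(o)$ as a M\"obius function of $\cW(o)$, with $t_0(\omega)$ the unique real zero of its imaginary part, gives a clean formula for $|\hat u^{\gamma}_{o'}(o)\hat u^{\gamma}_o(o')-1|^2$ in terms of $(\cW(o)-t_0(\omega))^2$, and the preliminary reduction of the $L^2$ estimate from Proposition~\ref{p:Sualtmod}(ii) to the Benjamini--Schramm limit parallels what the paper does.

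However, the probabilistic step is where the argument breaks, and I don't see how to repair the pigeonhole/covering idea. The threshold $t_0(\omega)$ is a random real number, a deterministic function of the environment outside the root, and the paper nowhere assumes that $\cW(o)$ is independent of the rest of the environment (only in the Anderson-model application does an i.i.d. structure appear). The hypothesis $\IP(|\cW(o)-\gamma|\ge a)\ge a$ constrains the distance of $\cW(o)$ to the \emph{fixed} point $\gamma$, not to the \emph{random} point $t_0(\omega)$; even restricting to an event $\mathcal{E}$ on which $|t_0|$ is bounded, nothing prevents the joint law from concentrating $\cW(o)$ near $t_0(\omega)$ while the law of $t_0(\omega)$ itself puts mass $\ge a$ at distance $\ge a$ from $\Re\gamma$. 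In that scenario your right-hand side $(\cW(o)-t_0)^2$ is uniformly small and the contradiction never materializes. The paper circumvents this exactly where you get stuck: instead of a probability lower bound on $|\cW(o)-t_0|$, it passes to the phase representation $\hat\zeta_o(y)=r(o,y)e^{-i\theta(o,y)}$ and extracts a sign from the Green-function structure. The chain $|\hat u\hat u-1|\approx 0 \Rightarrow e^{i\theta(y,o)}\approx\eps\, e^{-i\theta(o,y)}$, together with the recursion $2\hat m_o=r(y,o)^{-1}e^{i\theta(y,o)}-r(o,y)e^{-i\theta(o,y)}$, forces all phases $e^{-i\theta(o,y)}$ to nearly coincide and to satisfy $\gamma_0-\cW(o)\approx R\,e^{-i\theta(o,y')}$ with $R$ real; then $|\gamma_0-\cW(o)|\ge a$ forces $|\Im e^{-i\theta(o,y')}|<r^2$, while the (Green)-driven lower bound $|\Im\hat m_o|\ge r$ forces $|\Im e^{-i\theta(o,y')}|\ge r^2$ -- a contradiction with no independence needed. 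In short: the obstacle you correctly identified ("$\cW(o)$ could a priori track $t_0$") is a real one, and the covering argument you sketch does not overcome it; you would need to re-enter the argument through the recursive identity \eqref{eq:green3} and the sign of $\Im\hat m_o$, as the paper does, rather than through a distributional lower bound on $|\cW(o)-t_0|$.
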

Before proving the proposition, let us finally give the

\begin{proof}[Proof of Theorem \ref{t:thm4}]
We apply Proposition~\ref{p:mainbound2} and use Proposition~\ref{p:a} to show that we are in the framework of Proposition~\ref{prp:bothalt}.
 
Two cases may happen. Either $\cW(o)$ is deterministic~: there exists $E_0$ such that $\IP(\cW(o)=E_0)=1$. In that case, we fix a small $a>0$, let $J_1 = I \setminus  [E_0-2a, E_0+2a]$ and $J_2 = I\cap [E_0-2a, E_0+2a]$. We then write $\varnbi(\cF_\gamma K)=\varnbia(\cF_\gamma K)+\varnbib(\cF_\gamma K)$. For $\Re \gamma \in J_1$, we have $|\gamma - E_0|>2a$, so $\IP(|\cW(o)-\gamma|< a)=0$ and Proposition \ref{p:a} applies with $a$ arbitrarily small. Proposition~\ref{prp:bothalt}, applied with $A=a^{13}$, thus allows to control $\varnbia(\cF_{\gamma}K)$, while $\varnbib(\cF_{\gamma}K)=O_T(a)$.

If $\cW(o)$ is not deterministic, there exists $a$ such that for all $E\in\IR$,  $\IP(|\cW(o)- E|< a)\leq 1-a$. Thus, for any complex $\gamma$, $\IP(|\cW(o)- \gamma|< a)\leq 1-a$. In this case Proposition \ref{p:a} may be applied with the fixed value $A=a^{13}$ and all $\gamma$.

Either way, we showed that there exists $a_0$ such that, for all $a\leq a_0$, $M \ge M(a)$,  we have for any $s$ and $T$,
\begin{multline}   \label{e:varnbtozero}
\lim_{\eta_0 \downarrow 0}\limsup_{N\to \infty}\varnbi(\cF_\gamma K)^2 \leq |I|^2 \frac{1}{M^{3/16}} \bigg[\frac{2M^{1/8}}{c(D, \beta)}C_T + C_{s,T}M^{-s}  + \frac{C_T}{a^{13}}    \bigg] \\
+ |I|^2C_TM^{-1/16} + a C_T.
 \end{multline}
Taking $M\to \infty$ followed by $a\downarrow 0$, this completes the proof of Theorem \ref{t:thm4}.
\end{proof}

We conclude the section with the
 
\begin{proof}[Proof of Proposition \ref{p:a}]
We will use the following consequences of \textbf{(Green)}~:
\begin{itemize}
\item There exists $0<c_0<\infty$ such that for all $\gamma\in\C^+$, $\Re \gamma\in I_1$,
\begin{equation}\label{e:inegalmu}
 \IE\left(\sum_{y\sim o}\hat{\mu}_1^{\gamma}(o, y)\right)\leq c_0\,, \quad 
\IE\left(\sum_{y\sim o}(\hat{\mu}_1^{\gamma}(o, y))^{-1}\right)\leq c_0\,, \quad \IE\left(\sum_{y\sim o} |\hat{\zeta}^{\gamma}_y(o)|^{-2}\right)\leq c_0\,.
\end{equation}
In fact, $\hat{\mu}_1^{\gamma}(o, y) = \frac{|\Im \hat{\zeta}^{\gamma}_y(o)\Im\hat{\zeta}^{\gamma}_o(y)|}{|\hat{m}_y^{\gamma}\hat{\zeta}_o^{\gamma}(y)|^2}$, so this follows from \textbf{(Green)} and its consequences \eqref{e:nonzerogreen} and \eqref{e:newgreeen}.
\item There exists $0<c_1<\infty$, such that for all $\gamma\in\C^+$, $\Re\gamma\in I_1$,
\begin{subequations}
\begin{align}
\IP\left(|2\Im \hat{m}^{\gamma}_o|\geq 2 r \mbox{ and } |2 \hat{m}^{\gamma}_o|\leq \frac{1}{2}r^{-1}\right)\geq 1-c_1r \,, \label{e:probam} \\
\IP\left(\sum_{y\sim o}|\hat{\zeta}^{\gamma}_o(y)|\le \frac{1}{2} r^{-1}\right)\geq 1 - c_1r\,.\label{e:probaz}
\end{align}
\end{subequations}
In fact, $\expect(|2\Im \hat{m}_o|^{-1}) + \expect(|2\hat{m}_o^{\gamma}|) \le c_1/2$ by \eqref{e:nonzerogreen}, so the first claim follows by Markov's inequality. The second one follows similarly from \eqref{e:newgreeen}.
\end{itemize}

We may now begin the proof. If $\gamma$ falls into the second alternative, then
\begin{align}       \label{e:enfin}
& \|u_{x_0}^{\gamma}(x_1)u_{x_1}^{\gamma}(x_0) - s^\gamma(N)\|_{\nu}^2 \\
& \qquad \le 4 f(\beta, D)M^3 \left[ M^{-4} + \eta_1 O(1)_{N\To +\infty, \gamma}\right] + 4C'_{N,\cs} \,.  \nonumber
\end{align}

Let $a_0 = (2c_0)^{-2}(6+3c_1)^{-12}$; this choice will become clear later on. Take $a\le a_0$. There exist $M(a)$, $\eta(a)$ and $N(a)$ such that if $M\geq M(a)$, $\eta_1\leq \eta(a)$ and $N\ge N(a)$, then the RHS side in \eqref{e:enfin} is $\leq a^{26}$. We fix $\rho \ge a^{26}$.

So take any $a \le a_0$, $M\ge M(a)$, $\eta_1 \le \eta(a)$, and assume towards a contradiction that we can find a subsequence $N_k=N_k(\eta_1)\To +\infty$ and a sequence $\gamma_k\in A_{a, \eta_1}$, each falling into the second alternative on $G_{N_k}$, such that $|s^{\gamma_k}(N_k)-1|^2\leq \rho$. After extracting further subsequences, let $  \lim_{N_k\to +\infty} 
s^{\gamma_k}(N_k)=s$ and $\gamma_0= \lim_{N_k\to +\infty} \gamma_k\in \C$. Then $|s-1|^2\leq \rho$, $\Re \gamma_0 \in I_1, \Im \gamma_0=\eta_1$, and by \eqref{e:enfin} and Remark~\ref{rem:IDS1}
\[
\IE\left(\sum_{y\sim o}|\hat{u}_{o}^{\gamma_0}(y)\hat{u}_{y}^{\gamma_0}(o) - s|^2\hat{\mu}_1^{\gamma_0}(o, y)\right)\leq \rho\, \IE\left(\sum_{y\sim o}\hat{\mu}_1^{\gamma_0}(o, y)\right) \, ,
\]
which implies, using \eqref{e:inegalmu},
\[
\IE\left(\sum_{y\sim o}|\hat{u}_{o}^{\gamma_0}(y)\hat{u}_{y}^{\gamma_0}(o) - 1|^2\hat{\mu}_1^{\gamma_0}(o, y)\right)\leq 4 \rho\, \IE\left(\sum_{y\sim o}\hat{\mu}_1^{\gamma_0}(o, y)\right)\leq 4 c_0  \rho \, .
\]
By the Cauchy-Schwarz inequality, 
\[
\IE\left(\sum_{y\sim o}|\hat{u}_{o}^{\gamma_0}(y)\hat{u}_{y}^{\gamma_0}(o) - 1|^2\hat{\mu}_1^{\gamma_0}(o, y)\right)^{1/2}\geq \frac{\IE\left(\sum_{y\sim o}|\hat{u}_{o}^{\gamma_0}(y)\hat{u}_{y}^{\gamma_0}(o) - 1|\right)}{\IE\left(\sum_{y\sim o}(\hat{\mu}_1^{\gamma_0}(o, y))^{-1}\right)^{1/2}}
\]
and thus, by \eqref{e:inegalmu},
\begin{equation}\label{e:uu1}
\IE\left(\sum_{y\sim o}|\hat{u}_{o}^{\gamma_0}(y)\hat{u}_{y}^{\gamma_0}(o) - 1|\right)\leq \left(4 c_0\rho\,\IE\left(\sum_{y\sim o}(\hat{\mu}_1^{\gamma_0}(o, y))^{-1}\right)\right)^{1/2}
\leq 2c_0  \rho^{1/2} \, .
\end{equation}
Since the value of $\gamma_0$ is now fixed, let us omit it from the notation.

Let us write $\hat{\zeta}^{\gamma_0}_o(y)=\hat{\zeta}_o(y)=r(o, y) e^{-i\theta(o, y)}$ with $r\in \IR_+$ and $\theta\in \IR$. This implies $\hat{u}_{o}(y)=e^{2i\theta(o, y)}$ and $|\hat{u}_{o} (y)\hat{u}_{y} (o) - 1|=
| (e^{i\theta(y, o)}+ e^{-i\theta(o, y)}) (e^{i\theta(y, o)}- e^{-i\theta(o, y)})|$.

Now \eqref{e:uu1} implies that
\begin{equation}
\IE\left(\sum_{y\sim o}\min_{\eps=\pm 1} |e^{i\theta(y, o)}-\eps e^{-i\theta(o, y)}|^2 \right)\leq 2c_0 \rho^{1/2} \, .\label{e:yo}
\end{equation}
Let us call $\eps(o,y)$ the value of $\eps$ achieving the min. By \eqref{eq:mv} we have
\[
2\hat{m}_o=\hat{\zeta}_y(o)^{-1}-\hat{\zeta}_o(y)=r(y, o)^{-1} e^{i\theta(y, o)}-r(o, y) e^{-i\theta(o, y)}
\]
for all $y\sim o$. Thus, using \eqref{e:inegalmu},
\begin{multline}
\IE\left(\sum_{y\sim o} \left|e^{-i\theta(o, y)}\left(\eps(o,y)  r(y, o)^{-1}-r(o, y)\right) -2\hat{m}_o\right|\right) \\
= \IE\left(\sum_{y\sim o}   \left|\left(e^{i\theta(y, o)}-\eps(o,y) e^{-i\theta(o, y)}\right) r(y, o)^{-1}\right|\right) \\
\leq \sqrt{2c_0}  \rho^{1/4}\IE\left(\sum_{y\sim o} r(y, o)^{-2}\right)^{1/2}\leq 2c_0 \rho^{1/4}=:r^6 \,.
 \label{e:a8}
\end{multline}
Denote $t_{o,y}=\eps(o,y)  r(y, o)^{-1}-r(o, y)\in \R$. It follows by Markov's inequality that
\begin{equation}\label{e:cheb}
\sum_{y\sim o} \left|t_{o,y}e^{-i\theta(o, y)}-2\hat{m}_o\right|\leq r^5
\end{equation}
with probability $\ge 1-r$.

The probability that $|2\Im \hat{m}_o|\geq 2 r$ and $|2 \hat{m}_o|\leq \frac{1}{2}r^{-1}$ is at least $1-c_1r$ by \eqref{e:probam}. Thus, \eqref{e:cheb} implies that with probability $\geq 1-r-c_1r$, we have for any $y\sim o$
\begin{equation}   \label{e:rrmoins}
r \leq  |t_{o,y}| \leq r^{-1} \, .
\end{equation}
Combining \eqref{e:cheb} and \eqref{e:rrmoins}, we see that for any $y, y'\sim o$,
\begin{equation}\label{e:yy'}
\left|e^{-i\theta(o, y)} - t_{o,y'}t_{o,y}^{-1}e^{-i\theta(o, y')}\right|\leq r^{4}\,.
\end{equation}
%The previous identities imply that with probability $\ge 1-r-c_1r$,
%\begin{equation}\label{e:yy'}
%|e^{-i\theta(o, y)} - e^{-i\theta(o, y')} |\leq 2  r^{4}.
%\end{equation}
Now \eqref{eq:green3} says that
\[
\gamma_0= \cW(o) + \sum_{y \sim o} \zeta_o (y)+2\hat{m}_o  = \cW(o) + \sum_{y \sim o} r(o, y) e^{-i\theta(o, y)} +2\hat{m}_o\,.
\]
Using \eqref{e:cheb} and \eqref{e:yy'}, we get for any fixed $y'\sim o$,
\begin{multline}\label{e:rr}
\Big|\gamma_0-\mathcal{W}(o)-\bigg(t_{o,y'}+ \sum_{y\sim o} r(o,y)t_{o,y'}t_{o,y}^{-1}\bigg)e^{-i\theta(o,y')}\Big| \\
\le \left|2\hat{m}_o-t_{o,y'}e^{-i\theta(o,y')}\right| + \left|\sum_{y\sim o}r(o,y)\left(e^{-i\theta(o,y)}-t_{o,y'}t_{o,y}^{-1}e^{-i\theta(o,y')}\right)\right|\\
\le r^5 + r^4\sum_{y\sim o}r(o,y) \le 2r^3
\end{multline}
with probability $\ge 1-r-2c_1r$. Here we used that $\sum_{y\sim o}r(o, y)\le \frac{1}{2} r^{-1}$ with probability $\geq 1-c_1r$, see \eqref{e:probaz}. Since $|\gamma_0-\cW(o)|\geq a $ with probability $\geq a$, it follows that
\[
\Big|  t_{o,y'} + \sum_{y \sim o } r(o, y) t_{o,y'}t_{o,y}^{-1}\Big|\geq a-2r^3
\]
with probability $\geq 1-r-2c_1r-(1-a)$. Recall that $r(o,y)$ and $t_{o,u}$ are real. Taking the imaginary part in \eqref{e:rr}, we thus get $|\Im  e^{-i\theta(o, y')}| \le \frac{2r^3+\eta_1}{a-2r^3}$. Assume $\eta_1\le r^3$. Then if $r<a/5$, we get $|\Im  e^{-i\theta(o, y')}| < r^2$. Hence, $\prob(|\Im e^{-i\theta(o,y')}| \ge r^2) \le (2c_1+1)r+1-a$. But we know that $|2\Im \hat{m}_o| \ge 2r$, so taking the imaginary part in \eqref{e:cheb} and using \eqref{e:rrmoins}, we also have that $|\Im e^{-i\theta(o, y')}|\geq r^2$ with probability $\ge 1-r-c_1r$. If $(2+3c_1)r<a$, this will give a contradiction.

To prove the proposition, we take $r=\frac{a}{6+3c_1}$ and choose $a_0 \le (2c_0)^{-2}(6+3c_1)^{-12}$. Recalling that $2c_0\rho^{1/4}=r^6$, we get $\rho^{1/2} = (2c_0)^{-2}(\frac{a}{6+3c_1})^{12}\ge a^{13}$ for $a\le a_0$, as required. We also take $M>M(a)$, and $\eta_1\leq \min (r^3, \eta(a))$.
\end{proof}

\section{Step 5~: Back to the original eigenfunctions}             \label{sec:retour}

In this section, we show that it suffices to consider the non-backtracking quantum variance in order to prove quantum ergodicity; in other words Theorem \ref{t:thm4} can be retrieved from Theorem \ref{thm:1}. This part may be read before or after the others.

Given $K\in\mathscr{H}_k$, we define the \emph{quantum variance} by
\begin{equation}\label{e:usualvar}
\mathop{\mathrm{Var^I}}(K) = \frac{1}{N}\sum_{\lambda_j\in I} \left|\left\langle \psi_j, K_G \psi_j \right\rangle\right| \, ,
\end{equation}
where $K_G$ is as in Section~\ref{sec:basic}.

More generally, fix $\eta_0>0$ and suppose $K^\gamma \in \mathscr{H}_k$ satisfies conditions \textbf{(Hol)}. We denote
\[
\vari(K^{\gamma}) = \frac{1}{N}\sum_{\lambda_j\in I} \left|\left\langle \psi_j, K_G^{\lambda_j+i\eta_0} \psi_j \right\rangle\right|\,,
\]
where the subscript $\eta_0$ indicates that inside the variance, $\Im \gamma$ is fixed and equal to $\eta_0$.
Denote $\gamma_j=\lambda_j+i\eta_0$, and define
\[ g_j(x_0,x_1) = \overline{\zeta_{x_0}^{\gamma_j}}({x}_1)^{-1} \psi_j(x_1) -\psi_j(x_0) \quad \text{and }
g_j^{\ast}(x_0,x_1) = \overline{\zeta_{x_1}^{\gamma_j}}({x}_0)^{-1} \psi_j(x_0) -\psi_j(x_1)\, ,
\]
so $g_j^{\ast}$ and $g_j$ are defined like $f_j^{\ast}$ and $f_j$ (Section \ref{s:nb}), respectively, with $\zeta$ replaced by $\overline{\zeta}$. Put
\[
\widetilde{\varnbi}(K^{\gamma}) = \frac{1}{N} \sum_{\lambda_j \in I} \left|\left\langle g_j^{\ast}, K_B^{\gamma_j} g_j\right\rangle \right| \, .
\]
Next, given $\gamma\in\IC^+$, define the function $N_\gamma:V\To \R_+$ by
\begin{equation}\label{e:Ngamma}
N_{\gamma}(x) = \Im \tilde g^\gamma (\tilde{x},\tilde{x}) \, ,
\end{equation}
where $\tilde x$ is a point in $\tilG$ projecting down to $G=\Gamma\backslash \tilG$. Recall the Laplacian $P$ defined in \eqref{e:lapl}. We next introduce the operators $P_\gamma,\mathcal{S}_{T,\gamma} , \widetilde{\mathcal{S}}_{T,\gamma} : \IC^V\to \IC^V$ defined by
\begin{equation}\label{e:ST}
P_{\gamma} = \frac{d}{N_{\gamma}} P \frac{N_{\gamma}}{d} \, , \qquad \mathcal{S}_{T,\gamma} = \frac{1}{T} \sum_{s=0}^{T-1} (T-s) P_{\gamma}^s \qquad \text{and} \qquad \widetilde{\mathcal{S}}_{T,\gamma} = \frac{1}{T} \sum_{s=1}^T P_{\gamma}^s  \, ,
\end{equation}
for $T\in \IN^{\ast}$, and the operators $\mathcal{L}^{\gamma},\widetilde{\mathcal{L}}^{\gamma} : \IC^V \to \IC^B$ defined by
\begin{align}\label{e:cL}
(\mathcal{L}^{\gamma}J)(x_0,x_1) & = \frac{|\zeta_{x_0}^{\gamma}(x_1)|^2}{|2m_{x_0}^{\gamma} |^2} \left(\frac{ J(x_0)}{N_{\gamma}(x_1)} -\frac{J(x_1)}{\overline{\zeta_{x_0}^{\gamma}(x_1)}\zeta_{x_1}^{\gamma}(x_0)N_{\gamma}(x_0)}\right) \, , \\
(\widetilde{\mathcal{L}}^{\gamma}J)(x_0,x_1) & = \frac{|\zeta_{x_0}^{\gamma}(x_1)|^2}{ |2m_{x_0}^{\gamma} |^2} \left(\frac{J(x_0)}{N_{\gamma}(x_1)}  - \frac{J(x_1)}{\zeta_{x_0}^{\gamma}(x_1)\overline{\zeta_{x_1}^{\gamma}(x_0)}N_{\gamma}(x_0)}\right) \, . \nonumber
\end{align}
Finally, denote $\vari(K-\langle K \rangle_{\gamma} ) :=\vari(K-\langle K \rangle_{\gamma} \mathbf{1})$ where $\mathbf{1}\in \mathscr{H}_0$ is the constant function equal to $1$ (so that, with the notation of Section \ref{sec:basic}, $\widehat{\mathbf{1}}$ is the identity operator).

\begin{prp}               \label{lem:pasinitial}
Fix $\eta_0>0$ and $T\in \N^{\ast}$. For any $J\in \mathscr{H}_0$, we have
\begin{multline*}
\vari\left(J-\langle J\rangle_{\gamma}  \right) \le \varnbi\left(\mathcal{L}^{\gamma}d^{-1}\mathcal{S}_{T,\gamma}\left(J-\langle J\rangle_{\gamma}\right)\right) \\
+ \widetilde{\varnbi}\left(\widetilde{\mathcal{L}}^{\gamma}d^{-1}\mathcal{S}_{T,\gamma}\left(J-\langle J\rangle_{\gamma}\right)\right) + \vari\left(\widetilde{\mathcal{S}}_{T,\gamma}\left(J - \langle J\rangle_{\gamma}\right) \right) \, .
\end{multline*}
\end{prp}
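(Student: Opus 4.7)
A direct telescoping of $P_\gamma\mathcal{S}_{T,\gamma} = T^{-1}\sum_{s=0}^{T-1}(T-s)P_\gamma^{s+1}$ after reindexing yields the Poisson-type identity $(I-P_\gamma)\mathcal{S}_{T,\gamma} = I - \widetilde{\mathcal{S}}_{T,\gamma}$. This gives the linear decomposition
\[
J - \langle J\rangle_\gamma\mathbf{1} \;=\; (\widetilde{\mathcal{S}}_{T,\gamma}J - \langle J\rangle_\gamma\mathbf{1}) \;+\; (I-P_\gamma)\mathcal{S}_{T,\gamma}J ,
\]
and the triangle inequality applied inside the absolute values in the definition of $\vari$, followed by averaging over $\lambda_j\in I$, immediately gives
\[
\vari(J - \langle J\rangle_\gamma) \;\le\; \vari(\widetilde{\mathcal{S}}_{T,\gamma}J - \langle J\rangle_\gamma) \;+\; \vari((I-P_\gamma)\mathcal{S}_{T,\gamma}J) .
\]
The first summand already matches the last term in the claim, so it remains to establish the ``gradient-to-non-backtracking'' estimate $\vari((I-P_\gamma)(dh)) \le \varnbi(\mathcal{L}^\gamma h) + \widetilde{\varnbi}(\widetilde{\mathcal{L}}^\gamma h)$ for every $h\in\C^V$; the proposition then follows by taking $h = d^{-1}\mathcal{S}_{T,\gamma}J$.

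For this gradient estimate I would establish the pointwise-in-$j$ identity
\[
\langle \psi_j, ((I-P_\gamma)(dh))_G \psi_j\rangle \;=\; \langle f_j^*, (\mathcal{L}^\gamma h)_B f_j\rangle \;+\; \langle g_j^*, (\widetilde{\mathcal{L}}^\gamma h)_B g_j\rangle ,
\]
after which $|A+B|\le|A|+|B|$, summation over $\lambda_j\in I$ and division by $N$ conclude. Using $P_\gamma(dh)(v) = N_\gamma(v)^{-1}\sum_{u\sim v}N_\gamma(u)h(u)$, the LHS is easily rewritten as
\[
\sum_{(v,u)\in B}\frac{|\psi_j(v)|^2}{N_\gamma(v)}\bigl[N_\gamma(v)h(v) - N_\gamma(u)h(u)\bigr] ,
\]
while the RHS, since $\mathcal{L}^\gamma h, \widetilde{\mathcal{L}}^\gamma h \in \mathscr{H}_1$ so that the associated $K_B$ acts as multiplication on $\ell^2(B)$, becomes a sum over the same oriented-edge set involving $\overline{f_j^*(v,u)}f_j(v,u)$ and $\overline{g_j^*(v,u)}g_j(v,u)$.

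The main obstacle is the verification of this identity. The key inputs are the two ``fundamental edgewise relations''
\[
\zeta_v^{\gamma_j}(u)f_j(v,u) + f_j^*(v,u) = 2m_v^{\gamma_j}\psi_j(v) , \qquad \overline{\zeta_v^{\gamma_j}(u)}\,g_j(v,u) + g_j^*(v,u) = 2\overline{m_v^{\gamma_j}}\psi_j(v) ,
\]
obtained by combining $H\psi_j=\lambda_j\psi_j$ with the single-neighbour recursion $\gamma = W(v)+\sum_{u'\neq u}\zeta_v(u')+1/\zeta_u(v)$ and the relation $1/\zeta_u(v)-\zeta_v(u)=2m_v$ of Lemma~\ref{lem:zetapot}; the $i\eta_0$ terms coming from $\gamma_j-\lambda_j$ cancel \emph{exactly}. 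Using these to expand $\overline{f_j^*(v,u)}f_j(v,u)$ and $\overline{g_j^*(v,u)}g_j(v,u)$ in terms of $|\psi_j(v)|^2$, $|\psi_j(u)|^2$ and the cross-correlations $\overline{\psi_j(v)}\psi_j(u)$ and $\psi_j(v)\overline{\psi_j(u)}$; simplifying the characteristic coefficients $(\overline{\zeta_v(u)}\zeta_u(v))^{-1}$ and $(\zeta_v(u)\overline{\zeta_u(v)})^{-1}$ appearing in $\mathcal{L}^\gamma$ and $\widetilde{\mathcal{L}}^\gamma$ into $(4\overline{m_v^{\gamma_j}}m_u^{\gamma_j})^{-1}$ and $(4m_v^{\gamma_j}\overline{m_u^{\gamma_j}})^{-1}$ respectively via \eqref{eq:mv}; and using $f_j^*(u,v)=f_j(v,u)$ and $g_j^*(u,v)=g_j(v,u)$ to relabel the ``reverse-oriented'' contributions so that everything sits under a common $h(v)/N_\gamma(u)$ weighting, reduces the verification to an algebraic check on a single edge. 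The coefficient structure of $\mathcal{L}^\gamma$ and $\widetilde{\mathcal{L}}^\gamma$ (which are complex conjugates of each other whenever $h$ and $N_\gamma$ are real, so that in particular $\widetilde{\varnbi}(\widetilde{\mathcal{L}}^\gamma h)=\varnbi(\mathcal{L}^\gamma h)$ in the real-$\psi_j$ case) is engineered precisely so that the cross-correlation contributions coming from the ``$\mathcal{L}$-piece'' and the ``$\widetilde{\mathcal{L}}$-piece'' assemble into conjugate pairs, while the $|\psi_j|^2$ coefficients collapse, via $N_\gamma = \Im m^\gamma/(2|m^\gamma|^2)$, to exactly $N_\gamma(u)|\psi_j(v)|^2 - N_\gamma(v)|\psi_j(u)|^2$ per edge, matching the LHS. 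This edgewise bookkeeping is tedious but mechanical, and once completed the proposition follows.
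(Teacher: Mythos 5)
Your decomposition $J - \langle J\rangle_\gamma\mathbf{1} = (\widetilde{\mathcal{S}}_{T,\gamma}J - \langle J\rangle_\gamma\mathbf{1}) + (I-P_\gamma)\mathcal{S}_{T,\gamma}J$, the reduction to a pointwise-in-$j$ identity, and the fundamental edgewise relations are exactly what the paper does. However, there is a genuine error in the key identity you propose to verify: you write
\[
\langle \psi_j, ((I-P_\gamma)(dh))_G \psi_j\rangle \;=\; \langle f_j^*, (\mathcal{L}^\gamma h)_B f_j\rangle \;+\; \langle g_j^*, (\widetilde{\mathcal{L}}^\gamma h)_B g_j\rangle ,
\]
whereas the true identity is a \emph{difference} (up to a factor of $2i$). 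The distinction is not cosmetic. Expanding $\overline{f_j^*(x_0,x_1)}f_j(x_0,x_1)$ and $\overline{g_j^*(x_0,x_1)}g_j(x_0,x_1)$ as you outline, the coefficient of the cross-correlation $\overline{\psi_j(x_0)}\psi_j(x_1)$ in the $\mathcal{L}$-piece is some $C_1 = \tfrac{(\mathcal{L}^{\gamma_j}h)(x_0,x_1)}{\overline{\zeta_{x_1}^{\gamma_j}(x_0)}\zeta_{x_0}^{\gamma_j}(x_1)} + (\mathcal{L}^{\gamma_j}h)(x_1,x_0)$, and the corresponding coefficient in the $\widetilde{\mathcal{L}}$-piece is precisely $\overline{C_1}$ (since $\widetilde{\mathcal{L}}^\gamma h = \overline{\mathcal{L}^\gamma h}$ for real $h$, as you noted). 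The structural fact that makes the argument work is that $C_1$ is \emph{real}, which you would discover while doing the "mechanical" bookkeeping; this means the cross-terms cancel under \emph{subtraction} ($C_1 - \overline{C_1}=0$) but \emph{double} under addition ($C_1 + \overline{C_1}=2C_1\neq 0$). Since the left-hand side $\langle \psi_j, ((I-P_\gamma)(dh))_G \psi_j\rangle = \sum_v |\psi_j(v)|^2[(I-P_\gamma)(dh)](v)$ contains no $\overline{\psi_j(x_0)}\psi_j(x_1)$ contributions at all, the summed identity cannot hold, and "conjugate pairs assembling" is the wrong mechanism: the mechanism is exact cancellation via the minus sign. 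Once this sign is corrected — and the residual $2i$ prefactor, which is then absorbed by the triangle inequality $\tfrac12|Y-Z|\le|Y|+|Z|$ anyway — your computation of the $|\psi_j|^2$ piece (via $N_\gamma(x)|2m_x^\gamma|^2 = 2\Im m_x^\gamma$ and (\ref{eq:mv})) does close, and the rest of the argument goes through exactly as in the paper.
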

\begin{proof}
We have
\begin{multline}    \label{eq:fjlfj}
\langle f_j^{\ast}, (\mathcal{L}^{\gamma_j}J)_B f_j\rangle = \sum_{(x_0,x_1)\in B} \left(\frac{(\mathcal{L}^{\gamma_j}J)(x_0,x_1)}{\overline{\zeta_{x_1}^{\gamma_j}(x_0)}\zeta_{x_0}^{\gamma_j}(x_1)} + (\mathcal{L}^{\gamma_j}J)(x_1,x_0)\right) \overline{\psi_j(x_0)}\psi_j(x_1) \\
 -  \sum_{(x_0,x_1)\in B} (\mathcal{L}^{\gamma_j}J)(x_0,x_1)\left(\frac{|\psi_j(x_0)|^2}{\overline{\zeta_{x_1}^{\gamma_j}(x_0)}}+ \frac{|\psi_j(x_1)|^2}{\zeta_{x_0}^{\gamma_j}(x_1)} \right) \, .
\end{multline}
We calculate $\langle g_j^{\ast}, (\widetilde{\mathcal{L}}^{\gamma_j}J)_B g_j\rangle$ similarly. We then note that
\[
\frac{(\mathcal{L}^{\gamma_j}J)(x_0,x_1)}{\overline{\zeta_{x_1}^{\gamma_j}(x_0)}\zeta_{x_0}^{\gamma_j}(x_1)} + (\mathcal{L}^{\gamma_j}J)(x_1,x_0) - \frac{(\widetilde{\mathcal{L}}^{\gamma_j}J)(x_0,x_1)}{\zeta_{x_1}^{\gamma_j}(x_0) \overline{\zeta_{x_0}^{\gamma_j}(x_1)}} - (\widetilde{\mathcal{L}}^{\gamma_j}J)(x_1,x_0) =0 \, ,
\]
using that $\frac{|\zeta_{x_1}^{\gamma}(x_0)|^2}{|m_{x_1}^{\gamma}|^2} = \frac{|\zeta_{x_0}^{\gamma}(x_1)|^2}{|m_{x_0}^{\gamma}|^2}$, by (\ref{eq:mv}). Hence,
\begin{multline*}
\langle f_j^{\ast}, (\mathcal{L}^{\gamma_j}J)_B f_j\rangle - \langle g_j^{\ast}, (\widetilde{\mathcal{L}}^{\gamma_j}J)_B g_j\rangle = \sum_{(x_0,x_1)\in B} (\widetilde{\mathcal{L}}^{\gamma_j}J)(x_0,x_1)\left(\frac{|\psi_j(x_0)|^2}{\zeta_{x_1}^{\gamma_j}(x_0)}+ \frac{|\psi_j(x_1)|^2}{\overline{\zeta_{x_0}^{\gamma_j}(x_1)}} \right)  \\
- \sum_{(x_0,x_1)\in B} (\mathcal{L}^{\gamma_j}J)(x_0,x_1)\left(\frac{|\psi_j(x_0)|^2}{\overline{\zeta_{x_1}^{\gamma_j}(x_0)}}+ \frac{|\psi_j(x_1)|^2}{\zeta_{x_0}^{\gamma_j}(x_1)} \right)  \, .
\end{multline*}
Let $\alpha_{x_0}^{x_1} = \frac{|\zeta_{x_0}^{\gamma}(x_1)|^2}{|2m_{x_0}^{\gamma}|^2N_{\gamma}(x_1)}$, and note that $\alpha_{x_1}^{x_0} = \frac{|\zeta_{x_0}^{\gamma}(x_1)|^2}{|2m_{x_0}^{\gamma}|^2N_{\gamma}(x_0)}$ by (\ref{eq:mv}). Then
\[
\frac{(\widetilde{\mathcal{L}}^{\gamma_j}J)(x_0,x_1)}{\zeta_{x_1}^{\gamma_j}(x_0)} - \frac{(\mathcal{L}^{\gamma_j}J)(x_0,x_1)}{\overline{\zeta_{x_1}^{\gamma_j}(x_0)}} = -2i \left[ \frac{\Im \zeta_{x_1}^{\gamma_j}(x_0)}{|\zeta_{x_1}^{\gamma_j}(x_0)|^2}\alpha_{x_0}^{x_1} J(x_0) - \frac{\Im \zeta_{x_0}^{\gamma_j}(x_1)}{|\zeta_{x_1}^{\gamma_j}(x_0)\zeta_{x_0}^{\gamma_j}(x_1)|^2} \alpha_{x_1}^{x_0}J(x_1)\right]
\]
and
\[
\frac{(\widetilde{\mathcal{L}}^{\gamma_j}J)(x_0,x_1)}{\overline{\zeta_{x_0}^{\gamma_j}(x_1)}} - \frac{(\mathcal{L}^{\gamma_j}J)(x_0,x_1)}{\zeta_{x_0}^{\gamma_j}(x_1)} = 2i \left[ \frac{\Im \zeta_{x_0}^{\gamma_j}(x_1)}{|\zeta_{x_0}^{\gamma_j}(x_1)|^2}\alpha_{x_0}^{x_1} J(x_0) - \frac{\Im \zeta_{x_1}^{\gamma_j}(x_0)}{|\zeta_{x_1}^{\gamma_j}(x_0)\zeta_{x_0}^{\gamma_j}(x_1)|^2}\alpha_{x_1}^{x_0}J(x_1) \right] \, .
\]
Hence,
\begin{align*}
& \langle f_j^{\ast}, (\mathcal{L}^{\gamma_j}J)_B f_j\rangle - \langle g_j^{\ast}, (\widetilde{\mathcal{L}}^{\gamma_j}J)_B g_j\rangle \\
& \qquad = -2i \sum_{x_0\in V} |\psi_j(x_0)|^2J(x_0) \sum_{x_1\sim x_0} \left(\frac{\Im \zeta_{x_1}^{\gamma_j}(x_0)}{|\zeta_{x_1}^{\gamma_j}(x_0)|^2}\alpha_{x_0}^{x_1} +  \frac{\Im \zeta_{x_0}^{\gamma_j}(x_1)}{|\zeta_{x_0}^{\gamma_j}(x_1)\zeta_{x_1}^{\gamma_j}(x_0)|^2}\alpha_{x_0}^{x_1}\right) \\
& \qquad \qquad + 2i \sum_{x_0\in V} |\psi_j(x_0)|^2\sum_{x_1\sim x_0} \left(\frac{\Im \zeta_{x_0}^{\gamma_j}(x_1)}{|\zeta_{x_1}^{\gamma_j}(x_0)\zeta_{x_0}^{\gamma_j}(x_1)|^2} \alpha_{x_1}^{x_0} +  \frac{\Im \zeta_{x_1}^{\gamma_j}(x_0)}{|\zeta_{x_1}^{\gamma_j}(x_0)|^2}\alpha_{x_1}^{x_0} \right)J(x_1)\,.
\end{align*}
Now $\Im \zeta_{x_0}^{\gamma}(x_1) + \Im \zeta_{x_1}^{\gamma}(x_0)\cdot |\zeta_{x_0}^{\gamma}(x_1)|^2 = |\zeta_{x_0}^{\gamma}(x_1)|^2\big[\frac{\Im \zeta_{x_0}^{\gamma}(x_1)}{|\zeta_{x_0}^{\gamma}(x_1)|^2} + \Im \zeta_{x_1}^{\gamma}(x_0)\big] = -2\Im m_{x_1}^{\gamma}\cdot |\zeta_{x_0}^{\gamma}(x_1)|^2$ by (\ref{eq:mv}). Since $2\Im m_{x_1}^{\gamma} = N_{\gamma}(x_1)|2m_{x_1}^{\gamma}|^2$, we get $\frac{\Im \zeta_{x_0}^{\gamma}(x_1) + \Im \zeta_{x_1}^{\gamma}(x_0)|\zeta_{x_0}^{\gamma}(x_1)|^2}{|\zeta_{x_0}^{\gamma}(x_1)\zeta_{x_1}^{\gamma}(x_0)|^2} = \frac{-N_{\gamma}(x_1)|2m_{x_1}^{\gamma}|^2}{|\zeta_{x_1}^{\gamma_j}(x_0)|^2}$. Since $\alpha_{x_0}^{x_1} = \frac{|\zeta_{x_1}^{\gamma}(x_0)|^2}{N_{\gamma}(x_1)|2m_{x_1}^{\gamma}|^2}$ and $\alpha_{x_1}^{x_0} = \frac{|\zeta_{x_1}^{\gamma}(x_0)|^2}{N_{\gamma}(x_0)|2m_{x_1}^{\gamma}|^2}$ by (\ref{eq:mv}), we thus have
\begin{align*}
& \langle f_j^{\ast}, (\mathcal{L}^{\gamma_j}J)_B f_j\rangle - \langle g_j^{\ast}, (\widetilde{\mathcal{L}}^{\gamma_j}J)_B g_j\rangle \\
& \qquad = 2i \sum_{x_0\in V} |\psi_j(x_0)|^2d(x_0)J(x_0) - 2i \sum_{x_0\in V} |\psi_j(x_0)|^2\frac{1}{ N_{\gamma}(x_0)} \sum_{x_1\sim x_0} N_{\gamma}(x_1)J(x_1) \\
& \qquad = 2i\, \langle \psi_j, [(I-P_{\gamma_j})dJ]_G\psi_j\rangle\,.
\end{align*}
Hence,
\[
\vari[(I-P_{\gamma})J] \le \varnbi(\mathcal{L}^{\gamma}d^{-1}J) + \widetilde{\mathrm{Var_{nb}^I}}(\widetilde{\mathcal{L}}^{\gamma}d^{-1}J) \, .
\]
Now note that $P_{\gamma}(\mathcal{S}_{T,\gamma}K) = \frac{1}{T} \sum_{s=1}^T(T-s+1)P_{\gamma}^s K = \mathcal{S}_{T,\gamma}K - K + \widetilde{\mathcal{S}}_{T,\gamma}K$. Hence,
\[
K = (I-P_{\gamma})\mathcal{S}_{T,\gamma}K + \widetilde{\mathcal{S}}_{T,\gamma} K
\]
for any $K\in \mathscr{H}_0$. Taking $K_{\gamma}=J-\langle J\rangle_{\gamma}$, we thus get
\begin{multline*}
\vari(K_{\gamma} ) \le  \vari[(I-P_{\gamma})\mathcal{S}_{T,\gamma}K_{\gamma}] + \vari(\widetilde{\mathcal{S}}_{T,\gamma}K_{\gamma} )\\
 \le \varnbi(\mathcal{L}^{\gamma}d^{-1}\mathcal{S}_{T,\gamma}K_{\gamma}) + \widetilde{\mathrm{Var_{nb}^I}}(\widetilde{\mathcal{L}}^{\gamma}d^{-1}\mathcal{S}_{T,\gamma}K_{\gamma}) + \vari(\widetilde{\mathcal{S}}_{T,\gamma}K_{\gamma} ) \, . \qedhere
\end{multline*}
\end{proof}

We now consider $K\in \mathscr{H}_m$ for $m>0$. Define $\mathcal{T}^{\gamma}:\mathscr{H}_1\to\mathscr{H}_1$ and $\mathcal{O}^{\gamma}_1:\mathscr{H}_1\to\mathscr{H}_0$ by
\begin{equation}\label{e:T}
(\cT^{\gamma}K)(x_0,x_1) = \frac{\overline{\zeta_{x_1}^{\gamma}(x_0)}\zeta_{x_0}^{\gamma}(x_1)}{\overline{\zeta_{x_1}^{\gamma}(x_0)}\zeta_{x_0}^{\gamma}(x_1)+1} K(x_0,x_1)
\end{equation} 
\begin{equation}\label{e:O1}
(\mathcal{O}_1^{\gamma}K)(x_0) = \sum_{x_{-1}\sim x_0} \frac{(\mathcal{T}^{\gamma}K)(x_{-1},x_0)}{\zeta_{x_{-1}}^{\gamma}(x_0)} + \sum_{x_1\sim x_0} \frac{(\mathcal{T}^{\gamma}K)(x_0,x_1)}{\overline{\zeta_{x_1}^{\gamma}(x_0)}} \, .
\end{equation}
For $m\ge 2$, define $\cU^\gamma_m :\mathscr{H}_m\to\mathscr{H}_m$, $\mathcal{O}_m^{\gamma}:\mathscr{H}_m\to\mathscr{H}_{m-1}$ and $\mathcal{P}_m^{\gamma}:\mathscr{H}_m \to \mathscr{H}_{m-2}$ by
\begin{equation}\label{e:Um}
(\cU^{\gamma}_mK)(x_0;x_m) = \overline{\zeta_{x_1}^{\gamma}(x_0)}\zeta_{x_{m-1}}^{\gamma}(x_m)K(x_0;x_m) \, ,
\end{equation}
\begin{multline}\label{e:Om}
(\mathcal{O}_m^{\gamma}K)(x_0;x_{m-1})  = \sum_{x_{-1} \in \mathcal{N}_{x_0} \setminus \{x_1\}} \overline{\zeta_{x_0}^{\gamma}(x_{-1})}K(x_{-1};x_{m-1}) \\
 + \sum_{x_m \in \mathcal{N}_{x_{m-1}} \setminus \{x_{m-2}\}} K(x_0;x_m) \zeta_{x_{m-1}}^{\gamma}(x_m) \, .
\end{multline}
\begin{equation}\label{e:Pm}
(\mathcal{P}_m^{\gamma}K)(x_1;x_{m-1}) = \sum_{x_0 \in \mathcal{N}_{x_1}\setminus \{x_2\}, x_m\in \mathcal{N}_{x_{m-1}}\setminus \{x_{m-2}\}} \overline{\zeta_{x_1}^{\gamma}(x_0)}K(x_0;x_m) \zeta_{x_{m-1}}^{\gamma}(x_m) \, .
\end{equation}

\medskip
\begin{prp}                 \label{prp:passuperieur}
Fix $\eta_0>0$. Suppose $\overline{\psi_j(x_0)}\psi_j(x_1)\in \R$ for any $j=1,\dots,N$ and $(x_0,x_1)\in B$. Then for any $K\in \mathscr{H}_1$, we have
\[
\vari(K-\langle K \rangle_{\gamma} ) \le \varnbi(\mathcal{T}^{\gamma}K) + \vari(\mathcal{O}_1^{\gamma}K - \langle \mathcal{O}_1^{\gamma}K\rangle_{\gamma} ) \, ,
\]
%Moreover, $\langle K\rangle_{\lambda +i\eta_0}  = \langle \mathcal{M}_1^{\gamma}K\rangle_{\lambda +i\eta_0} $ for $\gamma=\lambda+i\eta_0$.
and for any $K \in \mathscr{H}_m$, $m\ge 2$, we have
\[
\vari(K-\langle K \rangle_{\gamma} ) \le \varnbi(\cU^{\gamma}_mK) + \vari(\mathcal{O}^{\gamma}_mK  - \langle \mathcal{O}_m^{\gamma}K\rangle_{\gamma}  ) + \vari( \mathcal{P}^{\gamma}_mK - \langle\mathcal{P}_m^{\gamma}K\rangle_{\gamma} ) \, .
\]
%Moreover, $\langle K \rangle_{\lambda +i\eta_0}  = \langle \mathcal{M}_k^{\gamma}K - \mathcal{P}_k^{\gamma}K\rangle_{\lambda +i\eta_0} $ for $\gamma=\lambda+i\eta_0$.
\end{prp}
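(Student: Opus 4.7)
The approach mirrors Proposition~\ref{lem:pasinitial}: I would derive an exact algebraic identity expressing $\langle\psi_j,K_G\psi_j\rangle$ in terms of $\langle f_j^*,(\cU_m^\gamma K)_B f_j\rangle$ (for $m\ge 2$) or $\langle f_j^*,(\mathcal{T}^\gamma K)_B f_j\rangle$ (for $m=1$), plus ``lower-order'' scalar products involving $\mathcal{O}_m^\gamma K$ and, when $m\ge 2$, $\mathcal{P}_m^\gamma K$. The proposition will then follow by subtracting $\langle K\rangle_\gamma$ and applying the triangle inequality. Neither the eigenvalue equation for $\psi_j$ nor the functional equations \eqref{e:newef}--\eqref{e:newef2} enter; the identity is an exact combinatorial consequence of the definitions.

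For $m\ge 2$, I would expand $\langle f_j^*,(\cU_m^\gamma K)_B f_j\rangle$ via \eqref{eq:kb}, noting that the factors $\overline{\zeta_{x_1}^\gamma(x_0)}$, $\zeta_{x_{m-1}}^\gamma(x_m)$ built into \eqref{e:Um} are designed precisely to cancel the denominators in $\overline{f_j^*(x_0,x_1)}$ and $f_j(x_{m-1},x_m)$. This reduces the bracketed sum to
\[
\sum_{(x_0;x_m)\in B_m}\bigl[\overline{\psi_j(x_0)}-\overline{\zeta_{x_1}^\gamma(x_0)}\overline{\psi_j(x_1)}\bigr]\,K(x_0;x_m)\,\bigl[\psi_j(x_m)-\zeta_{x_{m-1}}^\gamma(x_m)\psi_j(x_{m-1})\bigr].
\]
Expanding produces four bilinear terms in $\psi_j$. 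The ``diagonal'' term $\overline{\psi_j(x_0)}\psi_j(x_m)$ reconstructs $\langle\psi_j,K_G\psi_j\rangle$ directly. The two ``cross'' terms carrying $\overline{\psi_j(x_0)}\psi_j(x_{m-1})$ and $\overline{\psi_j(x_1)}\psi_j(x_m)$ collapse onto $B_{m-1}$ after partial summation over $x_m\in\mathcal{N}_{x_{m-1}}\setminus\{x_{m-2}\}$ and over $x_0\in\mathcal{N}_{x_1}\setminus\{x_2\}$ respectively (the latter after the shift of indices $y_j=x_{j+1}$), and combine exactly into $-\langle\psi_j,(\mathcal{O}_m^\gamma K)_G\psi_j\rangle$ by the definition \eqref{e:Om}. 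The last term, carrying $\overline{\psi_j(x_1)}\psi_j(x_{m-1})$, collapses onto $B_{m-2}$ after summation over both endpoints and yields $+\langle\psi_j,(\mathcal{P}_m^\gamma K)_G\psi_j\rangle$ by \eqref{e:Pm}. Hence
\[
\langle\psi_j,K_G\psi_j\rangle=\langle f_j^*,(\cU_m^\gamma K)_B f_j\rangle+\langle\psi_j,(\mathcal{O}_m^\gamma K)_G\psi_j\rangle-\langle\psi_j,(\mathcal{P}_m^\gamma K)_G\psi_j\rangle.
\]

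The case $m=1$ is special because both orderings $(x_0,x_1)$ and $(x_1,x_0)$ belong to $B$, so expanding $\langle f_j^*,K_B f_j\rangle$ for $K\in\mathscr{H}_1$ produces two contributions $\frac{K(x_0,x_1)}{\overline{\zeta_{x_1}^\gamma(x_0)}\zeta_{x_0}^\gamma(x_1)}\overline{\psi_j(x_0)}\psi_j(x_1)$ and $K(x_0,x_1)\overline{\psi_j(x_1)}\psi_j(x_0)$ which, after relabeling $x_0\leftrightarrow x_1$ in the second, mix into a single $\overline{\psi_j(x_0)}\psi_j(x_1)$-term with coefficient $\frac{K(x_0,x_1)}{\overline{\zeta_{x_1}^\gamma(x_0)}\zeta_{x_0}^\gamma(x_1)}+K(x_1,x_0)$. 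The rational factor $|\zeta_{x_0}^\gamma(x_1)\zeta_{x_1}^\gamma(x_0)|^2/(|\zeta_{x_0}^\gamma(x_1)\zeta_{x_1}^\gamma(x_0)|^2-1)$ built into $\mathcal{T}^\gamma$ is exactly the scalar inverting this $2\times 2$ mixing: running the same computation with $\mathcal{T}^\gamma K$ in place of $K$ gives back $\langle\psi_j,K_G\psi_j\rangle$ on the ``off-diagonal'' part, while the $|\psi_j(x)|^2$-terms package through \eqref{e:O1} into $\langle\psi_j,(\mathcal{O}_1^\gamma K)_G\psi_j\rangle$. Thus
\[
\langle\psi_j,K_G\psi_j\rangle=\langle f_j^*,(\mathcal{T}^\gamma K)_B f_j\rangle+\langle\psi_j,(\mathcal{O}_1^\gamma K)_G\psi_j\rangle.
\]

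The proposition then follows by subtracting $\langle K\rangle_\gamma$ from both sides and using $|a+b|\le|a|+|b|$, provided the compatibility relations $\langle K\rangle_\gamma=\langle\mathcal{O}_1^\gamma K\rangle_\gamma$ (for $m=1$) and $\langle K\rangle_\gamma=\langle\mathcal{O}_m^\gamma K\rangle_\gamma-\langle\mathcal{P}_m^\gamma K\rangle_\gamma$ (for $m\ge 2$) hold at the level of the averaging functional $\langle\cdot\rangle_\gamma$. Verifying these is the step I expect to be the main obstacle. For $m\ge 2$ it reduces at once to Corollary~\ref{cor:psiiden} equation \eqref{eq:psiiden2}: using $\Phi_\gamma^N(\tilde x,\tilde y)=\Psi_{\gamma,x}(y)/\sum_{\tilde x'\in\mathcal{D}_N}N_\gamma(x')$, substituting \eqref{e:Om}--\eqref{e:Pm} into the right-hand sides and collecting the coefficient of each $K(x_0;x_m)$, the identity to prove is exactly \eqref{eq:psiiden2} written for the path $(x_0;x_m)$, summed against $K$ over $B_m$. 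For $m=1$, an analogous but more intricate calculation, using \eqref{eq:psiiden1} together with the algebraic identity $1/\zeta_y^\gamma(x)-\zeta_x^\gamma(y)=2m_x^\gamma$ from \eqref{eq:mv}, should reduce the coefficient of each $K(x_0,x_1)$ in the expansion of $\sum_x N_\gamma(x)(\mathcal{O}_1^\gamma K)(x)$ to precisely $\Psi_{\gamma,x_0}(x_1)$, matching the coefficient of $K(x_0,x_1)$ in $\sum_{\tilde x'\in\mathcal{D}_N}N_\gamma(x')\cdot\langle K\rangle_\gamma$.
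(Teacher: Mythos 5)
Your approach is exactly the paper's: the $m\ge 2$ identity
\[
\langle f_j^*,(\cU_m^{\gamma_j} K)_B f_j\rangle=\langle\psi_j,(K-\mathcal{O}_m^{\gamma_j} K+\mathcal{P}_m^{\gamma_j} K)_G\psi_j\rangle
\]
and the $m=1$ inversion via $\mathcal{T}^{\gamma}$ are both obtained by the direct expansions you describe, and the compatibility $\langle K\rangle_\gamma=\langle\mathcal{O}_m^\gamma K-\mathcal{P}_m^\gamma K\rangle_\gamma$ for $m\ge 2$ is indeed \eqref{eq:psiiden2} summed against $K$ over $B_m$. One correction for the $m=1$ compatibility $\langle\mathcal{O}_1^\gamma K\rangle_\gamma=\langle K\rangle_\gamma$: the relevant identity is again \eqref{eq:psiiden2}, now specialized to $k=1$ and combined with the symmetry $\Psi_{\gamma,x}(y)=\Psi_{\gamma,y}(x)$, not \eqref{eq:psiiden1}. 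The latter carries the residual $|\Im\zeta_{v_0}^\gamma(v_1)|$ and has the coefficient pattern $(1,\ \overline{\zeta_{v_0}^\gamma(v_1)},\ \zeta_{v_0}^\gamma(v_1),\ |\zeta_{v_0}^\gamma(v_1)|^2)$, which does not match the denominators $1/\zeta_{x_0}^\gamma(x_1)$, $1/\overline{\zeta_{x_1}^\gamma(x_0)}$ that $\mathcal{O}_1^\gamma$ introduces; by contrast, \eqref{eq:psiiden2} at $k=1$ yields directly
\[
\frac{\Phi_\gamma(x_1,x_1)}{\zeta_{x_0}^\gamma(x_1)}+\frac{\Phi_\gamma(x_0,x_0)}{\overline{\zeta_{x_1}^\gamma(x_0)}}=\frac{1+\overline{\zeta_{x_1}^\gamma(x_0)}\zeta_{x_0}^\gamma(x_1)}{\overline{\zeta_{x_1}^\gamma(x_0)}\zeta_{x_0}^\gamma(x_1)}\,\Phi_\gamma(x_0,x_1),
\]
after which expanding $\mathcal{T}^\gamma K$ and collecting the $K(x_0,x_1)$ and $K(x_1,x_0)$ contributions closes the case with no further use of \eqref{eq:mv}.
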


\medskip

\begin{proof}
Let $K\in \mathscr{H}_1$. Since $\overline{\psi_j(x_0)}\psi_j(x_1)\in \R$ for all $(x_0,x_1)$, we have
\begin{multline*}
\langle f_j^{\ast},(\cT^{\gamma_j}K)_Bf_j\rangle = \sum_{(x_0,x_1)} \overline{\psi_j(x_0)}\psi_j(x_1) \Big(\frac{1}{\overline{\zeta_{x_1}^{\gamma_j}(x_0)}\zeta_{x_0}^{\gamma_j}(x_1)}+1\Big)\cT^{\gamma_j}(x_0,x_1) \\
-\sum_{(x_0,x_1)} (\cT^{\gamma_j}K)(x_0,x_1)\Big(\frac{|\psi_j(x_0)|^2}{\overline{\zeta_{x_1}^{\gamma_j}(x_0)}} + \frac{|\psi_j(x_1)|^2}{\zeta_{x_0}^{\gamma_j}(x_1)}\Big) \, .
\end{multline*}
By definition of $\cT^{\gamma}$ and $\cO_1^{\gamma}$, this implies
\[
\langle f_j^{\ast}, (\mathcal{T}^{\gamma_j}K)_B f_j\rangle = \langle \psi_j, K_G\psi_j \rangle -\langle \psi_j, (\mathcal{O}^{\gamma_j}_1K)_G \psi_j\rangle
\]
and thus
\[
\vari(K - \langle K\rangle_{\gamma} ) \le \varnbi(\mathcal{T}^{\gamma} K) + \vari(\mathcal{O}_1^{\gamma}K - \langle K\rangle_{\gamma} ) \, .
\]
Recall the definition of $\langle K \rangle_{\gamma}$ in \eqref{e:Klambda}. We claim that
\begin{equation}\label{e:checkO1}
\langle \mathcal{O}^{\gamma}_1K\rangle_{\gamma}  = \langle K\rangle_{\gamma}  \, .
\end{equation}
Indeed, we have $\langle K\rangle_{\gamma}  = \sum_{(x_0,x_1)\in B} K(x_0,x_1)\Phi_{\gamma}(x_0,x_1)$. On the other hand,
\[
\langle \mathcal{O}_1^{\gamma}K\rangle_{\gamma}  = \sum_{(x_0,x_1)\in B} \frac{(\mathcal{T}^{\gamma}K)(x_0,x_1)\Phi_{\gamma}(x_1,x_1)}{\zeta_{x_0}^{\gamma}(x_1)} + \sum_{(x_0,x_1)\in B} \frac{(\mathcal{T}^{\gamma}K)(x_0,x_1)\Phi_{\gamma}(x_0,x_0)}{\overline{\zeta_{x_1}^{\gamma}(x_0)}} \,.
\]
But $\frac{\Phi_{\gamma}(x_1,x_1)}{\zeta_{x_0}^{\gamma}(x_1)} + \frac{\Phi_{\gamma}(x_0,x_0)}{\overline{\zeta_{x_1}^{\gamma}(x_0)}} = \frac{1+\overline{\zeta_{x_1}^{\gamma}(x_0)}\zeta_{x_0}^{\gamma}(x_1)}{\zeta_{x_0}^{\gamma}(x_1)\overline{\zeta_{x_1}^{\gamma}(x_0)}} \Phi_{\gamma}(x_0,x_1)$ by \eqref{eq:psiiden2} and the fact that $\Psi_{\gamma,x}(y)=\Psi_{\gamma,y}(x)$ by \eqref{eq:greensym}, so that $\Phi_{\gamma}(x,y)=\Phi_{\gamma}(y,x)$. Hence,
\[
\langle \mathcal{O}_1^{\gamma}K\rangle_{\gamma}  = \sum_{(x_0,x_1)\in B}  \frac{\overline{\zeta_{x_1}^{\gamma}(x_0)}\zeta_{x_0}^{\gamma}(x_1)}{\overline{\zeta_{x_1}^{\gamma}(x_0)}\zeta_{x_0}^{\gamma}(x_1)+1} K(x_0,x_1) \cdot \frac{1+\overline{\zeta_{x_1}^{\gamma}(x_0)}\zeta_{x_0}^{\gamma}(x_1)}{\zeta_{x_0}^{\gamma}(x_1)\overline{\zeta_{x_1}^{\gamma}(x_0)}} \Phi_{\gamma}(x_0,x_1)= \langle K\rangle_{\gamma}  \, .
\]
This proves the proposition for $m=1$. Now let $m \ge 2$. It is easily checked that
\[
\langle f_j^{\ast},(\cU^{\gamma_j}_mK)_Bf_j\rangle = \langle \psi_j, (K-\mathcal{O}^{\gamma_j}_mK+\mathcal{P}^{\gamma_j}_mK)_G\psi_j\rangle \, .
\]
and thus
\begin{equation}     \label{eq:passuperieur}
\vari(K-\langle K \rangle_{\gamma} ) \le \varnbi(\cU^{\gamma}_mK) + \vari(\mathcal{O}^{\gamma}_mK - \mathcal{P}^{\gamma}_mK - \langle K\rangle_{\gamma} ) \, .
\end{equation}
We now note that
\begin{equation}                  \label{eq:gammav}
\langle K\rangle_{\gamma}  = \langle \mathcal{O}_m^{\gamma}K - \mathcal{P}_m^{\gamma}K\rangle_{\gamma}  \, .
\end{equation}
Indeed, we have
\begin{multline*}
\langle \mathcal{O}_m^{\gamma}K - \mathcal{P}_m^{\gamma}K\rangle_{\gamma}  = \sum_{(x_{-1};x_{m-1})\in B_m} \overline{\zeta_{x_0}^{\gamma}(x_{-1})}K(x_{-1};x_{m-1}) \Phi_{\gamma}(x_0,x_{m-1}) \\
  + \sum_{(x_0;x_m) \in B_m} K(x_0;x_m)\zeta_{x_{m-1}}^{\gamma}(x_m)\Phi_{\gamma}(x_0,x_{m-1})  \\
 - \sum_{(x_0;x_m) \in B_m} \overline{\zeta_{x_1}^{\gamma}(x_0)}K(x_0;x_m)\zeta_{x_{m-1}}^{\gamma}(x_m) \Phi_{\gamma}(x_1,x_{m-1}) \, ,
\end{multline*}
so \eqref{eq:gammav} follows from \eqref{eq:psiiden2}. Using \eqref{eq:passuperieur}, this completes the proof.
\end{proof}

We introduce one last operator $\mathcal{X}_{\gamma}:\mathscr{H}_0\to\mathscr{H}_0$ given by
\[
\mathcal{X}_{\gamma}K = \langle K \rangle_{\gamma} \mathbf{1} \,.
\]
The following corollary then holds assuming all eigenfunctions $\psi_j$ are real. Note that this assumption is not needed in the special case $m=0$, corresponding to Theorem~\ref{thm:2}.

\begin{cor}    \label{cor:recurrence}
Suppose we have shown that $\lim_{\eta_0 \downarrow 0} \limsup_{N\to \infty}\varnbi(\cF_{\gamma}K)=0$, $\lim_{\eta_0 \downarrow 0} \limsup_{N\to \infty}\widetilde{\varnbi}(\widetilde{\cF}_{\gamma}K)=0$
for any $\cF_{\gamma}:\mathscr{H}_m \to \mathscr{H}_k$ that is a polynomial combination of $\mathcal{L}^{\gamma}d^{-1}\mathcal{S}_{T,\gamma}$, $\mathcal{X}_{\gamma}$, $\mathcal{U}^{\gamma}_j$, $\mathcal{T}^{\gamma}$, $\mathcal{O}_j^{\gamma}$ and $\mathcal{P}_j^{\gamma}$ ($T$ fixed), $\widetilde{\cF}_{\gamma}$ the same combination with $\mathcal{L}^{\gamma}$ replaced by $\widetilde{\mathcal{L}}^{\gamma}$,
and that 
\begin{equation}\label{e:Tinfty}
\lim_{T\To +\infty}\lim_{\eta_0\downarrow 0} \limsup_{N\to\infty} \vari\big(\widetilde{\mathcal{S}}_{T,\gamma} (C_{\gamma}K - \langle C_{\gamma}K\rangle_{\gamma}) \big) = 0\,,
\end{equation}
where $C_{\gamma}:\mathscr{H}_m \to \mathscr{H}_0$ is any polynomial combination of $\cU_j^{\gamma}$, $\cT^{\gamma}$, $\cO_j^{\gamma}$ and $\cP_j^{\gamma}$. 

Then it will follow that $\lim_{\eta_0 \downarrow 0} \limsup_{N\to \infty} \vari(K-\langle K\rangle_{\gamma} ) =0$ for any $K \in \mathscr{H}_m$. In other words, Theorem \ref{thm:1} will follow.
\end{cor}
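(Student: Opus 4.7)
My plan is to prove the corollary by induction on $m$, the path length of $K \in \mathscr{H}_m$, by combining Proposition~\ref{prp:passuperieur} (which lowers the degree) with Proposition~\ref{lem:pasinitial} (which handles the base case $m=0$). The induction statement I will establish is that for any $K \in \mathscr{H}_m$ there is a \emph{finite} decomposition of the form
\[
\vari(K - \langle K\rangle_\gamma) \le \sum_\ell \varnbi(\cF_{\gamma,\ell} K) + \sum_\ell \widetilde{\varnbi}(\cF'_{\gamma,\ell} K) + \sum_\ell \vari\bigl(\widetilde{\cS}_{T,\gamma} C_{\gamma,\ell} K - \langle C_{\gamma,\ell} K\rangle_\gamma\bigr),
\]
where each $\cF_{\gamma,\ell}, \cF'_{\gamma,\ell}$ is a polynomial combination of $\cL^\gamma d^{-1}\cS_{T,\gamma}$, $\widetilde{\cL}^\gamma d^{-1}\cS_{T,\gamma}$, $\cU_j^\gamma$, $\cT^\gamma$, $\cO_j^\gamma$, $\cP_j^\gamma$, and each $C_{\gamma,\ell}:\mathscr{H}_m \to \mathscr{H}_0$ is a polynomial combination of $\cU_j^\gamma, \cT^\gamma, \cO_j^\gamma, \cP_j^\gamma$ alone.

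The base case $m=0$ is precisely Proposition~\ref{lem:pasinitial}, taken with $C_{\gamma,1} = \id$. For the inductive step with $m \ge 1$, I will apply Proposition~\ref{prp:passuperieur}: when $m=1$, it produces $\varnbi(\cT^\gamma K)$ plus $\vari(\cO_1^\gamma K - \langle \cO_1^\gamma K\rangle_\gamma)$ with $\cO_1^\gamma K \in \mathscr{H}_0$; when $m \ge 2$, it produces $\varnbi(\cU_m^\gamma K)$ plus variance terms for $\cO_m^\gamma K \in \mathscr{H}_{m-1}$ and $\cP_m^\gamma K \in \mathscr{H}_{m-2}$. Applying the inductive hypothesis to these strictly-lower-degree arguments gives the desired decomposition after composing the resulting $\cF$- and $C$-operators with $\cO_m^\gamma$ or $\cP_m^\gamma$ on the right; since the latter are themselves generators, closure inside the class of polynomial combinations is automatic. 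A crude count shows the number of terms is bounded by $3^m$, hence finite.

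Having fixed such a decomposition for a given $K$, I take $\limsup_{N \to \infty}$ and then $\lim_{\eta_0 \downarrow 0}$. By hypothesis, every $\varnbi(\cF_{\gamma,\ell}K)$ and $\widetilde{\varnbi}(\cF'_{\gamma,\ell}K)$ term vanishes in this limit, leaving the finite sum
\[
\lim_{\eta_0 \downarrow 0} \limsup_{N \to \infty} \vari(K - \langle K\rangle_\gamma) \le \sum_\ell \lim_{\eta_0 \downarrow 0} \limsup_{N \to \infty} \vari\bigl(\widetilde{\cS}_{T,\gamma} C_{\gamma,\ell} K - \langle C_{\gamma,\ell} K\rangle_\gamma\bigr).
\]
The left-hand side is independent of $T$, while each summand on the right vanishes as $T \to \infty$ by assumption \eqref{e:Tinfty}; sending $T \to \infty$ therefore yields $0$. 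Unwinding the definition of $\vari$ gives $\frac{1}{N}\sum_{\lambda_j \in I} |\langle \psi_j, K_G \psi_j\rangle - \langle K\rangle_{\lambda_j + i\eta_0}| \to 0$, which is exactly Theorem~\ref{thm:1}. The real-eigenfunction variant uses the same induction with $\widetilde{\cT}^\gamma, \widetilde{\cO}_1^\gamma$ in place of $\cT^\gamma, \cO_1^\gamma$ at the $m=1$ step, via Remark~\ref{rem:k=1}.

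The argument is essentially bookkeeping once Propositions~\ref{lem:pasinitial} and~\ref{prp:passuperieur} are in hand; the only point requiring genuine care is verifying that the operator classes are closed under the compositions produced by the induction, so that the hypotheses of the corollary actually apply to the operators that appear. The finiteness of the number of terms (which is crucial, since we interchange the finite sum with $\limsup_N$) also needs to be checked, but this is immediate from the degree-reduction by $1$ or $2$ at each inductive step.
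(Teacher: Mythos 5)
Your proof is correct and takes essentially the same route as the paper: induction on $m$ with Proposition~\ref{lem:pasinitial} as the base case and Proposition~\ref{prp:passuperieur} as the degree-reducing inductive step, the operator class being closed under the compositions that arise. The paper states the induction more tersely (just illustrating $m=2$); your version simply spells out the bookkeeping, the $3^m$ bound on the number of terms, and the $T\to\infty$ limit, none of which changes the argument.
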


\begin{proof}
The case $m=0$ holds by Proposition~\ref{lem:pasinitial} and the triangle inequality $\varnbi(K-\langle K\rangle_{\gamma}) \le \varnbi(K) + \varnbi(\mathcal{X}_{\gamma}K)$. Here, $\cF_{\gamma}$ has the form $\mathcal{L}^{\gamma}d^{-1}S_{T,\gamma}$, $\mathcal{L}^{\gamma}d^{-1}S_{T,\gamma}\mathcal{X}_{\gamma}$ and $C_{\gamma}=I$.

The result for higher $m$ follows by induction using Proposition~\ref{prp:passuperieur}. For example, for $m=2$, the conclusion is obtained by taking $\cF_{\gamma}$ of the form $\mathcal{U}^{\gamma}_2$, $\mathcal{T}^{\gamma}\mathcal{O}_2^{\gamma}$, $\mathcal{L}^{\gamma}d^{-1}\mathcal{S}_{T,\gamma}\mathcal{O}_1^{\gamma}\mathcal{O}_2^{\gamma}$, $\mathcal{L}^{\gamma}d^{-1}\mathcal{S}_{T,\gamma}\mathcal{X}_{\gamma}\mathcal{O}_1^{\gamma}\mathcal{O}_2^{\gamma}$, $\mathcal{L}^{\gamma}d^{-1}\mathcal{S}_{T,\gamma}\cP_2^{\gamma}$, $\mathcal{L}^{\gamma}d^{-1}\mathcal{S}_{T,\gamma}\mathcal{X}_{\gamma}\cP_2^{\gamma}$, and $C_{\gamma}$ of the form $\mathcal{O}_1^{\gamma}\mathcal{O}_2^{\gamma}$ and $\mathcal{P}_2^{\gamma}$.
\end{proof}

\begin{rem}\label{rem:holverif}
All the operators in Corollary~\ref{cor:recurrence} satisfy the assumptions \textbf{(Hol)} from Definition~\ref{d:hol}. Indeed, the first two points of \textbf{(Hol)} are clear (the derivative of any Green function such as $\zeta^z$ or $G^z$ may be assessed for example using the resolvent equation, yielding $|\partial_z \zeta^z| \le (\Im z)^{-2}$).

For the third point, we should estimate $\frac{1}{N}\sum_{\omega\in B_k} |\cF_{\gamma}K(\omega)|^s$. Assume first that $\cX_{\gamma}$ is not contained in $\cF_{\gamma}$. Then assuming $\|K\|_{\infty}\le 1$, we write
\[
|\cF_{\gamma}K(\omega)| = \left|\sum_{\omega'\in B_m} \cF_{\gamma}(\omega,\omega')K(\omega')\right| \le \sum_{\omega'\in B_m} |\cF_{\gamma}(\omega,\omega')| \,.
\]
Now $\cF_{\gamma}=A^{(1)}\cdots A^{(\ell)}$ is a composition of operators $A^{(r)}$, each of which is either a multiplication or of nearest-neighbour type (with $\cS_{T,\gamma}$ a composition of Laplacians). So the sum $\sum_{\omega'} A^{(r)}(\omega,\omega')$ reduces to $\sum_{\omega'\approx \omega} A^{(r)}(\omega,\omega')$, where depending on the operator, $\omega'\approx \omega$ means $\omega'=\omega$, $\omega'\sim \omega$, $\omega'\in \{o_{\omega},t_{\omega}\}$ (origin and terminus of $\omega$), $\omega'\in \{(x,\omega),(\omega,y):x\sim o_{\omega},y \sim t_{\omega}\}$ or $\omega'\in \{(x,\omega,y):x\sim o_{\omega},y\sim t_{\omega}\}$. In any case, $\# \{\omega'\approx \omega\} \le 2D$. So $\cF_{\gamma}(\omega,\omega') = \sum_{\omega_1\approx \omega}\dots\sum_{\omega_{\ell-1}\approx \omega_{\ell-2}} A^{(1)}(\omega,\omega_1)\dots A^{(\ell)}(\omega_{\ell-1},\omega')$ and thus $\sum_{\omega'\in B_m} |\cF_{\gamma}(\omega,\omega')| \le \sum_{\omega_1\approx \omega} \dots \sum_{\omega_\ell\approx \omega_{\ell-1}} |A^{(1)}(\omega,\omega_1)\dots A^{(\ell)}(\omega_{\ell-1},\omega_\ell)|$. It follows that $|\cF_{\gamma}K(\omega)|^s \le (2\ell D)^{s-1} \sum_{\omega_1\approx \omega}\dots\sum_{\omega_\ell\approx \omega_{\ell-1}} |A^{(1)}(\omega,\omega_1)\dots A^{(\ell)}(\omega_{\ell-1},\omega_\ell)|^s$. Using H\"older's inequality, if $\sum_{r=1}^{\ell}\frac{1}{p_r} =1$, we get using Remark~\ref{rem:IDS1} that
\begin{multline}\label{e:genboun}
\frac{1}{N}\sum_{\omega\in B_k} |\cF_{\gamma}K(\omega)|^s \le C_{D,\ell,k,s} \prod_{r=1}^{\ell} \left(\frac{1}{N}\sum_{\omega_{r-1}}\sum_{\omega_r\approx \omega_{r-1}} |A^{(r)}(\omega_{r-1},\omega_r)|^{sp_r}\right)^{1/p_r}\\
\Lim_{N\To +\infty } C_{D,\ell,k,s}\prod_{r=1}^{\ell}\expect\left[\sum_{\omega_{r-1}\,:\,o_{\omega_{r-1}}=o}\sum_{\omega_r\approx \omega_{r-1}} |\hat{A}^{(r)}(\omega_{r-1},\omega_r)|^{sp_r}\right]^{1/p_r}
\end{multline}
uniformly in $\lambda$. Here, $\ell$ may depend on $T$. By definition, all $\hat{A}^{(r)}(\omega,\omega')$ are well-behaved functions of $\hat{\zeta}$ and $\cG^z$, so the previous expression is finite using Remark~\ref{rem:IDS2}. For example, if $\cF^{\gamma}=\cT^{\gamma}$, we are reduced to estimating $\expect\big(\sum_{o'\sim o} |\frac{\overline{\zeta^{\gamma}_{o'}(o)}\hat{\zeta}^{\gamma}_o(o')}{\overline{\hat{\zeta}_{o'}^{\gamma}(o)}\hat{\zeta}_o^{\gamma}(o')+1}|^s\big)$. Using \eqref{eq:mv}, we observe that $\frac{|\hat{\zeta}_o^{\gamma}(o')|}{|\hat{\zeta}_o^{\gamma}(o')+\overline{\hat{\zeta}_{o'}^{\gamma}(o)}^{-1}|} = \frac{|\hat{\zeta}_o^{\gamma}(o')|}{|2\Re \hat{\zeta}_o^{\gamma}(o')+\overline{2\hat{m}_o^{\gamma}}|} \le \frac{|\hat{\zeta}_o^{\gamma}(o')|}{2\Im \hat{m}_o^{\gamma}}$, and we know from Remark~\ref{rem:IDS2} that $\sup_{\gamma}\expect\big(\sum_{o'\sim o} \frac{|\hat{\zeta}_o^{\gamma}(o')|^s}{(2\Im \hat{m}_o^{\gamma})^s}\big)<\infty$. Similarly, if $\cF^{\gamma}=\cL^{\gamma}d^{-1}\cS_{T,\gamma}$, then $|(\cF^{\gamma}K)(e)| \le \frac{|\zeta_{o_e}^{\gamma}(t_e)|^2}{|m_{o_e}^{\gamma}|^2}\frac{1}{N_{\gamma}(o_e)N_{\gamma}(t_e)}\sum_{r=0}^{T-1}[|(P^rd^{-1}N_{\gamma}K)(o_e)| + \frac{|(P^rd^{-1}N_{\gamma}K)(t_e)|}{|\zeta^{\gamma}_{o_e}(t_e)\zeta^{\gamma}_{t_e}(o_e)|}]$, so \eqref{e:genboun} reduces to
\begin{multline*}
C_{D,T,s}\expect\left(\sum_{o'\sim o} \Big(\frac{|\hat{\zeta}_o^{\gamma}(o')|^2}{|\hat{m}_o^{\gamma}|^2\hat{N}_{\gamma}(o)\hat{N}_{\gamma}(o')}\Big)^{p_1 s}\right)^{1/p_1}\expect\left(\hat{N}_{\gamma}(o)^{p_2 s}\right)^{1/p_2} \\ 
+ C_{D,T,s}\expect\left(\sum_{o'\sim o}\Big(\frac{|\hat{\zeta}_o^{\gamma}(o')|}{|\hat{m}_o^{\gamma}|^2\hat{N}_{\gamma}(o)\hat{N}_{\gamma}(o')|\hat{\zeta}_{o'}^{\gamma}(o)|}\Big)^{p_1s}\right)^{1/p_1}\expect\left(\hat{N}_{\gamma}(o)^{p_2s}\right)^{1/p_2}
\end{multline*}
for some $p_1,p_2$.

The previous discussion was under the assumption $A^{(r)}\neq \cX_{\gamma}$. If $\cF_{\gamma}=F_1^{\gamma} \cX_{\gamma} F_2^{\gamma}$  with $F_1^{\gamma}$ and $F_2^{\gamma}$ as in the previous paragraph, we write $\cF_{\gamma} K(\omega) = \sum_{\omega'} F_1^{\gamma}(\omega,\omega') \langle F_2^{\gamma}K\rangle_{\gamma}$, with $|\langle F_2^{\gamma} K\rangle_{\gamma}| = |\frac{\sum_x N_{\gamma}(x) (F_2^{\gamma}K)(x)}{\sum_x N_{\gamma}(x)}| = |\frac{\sum_x\sum_w N_{\gamma}(x)F_2^{\gamma}(x,w)K(w)}{\sum_x N_{\gamma}(x)}| \le \frac{\sum_x \sum_w N_{\gamma}(x)|F_2^{\gamma}(x,w)|}{\sum_x N_{\gamma}(x)}$. Hence, $|\cF_{\gamma}K(\omega)| \le \sum_{\omega'}|F_1^{\gamma}(\omega,\omega')| \cdot \frac{N}{\sum_x N_{\gamma}(x)} \cdot \frac{1}{N}\sum_x \sum_w N_{\gamma}(x)|F_2^{\gamma}(x,w)|$. Applying H\"older's inequality to $\frac{1}{N}\sum_{\omega\in B_k}(\sum_{\omega'}|F_1^{\gamma}(\omega,\omega')|)^s$ and $(\frac{1}{N}\sum_x N_{\gamma}(x)\sum_w|F_2^{\gamma}(x,w)|)^s$ and taking the limit, we obtain a uniform control as before. Thus, all points of \textbf{(Hol)} are satisfied.
\end{rem}

In view of Remark~\ref{rem:holverif}, we may use Theorem \ref{t:thm4} to conclude that for the $\cF_{\gamma}$ in Corollary~\ref{cor:recurrence}, we have $\lim_{\eta_0 \downarrow 0} \limsup_{N\to \infty}\varnbi(\cF_{\gamma}K)=0$.

Since $\widetilde{\varnbi}(\widetilde{\cF}_{\gamma}K)$ is defined exactly like $\varnbi(\cF_{\gamma}K)$ except that $\zeta$ is replaced by $\overline{\zeta}$, it is clear that it can be shown to vanish asymptotically using the same arguments, simply replacing $\zeta$ by $\overline{\zeta}$ when necessary. By Corollary \ref{cor:recurrence}, to finish the proof of Theorem \ref{thm:1}, it suffices to show \eqref{e:Tinfty}. This is what we do now.

Recall that we introduced $\|K\|_{\gamma}$ for $K\in \mathscr{H}_k$, $k \ge 1$, in \eqref{eq:normgamma}. For $K\in \mathscr{H}_0$, we let
\[
\|K\|_{\gamma}^2 = \|N_{\gamma}K\|_{\mathscr{H}_0}^2 = \frac{1}{N}\sum_{x\in V} N_{\gamma}^2(x)|K(x)|^2 \, .
\]
We also define $(Y_{\gamma} K)(x) = \frac{d(x)}{N_{\gamma}(x)} \cdot \frac{\sum_{y\in V} N_{\gamma}(y)K(y)}{\sum_{y\in V}d(y)}$. Denoting $\langle J\rangle_U := \frac{1}{N}\sum_{x\in V} J(y)$ the uniform average of $J$, we have $Y_{\gamma}K = \frac{\langle N_{\gamma}K\rangle_U}{\langle d\rangle_U} \cdot \frac{d}{N_{\gamma}} $. Fix $I=(a,b)\subset I_1$ as in Section~\ref{s:proof1}.

\begin{prp}
Under assumptions \emph{\textbf{(BSCT)}}, \emph{\textbf{(Green)}}, if $K^\gamma\in \mathscr{H}_0$ satisfies the set of assumptions \emph{\textbf{(Hol)}}, then for any interval $I=(a,b)$ as above,
\begin{multline*}
\lim_{\eta_0\downarrow 0}\limsup_{N\to +\infty}\vari(\widetilde{\mathcal{S}}_{T,\gamma}K^{\gamma}-Y_{\gamma}K^{\gamma})^2 \\
\leq \frac{D\,|I|}{\beta^2 T^2}\lim_{\eta_0 \downarrow 0}\lim_{\eta\downarrow 0}\limsup_{N\to \infty} \int_{a-2\eta}^{b+2\eta} \|K^{\lambda+i(\eta^4+\eta_0)}-Y_{\lambda+i(\eta^4+\eta_0)}K^{\lambda+i(\eta^4+\eta_0)}\|_{\lambda+i(\eta^4+\eta_0)}^2\, \dd \lambda \, .
\end{multline*}
\end{prp}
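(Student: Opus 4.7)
The proof decomposes naturally into two independent estimates. The first is a \emph{mixing bound} for $\widetilde{\mathcal{S}}_{T,\gamma}$ in the weighted norm $\|\cdot\|_{\gamma}$:
\[
\|\widetilde{\mathcal{S}}_{T,\gamma}K^{\gamma} - Y_{\gamma}K^{\gamma}\|_{\gamma}^2 \,\le\, \frac{D}{\beta^2 T^2}\,\|K^{\gamma} - Y_{\gamma}K^{\gamma}\|_{\gamma}^2 \, .
\]
The second is a direct-variance analog of Theorem~\ref{thm:upvar}: for every family $L^{\gamma}\in\mathscr{H}_0$ satisfying \textbf{(Hol)},
\[
\lim_{\eta_0\downarrow 0}\limsup_{N\to\infty}\vari(L^{\gamma})^2 \,\le\, |I|\lim_{\eta_0\downarrow 0}\lim_{\eta\downarrow 0}\limsup_{N\to\infty}\int_{a-2\eta}^{b+2\eta}\|L^{\lambda+i(\eta^4+\eta_0)}\|_{\lambda+i(\eta^4+\eta_0)}^2\,\dd\lambda \, .
\]
The proposition then follows by applying the second estimate to $L^{\gamma}:=\widetilde{\mathcal{S}}_{T,\gamma}K^{\gamma}-Y_{\gamma}K^{\gamma}$, which inherits \textbf{(Hol)} from $K^{\gamma}$ since both $\widetilde{\mathcal{S}}_{T,\gamma}$ and $Y_{\gamma}$ depend analytically on $\gamma$ through $N_{\gamma}$ and are uniformly controlled by \textbf{(Green)}.

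For the mixing estimate, the plan is to exploit the conjugacy $P_{\gamma}=(d/N_{\gamma})P(N_{\gamma}/d)$. Since $P\mathbf{1}=\mathbf{1}$ and $Y_{\gamma}K=c(K,\gamma)\,d/N_{\gamma}$ with $c(K,\gamma):=\langle N_{\gamma}K\rangle_U/\langle d\rangle_U$, one has $(N_{\gamma}/d)Y_{\gamma}K=c\cdot\mathbf{1}$, so $P_{\gamma}Y_{\gamma}K=Y_{\gamma}K$ and in particular $\widetilde{\mathcal{S}}_{T,\gamma}Y_{\gamma}K=Y_{\gamma}K$. Moreover, the very definition of $c(K,\gamma)$ makes $f:=(N_{\gamma}/d)(K-Y_{\gamma}K)$ orthogonal to $\mathbf{1}$ in $\ell^2(V,d)$. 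The spectral gap \textbf{(EXP)} then yields $\|P^s f\|_{\ell^2(V,d)}\le(1-\beta)^s\|f\|_{\ell^2(V,d)}$, and the elementary bounds $1\le d\le D$ convert this into $\|P_{\gamma}^s(K-Y_{\gamma}K)\|_{\gamma}\le\sqrt{D}(1-\beta)^s\|K-Y_{\gamma}K\|_{\gamma}$. Writing $\widetilde{\mathcal{S}}_{T,\gamma}K-Y_{\gamma}K=T^{-1}\sum_{s=1}^{T}P_{\gamma}^{s}(K-Y_{\gamma}K)$, the triangle inequality and the geometric-series bound $\sum_{s\ge 1}(1-\beta)^s\le 1/\beta$ yield the claim.

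The variance bound is proved along the blueprint of Section~\ref{s:proof1}, and is considerably simpler because $L\in\mathscr{H}_0$ makes $L_G$ a multiplication operator, so no non-backtracking paths are involved. One starts with the Cauchy--Schwarz inequality $\vari(L^{\gamma})^2\le (|I_N|/N)\cdot N^{-1}\sum_{\lambda_j\in I}|\langle\psi_j,L^{\gamma_j}_G\psi_j\rangle|^2$, then performs a weighted Cauchy--Schwarz on the vertex variable, writing $\sum_x L(x)|\psi_j(x)|^2=\sum_x (N_{\gamma}L)(x)\cdot(|\psi_j(x)|^2/N_{\gamma}(x))$ so as to isolate the $N_{\gamma}^2$-weight which defines $\|L\|_{\gamma}^2$ on $\mathscr{H}_0$. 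Approximating $\mathbf{1}_I$ by the entire function $\chi$ of \eqref{e:convol} and invoking Cauchy's integral formula rewrites the remaining sum over $j$ as a contour integral of $\chi(z)\,L^z_{\eta_0}(x)\,g^z(x,x)$, where $L^z_{\eta_0}$ is the analytic extension guaranteed by \textbf{(Hol)}. The analogs of Lemmas~\ref{l:intseg}--\ref{l:greenco} then reduce the contour to horizontal segments at $\Im z=\pm\eta^4$, lift to the universal cover via \textbf{(BST)}, and substitute $\Im g^{z+i\eta_0}(x,x)$ by $N_{z+i\eta_0}(x)$ up to error terms that vanish in the triple limit thanks to \textbf{(Green)}.

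The principal technical obstacle lies in this variance step: one must arrange the weighted Cauchy--Schwarz so that, after passing to the limit, \emph{exactly} the $N_{\gamma}^2$-weighted norm of the statement appears on the right-hand side, together with the clean prefactor $|I|$. This requires a uniform bound on the auxiliary moment $N^{-1}\sum_{\lambda_j\in I}\sum_x|\psi_j(x)|^4/N_{\gamma_j}^2(x)$, itself obtained by a second application of Cauchy's integral formula and the $L^s$ moment estimates of \textbf{(Green)}, which provide $\sup_{\lambda\in I_1,\eta_1\in(0,1)}\IE(\hat{N}_{\lambda+i\eta_1}^{-s})<+\infty$ for every $s>0$. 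The limits in $\eta,\eta_0\downarrow 0$ can then be handled by the same dominated-convergence-type arguments as in Lemma~\ref{l:greenco}, closing the proof.
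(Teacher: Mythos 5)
Your mixing estimate is correct and in substance the same as the paper's: the conjugacy $P_\gamma = (d/N_\gamma)P(N_\gamma/d)$, the fact that $f=(N_\gamma/d)(K^\gamma-Y_\gamma K^\gamma)\perp\mathbf{1}$ in $\ell^2(V,d)$, and the conversion via $1\le d\le D$ are exactly what the paper does, yielding $\|\widetilde{\mathcal{S}}_{T,\gamma}K^\gamma-Y_\gamma K^\gamma\|_\gamma^2\le \frac{D}{\beta^2T^2}\|K^\gamma-Y_\gamma K^\gamma\|_\gamma^2$.

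The variance bound, however, has a genuine gap, and it comes from the order in which you apply Cauchy--Schwarz. You first square the $j$-sum (paying $|I_N|/N$) and then split the inner sum as $\sum_x (N_\gamma L)(x)\cdot(|\psi_j(x)|^2/N_\gamma(x))$. That inner Cauchy--Schwarz produces $\bigl(\sum_x N_\gamma^2|L|^2\bigr)\bigl(\sum_x |\psi_j|^4/N_\gamma^2\bigr)$. The first factor is $N\|L\|_\gamma^2$, so you acquire a stray factor of $N$: even if your auxiliary moment $N^{-1}\sum_j\sum_x|\psi_j(x)|^4/N_{\gamma_j}^2(x)$ were $O(1)$, you would end up with $\vari(L^\gamma)^2\lesssim |I_N|\,\sup_j\|L^{\gamma_j}\|_\gamma^2$, which is worse by a factor of $N$ than what the statement claims. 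Worse, the auxiliary moment itself cannot be reached by a ``second application of Cauchy's integral formula'': the contour-integral trick converts $\sum_j\chi(\lambda_j)|\psi_j(x)|^2$ into $\Im g^z(x,x)$, a \emph{quadratic} quantity in the eigenfunctions, but there is no analogous spectral identity for $\sum_j\chi(\lambda_j)|\psi_j(x)|^4$. Controlling $\sum_j\|\psi_j\|_4^4$ uniformly would amount to assuming the eigenfunctions are already delocalized, which is precisely what quantum ergodicity is supposed to establish.

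The paper avoids both problems by weighting the Cauchy--Schwarz on the vertex variable so that one copy of $\psi_j$ lands in each factor: with $\alpha_{\gamma_j}(x)=N_{\gamma_j}^{1/2}(x)$ one writes $\langle\psi_j,L_G\psi_j\rangle=\langle\alpha_{\gamma_j}^{-1}\psi_j,\ \alpha_{\gamma_j}L_G\psi_j\rangle$, giving $|\langle\psi_j,L_G\psi_j\rangle|\le\|\alpha_{\gamma_j}^{-1}\psi_j\|\cdot\|\alpha_{\gamma_j}L_G\psi_j\|$, and only afterward applies Cauchy--Schwarz on $j$. Both resulting averages, $N^{-1}\sum_j\sum_x|\psi_j(x)|^2/N_{\gamma_j}(x)$ and $N^{-1}\sum_j\sum_x N_{\gamma_j}(x)|L^{\gamma_j}(x)|^2|\psi_j(x)|^2$, involve only $|\psi_j(x)|^2$ and are therefore accessible by the contour-integral machinery of Section~\ref{s:proof1}, producing exactly the prefactor $|I|$ and the $\|\cdot\|_\gamma$-norm of the statement with no spurious power of $N$. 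To repair your proof you should replace your vertex-variable splitting by this one; the rest of your outline (contour integral, lifting via \textbf{(BST)}, replacing $\Im g^{z+i\eta_0}$ by $N_{z+i\eta_0}$, dominated convergence as in Lemma~\ref{l:greenco}) then goes through.
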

\begin{proof}
We follow the steps in the proof of Theorem~\ref{thm:upvar}. Let $J^{\gamma} = (\widetilde{\mathcal{S}}_{T,\gamma}-Y_{\gamma})K^{\gamma}$ and $\alpha_{\gamma_j}(x) = N_{\gamma_j}^{1/2}(x)$. Then $\vari(J^{\gamma})^2 \le (\frac{1}{N}\sum_{\lambda_j\in I} \|\alpha_{\gamma_j}^{-1}\psi_j\|^2)(\frac{1}{N}\sum_{\lambda_j\in I} \|\alpha_{\gamma_j}J_G^{\gamma_j}\psi_j\|^2)$. As in the proof of \eqref{e:flim'}, $\frac{1}{N}\sum_{\lambda_j\in I} \|\alpha_{\gamma_j}^{-1}\psi_j\|^2 \lesssim \frac{3}{\pi N} \int_{a-2\eta}^{b+2\eta} \sum_{\rho_G(x)\ge d_{R,\eta}} \frac{\Psi_{z+i\eta_0,\tilde{x}}(\tilde{x})}{N_{\lambda+i\eta_0}(x)}\,\dd \lambda \le \frac{3(|I|+4\eta)}{\pi}$ for any small $\eta>0$, since $N_{\gamma}(x)=\Psi_{\gamma,\tilde{x}}(\tilde{x})$.

Hence, $\lim_{\eta_0\downarrow 0}\limsup_{N\to \infty} \vari(J^{\gamma})^2 \le \frac{3|I|}{\pi} \lim_{\eta_0\downarrow 0}\limsup_{N\to \infty} \frac{1}{N}\sum_{\lambda_j\in I} \|\alpha_{\gamma_j}J_G^{\gamma_j}\psi_j\|^2$. Now $\|\alpha_{\gamma_j} J_G^{\gamma_j}\psi_j\|^2 = \sum_{x\in V} N_{\gamma_j}(x)|J^{\gamma_j}(x)|^2|\psi_j(x)|^2$. Arguing as in Section~\ref{s:proof1}, we get
\[
\frac{1}{N}\sum_{\lambda_j\in I} \|\alpha_{\gamma_j}J_G^{\gamma_j}\psi_j\|^2 \lesssim \frac{3}{\pi N} \int_{a-2\eta}^{b+2\eta} \sum_{\rho_G(x)\ge d_{R,\eta}} \chi(\lambda) N_{z+i\eta_0}(x) |J^{z+i\eta_0}(x)|^2 \Psi_{z+i\eta_0,\tilde{x}}(\tilde{x})\,\dd \lambda \, ,
\]
where $z:=\lambda+i\eta^4$. This is bounded by $\frac{3}{\pi} \int_{a-2\eta}^{b+2\eta} \|J^{z+i\eta_0}\|_{z+i\eta_0}^2\,\dd \lambda$, since $\Psi_{\gamma,\tilde{x}}(\tilde{x})=N_{\gamma}(x)$ and $\chi(\lambda)\le 1$ on $\R$.

Summarizing, we have $\lim_{\eta_0\downarrow 0} \limsup_{N\to \infty} \vari(J^{\gamma})^2 \le \frac{9|I|}{\pi^2} \int_{a-2\eta}^{b+2\eta} \|J^{z+i\eta_0}\|_{z+i\eta_0}^2\,\dd \lambda$.

Now recall that $\widetilde{\mathcal{S}}_{T,\gamma} = \frac{1}{T}\sum_{s=1}^T P_{\gamma}^s$, and $P_{\gamma} = \frac{d}{N_{\gamma}}P\frac{N_{\gamma}}{d}$, so that $P_{\gamma}^s = \frac{d}{N_{\gamma}} P^s \frac{N_{\gamma}}{d}$. Moreover, $Y_{\gamma}K = \frac{d}{N_{\gamma}} \frac{\langle N_{\gamma}K \rangle_U}{\langle d\rangle_U}$. So denoting $\gamma=z+i\eta_0$, $\|K\|_{d}^2 = \frac{1}{N}\sum_{x\in V} d(x)|K(x)|^2$, we have
\begin{align*}
 & \|J^{\gamma}\|_{\gamma}^2 = \|N_{\gamma}J^{\gamma}\|_{\mathscr{H}_0}^2 = \frac{1}{N} \sum_{x\in V} \Big|\frac{1}{T} \sum_{s=1}^T d(x)\Big(P^s\frac{N_{\gamma}K^{\gamma}}{d}\Big)(x) - \frac{\langle N_{\gamma}K^{\gamma}\rangle_U}{\langle d\rangle_U} d(x)\Big|^2 \\
& \quad \le  D\cdot \Big\| \frac{1}{T} \sum_{s=1}^T P^s\Big(\frac{N_{\gamma}K^{\gamma}}{d} -\frac{\langle N_{\gamma}K^{\gamma}\rangle_U}{\langle d\rangle_U} \mathbf{1}\Big)\Big\|_{d}^2 \\
&\quad \le \frac{D}{T^2}\bigg(\sum_{s=1}^T(1-\beta)^s \Big\|\frac{N_{\gamma}K^{\gamma}}{d} - \frac{\langle N_{\gamma}K^{\gamma}\rangle_U}{\langle d\rangle_U}\mathbf{1}\Big\|_{d}\bigg)^2 \le \frac{D}{\beta^2T^2} \,\Big\|\frac{N_{\gamma}K^{\gamma}}{d}- \frac{\langle N_{\gamma}K^{\gamma}\rangle_U}{\langle d\rangle_U} \mathbf{1}\Big\|_{d}^2 \, .
\end{align*}
Here we used \textbf{(EXP)} and the fact that $\frac{N_{\gamma}K^{\gamma}}{d} - \frac{\langle N_{\gamma}K^{\gamma}\rangle_U}{\langle d\rangle_U}\mathbf{1}$ is orthogonal to the constants in $\ell^2(V,d)$. Indeed, the orthogonal projector onto $\mathbf{1}$ in $\ell^2(V,d)$ is $P_{\mathbf{1},d}J = \frac{\langle \mathbf{1},J\rangle_{d}}{\langle \mathbf{1},\mathbf{1}\rangle_{d}}\mathbf{1}=\frac{\langle dJ\rangle_U}{\langle d\rangle_U}\mathbf{1}$. Since $\frac{\langle N_{\gamma}K^{\gamma}\rangle_U}{\langle d\rangle_U}\mathbf{1}= \frac{N_{\gamma}Y_{\gamma}K^{\gamma}}{d}$ and $\frac{1}{d} \le 1$, the proposition follows.
\end{proof}

\begin{cor}    \label{cor:varreste}
For any $C_{\gamma}:\mathscr{H}_m\to\mathscr{H}_0$ as in Corollary~\ref{cor:recurrence} and $\bar{I}\subset I_1$, $\|K\|_{\infty}\le 1$,
\[
\lim_{\eta_0\downarrow 0}\limsup_{N\to +\infty}\vari\left(\widetilde{\cS}_{T,\gamma}\left(C_{\gamma}K-\langle C_{\gamma}K\rangle_{\gamma}\right) \right)^2 \le \frac{c\,|I|^2}{\beta^2 T^2}\,.
\]
\end{cor}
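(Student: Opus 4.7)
The plan is to apply the preceding proposition to the re-centred observable $K^\gamma:=C_\gamma K-\langle C_\gamma K\rangle_\gamma\,\mathbf{1}\in\mathscr{H}_0$. I first check that $K^\gamma$ satisfies \textbf{(Hol)}: the map $\gamma\mapsto C_\gamma K$ does by the remarks at the end of this section, while $\gamma\mapsto\langle C_\gamma K\rangle_\gamma=\langle N_\gamma C_\gamma K\rangle_U/\langle N_\gamma\rangle_U$ is a quotient of analytic extensions whose denominator $\sum_y N_\gamma(y)$ is bounded below uniformly in $N$ for $\Re\gamma\in I_1$ via \textbf{(BSCT)} and \textbf{(Green)}. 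The decisive algebraic observation is that $Y_\gamma K^\gamma=0$: by the definition $\langle J\rangle_\gamma=\langle N_\gamma J\rangle_U/\langle N_\gamma\rangle_U$ on $\mathscr{H}_0$, one has $\langle N_\gamma K^\gamma\rangle_U=\langle N_\gamma C_\gamma K\rangle_U-\langle C_\gamma K\rangle_\gamma\langle N_\gamma\rangle_U=0$, whence $Y_\gamma K^\gamma=(\langle N_\gamma K^\gamma\rangle_U/\langle d\rangle_U)(d/N_\gamma)=0$. The proposition then yields
\[
\lim_{\eta_0\downarrow 0}\limsup_{N\to\infty}\vari(\widetilde{\cS}_{T,\gamma}K^\gamma)^2 \;\le\; \frac{D|I|}{\beta^2 T^2}\lim_{\eta_0\downarrow 0}\lim_{\eta\downarrow 0}\limsup_{N\to\infty}\int_{a-2\eta}^{b+2\eta}\|K^\gamma\|_\gamma^2\,d\lambda,
\]
with $\gamma=\lambda+i(\eta^4+\eta_0)$. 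Since $\|K^\gamma\|_\gamma^2=\tfrac1N\sum_x N_\gamma(x)^2|C_\gamma K(x)-\langle C_\gamma K\rangle_\gamma|^2$, this integrand is uniformly controlled by \textbf{(BSCT)}, Remark~\ref{rem:IDS3} and \textbf{(Green)} along the lines of the proof of Corollary~\ref{c:Minfty}; hence the triple limit of the integral is $O(|I|)$, uniformly in $T$.

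Next, I expand $\widetilde{\cS}_{T,\gamma}K^\gamma=\widetilde{\cS}_{T,\gamma}C_\gamma K-\langle C_\gamma K\rangle_\gamma\,\widetilde{\cS}_{T,\gamma}\mathbf{1}$ and use the triangle inequality for $\vari$ together with $(a+b)^2\le 2a^2+2b^2$ to obtain
\[
\vari\bigl(\widetilde{\cS}_{T,\gamma}C_\gamma K-\langle C_\gamma K\rangle_\gamma\mathbf{1}\bigr)^2 \;\le\; 2\vari(\widetilde{\cS}_{T,\gamma}K^\gamma)^2 + 2\Bigl(\sup_{\lambda\in I}|\langle C_{\lambda+i\eta_0}K\rangle_{\lambda+i\eta_0}|\Bigr)^2\vari(\widetilde{\cS}_{T,\gamma}\mathbf{1}-\mathbf{1})^2.
\]
The supremum is bounded uniformly as $\eta_0\downarrow 0$, $N\to\infty$ by \textbf{(BSCT)} and \textbf{(Green)}, so I only need to show that $\vari(\widetilde{\cS}_{T,\gamma}\mathbf{1}-\mathbf{1})\to 0$ in this double limit.

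To handle this remaining term I invoke the identity $J=(I-P_\gamma)\cS_{T,\gamma}J+\widetilde{\cS}_{T,\gamma}J$ used in the proof of Proposition~\ref{lem:pasinitial}, specialised to $J=\mathbf{1}$, which gives $\widetilde{\cS}_{T,\gamma}\mathbf{1}-\mathbf{1}=-(I-P_\gamma)\cS_{T,\gamma}\mathbf{1}$. The very same calculation of that proposition then produces
\[
\vari(\widetilde{\cS}_{T,\gamma}\mathbf{1}-\mathbf{1}) \;\le\; \varnbi(\cL^\gamma d^{-1}\cS_{T,\gamma}\mathbf{1}) + \widetilde{\varnbi}(\widetilde{\cL}^\gamma d^{-1}\cS_{T,\gamma}\mathbf{1}).
\]
Since $\cL^\gamma d^{-1}\cS_{T,\gamma}$ is one of the operators $\cF_\gamma$ appearing in Corollary~\ref{cor:recurrence} (with $T$ fixed), $\|\mathbf{1}\|_\infty=1$, and Theorem~\ref{t:thm4} is established independently of this final section, both non-backtracking variances on the right vanish as $N\to\infty$ followed by $\eta_0\downarrow 0$ (the $\widetilde{\varnbi}$-analog being noted by the authors just after the statement of Theorem~\ref{t:thm4}). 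Plugging back delivers the asserted bound $c|I|^2/(\beta^2T^2)$.

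The main obstacle is the mismatch between the constant $\mathbf{1}$ appearing in $\langle C_\gamma K\rangle_\gamma\mathbf{1}$ and the function $Y_\gamma\mathbf{1}=(\langle N_\gamma\rangle_U/\langle d\rangle_U)(d/N_\gamma)$, which is the actual invariant direction of the Markov operator $\widetilde{\cS}_{T,\gamma}$; the two agree only when $d/N_\gamma$ is essentially constant (e.g.\ for regular graphs). Because of this mismatch no spectral-gap argument on $\ell^2(V,d)$ alone can deliver a $T^{-1}$-rate for $\widetilde{\cS}_{T,\gamma}\mathbf{1}-\mathbf{1}$ itself, which is why one must appeal to the independently-proven Theorem~\ref{t:thm4} for its (merely qualitative) vanishing; this suffices, as the term is multiplied by a bounded quantity and added to a piece already of size $O(|I|/(\beta T))$.
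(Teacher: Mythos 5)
Your proof is correct and arrives at the same bound, but it is in fact more careful than the published argument. The main line — set $K'_\gamma=C_\gamma K-\langle C_\gamma K\rangle_\gamma\mathbf{1}$, observe that $Y_\gamma K'_\gamma=0$ because $\langle N_\gamma K'_\gamma\rangle_U=\langle N_\gamma C_\gamma K\rangle_U-\langle C_\gamma K\rangle_\gamma\langle N_\gamma\rangle_U=0$, apply the preceding proposition, and then control the integrand $\|C_z K-\langle C_z K\rangle_z\|_z^2$ uniformly via \textbf{(BSCT)}, \textbf{(Green)}, Remark~\ref{rem:IDS3} and the method of Corollary~\ref{c:Minfty} — is exactly what the paper does.

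Where you go beyond the paper is the passage you flag as an "obstacle." The proposition controls
$\vari\bigl(\widetilde{\cS}_{T,\gamma}K'_\gamma - Y_\gamma K'_\gamma\bigr)=\vari\bigl(\widetilde{\cS}_{T,\gamma}(C_\gamma K-\langle C_\gamma K\rangle_\gamma\mathbf{1})\bigr)$,
whereas the corollary's statement concerns
$\vari\bigl(\widetilde{\cS}_{T,\gamma}C_\gamma K-\langle C_\gamma K\rangle_\gamma\mathbf{1}\bigr)$,
and these differ by $\langle C_\gamma K\rangle_\gamma\,(\widetilde{\cS}_{T,\gamma}\mathbf{1}-\mathbf{1})$, since $\widetilde{\cS}_{T,\gamma}\mathbf{1}\ne\mathbf{1}$ unless $N_\gamma/d$ is constant. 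The paper's proof silently identifies the two, so as written it has a gap. Your repair — write $\widetilde{\cS}_{T,\gamma}\mathbf{1}-\mathbf{1}=-(I-P_\gamma)\cS_{T,\gamma}\mathbf{1}$, bound the resulting variance by $\varnbi(\cL^\gamma d^{-1}\cS_{T,\gamma}\mathbf{1})+\widetilde{\varnbi}(\widetilde{\cL}^\gamma d^{-1}\cS_{T,\gamma}\mathbf{1})$ as in Proposition~\ref{lem:pasinitial}, and invoke Theorem~\ref{t:thm4} (which is proved in Section~\ref{s:step4} independently of Section~\ref{sec:retour}, so there is no circularity) with $K=\mathbf{1}$, $\|\mathbf{1}\|_\infty=1$ — is valid. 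The fact that this correction term only vanishes qualitatively (not at rate $T^{-1}$) in the double limit $N\to\infty$, $\eta_0\downarrow 0$ is, as you note, harmless because the corollary's limit already kills it, and the $T^{-2}$ rate comes entirely from the piece the proposition controls. Your explicit check of \textbf{(Hol)} for the re-centred observable, including the lower bound on $\langle N_\gamma\rangle_U$, is also a point the paper leaves implicit; it is sound.
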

\begin{proof}
Let $K_{\gamma}' = C_{\gamma}K - \langle C_{\gamma}K\rangle_{\gamma}  \mathbf{1}$. Then $Y_{\gamma}K_{\gamma}'=0$, since $Y_{\gamma}C_{\gamma}K = \frac{d}{N_{\gamma}}\frac{\langle N_{\gamma}C_{\gamma}K\rangle_U}{\langle d\rangle_U}$ and $\langle C_{\gamma}K\rangle_{\gamma}  Y_{\gamma}\mathbf{1}= \frac{\langle N_{\gamma}C_{\gamma}K\rangle_U}{\langle N_{\gamma}\rangle_U} \frac{d}{N_{\gamma}} \frac{\langle N_{\gamma}\rangle_U}{\langle d\rangle_U}$. Hence, denoting $z=\lambda+i(\eta^4+\eta_0)$,
\begin{multline*}
\lim_{\eta_0\downarrow 0}\limsup_{N\to +\infty}\vari\left(\widetilde{\mathcal{S}}_{T,\gamma}\left(C_{\gamma}K-\langle C_{\gamma}K\rangle_{\gamma}\right) \right)^2 \\
\leq \frac{D\,|I|}{\beta^2 T^2}\lim_{\eta_0 \downarrow 0}\lim_{\eta\downarrow 0}\limsup_{N\to \infty} \int_{a-2\eta}^{b+2\eta} \|C_zK-\langle C_zK\rangle_z\|_{z}^2\, \dd \lambda \, .
\end{multline*}
Now $\|C_zK\|_z^2 = \frac{1}{N}\sum_{x\in V}N_z^2(x)|(C_z K)(x)|^2 \le \frac{1}{N}\sum_{x\in V} N_z^2(x)[\sum_{w\in B_m} |C_{z}(x,w)|]^2$. Similarly, $|\langle C_{z}K \rangle_{\lambda}| \le \frac{1}{\sum_x N_{z}(x)} \sum_x N_{z}(x) \sum_w |C_{z}(x,w)|$. For our operators $C_{z}$, we thus get $\|C_{z}K\|_{z}^2 = O(1)_{N\To +\infty, z}$ and $|\langle C_{z}K\rangle_{z}| = O(1)_{N\To +\infty, z}$, as in Corollary~\ref{c:Minfty}.
\end{proof}

This proves \eqref{e:Tinfty} and ends the proof of Theorem~\ref{thm:1} on the interval $I$.

\smallskip

Suppose further that $\rho(\partial I_1)=0$. As $I_1$ is open, we have $I_1 = \cup_{j\in \IN} J_j$ for open intervals $J_j=(a_j,b_j)$. Let $J_j^{\varsigma}=(a_j+\varsigma,b_j-\varsigma)$ with $\varsigma>0$ small. Then $\overline{J_j^{\varsigma}}\subset I_1$, so using \eqref{e:varnbtozero} and Corollary~\ref{cor:varreste}, we get $\lim_{\eta_0\downarrow 0}\limsup_{N\to\infty} \varisigma(K-\langle K\rangle_{\gamma} )=0$. Now $\varib(K') = \sum_{j=1}^M \varisigma(K') +  \varis(K')$ for any given $M$. By \eqref{e:cool3} and \textbf{(Green)}, we have $\varis(K-\langle K\rangle_{\gamma} ) \leq\frac{\sharp\{\lambda_j\in  I_1\setminus \cup_{k=1}^M J_k^{\varsigma} \}}{N} \, O(1)_{N\To +\infty, \gamma}$. By the convergence of empirical spectral measures (Remark~\ref{rem:IDS1}), and using the fact that $\rho(\partial I_1)=0$, we have $\frac{\sharp\{\lambda_j\in  I_1\setminus \cup_{k=1}^M J_k^{\varsigma} \}}{N}\to \rho(I_1\setminus \cup_{k=1}^M J_k^{\varsigma})$.
 Finally, $\rho(I_1\setminus \cup_{k=1}^M J_k^{\varsigma}) \to 0$ as $\varsigma \downarrow 0$ and $M\To +\infty$. The conclusion of Theorem~\ref{thm:1} thus holds with $I$ replaced by $I_1$.

\smallskip

Finally, if \textbf{(Green)} holds on $\overline{I_1}$, then  $\rho(\{\lambda\}) = \lim_{\eta \downarrow 0} \eta \Im \expect(\cG^{\lambda+i\eta}(o,o)) = 0$ for any $\lambda\in \overline{I_1}$, since $\sup_{\eta>0} \Im \expect(\cG^{\lambda+i\eta}(o,o)) <\infty$. In particular, $\rho(\partial I_1)=0$.

\appendix

\section{Benjamini--Schramm topology\label{s:BSCT}}

\subsection{Generalities}
In this appendix we collect known facts on the Benjamini-Schramm convergence, we refer the reader to \cite{ATV,AL,BS,B,Salez} for details.

A \emph{coloured rooted graph} $(G,o,W)$ is a graph $G=(V,E)$ with a marked vertex $o\in V$ called the \emph{root}, and a map $W:V\to \R$ which we see as a ``colouring''; it can also be regarded as a potential on $\ell^2(V)$. This is a special case of what is called a \emph{network} in \cite{AL}. All graphs are assumed to be \emph{locally finite}, i.e. each vertex has a finite degree.

If $G$ is connected, we denote by $B_G(x,r)$ the \emph{$r$-ball} $\{y\in V:d_G(x,y) \le r\}$, where $d_G$ is the length of the shortest path between $x$ and $y$ in $G$.

As in \cite{AL}, we define a distance between coloured connected graphs by
\begin{equation}     \label{eq:dloc}
d_{loc}\big((G,o,W),(G',o',W')\big) = \frac{1}{1+\alpha} \, ,
\end{equation}
\begin{align*}
\alpha :=\sup \big\{r>0 & :\exists \text{ graph isomorphism } \phi:B_G(o,\lfloor r\rfloor) \to B_{G'}(o',\lfloor r \rfloor) \text{ with }\\
& \quad  \phi(o)=o' \text{ and } |W'(\phi(v))-W(v)|<1/r \ \forall v\in B_G(o,\lfloor r\rfloor)\big\} \, .
\end{align*}

Two coloured rooted graphs $(G,o,W)$ and $(G',o',W')$ are \emph{equivalent} if there is a graph isomorphism $\phi:G \to G'$ such that $\phi(o)=o'$ and $W' \circ \phi = W$. We denote the equivalence class of $(G,o,W)$ by $[G,o,W]$.

Let $\mathscr{G}_{\ast}$ be the set of equivalence classes of connected coloured rooted graphs. Then $d_{loc}$ turns $\mathscr{G}_{\ast}$ into a separable complete metric space. We may thus consider the set of probability measures on $\mathscr{G}_{\ast}$, denoted by $\mathcal{P}(\mathscr{G}_{\ast})$.

Any finite connected coloured graph $(G,W)$, $G=(V,E)$, defines a probability measure $U_{(G,W)}\in \mathcal{P}(\mathscr{G}_{\ast})$ by choosing the root $x$ uniformly at random in $V$:
\begin{equation}\label{e:U}
U_{(G,W)} = \frac{1}{|V|} \sum_{x\in V} \delta_{[G,x,V]} \, .
\end{equation}
If $(G_n,W_n)$ is a sequence of finite coloured graphs, we say that $\prob\in \mathcal{P}(\mathscr{G}_{\ast})$ is the \emph{local weak limit} of  $(G_n,W_n)$ if $U_{(G_n,W_n)}$ converges weakly-$\ast$ to $\prob$ in $\mathcal{P}(\mathscr{G}_{\ast})$. This notion of convergence was introduced in \cite{BS} and generalized in \cite{AL}. In this case, we also say that $(G_n,W_n)$ converges in the sense of Benjamini-Schramm.

The subset $\mathscr{G}_{\ast}^{D, A}\subset \mathscr{G}_{\ast}$ of equivalence classes $[G, o, W]$ such that $G$ is of degree bounded by $D$, and $W$ takes values in $[-A, A]$, is compact. It follows that $\mathcal{P}(\mathscr{G}_{\ast}^{D,A})$ is compact in the weak-$\ast$ topology.
Hence, if $\mathcal{C}^{D,A}_{\text{fin}}$ denotes the set of finite coloured graphs $(G,W)$, $G=(V,E)$, of degree bounded by $D$ and colouring $W:V\to [-A,A]$, then any sequence $(G_n,W_n)\subset \mathcal{C}^{D,A}_{\text{fin}}$ has a subsequence which converges in the sense of Benjamini-Schramm.

Let $C(\mathscr{G}_{\ast}^{D,A})$ be the set of continuous functions $f:\mathscr{G}_{\ast}^{D,A}\to \R$.

Then a sequence $(G_n,W_n)\subset \mathcal{C}^{D,A}_{\text{fin}}$ has a local weak limit $\prob$ iff there is an algebra $\mathscr{A} \subset C(\mathscr{G}_{\ast}^{D,A})$ which separates points, such that for all $f\in \mathscr{A}$,
\begin{equation}        \label{eq:stonewei}
\lim_{n\to \infty} \frac{1}{|V_n|}\sum_{x\in V_n} f\left([G_n,x,W_n]\right) = \int_{\mathscr{G}_{\ast}^{D,A}} f\left([G,o,W]\right)\dd \prob\left([G,o,W]\right) \, .
\end{equation}
This follows from the compactness of $\mathscr{G}_{\ast}^{D,A}$, see \cite[Chapter 13]{Klenke}.

It may not be very clear how a continuous function on $\mathscr{G}_{\ast}^{D,A}$ looks like, so we give a basic example. If $B_F(o,r)$ is an $r$-ball, the sets $\mathscr{C}_F = \{[G,x,W]: B_G(x,r) \cong B_F(o,r)\}$ turn out to be clopen in $\mathscr{G}_{\ast}^{D,A}$, so the characteristic function $\chi_{\mathscr{C}_F}$ is continuous. Here $B_G(x,r) \cong B_F(o,r)$ means there exists a graph isomorphism $\phi:B_G(x,r)\to B_F(o,r)$ with $\phi(x)=o$, Using (\ref{eq:stonewei}), it can be shown that in the special case where there is no colouring, $(G_n)\subset \mathcal{C}^{D,A}_{\text{fin}}$ has a local weak limit $\prob$ iff
\[
\lim_{n\to \infty} \frac{\#\{x:B_{G_n}(x,r) \cong B_F(o,r)\}}{|V_n|} = \prob(\{[G,x]:B_G(x,r) \cong B_F(o,r)\})
\]
for any $B_F(o,r)$. This was in fact the original criterion in \cite{BS}. Using it, one readily checks that a sequence of $(q+1)$-regular graphs $(G_n)$ satisfies \textbf{(BST)} iff it converges to the $(q+1)$-regular tree $\mathbb{T}_q$ in the sense of Benjamini-Schramm, i.e. iff $(G_n)$ has the local weak limit $\delta_{[\mathbb{T}_q,o]}$, with $o\in\mathbb{T}_q$ arbitrary. More generally, by considering the clopen sets $\mathscr{C}_r = \{[G,x,W]:B_G(x,r) \text{ is not a tree}\}$, one sees that if $(G_n,W_n)\subset \mathcal{C}^{D,A}_{\text{fin}}$ has a local weak limit $\prob$ that is concentrated on the subset $\mathscr{T}_{\ast}^{D,A}\subset \mathscr{G}_{\ast}^{D,A}$ of coloured rooted trees, then $(G_n)$ satisfies \textbf{(BST)}. Conversely, if $(G_n)$ satisfies \textbf{(BST)} and if a subsequence of $(G_n,W_n)$ has a local weak limit $\prob$, then $\prob$ must be concentrated on $\mathscr{T}_{\ast}^{D,A}$.

\subsection{Convergence of empirical spectral measures.} We now show that Benjamini-Schramm convergence implies convergence of the empirical spectral measures. This is already known in some settings \cite{ATV,Salez,SS}. In this paper we need the variant stated as Corollary \ref{cor:spectralmeas}.

Given $[G,o,W]\in \mathscr{G}_{\ast}^{D,A}$, $\gamma\in\C^+=\{z, \Im z>0\}$ and $x\sim y \in G$, we define $\zeta_x^{\gamma}(y)$ as in \S \ref{s:ident}. Like in \S \ref{sec:basic}, $B_k$ is the set of non-backtracking paths of length $k$ on $G$.

Fix $s\in \N$. Let $F:(\C\setminus\{0\})^{2s}\to\C$ be a continuous function and $\gamma\in \IC^+$. Let
\begin{equation}\label{e:Fgammagow}
F_{\gamma}([{G}, {o}, {W}]) = \sum_{(x_0;x_s)\in B_s \,:\, x_0=o} F\left(\zeta_{x_0}^{\gamma}(x_1),\zeta_{x_1}^{\gamma}(x_0),\dots,\zeta_{x_{s-1}}^{\gamma}(x_s),\zeta_{x_s}^{\gamma}(x_{s-1})\right) \, .
\end{equation}
For $s=1$, the sum reduces to $\sum_{x_1: x_1\sim o}$.
One can remark that $F_{\gamma}([{G}, {o}, {W}])=F_{\gamma}([\widetilde{G}, \widetilde{o},\widetilde {W}])$ where $\tilG$ is the universal cover of $G$ and $\widetilde{o},\widetilde {W}$ are lifts of ${o}, {W}$.

Next, given Borel $J \subseteq \R$, we define the measure
\[
\mu_{o,F,\gamma}^{(G,W)}(J) = F_{\gamma}([{G}, {o}, {W}]) \langle \delta_o, \chi_J(H_{G,W})\delta_o\rangle \, .
\]

Fix a compact $I\subset \R$ and fix $\eta\in(0,1)$. 

\begin{lem}           \label{lem:rootspec}
Suppose $(\lambda_n,[G_n,o_n,W_n]) \subset I \times \mathscr{G}_{\ast}^{D,A}$ converges to $(\lambda,[G,o,W])$ in $I\times \mathscr{G}_{\ast}^{D,A}$. Then $\mu_{o_n,F,\lambda_n+i\eta}^{(G_n,W_n)}$ converges weakly-$\ast$ to $\mu_{o,F,\lambda+i\eta}^{(G,W)}$.
\end{lem}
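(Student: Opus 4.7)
The plan is to reduce weak-$*$ convergence of the complex measures $\mu_{o_n,F,\lambda_n+i\eta}^{(G_n,W_n)}$ to convergence of integrals against polynomials, using Stone-Weierstrass. All the measures are supported on the common compact interval $I_0 = [-A-D, A+D]$ containing every spectrum of $H_{G,W}$ with $[G,o,W] \in \mathscr{G}_\ast^{D,A}$, so it suffices to show that for every polynomial $P$,
\[
\int P \, \dd\mu_{o_n, F, \lambda_n + i\eta}^{(G_n, W_n)} = F_{\lambda_n + i\eta}([G_n, o_n, W_n]) \cdot \la \delta_{o_n}, P(H_{G_n, W_n}) \delta_{o_n}\ra
\]
converges to the analogous expression at $(\lambda, [G, o, W])$. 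The two factors will be handled separately.

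The spectral factor is the simpler one. For a polynomial $P$ of degree $d$, the matrix element $\la \delta_o, P(H_{G,W})\delta_o\ra$ expands as a finite combination of closed walks of length $\leq d$ based at $o$, weighted by products of $W$-values along the walks; it depends only on the coloured ball $B_G(o, d)$ and is jointly continuous in the ball structure and the potential values. Benjamini-Schramm convergence provides, for all $n$ large, a root-preserving graph isomorphism $\phi_n : B_{G_n}(o_n, d) \to B_G(o, d)$ with $W_n \circ \phi_n^{-1} \to W$ uniformly, so the matrix element converges.

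The $F$-factor is the main technical point. Since $F$ is a continuous function of finitely many $\zeta$-values indexed by non-backtracking paths of length $s$ from $o$, and these paths are determined by $B_G(o, s)$ (hence identified via $\phi_n$ for large $n$), it suffices to prove joint continuity of $(\gamma, [G, o, W]) \mapsto \zeta_x^\gamma(y)$ on $\C^+ \times \mathscr{G}_\ast^{D, A}$ for neighbouring $x, y$ at bounded distance from $o$ in the universal cover $\tilG$. Recall $\zeta_x^\gamma(y) = -\la \delta_y, (\cH^{(y|x)} - \gamma)^{-1} \delta_y\ra$, where $\cH^{(y|x)}$ is the restriction of $\widetilde{H}$ to the branch $\tilG^{(y|x)}$, whose spectrum lies in $I_0$. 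The key idea is uniform polynomial approximation of the resolvent: for any $\varepsilon > 0$ and any compact $K \subset \C^+$, one can find $L$ and a family of polynomials $P_{L, \gamma}$ with $\sup_{\lambda \in I_0, \gamma \in K} |(\lambda - \gamma)^{-1} - P_{L, \gamma}(\lambda)| < \varepsilon$. For each fixed $L$ and $\gamma$, the expression $\la \delta_y, P_{L, \gamma}(\cH^{(y|x)})\delta_y\ra$ is polynomial in the $W$-values on a ball of radius bounded by $L + \mathrm{const}$ around $o$ in $\tilG$, and therefore depends continuously on $[G, o, W]$ via BS-convergence. Since the spectral theorem gives $\|(\cH^{(y|x)} - \gamma)^{-1} - P_{L, \gamma}(\cH^{(y|x)})\|_{\mathrm{op}} \leq \varepsilon$ uniformly in $[G, o, W]$, the function $\zeta_x^\gamma(y)$ is a uniform limit of jointly continuous functions, hence jointly continuous.

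The main obstacle is this final joint-continuity statement: $\zeta$ is a genuinely global object on the infinite tree $\tilG^{(y|x)}$, and locality must be recovered by spectral approximation. This recovery hinges essentially on $\Im \gamma = \eta > 0$ (which ensures $(\lambda - \gamma)^{-1}$ is continuous on $I_0$ and hence polynomially approximable there) and on the uniform spectral bound $\|\cH^{(y|x)}\| \leq A + D$. Once established, combining with $\lambda_n \to \lambda$ yields convergence of both factors and completes the proof.
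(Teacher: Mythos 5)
Your argument is correct and follows essentially the same strategy as the paper: reduce weak-$\ast$ convergence to moment convergence using the common spectral bound $[-A-D,A+D]$, and recover locality of the nonlocal quantities $\zeta_x^\gamma(y)$ by uniform polynomial approximation of $h_\eta(t)=-(t-i\eta)^{-1}$ on $I_0$ (valid since $\eta>0$), so that Benjamini--Schramm convergence, through ball isomorphisms at large enough radius, transfers all the relevant data. The one cosmetic difference is that the paper also approximates $F$ itself by a polynomial via Stone--Weierstrass on the compact annulus where the $\zeta$-values live before substituting the local approximants $Z_\epsilon^\gamma$, whereas you instead observe that each $\zeta_x^\gamma(y)$ is a uniform limit of local (hence BS-continuous) quantities and simply compose with the continuous $F$, which reaches the same conclusion a bit more conceptually.
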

\begin{proof}
Since all operators $H_n=H_{(G_n,W_n)}$ and $H=H_{(G,W)}$ are uniformly bounded by $D+A$, the supports of the spectral measures is compact, so it suffices to show that for any $k\in \N$, $\mu_{o_n,F,\lambda_n+i\eta}^{(G_n,W_n)}(t^k) \to \mu_{o,F,\lambda+i\eta}^{(G,W)}(t^k)$; see \cite[Chapter 13]{Klenke}.

Let $k\in \N$.  Denote $\gamma_n=\lambda_n+i\eta$, $\gamma=\lambda+i\eta$. We have
\[
\left| \mu_{o_n,F, \gamma_n}^{(G_n,W_n)}(t^k) - \mu_{o,F,\gamma}^{(G,W)}(t^k)\right| = \left| F_{\gamma_n}([ {G}_n, {o}_n, {W}_n])\langle \delta_{o_n}, H_n^k \delta_{o_n}\rangle - F_{\gamma}([{G}, {o}, {W}])\langle \delta_o, H^k \delta_o \rangle \right| \, .
\]
We first approximate $F$ by a polynomial.

We have $|\zeta_x^{\lambda+i\eta}(y)|\le \eta^{-1}$ and $|\Im \zeta_x^{\lambda+i\eta}(y)| = \eta \,\|(\tilH^{(\tilde{y}|\tilde{x})}-\lambda-i\eta)^{-1} \delta_{\tilde{y}} \|_{\ell^2(\tilG)}^2$. Since $\|\tilH^{({x}|{y})}- \lambda -i\eta\|_{\ell^2\to\ell^2} \le A+D+c_I+1=:c$ for all $\lambda\in I$ and $\eta\in(0,1)$, we get $|\Im \zeta_x^{\lambda+i\eta}(y)| \ge \eta c^{-2}$.

So let $\mathcal{O}\subset \C$ be the compact region $\{\eta c^{-2}\leq |z|\leq  \eta^{-1}\}$. If $F$ is continuous on $\mathcal{O}^{2s} \subset \C^{2s}$, by Stone-Weierstrass, given $R\in \N^{\ast}$, there is a polynomial $P_R$ of $4s$ variables such that $\sup_{(z_1;z_{2s})\in \mathcal{O}^{2s}} |F(z_1,\dots,z_{2s})-P_R(z_1,\bar{z}_1,\dots,z_{2s},\bar{z}_{2s})| \le \frac{1}{2R}$. Hence, for any $\lambda\in I$ and $(x_0;x_s)$, if $\gamma=\lambda+i\eta$, then
\begin{equation}          \label{eq:approx1}
\left|F\left(\zeta_{x_0}^{\gamma}(x_1),\zeta_{x_1}^{\gamma}(x_0),\dots,\zeta_{x_s}^{\gamma}(x_{s-1})\right)-P_R\left(\zeta_{x_1}^{\gamma}(x_0),\overline{\zeta_{x_1}^{\gamma}(x_0)},\dots,\overline{\zeta_{x_s}^{\gamma}(x_{s-1})}\right)\right|\le \frac{1}{2R} \, .
\end{equation}
Let $h_{\eta}(t) = -(t-i\eta)^{-1}$. Given $\epsilon>0$, we may choose a polynomial $Q_{\epsilon}=Q_{\epsilon}^{\eta}$ such that $\|h_{\eta}-Q_{\epsilon}\|_{\infty} < \epsilon$. It follows that $\|h_{\eta}(H^{(\tilde{x}|\tilde{y})}_{\tilde{G}}-\lambda) - Q_{\epsilon}(H^{(\tilde{x}|\tilde{y})}_{\tilde{G}}-\lambda)\| < \epsilon$. In particular, if $Z_{\epsilon}^{\gamma}(x,y) := Q_{\epsilon}(H^{(\tilde{y}|\tilde{x})}_{\tilde{G}}-\lambda)(\tilde{y},\tilde{y})$, we have for any $\lambda\in I$ and $(x,y)\in B$,
\begin{equation}           \label{eq:zetaz}
|\zeta_x^{\gamma}(y) - Z_{\epsilon}^{\gamma}(x,y)|<\epsilon \,.
\end{equation}
As $P_R$ is Lipschitz-continuous on $\mathcal{O}^{2s}$, we may thus find $C_{R,\eta^{-1}}$ such that
\[
\left|P_R\left(\zeta_{x_0}^{\gamma}(x_1),\dots,\overline{\zeta_{x_s}^{\gamma}(x_{s-1})}\right) - P_R\left(Z_{\epsilon}^{\gamma}(x_0,x_1),\dots,\overline{Z_{\epsilon}^{\gamma}(x_s,x_{s-1})}\right)\right| \le  C_{R,\eta^{-1}} \cdot \epsilon = \frac{1}{2R}
\]
by choosing $\epsilon = \frac{1}{2R}\frac{1}{C_{R,\eta^{-1}}}$. Using (\ref{eq:approx1}), we thus get uniformly in $\lambda\in I$, $(x_0;x_s)$,
\begin{equation}           \label{eq:fzetaprox}
\left| F\left(\zeta_{x_0}^{\gamma}(x_1),\zeta_{x_1}^{\gamma}(x_0),\dots,\zeta_{x_s}^{\gamma}(x_{s-1})\right) - P_R\left(Z_R^{\gamma}(x_0,x_1),\dots,\overline{Z_R^{\gamma}(x_s,x_{s-1})}\right)\right| \le \frac{1}{R} \, ,
\end{equation}
where we now denote $Z_R$ because $\epsilon$ is a function of $R$. Define
\[
P_{\gamma}([{G}, {o}, {W}]) = \sum_{(x_1;x_s),x_0=o} P_R\left(Z_R^{\gamma}(x_0,x_1),\dots,\overline{Z_R^{\gamma}(x_s,x_{s-1})}\right) \, .
\]
Then up to an error $\frac{C_{D,s,A,k}}{R}$, it suffices to consider
\[
\left| P_{\gamma_n}([ {G}_n, {o}_n, {W}_n])\langle \delta_{o_n}, H_n^k \delta_{o_n}\rangle - P_{\gamma}([{G}, {o}, {W}])\langle \delta_o, H^k \delta_o \rangle \right| \, .
\]
Let $d_R$ be the degree of $Q_R$ and choose an arbitrary integer $r\ge d_R+s+k=:d_{R,s,k}$. Then we may find $n_r$ such that for $n \ge n_r$, there exists $\varphi_r : B_{G_n}(o_n,r) \xrightarrow{\sim} B_G(o,r)$ with $\|W\circ \varphi_r - W_n\|_{B_{G_n}(o,r)}<1/r$. Now $\langle \delta_{o_n}, H_n^k \delta_{o_n}\rangle = \sum_{u_0,\dots,u_{k-1}} H_n(o_n,u_0)H_n(u_0,u_1)\dots H_n(u_{k-1},o_n)$
and $H_n(v,w) = \mathcal{A}_n(v, w) + W_n(v) \delta_w(v)$. This only depends on $B_{G_n}(o_n,k)$ and its colouring. Similarly, the quantity $Z_R^{\gamma}(x,y)$ corresponding to $(G_n,o_n,W_n)$ only depends on $B_{G_n}(y,d_R)$ and its colouring. Since $r\ge d_{R,s,k}$ and
$\varphi_r : B_{G_n}(o_n,r) \xrightarrow{\sim} B_G(o,r)$, if we let $\cH_n = \mathcal{A}_G + W_n \circ \varphi_r^{-1}$ on $G$, we get $\langle \delta_{o_n},H_n^k \delta_{o_n}\rangle = \langle \delta_o, \mathcal{H}_n^k \delta_o \rangle$. Similarly, $P_{\gamma_n}([ {G}_n, {o}_n, {W}_n])=P_{\gamma_n}( [{G}, {o}, {W}_n\circ \varphi_r^{-1}])$. Let $ {W}_n'= {W}_n\circ\varphi_r^{-1}$. Then for $n \ge n_r$,
\[
\left|\mu_{o_n,F, \gamma_n}^{(G_n,W_n)}(t^k) - \mu_{o,F,\gamma}^{(G,W)}(t^k)\right| \le \frac{C}{R} +  \left| P_{\gamma_n}([ {G}, {o}, {W}_n'])\langle \delta_o, \mathcal{H}_n^k \delta_o\rangle - P_{\gamma}([{G}, {o}, {W}])\langle \delta_o, H^k \delta_o \rangle \right|  \, .
\]
Writing $\cH_n^k-H^k=\sum_{i=1}^k  \cH_n^{k-i}( \cH_n-H)H^{i-1}$, we have
\[
| \langle \delta_o, ( \cH_n^k-H^k) \delta_o \rangle | \le C'_{k,D,A} \| W_n \circ \varphi_r^{-1}-W\|_{B_G(o,r)} \le \frac{C'_{k,D,A}}{r} \, .
\]
A similar argument yields $|P_{\gamma}([ {G}, {o}, {W}_n']) - P_{\gamma}([{G}, {o}, {W}])| \le \frac{C_{R,D,s,A}}{r}$ and $|P_{\gamma_n}([ {G}, {o}, {W}_n']) -P_{\gamma}([ {G}, {o}, {W}_n']) | \le  C_{R,D,s,A,I} |\lambda_n-\lambda| \le \frac{C_{R,D,s,A,I}}{r}$ for $n\ge n_r'$. We thus showed that for any $r\ge d_{R,s,k}$, there exists $n_r''$ such that if $n\ge n_r''$, then $|\mu_{o_n,F, \gamma_n}^{(G_n,W_n)}(t^k) - \mu_{o,F,\gamma}^{(G,W)}(t^k)| \le \frac{C_{D,s,A,k}}{R} + \frac{C'_{k,D,A}+C_{R,D,s,A} + C_{R,D,s,A,I}}{r}$. It follows that $\limsup_{n\to \infty} |\mu_{o_n,F, \gamma_n}^{(G_n,W_n)}(t^k) - \mu_{o,F,\gamma}^{(G,W)}(t^k)| \le \frac{C_{D,s,A,k}}{R}$. Since $R$ is arbitrary, the proof is complete.
\end{proof}

If $(G,W)\in \mathcal{C}_{\textup{fin}}^{D,A}$, we now define, for $\gamma\in \IC^+$,
\[
\mu^{(G,W)}_{F,\gamma} = \frac{1}{|V|} \sum_{x\in V} \mu_{x,F,\gamma}^{(G,W)} \, .
\]

\begin{cor}          \label{cor:spectralmeas}
Suppose $(G_n,W_n)\subset \mathcal{C}_{\textup{fin}}^{D,A}$ has a local weak limit $\prob$. Fix a compact $I\subset \IR$ and $\eta\in(0,1)$. Then $\mu^{(G_n,W_n)}_{F,\lambda+i\eta}$ converges weakly to $\int_{\mathscr{G}_{\ast}^{D,A}} \mu_{o,F,\lambda+i\eta}^{(G,W)}\,\dd \prob([G,o,W])$, uniformly in $\lambda\in I$. In other words, for any continuous $\varphi:\R\to \R$, we have uniformly in $\lambda\in I$,
\begin{multline*}
\frac{1}{|V_n|} \sum_{x\in V_n}F_{\lambda+i\eta}([G_n,x,W_n])\langle \delta_x,\varphi(H_{(G_n,W_n)})\delta_x\rangle \\
\Lim_{N\To +\infty }  \int_{\mathscr{G}_{\ast}^{D,A}} F_{\lambda+i\eta}([{G}, {o}, {W}])\langle \delta_o, \varphi(H_{(G,W)})\delta_o \rangle \,\dd \prob([G,o,W]) \, .
\end{multline*}
\end{cor}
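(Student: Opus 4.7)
The plan is to deduce Corollary~\ref{cor:spectralmeas} directly from Lemma~\ref{lem:rootspec} combined with the definition of Benjamini--Schramm convergence, together with a soft compactness argument to obtain the uniformity in $\lambda$. Set
\[
g_\lambda([G,o,W]) := F_{\lambda+i\eta}([G,o,W])\,\langle \delta_o, \varphi(H_{(G,W)})\delta_o\rangle.
\]
The corollary amounts to showing $h_n(\lambda):= \frac{1}{|V_n|}\sum_{x\in V_n} g_\lambda([G_n,x,W_n]) - \int g_\lambda\,\dd\prob\to 0$ uniformly for $\lambda\in I$.

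First I would verify that $g$ is jointly continuous on the compact space $I\times \mathscr{G}_*^{D,A}$. Indeed, if $(\lambda_n,[G_n,o_n,W_n])\to (\lambda,[G,o,W])$, Lemma~\ref{lem:rootspec} gives the weak-$*$ convergence $\mu_{o_n,F,\lambda_n+i\eta}^{(G_n,W_n)}\to \mu_{o,F,\lambda+i\eta}^{(G,W)}$. Since all these measures are supported in the fixed compact interval $[-A-D,A+D]$ (because $\|H_{(G,W)}\|\le A+D$), integrating the bounded continuous function $\varphi$ against them yields
$g_{\lambda_n}([G_n,o_n,W_n])\to g_\lambda([G,o,W])$, which is exactly joint continuity.

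Second, fixing $\lambda\in I$, the function $[G,o,W]\mapsto g_\lambda([G,o,W])$ belongs to $C(\mathscr{G}_*^{D,A})$. By the hypothesis that $(G_n,W_n)$ has local weak limit $\prob$, the definition of Benjamini--Schramm convergence applied via \eqref{eq:stonewei} gives pointwise convergence $h_n(\lambda)\to 0$ for every $\lambda\in I$.

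Finally, to upgrade to uniformity in $\lambda\in I$, I exploit compactness. Since $I\times\mathscr{G}_*^{D,A}$ is compact and $g$ is continuous, $g$ is uniformly continuous; in particular, for every $\varepsilon>0$ there is $\delta>0$ such that $|\lambda-\lambda'|<\delta$ implies $\|g_\lambda-g_{\lambda'}\|_{C(\mathscr{G}_*^{D,A})}<\varepsilon$. Consequently $|h_n(\lambda)-h_n(\lambda')|\le 2\|g_\lambda-g_{\lambda'}\|_\infty<2\varepsilon$ uniformly in $n$, so the family $\{h_n\}$ is equicontinuous on the compact set $I$. Equicontinuity plus pointwise convergence to $0$ yields uniform convergence on $I$ by a standard $\varepsilon$/$3$-net argument (choose finitely many $\lambda_1,\dots,\lambda_k$ that $\delta$-cover $I$, use pointwise convergence at each of them, and bound the oscillation by the equicontinuity modulus), completing the proof.

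The only substantive step is the joint continuity in the first paragraph, and that has already been carried out as Lemma~\ref{lem:rootspec}; the remaining argument is a routine piece of soft analysis using compactness of $\mathscr{G}_*^{D,A}$ and of $I$, so no genuine obstacle arises here.
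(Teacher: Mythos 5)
Your proof is correct and takes essentially the same approach as the paper: the paper also defines $\widehat{\varphi}(\lambda,[G,o,W]) = \int \varphi\,\dd\mu_{o,F,\lambda+i\eta}^{(G,W)}$ (which is identical to your $g_\lambda$), invokes Lemma~\ref{lem:rootspec} for joint continuity hence uniform continuity on the compact product space, derives pointwise convergence from the definition of local weak convergence, and upgrades to uniformity via equicontinuity. Your write-up simply spells out the $\varepsilon/3$ step that the paper leaves implicit.
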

\begin{proof}
Given continuous $\varphi:\R\to \R$, define $\widehat{\varphi} :I \times \mathscr{G}_{\ast}^{D,A} \to \R$ by $\widehat{\varphi}(\lambda,[G,o,W]) = \int \varphi(t)\,\dd \mu_{o,F,\lambda+i\eta}^{(G,W)}(t)$. Lemma~\ref{lem:rootspec} states $\widehat{\varphi}$ is continuous on $I \times \mathscr{G}_{\ast}^{D,A}$ -- hence, uniformly continuous.
 Let $\widehat{\varphi}_\lambda([G,o,W]) = \widehat{\varphi}(\lambda,[G,o,W])$.
Local convergence means that the measures $U_{(G_n,W_n)}$ (defined in \eqref{e:U}) converge weakly to $\prob$. Thus, for any $\lambda\in I$, $\int \widehat{\varphi}_\lambda\,\dd U_{(G_n,W_n)}\to \int \widehat{\varphi}_\lambda\,\dd \rho$, i.e. $\frac{1}{|V_n|}\sum_{x\in V_n} \widehat{\varphi}_\lambda([G_n,x,W_n]) \to \int \widehat{\varphi}_\lambda([G,o,W])\,\dd \prob([G,o,W])$, which is the statement of the lemma for fixed $\lambda\in I$.

Uniformity in $\lambda$ comes from the uniform continuity of $\widehat{\varphi}$, which implies that the maps $\lambda\mapsto \int \widehat{\varphi}_\lambda\,\dd U_{(G_n,W_n)}$ form a uniformly equicontinuous family.
\end{proof}

\begin{rem}               \label{rem:IDS1}
Taking $F \equiv 1$, we get in particular the convergence of empirical spectral measures. On the other hand, when $\varphi \equiv 1$, we get in particular that under assumption \textbf{(BSCT)}, if $I\subset \R$ is compact and $\eta\in (0,1)$ is fixed, then uniformly in $\lambda\in I$,
\begin{multline}              \label{p:limBSCT}
 \frac{1}{N} \sum_{(x_0;x_s)\in B_s} F\left(\zeta_{x_0}^{\lambda +i\eta}(x_1),\zeta_{x_1}^{\lambda +i\eta}(x_0),\dots,\zeta_{x_{s-1}}^{\lambda +i\eta}(x_s),\zeta_{x_s}^{\lambda +i\eta}(x_{s-1})\right)  \\
\Lim_{N\To +\infty } \expect\left[\sum_{(v_0;v_s)\in B_s:v_0=o}F\left(\hat{\zeta}_{v_0}^{\lambda +i\eta}(v_1),\hat{\zeta}_{v_1}^{\lambda +i\eta}(v_0),\dots,\hat{\zeta}_{v_{s-1}}^{\lambda +i\eta}(v_s),\hat{\zeta}_{v_s}^{\lambda +i\eta}(v_{s-1})\right)\right] \, .
\end{multline}
In the paper, we often encounter expressions of the form $\vartheta_{\gamma}(x_0,x_1) = F(\zeta_{x_0}^{\gamma}(x_1),\zeta_{x_1}^{\gamma}(x_0))$ in the LHS of \eqref{p:limBSCT}. In this case, we write $\hat{\vartheta}_{\gamma}(v_0,v_1) := F(\hat{\zeta}_{v_0}^{\gamma}(v_1),\hat{\zeta}_{v_1}^{\gamma}(v_0))$ for the object defined similarly at the limit. For instance, $\hat{\mu}_1^{\gamma}$ is defined like ${\mu}_1^{\gamma}$ but on the limiting tree $(\cT,\cW)$. In the particular case of $m^\gamma$, we have $\hat{m}_o^{\gamma} = \frac{-1}{2\cG^{\gamma}(o,o)}$.

It is worth noting that $\expect[\sum_{o'\sim o} {F}(\hat\zeta_o^{\gamma}(o'))] = \expect[\sum_{o'\sim o}{F}(\hat\zeta_{o'}^{\gamma}(o))]$. This holds because $\frac{1}{N}\sum_{(x_0,x_1)}F(\zeta_{x_0}^{\gamma}(x_1)) = \frac{1}{N}\sum_{(x_0,x_1)} F(\zeta_{x_1}^{\gamma}(x_0))$.
%
%Finally, the previous results continue to hold for variants of the function $F_{\gamma}[G,o,W]$ defined in \eqref{e:Fgammagow}, as long as the functions take arguments in some ball $B_G(o,s)$ around $o$. For example, if $G^{\gamma}(x):=G^{\gamma}(x,x)$ and $P$ is the Laplacian \eqref{e:lapl}, then for any $s\in \N$, we have uniformly in $\lambda\in I$,
%\[
%\frac{1}{N}\sum_{(x_0,x_1)\in B} (P^s G^{\lambda+i\eta_1})(x_0) F\left(\zeta_{x_0}^{\lambda+i\eta}(x_1)\right) \Lim_{N\To +\infty } \expect\left[\sum_{o'\sim o} (P^s \cG^{\lambda+i\eta_1})(o)F\left(\hat{\zeta}_{o}^{\lambda+i\eta_1}(o')\right)\right].
%\]
\end{rem}

\begin{rem}               \label{rem:IDS2}

Using \eqref{eq:green3'}, we have $|\hat{\zeta}_{o'}^{\gamma}(o)|^s \le |\Im \hat{\zeta}_o^{\gamma}(u)|^{-s}$ for any $u\in \mathcal{N}_o\setminus \{o'\}$. In particular, $|\hat{\zeta}_{o'}^{\gamma}(o)|^s \le \sum_{o''\sim o}|\Im \hat{\zeta}_{o}^{\gamma}(o'')|^{-s}$. We thus see by \textbf{(Green)} that for any $s>0$,
\begin{equation}\label{e:nonzerogreen}
\sup_{\lambda\in I_1, \eta\in (0,1)}\IE( |\Im \cG^{\lambda+i\eta}(o,o)|^{-s}) <\infty \, , \qquad \sup_{\lambda\in I_1, \eta\in (0,1)}\IE( |\cG^{\lambda+i\eta}(o,o)|^{s}) <\infty \, ,
\end{equation}
\begin{equation} \label{e:newgreeen}
\sup_{\lambda\in I_1, \eta\in(0,1)}\IE\Big(\sum_{y\sim o} |\hat{\zeta}^{\lambda+i\eta}_y(o)|^{s}\Big) <\infty \, , \qquad \sup_{\lambda\in I_1, \eta\in (0,1)}\IE\Big(\sum_{y\sim o} |\hat{\zeta}^{\lambda+i\eta}_o(y)|^{s}\Big) <\infty \, ,
\end{equation}
\[
\sup_{\lambda\in I_1, \eta\in (0,1)}\IE\Big(\sum_{y\sim o} |\Im \hat{\zeta}^{\lambda+i\eta}_y(o)|^{-s}\Big) <\infty \, .
\]
We also have
\[
\sup_{\lambda\in I_1, \eta\in (0,1)} \expect\left[\sum_{(v_0;v_t)\in B_t:v_0=o}\left|\hat{\zeta}_{v_0}^{\lambda +i\eta}(v_1)\hat{\zeta}_{v_1}^{\lambda +i\eta}(v_0)\cdots\hat{\zeta}_{v_{t-1}}^{\lambda +i\eta}(v_t)\hat{\zeta}_{v_t}^{\lambda +i\eta}(v_{t-1})\right|^s\right] <\infty \, .
\]
To see this, consider for simplicity $\expect[\sum_{(v_0;v_2),v_0=o}|\hat{\zeta}_{v_0}^{\gamma}(v_1)\hat{\zeta}_{v_1}^{\gamma}(v_2)|^s]$. This is the limit of $\frac{1}{N}\sum_{(x_0;x_2)\in B_2} |\zeta_{x_0}^{\gamma}(x_1) \zeta_{x_1}^{\gamma}(x_2)|^s$. This sum is bounded by $(\frac{1}{N}\sum_{(x_0;x_2)\in B_2} |\zeta_{x_0}^{\gamma}(x_1)|^{2s})^{1/2}\cdot (\frac{1}{N}\sum_{(x_0;x_2)\in B_2}|\zeta_{x_1}^{\gamma}(x_2)|^{2s})^{1/2}$ for any $N$. Using $|\mathcal{N}_{x_1}|-1\le D$ and taking $N\to \infty$, we see the limit is bounded by $D\expect(\sum_{o'\sim o}|\hat{\zeta}_{o}^{\gamma}(o')|^{2s})^{1/2}\expect(\sum_{o'\sim o} |\hat{\zeta}_{o}^{\gamma}(o')|^{2s})^{1/2} \le DC_s$ by \eqref{e:newgreeen}, for any $\lambda\in I_1$ and $\eta>0$. Hence, $\sup_{\lambda\in I_1,\eta>0}\expect[\sum_{(v_0;v_2),v_0=o}|\hat{\zeta}_{v_0}^{\gamma}(v_1)\hat{\zeta}_{v_1}^{\gamma}(v_2)|^s] \le DC_s$.

\end{rem}
\begin{rem}               \label{rem:IDS3}

Let us now look at the quantity $\frac{1}{N}\sum_{(x_0,x_1)}\sum_{(x_2;x_k) ,(y_2;y_k) }|\tilg^{\gamma}(\tilde{x}_k,\tilde{y}_k)|^s$, which we had to control in Section~\ref{s:proof1}.

Let $x_k\wedge y_k$ be the vertex of maximal length in $(x_0;x_k) \cap (x_0;y_k)$, so $x_k\wedge y_k = x_t$ for some $1\le t \le k$. Then $\tilg^{\gamma}(\tilde{x}_k,\tilde{y}_k)=\frac{-\prod_{l=0}^{k-t-1} \zeta_{x_{k-l}}^{\gamma}(x_{k-l-1})\cdot\zeta_{x_t}^{\gamma}(y_{t+1})\prod_{l=t+1}^{k-1} \zeta_{y_l}^{\gamma}(y_{l+1})}{2m_{x_k}^{\gamma}}$. We then write $\frac{1}{N}\sum_{(x_0,x_1)}\sum_{(x_2;x_k) ,(y_2;y_k) }  = \frac{1}{N}\sum_{(x_0,x_1)} \sum_{t=1}^k\sum_{(x_2;x_k) ,(y_2;y_k) ,x_k\wedge y_k=x_t}$, use H\"older's inequality, and take $N\to\infty$ to get a uniform bound involving $\expect[\sum_{o'\sim o} |\hat{\zeta}_o^{\gamma}(o')|^{s_2}]$ and $\expect[ |2\hat{m}_o|^{-s_1}]$, both of which are finite. Hence, $\frac{1}{N}\sum_{(x_0,x_1)}\sum_{(x_2;x_k) ,(y_2;y_k) }|\tilg^{\gamma}(\tilde{x}_k,\tilde{y}_k)|^s$ is uniformly bounded as $N\to\infty$.
\end{rem}

\subsection{Proofs of auxiliary results}        \label{sec:bsctaux}

We now turn to the proofs of some claims in Section~\ref{sec:introd}. In what follows, $\eta_0\in (0,1)$ is fixed.

{\bf{Claim \eqref{e:conv2}}.} Let $\chi : \mathscr{G}_{\ast}^{D,A}\to \IR$ and $F:\C \to \R$ be continuous. Then under \textbf{(BSCT)},
\begin{equation}        \label{eq:joint}
\frac1N \sum_{x\in V_N} \chi([G_N, x]) \sum_{y, d(y, x)=k} F(\tilde g^{\lambda+i\eta_0}_N(\tilde{x}, \tilde{y})) \Lim_{N\To +\infty} \IE\Big( \chi((\cT, o)) \sum_{v, d(v, o)=k} F(\cG^{\lambda +i\eta_0}(o, v))\Big)
\end{equation}
uniformly in $\lambda\in I_0$. This is a variant of Corollary~\ref{cor:spectralmeas} when one considers $F_{\gamma, \chi}:(\lambda,[G,x,W]) \mapsto \chi([G,x]) \sum_{y , d(y,x)=k} F(\tilde{g}^{\gamma}(x,y))$ instead of $F_\gamma$. In particular, taking $k=0$ and $\chi=1$, we obtain (\ref{e:conv2}).

\medskip

{\bf{Claim \eqref{eq:joint2}}.} We may assume $F$ is compactly supported (cf. Lemma~\ref{lem:rootspec}), hence uniformly continuous. Let $h_N(t)=\frac{1}{N}\sum_{x\in V_N}\chi([G_N, x]) \sum_{y, d(y, x)=k} F(t \Im \tilde g^{\lambda +i\eta_0}_N(x, y))$, $h(t) =  \IE\big( \chi((\cT, o)) \sum_{v, d(v, o)=k} F(t \Im \cG^{\lambda +i\eta_0}(o, v))\big)$, let $c_N(\lambda) = \frac{N}{\sum_{\tilde{x}\in \mathcal{D}_N} \Im \tilde g^{\lambda +i\eta_0}_N(\tilde{x},\tilde{x})}$ and $c(\lambda) = \frac{1}{\expect(\Im \mathcal{G}^{\lambda +i\eta_0}(o,o))}$.
The family $h_N$ is uniformly equicontinuous, and as in (\ref{eq:joint}) it converges uniformly to $h$. 
By (\ref{e:conv2}), $c_N(\lambda)\to c(\lambda)$ uniformly in $\lambda$. So $|h_N(c_N(\lambda)-h(c(\lambda))|\to 0$ uniformly in $\lambda$. This proves (\ref{eq:joint2}).

\medskip

We now turn to the proof of {\bf{Claim \eqref{eq:weightedav}}.} Consider the set of (double)-coloured rooted graphs $(G,o,W,a)$, where now $W:V\To \IR$ and $a:V\to \{0,1\}$.
We say $(G,o,W,a)$ and $(G',o',W',a')$ are equivalent if there is $\phi:G\to G'$ with $\phi(o)=o'$, $W'\circ \phi=W$ and $a'\circ \phi=a$. We let $\widehat{\mathscr{G}}_{\ast}^{D,A}$ be the corresponding set of equivalence classes and endow it with a metric $d_{loc}$ defined similarly to (\ref{eq:dloc}).
This amounts to the same definition as before, except that the colourings now take values in $\IR\times \{0, 1\}$ instead of $\IR$. The notion of local weak limit may obviously be extended to this situation.

Assuming that \textbf{(BSCT)} holds, then up to passing to a subsequence, $(G_N,W_N,\bbbone_{\Lambda_N})$ will have a local weak limit $\hat{\prob}$ concentrated on $\{[\cT, o, \cW, a]\}$, whose marginals on $\mathscr{T}_{\ast}^{D,A}$ coincides with $\prob$. The fact that $|\Lambda_N| \geq \alpha N$ implies $\hat\IP(a(o)=1) \geq \alpha$, since $\{a(o)=1\}$ is clopen in $\widehat{\mathscr{G}}_{\ast}^{D,A}$. We claim that
\begin{equation}      \label{e:cool1}
\lim_{N\To +\infty }\la \bbbone_{\Lambda_N} \ra_{\lambda+i\eta_0} = \frac{ \hat\IE\left(a(o) \Im \cG^{\lambda+i\eta_0}(o, o) \right)}{\IE\left({\Im \cG^{\lambda+i\eta_0}}(o, o)\right)}
\end{equation}
uniformly in $\lambda\in I_0$. Indeed, as in Lemma~\ref{lem:rootspec}, if $F : I_0 \times \widehat{\mathscr{G}}_{\ast}^{D,A}\to \C$ is given by $F(\lambda,[G,x,W,a]) = a(x) \Im \tilde{g}^{\lambda +i\eta_0}(x,x)$, then $F$ is continuous. So $\int F_{\lambda}\,\dd U_{G_N,W_N,\bbbone_{\Lambda_N}} \to \int F_{\lambda} \,\dd \hat{\prob}$ uniformly in $\lambda$ as in Corollary~\ref{cor:spectralmeas}. Combined with (\ref{e:conv2}), this yields (\ref{e:cool1}). We next note that for any $\alpha>0$,
\begin{equation}       \label{e:cool2}
\inf_{\lambda\in I_1, \eta_0\in (0,1)}\inf_{a,\hat\IP(a(o)=1)\geq \alpha } \frac{ \hat\IE\left(a(o)
\Im \cG^{\lambda+i\eta_0}(o, o) \right)}{\IE\left({\Im \cG^{\lambda+i\eta_0}}(o, o)\right)} >0 \, .
\end{equation}
In fact, suppose on the opposite that for all $\eps>0$, we can find $\lambda\in I_1, \eta_0\in (0,1)$ and $a$ such that $\hat\IP(a(o)=1)\geq \alpha$ and $\hat \IE\left(a(o)
\Im \cG^{\lambda+i\eta_0}(o, o) \right) \leq \eps$.
 The latter implies
\[
\hat\IP\left(a(o)=1, \Im \cG^{\lambda+i\eta_0}(o, o)\geq \eps^{1/2}  \right)\leq \eps^{1/2} \, .
\]
 On the other hand, since $a$ takes only the values 0 and 1,
\[
\hat\IP\left(a(o)=1, \Im \cG^{\lambda+i\eta_0}(o, o)\geq \eps^{1/2}  \right)\geq \hat\IP( \Im\cG^{\lambda+i\eta_0}(o, o)\geq \eps^{1/2}) -  \hat\IP(a(o)=0) \, .
\]
Thus,
\[
\hat\IP(\Im \cG^{\lambda+i\eta_0}(o, o)\geq \eps^{1/2}) -  \hat\IP(a(o)=0)\leq \eps^{1/2} \, .
\]
Equation \eqref{e:nonzerogreen} with $s=2$ implies that $\hat\IP( \Im \cG^{\lambda+i\eta_0}(o, o)<\eps^{1/2})\leq C \eps$, for some constant $C<\infty$ independent of $\lambda, \eta_0$. So $\hat\IP( \Im\cG^{\lambda+i\eta_0}(o, o)\geq \eps^{1/2})\geq 1-C\eps$. By assumption, $\hat\IP(a(o)=0)\leq 1-\alpha$.
Taking $\eps\to 0$ we would obtain $\alpha \leq 0$, a contradiction. We thus proved (\ref{e:cool2}). Since (\ref{e:cool1}) holds uniformly in $\lambda$, we get (\ref{eq:weightedav}).

Finally, as in the proof of \eqref{e:cool1}, we may consider the set of double-coloured rooted graphs $(G,o,W,K)$, where $K$ is a colouring of pairs of vertices $x,y\in G$, $d_G(x,y)\le R$, with values in $\{|z|\le 1\}\subset \C$. Assuming \textbf{(BSCT)} holds, up to passing to a subsequence, $(G_N,W_N,K_N)$ will have a local weak limit $\hat{\prob}$ concentrated on $\{[\cT,o,\cW,\cK]\}$ whose marginals on $\mathscr{T}_{\ast}^{D,A}$ coincides with $\prob$. We then deduce as before that uniformly in $\lambda\in I_0$,
\begin{equation}      \label{e:cool3}
\lim_{N\To +\infty }\left\la \mathbf{K}_N \right\ra_{\lambda+i\eta_0} = \frac{ \hat\IE\left(\sum_{y:d(y,o)\le R} \cK(o,y) \Im \cG^{\lambda+i\eta_0}(o, y) \right)}{\IE\left({\Im \cG^{\lambda+i\eta_0}}(o, o)\right)} \,.
\end{equation}

\medskip

{\bf{Acknowledgements~:}} This material is based upon work supported by the Agence Nationale de la Recherche under grant No.ANR-13-BS01-0007-01, by the Labex IRMIA and the Institute of Advance Study of Universit\'e de Strasbourg, and by Institut Universitaire de France.

\bibliographystyle{plain}
%\bibliography{biblio-lemasson} 

\providecommand{\bysame}{\leavevmode\hbox to3em{\hrulefill}\thinspace}
\providecommand{\MR}{\relax\ifhmode\unskip\space\fi MR }
% \MRhref is called by the amsart/book/proc definition of \MR.
\providecommand{\MRhref}[2]{%
  \href{http://www.ams.org/mathscinet-getitem?mr=#1}{#2}
}
\providecommand{\href}[2]{#2}

\end{document}